\newcommand{\jump}[1]{\ensuremath{[\![#1]\!]} }
\def\a{{\alpha}}%\a == \alpha
\def\b{{\beta}}%\b == \beta
\def\g{\gamma}%\g == \gamma
\def\s{\sigma}%\s == \sigma
\def\l{\left}%\l == \left
\def\r{\right}% \r == \right
\def\v{\varpi}%\v == \varpi
\def\w{\omega}%\w == \omega
\def\Om{\Omega}%\Om == \Omega
\def\ep{\varepsilon}%\ep == \varepsilon
\def\vp{\varphi}%\vp == \varphi 
\def\th{\theta}%\th == \theta
\def\Th{\Theta}%\Th == \Theta
\def\R{{\mathbb{R}}}%\R == \mathbb{R}
\def\C{{\mathbb{C}}}%\C == \mathbb{C}
\def\Z{{\mathbb{Z}}}
\def\Ss{{\mathscr{S}}}
\def\W{{\mathscr{W}}}%\W == \mathcal{W}
\def\Cs{{\mathscr{C}}}
\def\Bs{{\mathscr{B}}}
\def\Xs{{\mathscr{X}}}
\def\Tc{{\mathcal{T}}}
\def\Sc{{\mathcal{S}}}
\def\Sb{{\mathbb{S}}}
\def\Gb{{\mathbb{G}}}
\def\af{{\mathbf{a}}}
\def\nf{{\mathbf{n}}}
\def\bnf{{\bar{\mathbf{n}}}}
\def\bz{{\bar{\mathbf{z}}}}
\def\tf{{\mathbf{t}}}
\def\c{\mathfrak{c}}%\c == \mathfrak{c}
\def\e{{\mathfrak{e}}}
\def\zb{{\mathbf{z}}}
\def\o{\mathscr{O}}%\oo == \mathfrak{o}
\def\p{{\mathscr{P}}}%\pp == \mathfrak{p}
\def\L{{\mathcal{L}}}%\L == \mathcal{L}
\def\G{{\mathbf{G}}}
\def\ab{{\mathbf{a}}}
\def\jb{{\mathbf{j}}}
\def\tb{\mathbf{t}}
\def\nb{\mathbf{n}}
\def\bnb{\bar{\mathbf{n}}}
\def\1{\mathbf{1}}%
\def\N{{\mathbf{N}}}%
\def\Tb{{\mathbf{T}}}
\def\Ab{{\mathbf{A}}}
\def\Bb{{\mathbf{B}}}
\def\Zb{{\mathbf{Z}}}
\def\Qb{{\mathbf{Q}}}
\def\Ub{{\mathbf{U}}}
\def\Kb{{\mathbf{K}}}
\def\Wb{{\mathbf{W}}}
\def\t{\times}%\t == \times
\def\ot{\otimes}%\ot == \otimes
\def\bt{\boxtimes}%\bt == \boxtimes
\def\op{\oplus}%\op == \oplus
\def\bs{\backslash}%\bs == \backslash
\def\wt{\widetilde}%\wt == \widetilde
\def\ra{\rangle}%\ra == \rangle
\def\la{\langle}%\la == \langle
\def\Hom{{\mathrm{Hom}}}%\Hom == {\rm Hom}
\def\Alg{{\mathrm{Alg}}}
\def\ro{{\mathfrak{o}}}%\ro == {\rm ord}
\def\Ch{{\mathrm{Ch}}}%\Ch == {\rm Ch}
\def\supp{{\mathrm{supp}}}%\spo == {\rm supp}
\def\vol{{\mathrm{vol}}}
\def\Int{{\mathrm{Int}}}
\def\Ir{{\mathrm{Irr}}}
\def\Alg{{\mathrm{Alg}}}
\numberwithin{equation}{section}
\newtheorem{thm}{Theorem}[section]
\theoremstyle{plain}
\newtheorem{prop}[thm]{Proposition}
\newtheorem{Cor}[thm]{Corollary}
\newtheorem{lem}[thm]{Lemma}
\newtheorem{mthm}{Main Theorem}
\theoremstyle{definition}
\newtheorem{Rem}{Remark}
\newcommand{\FRAC}[2]{\leavevmode\kern.1em \raise.5ex\hbox{\the\scriptfont0 #1}\kern-.1em/\kern-.15em\lower.25ex\hbox{\the\scriptfont0 #2}}
\newdimen\argwidth
\def\[[#1\]]{
\setbox0=\hbox{$#1$}\argwidth=\wd0
\setbox0=\hbox{$\left[\box0\right]$}\advance\argwidth by -\wd0
\left[\kern.3\argwidth\box0\kern.3\argwidth\right]}
\title{LOCAL WHITTAKER-NEWFORMS FOR $GSp(4)$ MATCHING TO LANGLANDS PARAMETERS}
\author{TAKEO OKAZAKI}
\dedicatory{Dedicated to Professor Tomoyoshi Ibukiyama on his 70th birthday}
\date{}
\keywords{Siegel modular form, Newform, $GSp(4)$, $L$-parameter, $\th$-correspondence}
\thanks{
The author is supported by JSPS Grant-in-Aid for Scientific Research No. 24740017.}
\subjclass[2010]{11F46, 11F70, 11F27}
\address{Department of Mathematics, Faculty of Science, Nara Woman University,
Kitauoyahigashi-machi, Nara 630-8506, Japan. }
\email{okazaki@cc.nara-wu.ac.jp}
\begin{document}
\maketitle

\begin{abstract}
We extend the local newform theory of B. Roberts and R. Schmidt for generic, irreducible, admissible representations of $PGSp(4)$ to that for $GSp(4)$.
The newform matches to the Langlands parameter.
\end{abstract}
\section{Introduction}
Let $F$ be a non-archimedean local field of characteristic $0$ and residue characteristic $p$. 
Let $WD_F$ be the Weil-Deligne group. 
Let $\phi:WD_F \to GSp(4,\C)$ be a $L$-parameter.
The local Langlands correspondence for $GSp(4)$ showed by W. T. Gan and S. Takeda \cite{G-T2} says that, if $\phi$ is tempered, the $L$-packet of $\phi$ contains a unique generic, irreducible, admissible representation $\pi$ whose $L$- and $\ep$-factors defined by F. Shahidi \cite{Shahidi} coincide with those of $\phi$ respectively.
In the context of noncommutative class field theory, and Shimura type conjectures, for example, Yoshida-Brumer-Kramer conjecture \cite{Y}, \cite{B} on Abelian surfaces (see also \cite{O-Y} for Siegel threefold varieties), it is natural to quest which vector in $\pi$ possesses the $L$- and $\ep$-factors of $\phi$, and by which subgroup the vector is fixed.
For the generic $GL(d)$-case, the answer can be found in the series of the works of H. Jacquet, I. I. Piatetski-Shapiro, J. A. Shalika, and the subsequent works of S. Kondo, S. Yasuda \cite{K-Y}, N. Matrigne \cite{Ma}, and M. Miyauchi \cite{Mi}.
For the generic $PGSp(4)$-case, the answer was provided by B. Roberts and R. Schmidt \cite{R-S} for nondiscrete $L$-parameters (they provided also for some non-generic cases).
The `paramodular group' corresponding to the $L$-parameter is the fixing subgroup. 
After these works, in this paper, we will provide the following answer for the generic $GSp(4)$-case.
Let $\o$ be the ring of integers of $F$ and $\p = \v \o$ be its maximal ideal with a fixed generator $\v$.
Let $q = |\o/\p| = |\v|^{-1}$.
Let 
\begin{eqnarray}
\begin{bmatrix}
 & & & -1 \\
 & & -1 & \\
& 1& & \\
1& & & 
\end{bmatrix} \label{eqn:defmat}
\end{eqnarray}
be the defining matrix for $GSp(4)$.
We fix a continuous homomorphism $\psi: F \to \C^1$ such that $\psi(\o) = 1$ but $\psi(\p^{-1}) \neq 1$.
Let $\pi$ be a generic, irreducible, admissible representation of $GSp(4,F)$, and $\W_\psi(\pi)$ denote the representation space of consisting of (Whittaker) functions $W$ such that
\begin{eqnarray*}
W(\begin{bmatrix}
1 & -x & *&* \\
 &1 & y& *\\
 & &1 &x \\
 & & &1
\end{bmatrix}g) = \psi(y+x)W(g).
\end{eqnarray*}
Let $\w_\pi$ be the central character of $\pi$, and $\e$ its (order of) conductor.
For an integer $m \ge 2\e$, we define $\Kb(m;\e)$ to be the subgroup of all $k \in GSp(4,F)$ such that $\det(k) \in \o^\t$ and  
\begin{eqnarray*}
k \in \begin{bmatrix}
\o & \o & \o & \p^{-l} \\
\p^l & \o & \o &\o \\
\p^l & \o & \o & \o \\
\p^{m} & \p^l & \p^l & \o 
\end{bmatrix},
\end{eqnarray*}
where $l = m -\e$.
Define $\Kb_1(m;\e) = \{k \in \Kb(m;\e) \mid k_{44} \in 1+ \p^{\e}\}$.
We call these open compact subgroups the quasi-paramodular groups of level $m$.
They are contained in the paramodular group $\Kb(m-\e)$ of level $m - \e$.
In case of $\e = 0$, they coincide with $\Kb(m)$.
We call $\Kb_1(m;\e)$-invariant Whittaker functions quasi-paramodular forms of level $m$, including the case of $\e = 0$.
Let $V(m) \subset \W_\psi(\pi)$ denote the subspace consisting of quasi-paramodular forms of level $m$.
Observe that if $W \in V(m)$, then $\pi(k)W = \w_\pi(k_{44})W$ for $k \in \Kb(m;\e)$.
Although $\Kb_1(m+1;\e) \not\subset \Kb_1(m;\e)$, there exists an inclusion map $V(m) \hookrightarrow V(m+1)$.
The minimal integer $m$ such that $V(m) \neq \{0\}$ is called the minimal level of $\pi$, and denoted by $m_\pi$.
\begin{mthm}
Let $\pi$ be a generic, irreducible, admissible representation of $GSp(4,F)$ with $L$-parameter $\phi_\pi$.
Write $\ep(s,\phi_\pi,\psi) = \ep_\pi q^{-n_\pi(s-\frac{1}{2})}$.
Then, $m_\pi = n_\pi$, and $V(m_\pi)$ is one-dimensional.
There exists a unique $W$ in $V(m_\pi)$ such that 
\begin{eqnarray}
&& \int_{F^\t} W(\begin{bmatrix}
t & & & \\
 &t & & \\
& &1 & \\
& & &1 
\end{bmatrix}
)|t|^{s-\frac{3}{2}} d^\t t = L(s,\phi_\pi), \label{eqn:mthm1} \\
&& \int_{F^\t} W(\begin{bmatrix}
 & & t & \\
 & & & -t \\
\v^{n_\pi}& & & \\
& -\v^{n_\pi}& & 
\end{bmatrix}
)\w_\pi(t)^{-1} |t|^{s-\frac{3}{2}} d^\t t  = q^{\e} \ep_\pi L(s,\phi_\pi^{\vee}), \label{eqn:mthm2}
\end{eqnarray}
where $d^\t t$ is the Haar measure such that $\vol(\o^\t) = 1$.
\end{mthm}
The zeta integral (\ref{eqn:mthm1}) coincides with Novodvorsky's $Z(s,W)$ (\cite{N}), if $W \in \W_\psi(\pi)$ is quasi-paramodular (Proposition \ref{prop:zeta-simp}).
As well as in the $PGSp(4, F)$-case, for a tempered representation of $GSp(4,F)$, the genericity is equivalent to the quasi-paramodularity (Theorem \ref{thm:temp}). 
We now describe our method.
\begin{enumerate}[i)]
\item 
We show that if there exists a $W \in V(m)$ satisfying the equalities (\ref{eqn:mthm1}), and (\ref{eqn:mthm2}) up to a constant multiple, then $m = m_\pi$, and $V(m)$ is spanned by this $W$ (Theorem \ref{thm: coinLN}). 
Comparing with our functional equation (Theorem \ref{thm:fesp}), we find that the existence of such a $W$ means that $m_\pi$ equals $n_\pi'$, the analytic conductor, and that (\ref{eqn:mthm2}) with replacing $\ep_\pi$ by $\ep_\pi'$, the analytic root number, holds exactly.
Here, the functional equation is a modified version of Novodovorsky's \cite{N}, and $\ep_\pi', n_\pi'$ are defined by the $\ep$-factor (\ref{eqn:defepGSp4}). 
See also the remark in p.82 of \cite{R-S}.
Following the idea of B. Roberts and R. Schmidt, we use the $P_3$-representation theory (sect. \ref{sec:P3}), to prove the functional equation, and Theorem \ref{thm:vnthm} that says a quasi-paramodular form vanishing at all diagonal matrices is identically zero.
Theorem \ref{thm: coinLN} comes from Theorem \ref{thm:vnthm}.
\item 
To show the existence of $W$ as in Theorem \ref{thm: coinLN}, in sect. \ref{sec:Hecke}, we analyze Hecke actions on $V(m_\pi)$ when $\pi$ is supercuspidal, or when $\pi$ is a constituent of the induction of a supercuspidal representation of the Levi factor of the Klingen parabolic subgroup.
Since the $L$-function defined by \cite{N} of $\pi$ equals $1$ in this case, the Kirillov models corresponding to the quasi-paramodular forms have compact supports (Lemma \ref{lem:havev}).
This causes the analysis simple, and makes possible to determine all values at diagonal matrices of $W \in V(m_\pi)$ (Theorem \ref{thm:Ls1}).  
\item
For other generic constituents of parabolic inductions, we use the local $\th$-lift from $GL(2) \t GL(2)$ to $GSp(4)$.
It is known by \cite{G-T} that such constituents are obtained by the $\th$-lift.
In sect. \ref{sec:CQF}, the desired $W$ is constructed explicitly by the $\th$-lift.
\item
W. T. Gan and S. Takeda \cite{G-T2} showed the Langlands corespondence for $GSp(4)$ by observing the local $\th$-lift from $GSp(4)$ to $GL(4)$, and reducing to that for $GL(4)$ due to M. Harris and R. Taylor \cite{H-T}, and G. Henniart \cite{He}.
Following this line, in sect. \ref{sec:CN4}, by the $\th$-lift we construct the newform for $GL(4)$.
It matches to $\phi_\pi$, thanks to the newform theory for $GL(d)$ (sect. \ref{sec:GLd}).
Seeing that it is constructed by the above $W \in V(m_\pi)$, we obtain the coincidences $\ep_\pi' =\ep_\pi$, and $ n_\pi' = n_\pi$. 
\end{enumerate}
In the case of $\e > 0$, an elementary argument shows that, if $\pi(k)W = \chi(k) W$ for a quasi-character $\chi$ on a paramodular group, then $Z(s,W) = 0$, different from the case of $GL(d)$.  
In the case of $\e> 0$, the quasi-paramodular group is not normalized by the Weyl element $\j_m$ (c.f (\ref{eqn:weylelms})), and therefore $V(m)$ is not decomposed by the Atkin-Lehner operator defined by $\j_m$, different from the case of $\e = 0$.
We also consider the $\j_m$-conjugate of quasi-paramodular forms, which are called coquasi-paramodular forms.

{\bf Notation}
Let $F$ be a non-archimedean local field of characteristic $0$, and residue characteristic $p$. 
Let $\o$ be the ring of integers of $F$ and $\p = \v \o$ be its maximal ideal with a fixed generator $\v$.
Let $\p^* = \p \setminus \p^2$.
Let $q = |\o/\p| = |\v|^{-1}$.
Let $\ro(x)$ denote the $p$-adic order of $x \in F$, and let $\nu_s(x) = q^{-\ro(x)s}$ for $s \in \C$.
Let $\psi$ denote a continuous homomorphism $\psi: F \to \C^1$. 
We sometimes assume that the conductor of $\psi$ is $\o$, i.e., $\psi(\o) = 1$ but $\psi(\p^{-1}) \neq 1$.
If $G$ is a locally compact totally disconnected group (called an $l$-group), then we let $\Alg(G)$(resp. $\Ir(G)$) denote the category of smooth(resp. irreducible admissible) complex $G$-modules.
Let $\Xs(G)$ denote the subcategory of $\Ir(G)$ consisting of one-dimensional ones.
For $\chi \in \Xs(F^\t)$, let $\c(\chi)$ denote the order of the conductor of $\chi$.
If $\pi \in \Alg(G)$, then $\pi^\vee$ denotes the contragredient to $\pi$.
Let $L$ and $R$ denote the left and right translations of elements in $G$ on itself, respectively: $L(g) g' = g^{-1}g', R(g) g' = g'g$.
\section{Newforms for $GL(d)$}\label{sec:GLd}
We review the newform theory for a generic representation of $GL(d,F)$.
We will use the following notation for elements and subgroups of $G_d = GL(d, F)$:
\begin{eqnarray*}
N &=& \{n= (n_{ij}) \mid n_{ij} = 0 \ \mbox{for $i > j$}, \ n_{ii} = 1\},\\
\bar{N} &=& \{\bar{n} = \mbox{the transposition of $n \in N$}\},\\
K(m) &=& \{k \in G_d(\o) \mid k_{d1}, \cdots, k_{d,d-1} \in \p^m\}, \\
K_1(m) &=& \{k \in K(m) \mid k_{dd} \equiv 1 \pmod{\p^m} \}, \\
A &=& \{a(t) = \begin{bmatrix}
t & \\
 & 1_{d-1} 
\end{bmatrix} \mid t \in F^\t \}, \\ 
a_i &=& a(\v^i), \\
w_d &=& \mbox{the standard longest Weyl element in $G_d$}, \\ 
w_{1,d-1} &=& \begin{bmatrix}
1 & \\
 & w_{d-1}
\end{bmatrix}.
\end{eqnarray*}
In case of $r < d$, for an element $h \in G_r$, let 
\begin{eqnarray*}
h' = \begin{bmatrix}
h & \\
 & 1_{d-r}
\end{bmatrix} \in G_d.
\end{eqnarray*} 
For $h \in G_{d}$ and $b \in M_{d \t d}(F)$, let 
\begin{eqnarray*}
j(h) = \begin{bmatrix}
 & -h^{-1} \\
h &
\end{bmatrix}, \ 
n(b) = \begin{bmatrix}
1_{d} & b \\
 &1_{d}
\end{bmatrix}, \ \bar{n}(b) = \begin{bmatrix}
1_{d} &  \\
b &1_{d}
\end{bmatrix} \in SL(2d,F).
\end{eqnarray*}
The following identities are basic.
\begin{eqnarray}
\bar{n}(h) &=& n(h^{-1})j(h)n(h^{-1}), \label{eqn:usid} \\
\Int(\bar{n}(c))n(b) &=& \begin{bmatrix}
1_{d} -bc & b \\
cbc & 1_{d} + cb 
\end{bmatrix}. \label{eqn: nion}
\end{eqnarray}
Let $\W_\psi = {\rm Ind}_{N}^{G_d} \tilde{\psi}$ denote the induced representation consisting of smooth  functions $W: G_d \to \C$ (called Whittaker functions with respect to $\psi$) such that $L(n)W = \tilde{\psi}(n)^{-1} W$ for $n \in N$, where $\tilde{\psi} \in \Xs(N)$ is defined by $\tilde{\psi}(n) = \prod_{1 \le i \le d-1} \psi(n_{i,i+1})$.
We denote by $\Ir^{gn}(G_d)$ the subcategory consisting of $\pi$ such that $\Hom_{G_d}(\pi, \W_\psi) \neq \{0\}$.
If $(\pi,V) \in \Ir^{gn}(G_d)$, then $\Hom_{G_d}(\pi, \W_\psi) = \C \lambda$ for a functional $\lambda$, unique up to constant multiples, and we identify $V$ with $\W_\psi(\pi) : =\mathrm{Im}(\lambda)$. 
Let $W \in \W_\psi$.
For a nonnegative integer $r \le d-2$, let  
\begin{eqnarray*}
Z_r(s,W) = \iint W(\begin{bmatrix}
t & \\
x &1_r 
\end{bmatrix}')\nu_{s -\frac{n-1}{2}}(t) dx d^\t t \label{eqn:zetaGL}
\end{eqnarray*}
with the integration being over $t$ in $F^\t$ and $x$ in the column space $F^r$, where the Haar measures $dx$ and $d^\t t$ are chosen so that $\vol(\o^\t) = 1$ and $\vol(\o^r) = 1$ respectively.
Let $\pi \in \Ir^{gn}(G_d)$.
Let $L(s,\pi)$ and $\ep(s, \pi,\psi)$ denote the $L$- and $\ep$-factors respectively defined in \cite{G-J}, which coincide with those of the Rankin-Selberg convolution $\pi \t \1$ defined in \cite{J-PS-S2}(c.f. sect. 4 of \cite{J-PS-S3}), where $\1$ indicates the trivial quasi-character of $G_1 = F^\t$.
By the works of M. Harris and R. Taylor \cite{H-T}, and G. Henniart \cite{He}, these factors also coincide with those of the $L$-parameter $\phi_\pi: WD_F \to GL(d,\C)$, respectively.
Define $W^\imath \in \W_{\psi^{-1}}$ by $W^\imath(g) = W(w_d{}^tg^{-1}w_{1,d-1})$.
The $G_d$-module $\pi^\imath = \{W^\imath \mid W \in \W_\psi(\pi)\}$ is equivalent to $\pi^\vee$ (c.f. \cite{G-Ka}).
The functional equation for $\pi \t \1$ given in \cite{J-PS-S2} is 
\begin{eqnarray}
\frac{Z_0(1-s, W^\imath)}{L(1-s,\pi^\imath)} = \ep(s,\pi,\psi) \frac{Z_{d-2}(s, W)}{L(s,\pi)}. \label{eqn:FEGL}
\end{eqnarray}
It holds that $\ep(s,\pi,\psi)\ep(1-s,\pi^\imath,\psi^{-1}) = 1$.
Now fix a $\psi$ with conductor $\o$.
We define the root number $\ep_\pi$ and conductor $n_\pi$ by 
\begin{eqnarray*}
\ep(s,\pi,\psi) = \ep_\pi q^{-n_\pi(s-\frac{1}{2})}.
\end{eqnarray*}
Let $V(m)$ denote the subspace consisting of $K_1(m)$-invariant vectors in $\W_\psi(\pi)$.
Let $\w_\pi$ denote the central character of $\pi$, and $\e = \c(\w_\pi)$ its (order of) conductor.
Since $K(m)/K_1(m) \simeq \o^\t/(1+ \p^m)$, 
\begin{eqnarray}
V(m)  = \{W \in \W_\psi(\pi) \mid \pi(k)W = \w_\pi(k_{dd})W, \ \ k \in K(m) \}.  \label{eqn: Vmgl}
\end{eqnarray}
It is obvious that $\{0\} = \cdots \subset V(\e) \subset \cdots \subset V(\infty) := \cup_m V(m)$.
Observe that $V(\e-1) = \{ 0\}$ in case of $\e > 0$.
The smallest integer $m$ such that $V(m) \neq \{0\}$ is called {\bf the minimal level} of $\pi$, and denoted by $m_\pi$.
Then, $V(m_\pi)$ is one-dimensional, and spanned by a $W$ such that $W(1_d) = 1$, which is called the {\bf newform} of $\pi$ and denoted by $W_\pi$(called the essential vector in their original paper \cite{J-PS-S}).
The following identity was showed in \cite{Ma}, \cite{Mi}:
\begin{eqnarray}
Z_0(s,W_\pi)= L(s,\pi). \label{eqn: ZLcoinGL}
\end{eqnarray}
However, by the method of Lemma 4.1.1. of \cite{R-S}, $Z_r(s,W)$ are same for all nonnegative $r \le d-2$, if $W \in V(\infty)$ as is showed below.
\begin{prop}\label{prop:simp-zetagl}
With notations as above, if $W \in V(\infty)$, then $Z_0(s,W) = Z_r(s,W)$ for any $0 \le r \le d-2$.
\end{prop}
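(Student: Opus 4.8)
The plan is to show that the integrals $Z_r(s,W)$ for different $r$ differ only by a change of variables that is invisible on $K_1(\infty)$-invariant vectors, following the mechanism of Lemma 4.1.1 of \cite{R-S}. First I would set up the comparison between $Z_r(s,W)$ and $Z_{r+1}(s,W)$: writing the integration variable in $Z_{r+1}$ as $\begin{bmatrix} t & \\ x' & 1_{r+1}\end{bmatrix}'$ with $x' = (x, x_{r+1})^t \in F^{r+1}$, I would separate off the last coordinate $x_{r+1}$. The point is that the matrix $\begin{bmatrix} t & & \\ x & 1_r & \\ x_{r+1} & 0 & 1\end{bmatrix}'$ can be written, after a suitable reordering of rows and columns (i.e.\ conjugation by a permutation matrix lying in $GL(d,\o)$, hence harmless), as a product of $\begin{bmatrix} t & \\ x & 1_r\end{bmatrix}'$ with a unipotent element whose only nontrivial entry $x_{r+1}$ sits in a position below the diagonal in the last row — exactly the entries that $K_1(m)$ (and $K(m)$) allow to be integral but no smaller.

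The key computation is then: when $x_{r+1} \in \o$, the extra unipotent factor lies in $K_1(m)$ for every $m$ (its lower-left entries are in $\p^m$ vacuously since it has a $1$ in position $(d,d)$ and zeros elsewhere below the diagonal except in a column $\le d-1$), so $\pi$ of it acts trivially on $W \in V(\infty)$ and contributes nothing; when $x_{r+1} \notin \o$, I would use the left $N$-equivariance $L(n)W = \tilde\psi(n)^{-1}W$ together with the commutation identity \eqref{eqn: nion} (or simply a Bruhat-type decomposition of that single unipotent factor) to extract a character $\psi(\ast)$ in a variable that, upon integrating over the remaining torus/unipotent directions, forces the contribution to vanish — this is the standard ``oscillation kills the tail'' argument already used to truncate Whittaker integrals. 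Combining these two cases shows the $x_{r+1}$-integral reduces to integration over $\o$ with integrand independent of $x_{r+1}$, i.e.\ $Z_{r+1}(s,W) = Z_r(s,W)$. Iterating from $r=0$ up to $r = d-2$ gives the claim.

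The main obstacle I anticipate is bookkeeping the precise conjugating permutation and verifying that the ``extra'' unipotent factor genuinely lands in $K_1(m)$ for the relevant $m$ — in particular that the row of $x_{r+1}$ really is a row indexed $\le d$ with the offending entry in a column $\le d-1$, so that the $K_1$-condition (congruence only on the $(d,d)$ entry and integrality of $k_{d1},\dots,k_{d,d-1}$) is automatically satisfied. One must also be slightly careful that the decomposition used in the non-integral case does not move mass outside the region already accounted for; here invoking the method of \cite{R-S} directly, rather than redoing it, is the cleanest route. No convergence subtleties arise beyond those already present in the definition of $Z_r(s,W)$ for $\Re(s)$ large, where the manipulations are term-by-term valid and then extend by meromorphic continuation.
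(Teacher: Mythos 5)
Your proposal is correct and takes essentially the same approach as the paper: the paper likewise shows that the integrand of $Z_r(s,W)$ vanishes pointwise whenever $x\not\in\o^r$ (isolating the last non-integral coordinate, extracting a $\psi$-value via the identity (\ref{eqn:usid}), and applying Lemma \ref{lem:vanishlem} with an element of $N$ that conjugates into $K_1(m)$), while the integral coordinates are absorbed by $K_1(m)$-invariance exactly as you describe. The only differences are organizational (the paper treats all $r$ coordinates at once rather than inducting on $r$), and that the vanishing in the non-integral case is pointwise — which is what Lemma \ref{lem:vanishlem} delivers — rather than only after integration over the remaining variables.
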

\begin{proof}
It suffices to show that, for $x = {}^t(x_1, \ldots, x_{r}) \not\in \o^r$, 
\begin{eqnarray}
W(\begin{bmatrix}
t & \\
x &1_r 
\end{bmatrix}') = 0. \label{eqn:Zsimpgl}
\end{eqnarray}
Let $x_l$ be the last element such that $x_l \not\in \o$.
Let $\check{x_l} = {}^t(x_1, \ldots, x_{l-1})$.
By the $K_1(m)$-invariance property, 
\begin{eqnarray*}
W(\begin{bmatrix}
t & \\
x &1_r
\end{bmatrix}') 
= W(\begin{bmatrix}
t & & \\
\check{x}_l&1_{l-1} &\\
x_l & & 1 
\end{bmatrix}').
\end{eqnarray*}
By (\ref{eqn:usid}) and the $K_1(m)$-invariance property, this equals 
\begin{eqnarray*}
&& W(\l(\begin{bmatrix}
t & & \\
\check{x_l} &1_{l-1} & \\
 & & 1 
\end{bmatrix}
\begin{bmatrix}
1 & & x_l^{-1} \\
 &1_{l-1} & \\
 & & 1 
\end{bmatrix}
\begin{bmatrix}
 & & -x_l^{-1}\\
 &1_{l-1} & \\
x_l  & &  
\end{bmatrix}
\begin{bmatrix}
1 & & x_l^{-1} \\
 &1_{l-1} & \\
 & & 1 
\end{bmatrix}\r)') \\
&=& W(\l(\begin{bmatrix}
1 & & x_l^{-1}t\\
 &1_{l-1} & x_l^{-1}\check{x_l}\\
 & & 1 
\end{bmatrix}
\begin{bmatrix}
t & & \\
\check{x_l} &1_{l-1} &\\
 & & 1 
\end{bmatrix}
\begin{bmatrix}
 & & -x_l^{-1}\\
 &1_{l-1} & \\
x_l  & &  
\end{bmatrix}\r)') \\
&=& 
\psi(x_{l-1}x_l^{-1})W(\l(\begin{bmatrix}
t & & \\
\check{x_l} &1_{l-1} &\\
&&1
\end{bmatrix}
\begin{bmatrix}
 & & -x_l^{-1}\\
 &1_{l-1} & \\
x_l  & &  
\end{bmatrix}\r)'). 
\end{eqnarray*}
Since $x_l \not\in \o$ and $\psi(\p^{-1}) \neq 1$,  there exists a $y \in \o$ such that $\psi(x_ly) \neq 1$.
Now (\ref{eqn:Zsimpgl}) follows from Lemma \ref{lem:vanishlem} below combined with 
\begin{eqnarray*}
&& \Int^{-1}(\l(\begin{bmatrix}
t & & \\
\check{x_l} &1_{l-1} & \\
 & & 1
\end{bmatrix}
\begin{bmatrix}
 & & -x_l^{-1}\\
 &1_{l-1} & \\
x_l  & &  
\end{bmatrix}\r)')\begin{bmatrix}
1_{l} & & \\
 &1 & x_ly\\
& &1  
\end{bmatrix}'\\
&=& \Int^{-1}(
\begin{bmatrix}
 & & -x_l^{-1}\\
 &1_{l-1} & \\
x_l  & &  
\end{bmatrix}')\begin{bmatrix}
1_{l} & & \\
 &1 & x_ly\\
& &1  
\end{bmatrix}' = \begin{bmatrix}
1 & & -y\\
 &1_l & \\
& &1  
\end{bmatrix}' \in K_1(m).
\end{eqnarray*}
\end{proof}
\begin{lem}\label{lem:vanishlem}
Let $G$ be a group and $H, K$ be subgroups of $G$.
Let $\xi: H \to \C^\t$ and $\chi: K \to \C^\t$ be homomorphisms.
Let $f: G \to \C$ such that $L(h^{-1}) R(k)f = \xi(h)\chi(k) f$ for $h \in H, k \in K$.
Let $g \in G$.
If there exists an $h \in H$ such that $\Int^{-1}(g)h \in K$ and $\xi(h) \neq \chi(\Int^{-1}(g) h)$, then $f(g) = 0$.
\end{lem}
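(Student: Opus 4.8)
The plan is to unwind the transformation law into the single functional identity it encodes and then evaluate $f$ at one cleverly chosen point in two ways. Recalling the conventions $\bigl(L(h^{-1})f\bigr)(g') = f(hg')$ and $\bigl(R(k)f\bigr)(g') = f(g'k)$, the hypothesis $L(h^{-1})R(k)f = \xi(h)\chi(k)f$ is equivalent to
$f(h g' k) = \xi(h)\,\chi(k)\, f(g')$ for all $g' \in G$, $h \in H$, $k \in K$.
In particular, specializing $k$ or $h$ to the identity, one has both $f(hg') = \xi(h) f(g')$ and $f(g'k) = \chi(k) f(g')$ separately.

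Next I would set $k = \Int^{-1}(g)h = g^{-1} h g$, which lies in $K$ by assumption, and note the elementary identity $hg = g\bigl(g^{-1}hg\bigr) = g k$. Evaluating $f(hg)$ using the left-translation relation gives $f(hg) = \xi(h) f(g)$, while evaluating the same quantity via $hg = gk$ and the right-translation relation gives $f(hg) = f(gk) = \chi(k) f(g) = \chi\bigl(\Int^{-1}(g)h\bigr) f(g)$.

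Comparing the two expressions yields $\bigl(\xi(h) - \chi(\Int^{-1}(g)h)\bigr) f(g) = 0$, and since the scalar factor is nonzero by hypothesis, we conclude $f(g) = 0$. There is no real obstacle here: the lemma is a bookkeeping device that isolates the recurring principle ``a nonzero character discrepancy on a conjugate subgroup forces vanishing'' used in the Whittaker-function manipulations (as in the proof of Proposition \ref{prop:simp-zetagl}). The only point requiring minor care is consistency of the conjugation convention $\Int(g)x = gxg^{-1}$, so that the identity $hg = g\cdot\Int^{-1}(g)h$ is the correct one to invoke.
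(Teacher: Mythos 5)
Your proof is correct and is the evident argument (the paper states the lemma without proof); you unwind the translation law to $f(hg'k)=\xi(h)\chi(k)f(g')$, set $k=\Int^{-1}(g)h=g^{-1}hg$ so that $hg=gk$, compute $f(hg)$ two ways to get $\xi(h)f(g)=\chi(\Int^{-1}(g)h)f(g)$, and conclude $f(g)=0$ from the character discrepancy. Your remark about checking the convention $\Int(g)x=gxg^{-1}$ is the right point of care, and your reading is consistent with how the lemma is invoked in the proof of Proposition \ref{prop:simp-zetagl}.
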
 
By \cite{K-Y} it was showed that $n_\pi = m_\pi$.
Taking into account above results, we obtain the following characterization for the newforms.
\begin{thm}\label{thm:main}
Let $\pi \in \Ir^{gn}(G_d)$.
An integer $m$ equals $m_\pi$, if and only if there exists a $W \in V(m)$ such that $Z_0(s,W) = L(s,\pi)$ and $W(w_d a_{m}w_{1,d-1}) \neq 0$.
\end{thm}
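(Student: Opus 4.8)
The plan is to reduce the theorem to the single assertion $(\star)$: \emph{if $W \in V(\infty)$ satisfies $Z_0(s,W) = L(s,\pi)$, then $W(w_d\,a_j\,w_{1,d-1}) = 0$ for every integer $j > n_\pi$, whereas $W(w_d\,a_{n_\pi}\,w_{1,d-1}) \neq 0$.} Granting $(\star)$ together with the equality $n_\pi = m_\pi$ of \cite{K-Y}, both implications are immediate. If $m = m_\pi$, then $W_\pi \in V(m_\pi)$ has $Z_0(s,W_\pi) = L(s,\pi)$ by $(\ref{eqn: ZLcoinGL})$, so $(\star)$ gives $W_\pi(w_d\,a_{m_\pi}\,w_{1,d-1}) \neq 0$ and $W_\pi$ is a vector of the required form. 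Conversely, if $W \in V(m)$ satisfies $Z_0(s,W) = L(s,\pi)$ and $W(w_d\,a_m\,w_{1,d-1}) \neq 0$, then $W \neq 0$, so $V(m) \neq \{0\}$ and hence $m \geq m_\pi$; on the other hand $(\star)$ applied to this $W$ shows $W(w_d\,a_j\,w_{1,d-1}) = 0$ for all $j > n_\pi$, so the hypothesis $W(w_d\,a_m\,w_{1,d-1}) \neq 0$ forces $m \leq n_\pi = m_\pi$. Therefore $m = m_\pi$.

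To prove $(\star)$ I would push everything through the functional equation $(\ref{eqn:FEGL})$. Fix such a $W$, say $W \in V(m)$. By Proposition \ref{prop:simp-zetagl} one has $Z_{d-2}(s,W) = Z_0(s,W) = L(s,\pi)$, so $(\ref{eqn:FEGL})$ collapses to
\begin{eqnarray*}
Z_0(1-s,W^\imath) &=& \ep(s,\pi,\psi)\,L(1-s,\pi^\imath) \;=\; \ep_\pi\,q^{-n_\pi(s-\frac{1}{2})}\,L(1-s,\pi^\imath).
\end{eqnarray*}
Next I would observe that $t \mapsto W^\imath(a(t))$ depends only on $\ro(t)$: for $u \in \o^\t$,
\begin{eqnarray*}
W^\imath(a(tu)) &=& W(w_d\,a(t^{-1})\,a(u^{-1})\,w_{1,d-1}) \;=\; W(w_d\,a(t^{-1})\,w_{1,d-1}\,a(u^{-1})) \\
&=& W(w_d\,a(t^{-1})\,w_{1,d-1}) \;=\; W^\imath(a(t)),
\end{eqnarray*}
because $a(u^{-1})$ commutes with $w_{1,d-1}$ and $a(u^{-1}) \in K(m)$ has $(d,d)$-entry $1$, so acts trivially on $W \in V(m)$ by $(\ref{eqn: Vmgl})$. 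Moreover $W^\imath$ is a Whittaker function of the generic irreducible representation $\pi^\imath \simeq \pi^\vee$, so $W^\imath(a_k) = 0$ for all sufficiently negative $k$ by the standard vanishing of Whittaker functions at large argument. Hence $Z_0(1-s,W^\imath) = \sum_{k \in \Z} W^\imath(a_k)\,q^{k(d-3)/2}(q^s)^k$ is a Laurent series in $q^s$ with only finitely many negative-degree terms, whereas the right-hand side above is $\ep_\pi q^{n_\pi/2}(q^s)^{-n_\pi}$ times a power series in $q^s$ with constant term $1$. Since the two sides agree as rational functions, their Laurent expansions agree, and comparing lowest powers of $q^s$ yields $W^\imath(a_k) = 0$ for $k < -n_\pi$ and $W^\imath(a_{-n_\pi}) = \ep_\pi q^{n_\pi(d-2)/2} \neq 0$. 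Finally $a(\v^k)$ is diagonal, so $W^\imath(a_k) = W(w_d\,{}^t a(\v^k)^{-1}\,w_{1,d-1}) = W(w_d\,a_{-k}\,w_{1,d-1})$; substituting $k = -j$ turns the last two statements into precisely $(\star)$.

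The step I expect to be the main obstacle is the bookkeeping in this last comparison: one must line up the normalization $\ep(s,\pi,\psi) = \ep_\pi q^{-n_\pi(s-\frac{1}{2})}$ against $Z_0(1-s,W^\imath) = \sum_k W^\imath(a_k)q^{k(d-3)/2}(q^s)^k$, and check that $L(1-s,\pi^\imath)$ contributes only non-negative powers of $q^s$ with unit constant term, so that the extremal Laurent coefficient is transported cleanly onto $W^\imath(a_{-n_\pi})$. The remaining ingredients are routine: Proposition \ref{prop:simp-zetagl} for the collapse of $(\ref{eqn:FEGL})$, the elementary identity above for the $\ro(t)$-invariance of $W^\imath(a(t))$, and the classical fact that a Whittaker function of $GL(d,F)$ vanishes at $\dg(t,1,\dots,1)$ once $\ro(t)$ is sufficiently negative. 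Note that it is the \emph{strict} inequality $j > n_\pi$ in $(\star)$, combined with the nonvanishing at $a_m$, that bounds $m$ from above, whereas minimality of the level bounds it from below; this is why both conditions on $W$ in the statement are indispensable.
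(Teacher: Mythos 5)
Your proof is correct and follows essentially the same route as the paper: Proposition \ref{prop:simp-zetagl} collapses the functional equation (\ref{eqn:FEGL}) to $Z_0(1-s,W^\imath)=\ep(s,\pi,\psi)L(1-s,\pi^\imath)$, and a comparison of extremal Laurent coefficients in $q^s$, together with $n_\pi=m_\pi$ from \cite{K-Y}, pins down the level. The only cosmetic difference is that the paper translates $W^\imath$ by $a_{-m}$ and uses Lemma \ref{lem:vanishlem} to anchor the comparison at the constant term, whereas you read off the lowest-degree coefficient directly, which also delivers the ``only if'' direction that the paper leaves implicit.
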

\begin{proof}
We show only the if-part.
Let $W' = \pi^\imath(a_{-m})W^\imath$.
By (\ref{eqn:FEGL}) and Proposition \ref{prop:simp-zetagl}, 
\begin{eqnarray*}
\frac{Z_0(1-s,W')}{L(1-s,\pi^\imath)} = \ep_\pi q^{(m- m_\pi)(s -1/2)}.
\end{eqnarray*}
Since $W'$ is invariant under the subgroup 
\begin{eqnarray*}
\{n(x)' \in G_2' \mid x \in \o \},
\end{eqnarray*}
and $W'(1) (= W(w_d a_{m}w_{1,d-1}))\neq 0$ is assumed, by Lemma \ref{lem:vanishlem}, $Z_0(1-s,W')$ is a power series in $q^s$ with a nonzero constant term, and so is the left hand side of the above equation (recall $L(s,\pi^\imath)^{-1} = L(s,\pi^\vee)^{-1}$ is a polynomial in $q^{-s}$ with constant term $1$.).
However, the right side is a monomial in $q^{s}$.
Hence, both sides are constant, and $m = m_\pi$.
\end{proof}
From now on, we concentrate on the argument for the case that $d =2$ and the central character is ramified, which is an archetype for $GSp(4)$, and will be used repeatedly.
\begin{prop}\label{prop:mpicr}
Let $\pi \in \Ir^{gn}(G_2)$.
If $L(s,\pi) = 1$, then $m_\pi > \c(\w_\pi)$.
\end{prop}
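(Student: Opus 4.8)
The plan is to translate the statement, through the local Langlands correspondence for $GL(2)$ invoked in Section \ref{sec:GLd}, into an inequality between the Artin conductors of $\phi_\pi$ and $\det\phi_\pi$, and then to settle that inequality by a short case analysis on the shape of $\phi_\pi$. By \cite{K-Y} together with the compatibility of \cite{H-T}, \cite{He} with $L$- and $\ep$-factors one has $m_\pi = n_\pi = a(\phi_\pi)$ and $L(s,\pi) = L(s,\phi_\pi)$, and the correspondence carries central characters to determinants, so $\c(\w_\pi) = a(\det\phi_\pi)$. Thus the assertion reduces to: if $\phi$ is a (Frobenius-semisimple) two-dimensional Weil--Deligne representation of $WD_F$ with $L(s,\phi) = 1$, then $a(\det\phi) < a(\phi)$.

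First I would treat the case where $\phi$ is irreducible. Then its monodromy operator vanishes, and $\phi$ must be ramified, for an unramified two-dimensional Weil representation is a sum of two unramified characters; hence $\phi^{I_F} = 0$, since $\phi^{I_F}$ is a $W_F$-subrepresentation of the irreducible $\phi$ ($I_F$ being the inertia subgroup). Writing the Artin conductor along the lower-numbering ramification filtration $G_0 = I_F \supseteq G_1 \supseteq \cdots$ as $a(\phi) = \sum_{i\ge 0}[G_0:G_i]^{-1}\dim(\phi/\phi^{G_i})$, the $i = 0$ term equals $2$ for $\phi$ and is at most $1$ for $\det\phi$ (the latter being one-dimensional); and for every $i \ge 1$, whenever $\det\phi|_{G_i}$ is nontrivial so is $\phi|_{G_i}$, whence the $i$-th term for $\det\phi$ is dominated by that for $\phi$. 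Summing, $a(\det\phi) \le a(\phi) - 1$. Now suppose $\phi$ is reducible and write its Weil part as $\mu_1 \oplus \mu_2$. The hypothesis $L(s,\phi) = 1$ forces both $\mu_i$ to be ramified: in the decomposable case because $L(s,\phi) = L(s,\mu_1)L(s,\mu_2)$, and when the monodromy $N$ is nonzero because then $\phi \cong \chi\otimes\mathrm{sp}(2)$ (so $\{\mu_1,\mu_2\} = \{\chi|\cdot|^{1/2},\chi|\cdot|^{-1/2}\}$) and $\phi^{I_F}\cap\ker N \ne 0$ unless $\chi$ is ramified. In either subcase $a(\mu_i) \ge 1$, $a(\phi) \ge a(\mu_1) + a(\mu_2)$, and $a(\det\phi) = a(\mu_1\mu_2) \le \max(a(\mu_1),a(\mu_2)) < a(\mu_1) + a(\mu_2)$. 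In all cases, therefore, $\c(\w_\pi) = a(\det\phi_\pi) < a(\phi_\pi) = m_\pi$.

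The points I expect to need care are the two parts of the translation step — that $m_\pi = a(\phi_\pi)$ and $\c(\w_\pi) = a(\det\phi_\pi)$ are genuinely available here, which they are, being exactly the $GL(d)$ input assembled in Section \ref{sec:GLd} — and the book-keeping with the monodromy operator in the reducible, indecomposable case. A more elementary but longer alternative would stay on the automorphic side: $L(s,\pi) = 1$ forces $\pi$ to be supercuspidal, a ramified twist of the Steinberg representation, or an irreducible principal series attached to two ramified characters, and one then reads off the conductor in each case; the Galois-side argument is shorter and uniform.
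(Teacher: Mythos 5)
Your proposal is correct, but it proves the proposition by a genuinely different route from the paper. You move to the Galois side: using $m_\pi=n_\pi$ from \cite{K-Y}, the compatibility of $L$- and $\ep$-factors with the Langlands parameter from \cite{H-T}, \cite{He}, and the fact that the correspondence takes central characters to determinants, you reduce the claim to the inequality $a(\det\phi)<a(\phi)$ for a two-dimensional Frobenius-semisimple Weil--Deligne representation with trivial $L$-factor, which you verify cleanly in the irreducible case (term-by-term comparison along the ramification filtration, with the $i=0$ term contributing $2$ versus at most $1$ since $\phi^{I_F}=0$) and in the reducible case (both characters ramified, so $a(\mu_1\mu_2)\le\max(a(\mu_1),a(\mu_2))<a(\mu_1)+a(\mu_2)\le a(\phi)$, the twisted Steinberg subcase being handled by the $\ker N\cap\phi^{I_F}$ computation). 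The paper instead stays entirely on the automorphic side: it disposes of $\e=0$ by noting an unramified representation is a principal series, and for $\e>0$ it studies the Hecke operator $\Tc$ of (\ref{eqn:expfmTGL2}) on $V(m)$; for $m_\pi>\e$ the operator kills $W_\pi$ by averaging over $K_1(m_\pi-1)$, and for the critical case $m_\pi=\e$ it uses the partial Gauss sums of Lemma \ref{lem:Gauss2} and the vanishing/transformation statements of Lemma \ref{lem:e-lem} to extract the relation $\lambda W_\pi(a_1)=W_\pi(1)$, which contradicts $W_\pi(1)=1$, $W_\pi(a_1)=0$ coming from $Z_0(s,W_\pi)=L(s,\pi)=1$. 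Your argument is shorter and uniform, at the cost of invoking the full local Langlands dictionary for $GL(2)$ (all inputs the paper has already assembled in sect.~\ref{sec:GLd}, so there is no circularity); the paper's argument is longer but deliberately builds the Gauss-sum and Hecke-operator machinery (Lemmas \ref{lem:Gauss2}, \ref{lem:e-lem}) that is reused in the $GSp(4)$ analysis of sects.~\ref{sec:QP}--\ref{sec:Hecke}, where no Galois-side shortcut is available; indeed the proposition is explicitly presented there as the archetype for Proposition \ref{prop: cricase}. Your sketched automorphic alternative (supercuspidal, ramified twist of Steinberg, or principal series from two ramified characters) is also sound and closer in spirit to the paper's $\e=0$ step.
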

Let $\e = \c(\w_\pi)$.
In case of $\e = 0$, the assertion is obvious, since an unramified representation is a principal series representation.
Assume $\e > 0$.
Let $m \ge \e$.
Consider the Hecke action $\Tc: V(m) \to V(m)$ defined by 
\begin{eqnarray}
\Tc W := 
\sum_{x \in \o/\p} \pi(\bar{n}(x\v^{m})a_{-1})W = \sum_{x \in \o/\p} \pi(a_{-1}\bar{n}(x\v^{m-1}))W. \label{eqn:expfmTGL2}
\end{eqnarray}
Observe that $\{\bar{n}(x\v^{m})\mid x \in \o/\p\}$ is representatives for $K_1(m) / K_1(m) \cap \Int(a_{-1})K_1(m)$.
In case of $m = m_\pi > \e$, we have $\Tc W_\pi = 0$ since $\Tc W_\pi$ is a constant multiple of $
\pi(a_{-1}) \int_{K_1(m_\pi-1)} \pi(k) W_\pi d k$.
But, this argument does not work in case of $m_\pi = \e$.
To observe $K_1(\e)$-invariant vectors in $\W_\psi$, we need the following Gauss sum and its partial sum.
Let $\chi \in \Xs(\o^\t)$ with $\c(\chi) = \e >0$.
Let $1 \le m \le \e$.
For $u \in \o^\t$, define 
\begin{eqnarray*}
\Gb(\chi, u) &=& \int_{\o}\psi\l(\frac{x}{\v^{\e}u}\r) \chi(x) d x, \\
\Sb_m(\chi, u) &=& \int_{\p^m} \psi\l(\frac{x}{\v^{\e}u}\r)\chi(1+x) dx, 
\end{eqnarray*}
where $dx$ is chosen so that $\vol(\o) = 1$.
Since $\p^{\lceil \e/2 \rceil}/\p^\e \simeq (1+\p^{\lceil \e/2 \rceil})/(1 + \p^\e)$, there is a continuous homomorphism $\psi_\chi: \p^{\lceil \e/2 \rceil} \to \C^1$ such that $\psi_\chi(x) = \chi(1+x)$.
If $u \in \o^\t$, then $\ker(\psi(*/u\v^\e) \psi_\chi(*)) = \p^{m(u)}$ for some integer $m(u) \le \e$.
Let $n_\chi = \min\{m(u) \mid u \in \o^\t \}$. 
Of course, $\lceil \e/2 \rceil \le n_\chi \le \e$.
By definition, $m(u) = m(u') = n_\chi$, if and only if $u \equiv u' \pmod{\p^{\e- n_\chi}}$.
So, we can define $u_\chi \in \o^\t$ uniquely modulo $\p^{\e- n_\chi}$ such that $m(u_\chi) = n_\chi$.
Let $1 \le m < \e$ and $z \in \p^{*m}$.
In case of $n_\chi > \max\{m,\e-m \}$, there exists an $x \in \p^{\max\{m,\e-m\}}$ such that $\psi(\frac{x}{\v^\e})\psi_\chi(\frac{zx}{\v^m}) \neq 1$ by definition.
In case of $n_\chi \le \max\{m,\e-m \}$, $\ker(\psi(\frac{*}{\v^\e})\psi_\chi(\frac{z*}{\v^m})) = \p^{\max\{m,\e-m \}}$, if and only if $z \equiv \v^m u_\chi \pmod{\p^{\min\{\e,2m \}}}$.
\begin{lem}\label{lem:Gauss2}
If $n_\chi \le \max\{m,\e-m \}$, then, $\Sb_m(\chi,u_\chi) \neq 0$.
\end{lem}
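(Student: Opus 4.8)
The plan is to analyze the partial Gauss sum $\Sb_m(\chi,u_\chi) = \int_{\p^m}\psi(x/(\v^\e u_\chi))\chi(1+x)\,dx$ directly, using the stratification of $\p^m$ by the subgroup $\p^{n_\chi}$ on which the integrand is constant. First I would recall from the discussion preceding the lemma that, since $n_\chi \ge \lceil \e/2\rceil$, the homomorphism $\psi_\chi$ is defined on all of $\p^{n_\chi}$, and on $\p^m$ we may write $\chi(1+x)$ in terms of $\psi_\chi$ wherever that makes sense; more carefully, I would split $\p^m$ into cosets of $\p^{\lceil \e/2\rceil}$ (or of $\p^{n_\chi}$) and on each coset linearize the phase. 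The key point is the definition of $u_\chi$: the character $x \mapsto \psi(x/(\v^\e u_\chi))\psi_\chi(x)$ (equivalently $\psi(x/(\v^\e u_\chi))\chi(1+x)$ on the appropriate range) has kernel exactly $\p^{n_\chi}$, i.e. it is a \emph{nontrivial} additive character modulo $\p^{n_\chi}$ and a \emph{trivial} one on $\p^{n_\chi}$.

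The main computation is then an orthogonality argument. Under the hypothesis $n_\chi \le \max\{m,\e-m\}$, I claim the integrand over $\p^m$, after the substitution collapsing $\chi(1+x)$ to $\psi_\chi$-type data, factors through $\p^m/\p^{N}$ for $N = \max\{m,\e-m\}$ (this is where the hypothesis is used — it guarantees $N \ge n_\chi$ so the quotient is nontrivial in the relevant sense, and guarantees $N$ is large enough that $\chi(1+x)$ genuinely equals $\psi_\chi(x)$ on the scale where the phase still varies, using $2m \ge m \ge \dots$; one must check the two cases $m \ge \e-m$ and $m < \e - m$ separately, as the paper's preceding paragraph already distinguishes). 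On $\p^m/\p^{N}$ the integrand is a character, and by the defining property of $u_\chi$ this character is \emph{trivial} precisely when we are summing over $\p^{n_\chi}$ — so the integral is a nonzero volume times a sum over a coset space on which the character restricts trivially. Concretely: $\Sb_m(\chi,u_\chi) = \vol(\p^{n_\chi})\sum_{y \in \p^m/\p^{n_\chi}} \psi(y/(\v^\e u_\chi))\psi_\chi(y)$ after pairing up the contributions, and the remaining finite sum is over a group on which $y \mapsto \psi(y/(\v^\e u_\chi))\psi_\chi(y)$ is trivial — by the maximality/minimality in the definition of $n_\chi$ and $u_\chi$ — hence equals $\#(\p^m/\p^{n_\chi}) \ne 0$.

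I expect the main obstacle to be bookkeeping rather than conceptual: matching the range on which $\psi_\chi$ is literally defined ($\p^{\lceil \e/2\rceil}$) against the range $\p^m$ over which we integrate, and justifying that on the sub-range where the phase $\psi(x/(\v^\e u_\chi))$ is non-constant one really may replace $\chi(1+x)$ by $\psi_\chi(x)$ — this forces the split into the cases $2m \ge \e$ and $2m < \e$ and careful use of $n_\chi \le \max\{m,\e-m\}$ together with $n_\chi \ge \lceil \e/2\rceil$. Once the integrand is correctly identified as an additive character on a finite quotient, nonvanishing is immediate from the standard fact that the integral of a character over a group is the volume if the character is trivial and $0$ otherwise, and here the defining property of $u_\chi$ (established in the paragraph before the lemma via the explicit description $\ker(\psi(\tfrac{*}{\v^\e})\psi_\chi(\tfrac{z*}{\v^m})) = \p^{\max\{m,\e-m\}}$ iff $z \equiv \v^m u_\chi$) places us exactly in the trivial case on the relevant coset. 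A final sanity check is to note consistency with Lemma~\ref{lem:vanishlem}-type vanishing for $u \not\equiv u_\chi$, confirming that the choice $u = u_\chi$ is the one that avoids cancellation.
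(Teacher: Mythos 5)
Your reduction to an orthogonality sum breaks down in exactly the case that carries the content of the lemma, namely $m<\e/2$ (so that $\max\{m,\e-m\}=\e-m$ and $m<\lceil\e/2\rceil\le n_\chi$). Two things go wrong there. First, the identity $\Sb_m(\chi,u_\chi)=\vol(\p^{n_\chi})\sum_{y\in\p^m/\p^{n_\chi}}\psi(y/\v^\e u_\chi)\psi_\chi(y)$ is not available: $\psi_\chi$ is only defined on $\p^{\lceil\e/2\rceil}$, and for $x\in\p^m$ with $m<\e/2$ the map $x\mapsto\chi(1+x)$ is genuinely not additive (since $(1+x)(1+y)=1+x+y+xy$ with $xy\notin\p^\e$), so the integrand on $\p^m$ is not a character of $(\p^m,+)$ at all. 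This is not bookkeeping; it is the reason the lemma is nontrivial. Second, even granting your formula, the conclusion is backwards: by the definition of $n_\chi$ and $u_\chi$, the kernel of $y\mapsto\psi(y/\v^\e u_\chi)\psi_\chi(y)$ is \emph{exactly} $\p^{n_\chi}$, so for $m<n_\chi$ this character is nontrivial on $\p^m$ (already on $\p^{\lceil\e/2\rceil}\subset\p^m$), and the sum over $\p^m/\p^{n_\chi}$ vanishes by orthogonality. Your argument, carried out literally, would prove $\Sb_m(\chi,u_\chi)=0$, the opposite of the statement. Only the easy case $m\ge\e/2$ (where $\p^m\subseteq\p^{n_\chi}$ and the integrand is trivial on $\p^m$, giving $\Sb_m=q^{-m}$) is captured by your picture, and that is the case the paper dismisses as obvious.

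The missing ingredient is a global input that your purely local orthogonality analysis cannot supply: the nonvanishing of the full Gauss sum $\Gb(\chi,1)$. The paper's proof first uses the telescoping decomposition $\Sb_l(\chi,u)=\sum_{y\in\p^l/\p^n}\psi(y/\v^\e u)\chi(1+y)\,\Sb_n(\chi,u/(1+y))$ together with the evaluation of $\Sb_{n_\chi}$ to show that $\Sb_m(\chi,u)=0$ whenever $u\not\equiv u_\chi\pmod{\p^m}$; it then expands $\Gb(\chi,1)=\sum_z\psi(z/\v^\e)\chi(z)\Sb_1(\chi,z^{-1})\neq 0$ to force $\Sb_1(\chi,z^{-1})\neq 0$ for the unique residue with $z^{-1}\equiv u_\chi\pmod{\p}$, and finally decomposes that $\Sb_1$ through the $\Sb_m$'s to conclude. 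To repair your proposal you would need to import $\Gb(\chi,1)\neq 0$ (or an equivalent fact) and some such descent; there is no direct character-sum evaluation of $\Sb_m(\chi,u_\chi)$ when $m<n_\chi$.
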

\begin{proof}
By definition of $u_\chi$, 
\begin{eqnarray}
\Sb_{n_\chi}(\chi,u) = 
\begin{cases}
q^{-n_\chi} & \mbox{if $u \equiv u_\chi \pmod{\p^{\e- n_\chi}}$,} \\
0 & \mbox{otherwise.}
\end{cases}
\label{eqn: Sce/2}
\end{eqnarray}
If $m \ge \e/2$, then $\max\{m,\e-m \} = m \ge n_\chi$, and the assertion is obvious.
Assume that $m < \e/2$.
Then, $m \le \e - n_\chi$.
Let $1 \le l < n \le \e- n_\chi + 1$, and $Y = \{y \}$ be representatives for $\p^l/\p^n$.
Consider the following decompositions:
\begin{eqnarray*}
\Sb_l(\chi,u) &=& \sum_{y \in Y}\int_{\p^{n}} \psi\l(\frac{y + x + y x}{\v^\e u}\r)\chi\l((1+y)(1+ x)\r) dx \\
&=& \sum_{y \in Y} \psi\l(\frac{y}{\v^{\e} u}\r) \chi(1+ y) \int_{\p^{n}} \psi\l(\frac{(1+ y)x}{\v^\e u}\r)\chi(1+ x)  dx \\
&=& \sum_{y \in Y} \psi\l(\frac{y}{\v^{\e}u}\r) \chi(1+y) \Sb_{n}(\chi,\frac{u}{1+ y}),
\end{eqnarray*}
and 
\begin{eqnarray*}
(0 \neq ) \ \Gb(\chi,1) &=& \sum_{z \in (\o/\p)^\t} \int_{\p} \psi\l(\frac{z+x}{\v^\e}\r) \chi(z + x) dx \\
&=&\sum_{z \in (\o/\p)^\t} \psi\l(\frac{z}{\v^\e}\r)\chi(z)  \int_\p \psi\l(\frac{x}{\v^\e}\r)\chi(1+ z^{-1} x) dx \\
&=& \sum_{z \in (\o/\p)^\t} \psi\l(\frac{z}{\v^\e}\r)\chi(z) \Sb_1(\chi, z^{-1}).
\end{eqnarray*}
If $u \not\equiv u_\chi \pmod{\p^m}$, then $\Sb_m(\chi,u) =0$ by (\ref{eqn: Sce/2}) and the former decomposition in case of $l = m, n =n_\chi$.
Hence by the latter decomposition, $\Sb_1(\chi,z^{-1})$ is not zero for a unique $z \in (\o/\p)^\t$ such that $z^{-1} \equiv u_\chi \pmod{\p}$.
Hence $\Sb_m(\chi,u) \neq 0$ if $u \equiv u_\chi \pmod{\p^m}$ by the former decomposition in case of $l = 1, n =m$.
This completes the proof.
\end{proof}
For $t \in F^\t$ and $z \in F$, let
\begin{eqnarray}
[t;z] = \begin{bmatrix}
t & \\
z & 1
\end{bmatrix} \in G_2. \label{eqn:[;]}
\end{eqnarray}
\begin{lem}\label{lem:e-lem}
With the preceeding assumption, let $W \in \W_\psi$ such that $R(k)W = \chi(k_{22})W$ for $k \in K(\e)$.
\begin{enumerate}[i)]
\item 
Assume $\e > 1$.
Let $0 < m < \e$, and $z \in \p^{*m}$.
We have $W([\v^i;z]) = 0$ unless $n_\chi \le \max\{m,\e-m\}$, $i = m-\e$ and $z \equiv \v^m u_\chi \pmod{\p^{\min\{\e,2m\}}}$.
In case of $m = i + \e$, it holds that, for $y \in \p^m$, 
\begin{eqnarray}
W([\v^i;(1+ y)^{-1}\v^{m}u_\chi]) =
\psi(\frac{y}{\v^\e u_\chi})\chi(1+ y) W([\v^i;\v^m u_\chi])\label{eqn:e-lemv}
\end{eqnarray}
and 
\begin{eqnarray*}
\int_{\p^{*m}} W([\v^i;z])dz = 
\Sb_m(\chi,u_\chi)W([\v^i;\v^m u_\chi]) 
\end{eqnarray*}
where $dz$ is chosen so that $\vol(\o) = 1$.
\item Assume $\e = 1$.
For $i \ge 0$, 
\begin{eqnarray*}
\int_{\o^\t} W([\v^i;z])dz = 0.
\end{eqnarray*}
\end{enumerate}
\end{lem}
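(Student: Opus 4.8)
The argument will rest on a handful of elementary matrix identities, together with Lemma~\ref{lem:vanishlem} and the Gauss-sum bookkeeping of the preceding pages. A direct computation gives
\begin{equation*}
[\v^i;z]^{-1}n(b)[\v^i;z]=\begin{bmatrix}1+\v^{-i}bz & \v^{-i}b\\ -z^{2}\v^{-i}b & 1-z\v^{-i}b\end{bmatrix},
\end{equation*}
and, with $m=\ro(z)$, this matrix lies in $K(\e)$ precisely when $\ro(b)\ge\max\{i,\e+i-2m\}$, its lower-right entry being $1-z\v^{-i}b$. When $m=i+\e$ and $y\in\p^{m}$ one has
\begin{equation*}
n\l(\tfrac{y}{\v^{\e}u_\chi}\r)[\v^i;\v^{m}u_\chi]=[\v^i;(1+y)^{-1}\v^{m}u_\chi]\begin{bmatrix}1+y & y\v^{-m}u_\chi^{-1}\\ 0 & (1+y)^{-1}\end{bmatrix},
\end{equation*}
the right factor being in $K(\e)$ (integral entries since $\ro(y)\ge m$, unit determinant) with lower-right entry $(1+y)^{-1}$. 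Finally, for any $a\in\o^{\t}$,
\begin{equation*}
n\l(\tfrac{(1-a)\v^{i}}{az}\r)[\v^i;z]=[\v^i;az]\begin{bmatrix}a^{-1} & (1-a)/(az)\\ 0 & a\end{bmatrix},
\end{equation*}
with the right factor in $K(1)$ when $z\in\o^{\t}$ and lower-right entry $a$. Throughout I use that $W$ transforms by $\psi$ on the left under $N$ and by $k\mapsto\chi(k_{22})$ on the right under $K(\e)$.

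For (i): the first identity and Lemma~\ref{lem:vanishlem} give $W([\v^i;z])=0$ unless $\psi(b)=\chi(1-z\v^{-i}b)$ for every $b$ with $\ro(b)\ge\max\{i,\e+i-2m\}$. For such $b$ one has $\ro(z\v^{-i}b)\ge\lceil\e/2\rceil$, so this reads $\psi(b)\psi_\chi(z\v^{-i}b)=1$ on the ideal $\p^{\max\{i,\e+i-2m\}}$; writing $\psi_\chi=\psi(c\,\cdot\,)$ with $\ro(c)=-\e$, it says that the additive character $b\mapsto\psi((1+cz\v^{-i})b)$ is trivial there. Weighing valuations forces first $i=m-\e$ (otherwise $1+cz\v^{-i}$ is a unit and the character is nontrivial) and then, exactly along the lines of the paragraph preceding Lemma~\ref{lem:Gauss2}, $z\equiv\v^{m}u_\chi\pmod{\p^{\min\{\e,2m\}}}$ --- the condition $n_\chi\le\max\{m,\e-m\}$ coming out automatically. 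Substituting the second identity into the transformation laws of $W$ gives (\ref{eqn:e-lemv}) directly. For the integral, the vanishing statement confines the integrand on $\p^{*m}$ to the coset $z\equiv\v^{m}u_\chi\pmod{\p^{\min\{\e,2m\}}}$; parametrising it by $z=(1+y)^{-1}\v^{m}u_\chi$ with $y$ running over $\p^{m}$ --- and, in the range $2m\ge\e$, also using $\bar{n}(w)\in K(\e)$ for $w\in\p^{\e}$, so that $W([\v^i;\cdot])$ is $\p^{\e}$-periodic --- then substituting (\ref{eqn:e-lemv}) and recognising the defining integral of $\Sb_m(\chi,u_\chi)$, the integral collapses to $\Sb_m(\chi,u_\chi)\,W([\v^i;\v^{m}u_\chi])$.

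For (ii), where $\e=1$ so that $\chi$ is trivial on $1+\p$ and nontrivial on $\o^{\t}$: from the third identity, since $\ro\bigl(\tfrac{(1-a)\v^{i}}{az}\bigr)\ge i\ge 0$ and $\psi$ is trivial on $\o$, one obtains $W([\v^i;az])=\psi\bigl(\tfrac{(1-a)\v^{i}}{az}\bigr)\chi(a)^{-1}W([\v^i;z])$; taking $z=1$ this equals $\chi(a)^{-1}W([\v^i;1])$ when $i\ge 1$ and $\psi(a^{-1})\chi(a)^{-1}W([\1;1])$ when $i=0$. Hence $\int_{\o^{\t}}W([\v^i;z])\,dz$ equals $W([\v^i;1])$ times $\int_{\o^{\t}}\chi(z)^{-1}\,dz$ for $i\ge 1$, and --- after $z\mapsto z^{-1}$ and $\psi|_{\o^{\t}}\equiv 1$ --- times $\int_{\o^{\t}}\chi(z)\,dz$ for $i=0$; both vanish because $\chi$ is a nontrivial character of $\o^{\t}$.

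The matrix algebra is routine; I expect the real work to be in the second paragraph --- converting ``$\psi(b)=\chi(1-z\v^{-i}b)$ on a prescribed ideal'' into the sharp congruence for $z$, which means tracking carefully the two regimes $2m<\e$ and $2m\ge\e$ and the precision to which $u_\chi$ is defined. That, together with reading off the correct normalising constant $\Sb_m(\chi,u_\chi)$ in the integral, is precisely what the Gauss-sum preliminaries were set up to handle.
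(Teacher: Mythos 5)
Your proof is correct and takes essentially the same route as the paper's: the vanishing statement comes from conjugating $n(b)$ by $[\v^i;z]$ and applying Lemma \ref{lem:vanishlem} with the same three-way split on $i$ versus $m-\e$ and the dichotomy established just before Lemma \ref{lem:Gauss2}; the identity (\ref{eqn:e-lemv}) is the paper's matrix identity specialized to $x=y/(\v^\e u_\chi)$; and part (ii) again reduces to $\int_{\o^\t}\chi^{-1}\,dz=0$, the only cosmetic difference being that the paper extracts the factor $\chi(z)^{-1}$ via $\bar{n}(z)=n(z^{-1})j(z)n(z^{-1})$ where you use an upper-triangular conjugation. The one step you elaborate beyond the paper (which calls it obvious) is the evaluation of $\int_{\p^{*m}}$, via the change of variables $z=(1+y)^{-1}\v^m u_\chi$ together with the $\p^\e$-periodicity of $W([\v^i;\cdot])$ in the range $2m\ge\e$, which is indeed the intended computation.
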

\begin{proof}
i) 
(\ref{eqn:e-lemv}) follows from the identity
\begin{eqnarray*}
n(x)[\v^i;\v^m u_\chi]=  [\v^i;\v^m u_\chi - \frac{\v^{\e+m}u_\chi^2x}{1+ \v^{\e}u_\chi x}] \begin{bmatrix}
1+ \v^{\e}u_\chi x &  \\
 & (1+ \v^{\e}u_\chi x)^{-1}
\end{bmatrix} n\l(\frac{\v^{-i}x}{1+ \v^{\e}u_\chi x}\r).
\end{eqnarray*}
The last assertion is obvious.
For the remained assertion, we will use repeatedly Lemma \ref{lem:vanishlem}, and the identity
\begin{eqnarray}
\Int^{-1}([\v^i;z])n(x) = 
\begin{bmatrix}
1+ \v^{-i}zx & \v^{-i}x \\
-\v^{-i}xz^2 & 1- \v^{-i}zx
\end{bmatrix}, \label{eqn: elemad}
\end{eqnarray}
which lies in $K(\e)$ if $x \in \p^{\max\{i, i -2m + \e\}}$.
Suppose that $i < m- \e$.
Then $m-i-1 \ge \e$, and there is an $x \in \p^{-1} (\subset \p^{\max\{i, i -2m + \e\}})$ such that $\psi(x) \neq 1 = \chi(1- \v^{-i} zx)$.
Hence $W([\v^i;z]) = 0$.
Suppose that $i > m- \e$.
Then there is an $x \in \p^{\e-m+i-1}( \subset \p^{\max\{i, i -2m + \e\}})$ such that $\psi(x) = 1 \neq \chi(1- \v^{-i} z x)$. 
Hence $W([\v^i;z]) = 0$.
Suppose that $i = m- \e$.
If $z \not\equiv \v^{m} u_\chi \pmod{\p^{\min\{\e,2m \}}}$ or $n_\chi > \max\{m,\e-m \}$, then there is an $x \in \p^{\max\{m- \e, -m\}}$ such that $\psi(x) \neq \chi(1- \v^{-i}z x)$, and hence $W([\v^i;z]) = 0$.

ii) follows from the computation:
\begin{eqnarray*}
\int_{\o^\t} W(a_i \bar{n}(z)) dz &=& \int_{\o^\t} W(a_i n(z^{-1})j(z)n(z^{-1})) dz \\
&=& \int_{\o^\t} \psi(z^{-1} \v^{i}) W(a_ij(z)) dz \\
&=& W(a_ij(1)) \int_{\o^\t} \chi(z)^{-1} dz = 0.
\end{eqnarray*}
\end{proof}
Now, we can prove Proposition \ref{prop:mpicr}. 
By (\ref{eqn: ZLcoinGL}), $W_\pi(1) = 1$ and $W_\pi(a_i) = 0$ for $i \neq 0$.
Since $\dim V(m_\pi) = 1$, there is a constant $\lambda$ such that $\Tc W_\pi = \lambda W_\pi$.
From (\ref{eqn:expfmTGL2}), and the above Lemma, it follows that 
\begin{eqnarray*}
\lambda W_\pi(a_1) = W_\pi(1).
\end{eqnarray*}
This is a contradiction.
This completes the proof of the proposition.
\section{Representations of $P_3$}\label{sec:P3}
Let $P_3$ be the subgroup of $G_3$ of matrices of the form of 
\begin{eqnarray*}
\begin{bmatrix}
g & \b\\
 & 1
\end{bmatrix}, \ \ g \in G_2.
\end{eqnarray*}
We need the following notations for subgroups and elements in $P_3$. 
\begin{eqnarray*}
N_2 &=& \{n_2(x) = \begin{bmatrix}
1 & & \\
& 1& x \\
& & 1
\end{bmatrix} \}, \ 
N_3 = \{n_3(x) = \begin{bmatrix}
1 & & x \\
 & 1& \\
& & 1
\end{bmatrix} \},  \\
N' &=& \{n'(x) = \begin{bmatrix}
1 & x \\
 & 1
\end{bmatrix}' \}, \\
Z_2' &=& \{z_2'(t) =(t1_2)' \mid t \in F^\t  \}, \\
M &=&N'N_3, \ 
M^\flat = \bar{N}' N_2 (\simeq M).
\end{eqnarray*}
For $\xi \in \Xs(F^\t)$, let $\xi_\psi \in \Xs(NA)$ defined by 
\begin{eqnarray*}
\xi_\psi(a(t)n) = \xi(t)\psi(n_{2,3}), \ n \in N.
\end{eqnarray*} 
For $\rho \in \Ir(G_2)$, let $\rho'$ denote the representation of $P_3$ sending elements $g' n \in G_2' N_2N_3$ to $\rho(g)$, whose representation space is same as $\rho$.
Every irreducible smooth representation of $P_3$ is isomorphic to 
\begin{eqnarray*}
\tau_0 := \mathrm{ind}_{N}^{P_3} \tilde{\psi}, \ \tau_1(\xi) := \mathrm{ind}_{NA}^{P_3} \xi_\psi \ \mbox{or} \ \tau_2(\rho) := \rho',
\end{eqnarray*}
where $\mathrm{ind}$ indicates the compact induction.
For $\chi \in \Xs(F^\t)$, and $b \in F^\t$, let $\eta_b(\chi) \in \Xs(MZ_2'), \s_b(\chi) \in \Xs(M^\flat A)$, and $\s_0(\chi) \in \Xs(\bar{N}' Z_2')$ defined by  
\begin{eqnarray*}
\eta_b(\chi)(z_2'(t)m) &=& \chi(t)\psi(b m_{1,2}),  \ \ m \in M \\
\s_b(\chi)(a(t)m) &=& \chi(t)\psi(b m_{2,3}), \ \ m \in M^\flat \\
\s_0(\chi)(z_2'(t)n) &=& \chi(t), \ \ n \in \bar{N}'.
\end{eqnarray*}
For an $l$-group $G$, we say a distribution $D$ on $G$ left (resp. right) quasi-invariant with $\chi \in \Xs(G)$, if $\chi(g) D$ equals $D \circ L(g)$ (resp. $D \circ R(g)$) for all $g \in G$.
By the proof of Proposition 1.18 of \cite{B-Z} (taking the family of neighborhoods of $1$ in $\ker(\chi)$), the space of quasi-invariant distributions is one-dimensional. 
Indeed, there is a constant $c$ such that $D(\vp) = c \int_G \vp(g) \chi(g)^{-1}dg$ for $\vp \in \Ss(G)$, where $dg$ is a left (resp. right) Haar measure on $G$.
Following propositions are verified by Bruhat's distributional technique for induced representations (c.f. section 5 of \cite{Wa}).
\begin{prop}\label{prop: onedimP}
With the above notation, 
\begin{enumerate}[i)]
\item The space $\Hom_{MZ_2'}(\tau_0, \eta_b(\chi))$ is spanned by the nontrivial functional $\mu_\chi^b:\tau_0 \to \C$ defined by 
\begin{eqnarray*}
\mu_\chi^b(f) = \int_{F^\t} \chi^{-1}(t)f(z_2'(t)a(b)) d^\t t.  \label{eqn:linfun}
\end{eqnarray*}
\item For any $\xi \in \Xs(F^\t)$, $\Hom_{MZ_2'}(\tau_1(\xi), \eta_b(\chi)) = \{ 0 \}$.
\item Let $\rho \in \Ir(G_2)$.
Then,
\begin{eqnarray*}
\Hom_{MZ_2'}(\tau_2(\rho), \eta_b(\chi)) = \begin{cases}
\C \mu_2^b (\neq \{0\}) & \mbox{if $\chi = \w_\rho$, and $\rho \in \Ir^{gn}(G_2)$,} \\
\{0 \}& \mbox{otherwise},
\end{cases}
\end{eqnarray*}
where $\mu_2^b: \tau_2(\rho) \to \C$ is defined by $\mu_2^b(f) = f(a(b))$.
\end{enumerate}
\end{prop}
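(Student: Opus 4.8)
The plan is to apply Bruhat's distributional technique uniformly to all three representations, using the orbit structure of the double coset space $MZ_2' \backslash P_3 / ($support of the inducing character$)$. For part (i), I would first describe the $P_3$-module $\tau_0 = \mathrm{ind}_N^{P_3}\wt\psi$ as functions on $N \backslash P_3$ supported on a compact set. A functional $D \in \Hom_{MZ_2'}(\tau_0, \eta_b(\chi))$ is the same as a distribution on $N \backslash P_3$ that transforms by $\wt\psi$ on the left under $N$ and by $\eta_b(\chi)$ on the right under $MZ_2'$. Since $N$ and $MZ_2'$ together (essentially) generate a large subgroup, the relevant double coset space $N \backslash P_3 / MZ_2'$ has a single open dense orbit; the stabilizer computation shows the two characters are compatible on the intersection precisely because $\eta_b(\chi)$ restricted to $N \cap MZ_2' = N'N_3 \cap (\text{torus and }M)$ matches $\wt\psi$ twisted by $b$. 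One then checks directly that $\mu_\chi^b(f) = \int_{F^\t}\chi^{-1}(t) f(z_2'(t)a(b))\, d^\t t$ converges (because $f$ has compact support modulo $N$ and is smooth, so the integrand is compactly supported on $F^\t$), is nonzero (evaluate on a suitable bump function), and satisfies the two equivariance properties by an explicit change of variables using the identity $z_2'(t) a(b) n = \cdots$ expressing how $M$ and $Z_2'$ act. The one-dimensionality then follows from the Bruhat-theoretic fact that on the single open orbit the quasi-invariant distribution is unique up to scalar (the cited Proposition 1.18 of \cite{B-Z}), together with a check that the lower-dimensional boundary orbits support no such distribution — this is where the geometry of $P_3$ is needed.

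For part (ii), with $\tau_1(\xi) = \mathrm{ind}_{NA}^{P_3}\xi_\psi$, I would run the same orbit analysis for the double coset space $NA \backslash P_3 / MZ_2'$. The key point is a \emph{mismatch of characters on stabilizers}: because $\tau_1(\xi)$ is induced from the larger subgroup $NA$ (which contains the torus $A$), on the open orbit the required compatibility between $\xi_\psi$ and $\eta_b(\chi)$ fails on the intersection $NA \cap g^{-1}(MZ_2')g$ — concretely, the torus direction forces an incompatibility between $\xi$ and $\chi$ that cannot be satisfied, while on every smaller orbit the unipotent part of $\xi_\psi$ (the $\psi$-twist) is nontrivial on a subgroup on which $\eta_b(\chi)$ is trivial. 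Hence no nonzero quasi-invariant distribution exists on any orbit, so $\Hom = \{0\}$. This is essentially the standard "Whittaker functionals on $\tau_1$ are supported only on the big cell" type argument, dualized.

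For part (iii), $\tau_2(\rho) = \rho'$ is just $\rho$ viewed through the projection $P_3 \to G_2$ (killing $N_2 N_3$). A functional in $\Hom_{MZ_2'}(\rho', \eta_b(\chi))$ must kill the $N_3$-action (since $N_3 \subset M$ acts trivially on $\rho'$ but $\eta_b(\chi)$ restricted to $N_3$ is... trivial, so that is automatic) and must transform the $N' \subset M$ direction by $\psi(b\,\cdot\,)$ — i.e. after projecting to $G_2$, it must be a $\psi_b$-Whittaker functional on $\rho$; meanwhile $Z_2' \subset MZ_2'$ projecting to the center of $G_2$ forces $\chi = \w_\rho$. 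Thus $\Hom_{MZ_2'}(\tau_2(\rho), \eta_b(\chi))$ is nonzero iff $\chi = \w_\rho$ and $\rho$ admits a Whittaker functional, i.e. $\rho \in \Ir^{gn}(G_2)$, in which case it is one-dimensional by uniqueness of Whittaker models for $G_2$, and the functional is $f \mapsto f(a(b))$ after unwinding (the $a(b)$ translate accounts for the $b$ in the character). I expect the main obstacle to be part (i): correctly enumerating the $P_3$-orbits on $N \backslash P_3 / MZ_2'$ and verifying that the boundary orbits genuinely contribute nothing, since the convergence and equivariance of $\mu_\chi^b$ itself are routine changes of variable once the orbit picture is in hand.
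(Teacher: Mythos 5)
Your overall strategy is the paper's (Bruhat's distributional technique, reduction to quasi-invariant distributions orbit by orbit), and your treatment of part (iii) matches the paper's exactly. But the crux of part (i) --- which you yourself flag as the main obstacle and leave unexecuted --- is mis-described in a way that would derail the argument if carried out as stated. The double coset space $N \bs P_3/MZ_2'$ does \emph{not} consist of a single open dense orbit plus lower-dimensional boundary: a set of representatives is $\{a(s) \mid s \in F^\t\} \sqcup \{a(s)w_2' \mid s \in F^\t\}$, a continuum of orbits none of which is open. The orbit that survives is the \emph{closed} one through $a(1)$ (after normalizing $b=1$), namely $NZ_2'$, and the paper must invoke the exact sequence for distributions supported on a closed subset (1.9--1.10 of \cite{B-Z}) to restrict $D$ there and then factor $\Ss(NZ_2') \simeq \Ss(N)\ot\Ss(F^\t)$ to get one-dimensionality. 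The vanishing on the orbits through $a(s)$, $s \neq 1$, is not a boundary/dimension phenomenon but a clash of additive characters on $N'$: the identity $n'(-x)a(s) = a(s)\,\Int(n'(-sx))$ forces $\psi(-x)D = \psi(-sx)D$ on that orbit, so $D$ vanishes there; the orbits through $a(s)w_2'$ are killed similarly via $a(s)w_2'n_3(x) = n_2(sx)a(s)w_2'$. A further point you omit: to identify functionals on $\tau_0$ with distributions on $P_3$ injectively, one needs the surjectivity of $\Ss(P_3) \ni \vp \mapsto f_\vp \in \tau_0$, which the paper proves via the Iwasawa decomposition of $P_3$.

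In part (ii) your stated mechanism on the main orbit is also wrong: a ``torus incompatibility between $\xi$ and $\chi$'' cannot be what kills the Hom space, since the conclusion holds for \emph{every} $\xi$ and $\chi$. The actual clash is again purely unipotent and occurs on both orbits of $NA \bs P_3/MZ_2'$ (representatives $\{1_3, w_2'\}$): the subgroup $N'$ lies in $NA$ with $\xi_\psi|_{N'}$ trivial (since $\xi_\psi$ only sees the $(2,3)$-entry), while $\eta_b(\chi)|_{N'} = \psi(b\,\cdot)$ is nontrivial, so the support of any candidate distribution is empty. Your conclusion for (ii) is correct, but the reason you give would only rule out particular pairs $(\xi,\chi)$, not all of them.
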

\begin{proof}
It suffices to show for the case of the conductor of $\psi$ is $\o$ and $b =1$.
i) 
Let $\vp \in \Ss(P_3)$.
Define $f_\vp \in \tau_0$ by
\begin{eqnarray}
f_\vp(p) = \int_{N} \tilde{\psi}(n)^{-1} \vp(n p) d n. \label{eqn:consttau0}
\end{eqnarray}
We claim that the linear mapping $\Ss(P_3) \ni \vp \mapsto f_\vp \in \tau_0$ is surjective.
Let $f \in \tau_0$.
We will use the following compact subgroups: 
\begin{eqnarray*}
\Gamma(m) &=& \{k \in G_2(\o) \mid k \equiv 1_2 \pmod{\p^m}\} \subset G_2, \\
\Upsilon(m) &=& \{p \in P_3(\o) \mid p \equiv 1_3 \pmod{\p^m}\} \subset P_3.
\end{eqnarray*}
Take $m$ so that $f$ is right $\Upsilon(m)$-invariant.
By the Iwasawa decomposition of $G_2$, we have $P_3 = \bigsqcup_{l \in \Z^2} N \v^{l}G_2(\o)'$.
Hence, by a finite subset $\mathfrak{T}$ of representatives for $G_2(\o)/\Gamma(m)$, we have 
\begin{eqnarray}
P_3= \bigsqcup_{l \in \Z^2, t \in \mathfrak{T}} N \v^{l} t' \Upsilon(m) \label{eqn:IwP3}.
\end{eqnarray}
Let $\vp^{l}_{t} \in \Ss(P_3)$ be the characteristic function of the compact orbit $N(\o)\v^{l} t' \Upsilon(m)$.
The function $f_{\vp_t^l}$ vanishes outside of $N\v^{l} t' \Upsilon(m)$, and takes a constant value $c_t^l$ on $N(\o)\v^{l} t' \Upsilon(m)$.
Any $\Upsilon(m)$-invariant $f' \in \tau_0$ with $\supp(f')=N \v^{l} t' \Upsilon(m)$ is a constant multiple of $f_{\vp_t^l}$.
In particular, $f'(\w^lt') = 0$ if $c_t^l = 0$.
Therefore, for the $\Upsilon(m)$-invariant $f \in \tau_0$, setting 
\begin{eqnarray*}
\vp = \sum (c_t^l)^{-1} f(\v^{l} t') \vp^{l}_{t}
\end{eqnarray*}
with the sum (finite since $f \in \tau_0$) being over $l \in \Z^2$ and $t \in \mathfrak{T}$ such that $c_t^l \neq 0$, we have $f(\v^{l} t') = f_\vp(\v^{l} t')$ for all $l \in \Z^2,t \in \mathfrak{T}$. 
By the disjoint union (\ref{eqn:IwP3}), $f = f_\vp$.
This proves the claim.
Let $\mu \in \Hom_{MZ_2'}(\tau_0, \eta_b(\chi))$ correspond to the distribution $D_\mu$ on $P_3$ defined by 
\begin{eqnarray*}
D_\mu(\vp) = \mu(f_\vp).
\end{eqnarray*}
Since $\Ss(P_3) \ni \vp \mapsto f_\vp \in \tau_0$ is surjective, the linear mapping $\mu \mapsto D_\mu$ to the space of distributions on $P_3$ is injective.
By definition, if $D= D_\mu$, then 
\begin{eqnarray}
D \circ R(h) &=& \eta_1(\chi)(h) D  \ \ \ \ ( h \in MZ_2'),  \label{eqn:trlawdis}\\
D \circ L(n) &=& \tilde{\psi}(n)^{-1} D \ \ \ \ (n \in N). \label{eqn:trlawdis2}
\end{eqnarray}
Now we observe the support of $D_\mu$ in the sense of 1.10 of \cite{B-Z}.
Take representatives for the double coset space $N \bs P_3/ MZ_2'$, for example, $\{a(s) \mid s \in F^\t \} \sqcup \{a(s) w_2' \mid s \in F^\t\}$.
Let $\vp_{s,m} \in \Ss(P_3)$ be the characteristic function of $a(s)\Upsilon(m)$.
For $k \in \Upsilon(m)$, 
\begin{eqnarray*}
L(n'(x))\vp_{s,m}(a(s)k) &=& \vp_{s,m}(n'(-x)a(s)k) \\
&=& \vp_{s,m}(a(s)\Int(n'(-s x))k n'(-sx)) \\
&=& R(n'(-sx))\vp_{s,m}(a(s)\Int(n'(-s x))k).
\end{eqnarray*}
If $s \neq 1$, then we may take a sufficiently large $m$ so that there exists an $x \in F$ such that $\psi((1-s)x) \neq 1$ and $\Int(n'(s x)) \Upsilon(m) \subset \Upsilon(m)$, and therefore, by (\ref{eqn:trlawdis}), (\ref{eqn:trlawdis2}), 
\begin{eqnarray*}
\psi(-x) D(\vp_{s,m}) &=& D(L(n'(x))(\vp_{s,m})) \\
&=& D(R(n'(-s x))(\vp_{s,m})) \\
&=& \psi(-sx) D(\vp_{s,m}).
\end{eqnarray*}
Hence $D(\vp_{s,m}) = 0$ and $a(s) \not\in \supp(D)$, unless $s =1$.
Similarly, one can see that $a(s)w_2' \not\in \supp(D)$ by using the identity $a(s)w_2' n_3(x) = n_2(sx)a(s)w_2'$. 
Therefore, $\supp(D) \subset N Z_2'$.
By the exact sequence in 1.9 of loc. cit., we may regard $D$ as a distribution on the closed subgroup $NZ_2'$ of $P_3$ such that $D \circ L(n) R(z_2'(t)) = \chi(t) \tilde{\psi}(n)^{-1} D$.
Since $N \cap Z_2' = \{1 \}$, and $Z_2' \simeq F^\t$, $\Ss(NZ_2') \simeq \Ss(N) \ot \Ss(F^\t)$.
Therefore, such a $D$ lies in the space $\Hom_{N \t F^\t}(\Ss(N) \ot \Ss(F^\t),\C_{\tilde{\psi}^{-1}} \ot \C_{\chi})$ where $\C_{\tilde{\psi}^{-1}}$ and $\C_{\chi}$ indicate the representation spaces of $\tilde{\psi}^{-1}$ and $\chi$ respectively.
Since the spaces of quasi-invariant distributions on $N, F^\t$ are one-dimensional, so is $\Hom_{N \t F^\t}(\Ss(N) \ot \Ss(F^\t),\C_{\tilde{\psi}^{-1}} \ot \C_{\chi})$ which is isomorphic to 
\begin{eqnarray*}
\Hom_{N}\l(\Ss(N),  \C_{\tilde{\psi}^{-1}} \ot \Hom_{F^\t}(\Ss(F^\t),\C_{\chi})\r) \simeq \Hom_{N}(\Ss(N), \C_{\tilde{\psi}^{-1}}).
\end{eqnarray*}
Hence, $\Hom_{MZ_2'}(\tau_0, \eta_1(\chi))$ is $1$-dimensional at most.
By (\ref{eqn:IwP3}) we can define the right $\Upsilon(\c(\chi))$-invariant $f_\chi \in \tau_0$ by 
\begin{eqnarray*}
f_\chi(n\v^{l}g') =
\begin{cases}
\tilde{\psi}(n) & \mbox{if $l = (0,0)$ and $ng' \in N\Upsilon(\c(\chi))$,} \\
0 & \mbox{otherwise.}
\end{cases}
\end{eqnarray*}
Obviously $\mu_\chi^1(f_\chi) \neq 0$, and $\mu_\chi^1$ spans $\Hom_{MZ_2'}(\tau_0, \eta_1(\chi))$. \\
ii) Similar to i).
Replace the condition (\ref{eqn:trlawdis2}) with 
\begin{eqnarray*}
D \circ L(h) =  \eta_1(\chi)(h)^{-1} D \ \ \ \ (h \in NA). 
\end{eqnarray*}
By using this condition, (\ref{eqn:trlawdis}), and representatives for $NA \bs P_3/ MZ_2'$, say $\{1_3,w_2' \}$, one can see that the supports of corresponding distributions are emptysets.\\
iii) follows from the fact that $\Hom_{G_2}(\rho, \W_\psi) \simeq \Hom_{N (\subset G_2)}(\rho, \tilde{\psi})$ is one-dimensional, if $\rho$ is generic.
\end{proof}
The following proposition is proved similarly (c.f. Lemma 2.5.4., 2.5.5., 2.5.6. of \cite{R-S}). 
\begin{prop}\label{prop: onedimP2} 
\begin{enumerate}[i)]
\item  The space $\Hom_{AM^\flat}(\tau_0, \s_b(\chi))$ is spanned by the nontrivial functional $\lambda_\chi^b:\tau_0 \to \C$ defined by  
\begin{eqnarray}
\lambda_\chi^b(f) = \int_{F^\t} \int_{\bar{N}'} \chi^{-1}(t)\nu_{-1}(t)f(a(t) n z_2'(b)) d n d^\t t.   \label{eqn:linfun2}
\end{eqnarray}
\item Let $\chi, \xi \in \Xs(F^\t)$.
Then, 
\begin{eqnarray*}
\Hom_{AM^\flat}(\tau_1(\xi), \s_b(\chi)) = \begin{cases}
\C \lambda_1^b \neq \{0\} & \mbox{if $\xi = \nu_1\chi$,} \\
\{ 0\} & \mbox{otherwise,}
\end{cases}
\end{eqnarray*}
where $\lambda_1^b: \tau_1(\xi) \to \C$ is defined by
\begin{eqnarray*}
\lambda_1^b(f) = \int_{\bar{N}'} f(n z_2'(b)) dn.
\end{eqnarray*}
\item  For any $\rho \in \Ir(G_2)$, $\Hom_{AM^\flat}(\tau_2(\rho), \s_b(\chi)) = \{ 0 \}$.
\end{enumerate}
\end{prop}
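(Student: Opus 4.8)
The plan is to prove all three parts by the same Bruhat-distributional method used for Proposition \ref{prop: onedimP}, now with the pair $(N'N_3, Z_2')$ there replaced by $(M^\flat,A) = (\bar{N}'N_2, A)$; by the reduction in that proof it suffices to take the conductor of $\psi$ to be $\o$ and $b=1$. Part iii) needs no machinery at all: since $N_2 \subset M^\flat \subset AM^\flat$ and $\s_b(\chi)|_{N_2}$ is the \emph{nontrivial} character $n_2(x) \mapsto \psi(bx)$, while $\tau_2(\rho) = \rho'$ is trivial on $N_2N_3$ by construction, any $T \in \Hom_{AM^\flat}(\rho',\s_b(\chi))$ satisfies $T(v) = T(\rho'(n_2(x))v) = \psi(bx)T(v)$ for all $x \in F$, which forces $T = 0$.

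For i) and ii) I would attach to $\mu \in \Hom_{AM^\flat}(\tau_i,\s_1(\chi))$ the distribution $D = D_\mu$ on $P_3$ given by $D(\vp) = \mu(f_\vp)$, using the surjection $\Ss(P_3) \to \tau_0$ constructed in the proof of Proposition \ref{prop: onedimP} and its evident analogue for $\tau_1(\xi)$; injectivity of $\mu \mapsto D_\mu$ follows as there. Then $D \circ R(h) = \s_1(\chi)(h) D$ for $h \in AM^\flat$, $D \circ L(n) = \tilde\psi(n)^{-1} D$ for $n \in N$, and, in the $\tau_1(\xi)$-case, the extra relation $D \circ L(h) = \xi_\psi(h)^{-1} D$ for $h \in NA$. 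The first step is to compute $\supp(D)$ against a set of representatives for $N \bs P_3 / AM^\flat$ (resp.\ $NA \bs P_3 / AM^\flat$): I would take $\{a(s) \mid s \in F^\t\} \sqcup \{a(s)w_2' \mid s \in F^\t\}$ in case i) and $\{1_3, w_2'\}$ in case ii), as in Proposition \ref{prop: onedimP}, and kill the spurious cosets by exhibiting for each an $x \in F$ whose $\psi$-values are incompatible with the quasi-invariance relations (using identities such as $a(s)w_2'n_3(x) = n_2(sx)a(s)w_2'$). This should leave $D$ supported on the closure of a single cell — roughly $N \cdot A\bar{N}'Z_2'$ in case i), and the cell through $1_3$ in case ii).

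Restricting to that cell, the second step is to push $D$ down to a distribution on the relevant closed subgroup via the exact sequence of 1.9 of \cite{B-Z} and to use one-dimensionality of the spaces of quasi-invariant distributions on the factors $N$, $\bar{N}'$, $F^\t$ (Proposition 1.18 of \cite{B-Z}) to get $\dim \Hom \le 1$; in case ii) the matching of the two $A$-actions — one from $NA$ on the left, one from $A \subset AM^\flat$ on the right — imposes a modulus-character relation which I expect to read off as $\xi = \nu_1\chi$, so the pushed-down distribution vanishes unless $\xi = \nu_1\chi$. The third step is to exhibit a vector not annihilated by the stated functional: for $\tau_0$, a right-$\Upsilon(\c(\chi))$-invariant $f \in \tau_0$ supported on one cell and equal to $\tilde\psi$ there, giving $\lambda_\chi^1(f) \neq 0$; for $\tau_1(\xi)$ with $\xi = \nu_1\chi$, the analogous function with $\lambda_1^1(f) \neq 0$; and one checks directly that $\lambda_\chi^b$, $\lambda_1^b$ lie in the respective Hom-spaces. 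This pins the dimension at exactly one and identifies the spanning functional, paralleling Lemmas 2.5.4–2.5.6 of \cite{R-S}.

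I anticipate the main obstacle to be the first step in case i): organizing the double-coset representatives for $N \bs P_3 / AM^\flat$ and verifying, cell by cell, that the $\tilde\psi$- and $\s_1(\chi)$-quasi-invariances confine $D$ to the single big cell with no stray component surviving — and, in case ii), extracting the precise relation $\xi = \nu_1\chi$ from the modulus bookkeeping. These are the delicate points; the remainder is the by-now-standard Bruhat technology (cf.\ section 5 of \cite{Wa}).
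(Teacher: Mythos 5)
Your overall strategy is exactly the one the paper intends: the paper omits the proof and says only that it is ``proved similarly'' to Proposition \ref{prop: onedimP} by the Bruhat distributional technique (cf.\ Lemmas 2.5.4--2.5.6 of \cite{R-S}), and your outline for i) and ii) reproduces that argument, while your direct argument for iii) (that $\tau_2(\rho)=\rho'$ is trivial on $N_2$ whereas $\s_b(\chi)|_{N_2}$ is the nontrivial character $n_2(x)\mapsto\psi(bx)$, forcing $T=0$) is correct and complete.

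One concrete misstep in your setup for i): the set $\{a(s)\mid s\in F^\t\}\sqcup\{a(s)w_2'\mid s\in F^\t\}$ is not a system of representatives for $N\bs P_3/AM^\flat$. Since $A\subset AM^\flat$, all the $a(s)$ lie in the single double coset $N\cdot AM^\flat$, so the ``kill the spurious cosets by varying $s$'' step has nothing to act on as written. The big cell $N'T\bar N'$ of $G_2$ decomposes as $\bigsqcup_s N'A\bar N'\cdot s1_2$ with the center supplying the parameter, so the correct big-cell representatives are $\{z_2'(s)\mid s\in F^\t\}$ (this is the mirror image of Proposition \ref{prop: onedimP} i), where the roles of $A$ and $Z_2'$ are interchanged). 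The incompatibility that eliminates all but one of these cosets then comes from $N_2$: on the coset through $z_2'(s)$ one has $\Int^{-1}(z_2'(s))n_2(x)=n_2(s^{-1}x)$, so the left character $\tilde\psi(n_2(x))=\psi(x)$ must match the right character $\s_b(\chi)(n_2(s^{-1}x))=\psi(bs^{-1}x)$, forcing $s=b$ and leaving $D$ supported on (the closure of) $Nz_2'(b)AM^\flat$ only --- consistent with the formula for $\lambda_\chi^b$. With this correction the rest of your plan (pushing $D$ to the closed subgroup, one-dimensionality of quasi-invariant distributions on $N$, $A$, $\bar N'$, the modulus factor $\nu_1$ producing the condition $\xi=\nu_1\chi$ in ii), and exhibiting a test vector with $\lambda_\chi^b(f)\neq 0$) goes through as in the model proof.
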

Since both of $\tau_2(\xi \circ \det)$ and $\s_0(\chi)$ are one-dimensional, the following is obvious.
\begin{prop}\label{prop: onedimP3}
Let $\chi, \xi \in \Xs(F^\t)$.
Then, 
\begin{eqnarray*}
\Hom_{Z_2'\bar{N}'}(\tau_2(\xi \circ \det), \s_0(\chi)) = \begin{cases}
\C \mu_2' \neq \{0\} & \mbox{if $\chi = \xi^2$,} \\
\{ 0\} & \mbox{otherwise,}
\end{cases}
\end{eqnarray*}
where $\mu_2': \tau_2(\xi \circ \det) \to \C$ is the nontrivial functional defined by $\mu_2'(f) = f(1_3)$.
\end{prop}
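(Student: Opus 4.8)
The plan is to restrict both representations to the subgroup $Z_2'\bar{N}'$ of $P_3$, note that on both sides this restriction is a character, and then simply compare the two characters.

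First I would record the action of $\tau_2(\xi\circ\det)$ on $Z_2'\bar{N}'$. Since $Z_2'\bar{N}' \subset G_2'$ and $\tau_2(\xi\circ\det) = (\xi\circ\det)'$ acts on its one-dimensional space by $g'n \mapsto \xi(\det g)$ for $g'n \in G_2'N_2N_3$, and since the $G_2$-component of $z_2'(t)\bar{n}'(x)$ has determinant $t^2$ (that of $\bar{n}'(x)$ being $1$), this restriction is the character
\begin{eqnarray*}
z_2'(t)\bar{n}'(x) \longmapsto \xi(t^2) = \xi^2(t).
\end{eqnarray*}
On the other hand, $\s_0(\chi)$ is by definition the character $z_2'(t)n \mapsto \chi(t)$ of $Z_2'\bar{N}'$, trivial on $\bar{N}'$.

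Next I would invoke the elementary fact that a $\C$-linear map between two one-dimensional representations of a group is equivariant if and only if it is zero or the two characters coincide. Hence $\Hom_{Z_2'\bar{N}'}(\tau_2(\xi\circ\det), \s_0(\chi))$ is nonzero, and then exactly one-dimensional, precisely when $\xi^2 = \chi$, in which case it is spanned by the identity map of the common line. Under the identification of that line with $\C$ for which an element $f$ of the space of $\tau_2(\xi\circ\det)$ is recovered as $f(1_3)$, this spanning map is the functional $\mu_2'$, which is visibly nonzero; this gives the asserted dichotomy.

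There is no genuine obstacle here, which is why the author calls it obvious. The only points deserving a moment's attention are the determinant bookkeeping $\det(t1_2) = t^2$, so that the relevant character of $Z_2'$ is $\xi^2$ rather than $\xi$, and the purely notational identification of the one-dimensional representation space of $\tau_2(\xi\circ\det)$ with $\C$ which makes the expression $f(1_3)$ meaningful.
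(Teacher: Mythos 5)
Your proof is correct and is exactly the paper's approach: the paper simply remarks that both $\tau_2(\xi\circ\det)$ and $\s_0(\chi)$ are one-dimensional and declares the statement obvious, and your character comparison $\xi(\det(t1_2))=\xi^2(t)$ versus $\chi(t)$ is precisely the computation being left implicit.
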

\section{Representations of Whittaker types}\label{sec:Q} 
Let $\G = GSp(4,F)$.
Subgroups of $\G$ will be written in capital boldface.
The center of $\G$ is isomorphic to $F^\t$, and we identify them.
Let $\Qb^\circ \subset \G$ be the subgroup consisting of matrices of the form of 
\begin{eqnarray*}
\begin{bmatrix}
* &* &* &* \\
 & a& b& x\\
 & c& d & y\\
 & & & 1
\end{bmatrix}. \label{eqn:elmQ}
\end{eqnarray*}
The Klingen parabolic subgroup $\Qb$ is generated by $\Qb^\circ$ and $F^\t$.
The Jacobi subgroup of $\Qb$ consists of the above matrices such that $ad -bc = 1$, and its center is 
\begin{eqnarray*}
\Zb^J = \{\zb(x) = \begin{bmatrix}
1 & & &x \\
 & 1& & \\
& & 1& \\
& & & 1
\end{bmatrix} \mid x \in F \}.
\end{eqnarray*}
Let $\mathrm{pr}: \Qb^\circ \to P_3$ be the projection sending the above matrices in $\Qb^\circ$ to 
\begin{eqnarray*}
\begin{bmatrix}
a & b& x \\
 c& d & y \\
 & & 1
\end{bmatrix}.
\end{eqnarray*}
Then, $\mathrm{pr}$ is a homomorphism with $\ker(\mathrm{pr}) = \Zb^J$, and thus $\Qb^\circ /\Zb^J \simeq P_3$.
We will argue about the representations of $P_3$ and $\Qb^\circ$.
In \cite{R-S}, they use the projection sending $q = zq_0$ with $z \in F^\t, q_0 \in \Qb^\circ$ to $\mathrm{pr}(q_0)$, and relate the representations of $P_3$ to those of $\Qb/F^\t$.
By using $\mathrm{pr}$, many of their arguments for the representations of $PGSp(4)$ also work for those of $\G$ having unramified central characters.
The following subgroups of $\Qb^\circ$ correspond to those of $P_3$ in the previous section. 
\begin{eqnarray*}
&& \G_2' = \{g' = \begin{bmatrix}
\det(g) & &  \\
 & g&  \\
& & 1& 
\end{bmatrix} \mid g \in G_2 \},  \N' = \{\nb'(x) = \begin{bmatrix}
1 & & & \\
 & 1&x & \\
& & 1& \\
& & & 1
\end{bmatrix} \mid x \in F \}, \\
&& \N_2 = \{\nb_2(x) = \begin{bmatrix}
1 &-x & & \\
 & 1& & \\
& & 1&x \\
& & & 1
\end{bmatrix}\mid x \in F \}, 
\N_3 = \{\nb_3(x) = \begin{bmatrix}
1 & &x & \\
 & 1& & x\\
& & 1& \\
& & & 1
\end{bmatrix} \mid x \in F \}, \\
&& \N = \N' \N_2 \N_3 \Zb^J.
\end{eqnarray*}
Define $\tilde{\psi} \in \Xs(\N)$ by $\tilde{\psi}(\nb_2(x)\nb'(y)\nb_3(*)\zb(*)) = \psi(x+y)$.
Define $\W_\psi = {\rm Ind}_{\N}^{\G} \tilde{\psi}$, $\Ir^{gn}(\G)$ and $\W_\psi(\pi)$ for $\pi \in \Ir(\G)$, similar to the $G_d$-case. 
Let $W \in \W_\psi$.
Via $\mathrm{pr}$, and the embedding 
\begin{eqnarray}
G_2 \ni h \mapsto h^\natural:= \begin{bmatrix}
w_2{}^th^{-1}w_2 & \\
 & h
\end{bmatrix} \in \G,  \label{eqn:defsharp}
\end{eqnarray}
we define the function on $P_3$ and that on $G_2$ by
\begin{eqnarray}
f_W(p) = W(\mathrm{pr}^{-1} (p)), \ \xi_W(h) = W(h^\natural). \label{eqn:gaugedef}
\end{eqnarray}
They are called {\bf the first and second gauge} of $W$, respectively.
Note that $f_W$ is well-defined since $W$ is left $\Zb^J$-invariant.
For the torus subgroups, we will use the following notations:
\begin{eqnarray*}
\Tb &=& \{\tb(x,y;z) = diag(xz,yz,y^{-1},x^{-1}) \mid x,y,z \in F^\t \},\\
\Tb_1 &=& \{\tb_1(x) = \tb(x,1;1) \mid x \in F^\t \}, \\
\Ab' &=& \{\ab'(z) = \tb(1,1;z) \mid z \in F^\t \}. 
\end{eqnarray*}
In particular, 
\begin{eqnarray*}
\ab_j^i &=& \tf(\v^i,1;\v^j), \ \  \eta = \tb_1(\v^{-1}),  \ \ \ab_j = \ab'(\v^j).
\end{eqnarray*}
The following Weyl elements are important to our arguments.
\begin{eqnarray}
&& \jb'(x) = \begin{bmatrix}
1 & & & \\
 & &-x^{-1} & \\
& x& & \\
& & &1 
\end{bmatrix}, \ \jb''_m = \begin{bmatrix}
 & & & -\v^{-m} \\
 &1 & & \\
& &1 & \\
\v^m & & & 
\end{bmatrix}, \nonumber \\
&& \j_{m} = \begin{bmatrix}
 & & 1& \\
 & & &-1 \\
\v^m& & & \\
& -\v^m & & 
\end{bmatrix}, \label{eqn:weylelms}
\end{eqnarray}
where $x \in F^\t, m \in \Z$.
For an admissible $(\pi, V) \in \Alg(\G)$, let $V(\Zb^J)$ denote the $\C$-subspace spanned by $v -\pi(z) v, v \in V, z \in \Zb^J$.
Let $V_{\Zb^J} = V/ V(\Zb^J)$. 
Via the isomorphism $\Qb^\circ/\Zb^J \simeq P_3$, we may regard $V_{\Zb^J}$ as a smooth $P_3$-module. 
We denote also by $\mathrm{pr}$ the projection $V \to V_{\Zb^J}$.
Following to \cite{J-PS-S2}, we refer to an admissible $\pi \in \Alg(\G)$ of finite length such that $\dim_\C \Hom_{\N}(\pi, \wt{\psi}) = 1$, as a representation {\bf of Whittaker type}.
By the proof of Lemma 2.5.2, Theorem 2.5.3 of \cite{R-S}, 
\begin{thm}[\cite{R-S}]\label{thm: RSfilt}
With notations as above, if $\pi$ is of Whittaker type, then the $P_3$-module $V_{\Zb^J}$ has a finite Jordan-H\"older sequence of smooth $P_3$-modules $
0 \subset V_0 \subset V_1 \subset \cdots \subset V_n = V_{\Zb^J}$ such that $
V_0 \simeq \tau_0$ and, for some $I \le n-1$, 
\begin{eqnarray*}
V_{i+1}/V_i \simeq 
\begin{cases}
\tau_1(\xi_i), \ \xi_i \in \Xs(F^\t)& \mbox{($i \le I$),} \\
\tau_2(\rho_i), \ \ \rho_i \in \Ir(G_2) & \mbox{($i >I$).}
\end{cases}
\end{eqnarray*}
We have $V_{\Zb^J} = V_0$, if and only if $\pi$ is supercuspidal.
\end{thm}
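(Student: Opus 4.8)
The plan is to port over the $P_3$-theory of Roberts and Schmidt (\cite{R-S}, \S 2.5) essentially verbatim, the only change being that their projection $q = zq_0 \mapsto \mathrm{pr}(q_0)$ ($z\in F^\t$, $q_0\in\Qb^\circ$) is replaced here by $\mathrm{pr}:\Qb^\circ\to P_3$ itself, whose kernel is already $\Zb^J$. This is harmless: $\Qb^\circ\cap F^\t=\{1\}$, so $\pi|_{\Qb^\circ}$ does not involve the central character $\w_\pi$ at all, and $\tilde\psi$ is trivial on $\Zb^J$; hence every step of \cite{R-S} that is used goes through with ramified $\w_\pi$. First I would record, as in \cite[Lemma 2.5.2]{R-S}, that $V_{\Zb^J}$, viewed as a smooth $P_3$-module via $\Qb^\circ/\Zb^J\simeq P_3$, has finite length: smoothness is clear, and finite length follows from the admissibility and finite length of $\pi$ together with the structure of the double coset space $\Qb^\circ\backslash\G$, neither of which is sensitive to ramification.

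Next I would invoke the Bernstein--Zelevinsky structure theory of smooth $P_3$-modules (\cite{B-Z}; cf. \cite[\S 2.5]{R-S}). For the mirabolic tower $P_3\supset P_2\supset P_1$ let $\Psi^\pm,\Phi^\pm$ be the usual functors, and for a finite length $\tau\in\Alg(P_3)$ let $\tau^{(k)}=\Psi^-(\Phi^-)^{k-1}\tau$ be its $k$-th derivative, a finite length module over $G_{3-k}$ (with $G_0=\{1\}$, $G_1=F^\t$, $G_2=GL(2,F)$). There is a canonical filtration $0\subseteq\tau_{[3]}\subseteq\tau_{[2]}\subseteq\tau_{[1]}=\tau$ with
\begin{eqnarray*}
&& \tau_{[3]} \simeq (\Phi^+)^2\Psi^+\tau^{(3)}, \quad \tau_{[2]}/\tau_{[3]} \simeq \Phi^+\Psi^+\tau^{(2)}, \quad \tau/\tau_{[2]} \simeq \Psi^+\tau^{(1)}.
\end{eqnarray*}
Since $(\Phi^+)^2\Psi^+$ applied to a finite dimensional space $U$ gives $\tau_0\ot U$, since $\Phi^+\Psi^+\xi\simeq\tau_1(\xi)$ for $\xi\in\Xs(F^\t)$, and since $\Psi^+\rho\simeq\rho'=\tau_2(\rho)$ for $\rho\in\Ir(G_2)$ (all up to harmless normalizing twists), refining each of the three layers to a Jordan--H\"older series of $P_3$-modules yields a chain $0\subset V_0\subset V_1\subset\cdots\subset V_n=V_{\Zb^J}$ whose successive quotients are: $\tau_0$, repeated $\dim\tau^{(3)}$ times at the bottom; then the $\tau_1(\xi_i)$ with $\xi_i$ running over the constituents of the $G_1$-module $\tau^{(2)}$; then the $\tau_2(\rho_i)$ with $\rho_i$ running over the constituents of the $G_2$-module $\tau^{(1)}$. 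This is exactly the asserted shape, with $I+1$ equal to the length of $\tau_{[2]}/\tau_{[3]}$, provided $\dim\tau^{(3)}=1$.

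To see $\dim\tau^{(3)}=1$: by construction $\tau^{(3)}$ is the space of coinvariants of $V_{\Zb^J}$ for the maximal unipotent subgroup of $P_3$ against the standard generic character. Since $\N=\N'\N_2\N_3\Zb^J$ surjects onto that maximal unipotent with kernel $\Zb^J$ and $\tilde\psi|_{\Zb^J}=1$, this is naturally identified with the $(\N,\tilde\psi)$-coinvariant space of $V$; being finite dimensional with dual $\Hom_\N(\pi,\tilde\psi)$, it is one-dimensional because $\pi$ is of Whittaker type. Hence $\tau_{[3]}\simeq\tau_0$, and we put $V_0=\tau_{[3]}$. Finally, $V_{\Zb^J}=V_0$ if and only if $\tau^{(1)}=\tau^{(2)}=0$: here $\tau^{(1)}=\Psi^- V_{\Zb^J}$ is, up to normalization, the Jacquet module of $\pi$ along the Klingen parabolic $\Qb$ regarded as a $GL(2,F)$-module (indeed the unipotent radical $\N_2\N_3\Zb^J$ of $\Qb$, modulo $\Zb^J$, is exactly the abelian subgroup of $P_3$ defining $\Psi^-$), and $\tau^{(2)}$ is, up to a character twist, a further Jacquet module of $\pi$ attached to a proper parabolic subgroup; as in \cite[Theorem 2.5.3]{R-S}, the simultaneous vanishing of these is equivalent to the vanishing of all proper Jacquet modules of $\pi$, i.e. to $\pi$ being supercuspidal.

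The work is therefore not conceptual but administrative, and I expect the only delicate point to be the bookkeeping: one must check that Roberts and Schmidt's geometric-lemma computations, and their identification of the derivatives of $V_{\Zb^J}$ with Jacquet modules of $\pi$, survive the substitution of $\mathrm{pr}$ for their projection. This dissolves once one notes that $\Qb^\circ/\Zb^J\simeq P_3$ canonically and $\tilde\psi|_{\Zb^J}=1$, so the passage through $\mathrm{pr}$ adds nothing that the $PGSp(4)$ computation does not already cover.
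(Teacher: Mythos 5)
Your proposal is correct and takes essentially the same route as the paper, which simply invokes the proof of Lemma 2.5.2 and Theorem 2.5.3 of Roberts--Schmidt after remarking that $\mathrm{pr}:\Qb^\circ\to P_3$ (with kernel $\Zb^J$, on which $\tilde\psi$ is trivial) replaces their projection from $\Qb/F^\t$, so the argument is insensitive to the ramification of $\w_\pi$. The Bernstein--Zelevinsky derivative filtration of $P_3$-modules that you spell out, together with $\dim\tau^{(3)}=1$ from the Whittaker-type hypothesis and the Jacquet-module criterion for supercuspidality, is precisely the content of the cited Roberts--Schmidt results, so your account fills in what the paper leaves implicit rather than departing from it.
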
 
\begin{prop}\label{prop:ondim}
Fix $\psi$ and $b \in F^\t$.
Let $(\pi,V)$ be of Whittaker type.
Except for finitely many $\chi \in \Xs(F^\t)$, the space of functionals $\mu: \pi \to \C$ such that 
\begin{eqnarray*}
\mu(\pi\l(\tb_1(t) \nb'(x) \nb_3(*)\zb(*)\r)v) = \psi(b x)\chi(t) \mu(v), 
\end{eqnarray*}
and the space of functionals $\lambda: \pi \to \C$ such that 
\begin{eqnarray*}
\lambda(\pi\l(\ab'(t)\bnb'(*) \nb_2(x)\zb(*)\r)v) = \psi(b x)\chi(t) \lambda(v)
\end{eqnarray*}
are both one-dimensional.
\end{prop}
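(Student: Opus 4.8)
The plan is to transfer the problem into the $P_3$-representation theory of Section~\ref{sec:P3} and then combine the Jordan--H\"older filtration of Theorem~\ref{thm: RSfilt} with the one-dimensionality computations of Propositions~\ref{prop: onedimP} and~\ref{prop: onedimP2}. First I would note that both $\mu$ and $\lambda$ are trivial on $\pi(\Zb^J)$, hence factor through the smooth $P_3$-module $V_{\Zb^J}=V/V(\Zb^J)$ via $\mathrm{pr}$. Under the isomorphism $\Qb^\circ/\Zb^J\simeq P_3$ the elements $\tb_1(t),\nb'(x),\nb_3(x)$ map to $z_2'(t),n'(x),n_3(x)$ and $\ab'(z),\bnb'(x),\nb_2(x)$ map to $a(z),\bar{n}'(x),n_2(x)$, so the first triple generates the copy of $MZ_2'$ and the second the copy of $AM^\flat$. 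Writing $\tb_1(t)$ as a scalar matrix times an element of $\Qb^\circ$ (which replaces $\chi$ by a fixed twist $\chi'$ of $\chi$ by $\w_\pi$ on the $Z_2'$-factor), while $\ab'(z)$ already lies in $\Qb^\circ$ and needs no twist, this identifies the space of $\mu$'s with $\Hom_{MZ_2'}(V_{\Zb^J},\eta_b(\chi'))$ and the space of $\lambda$'s with $\Hom_{AM^\flat}(V_{\Zb^J},\s_b(\chi))$.

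Next, for the upper bound, I would invoke Theorem~\ref{thm: RSfilt}: $V_{\Zb^J}$ carries a finite filtration $0\subset V_0\subset\cdots\subset V_n=V_{\Zb^J}$ with $V_0\simeq\tau_0$ and successive quotients $\tau_1(\xi_i)$ $(i\le I)$ and $\tau_2(\rho_i)$ $(i>I)$, only finitely many of each occurring. By left-exactness of $\Hom$, $\dim\Hom_{MZ_2'}(V_{\Zb^J},\eta_b(\chi'))$ is bounded by $\dim\Hom_{MZ_2'}(\tau_0,\eta_b(\chi'))$ plus the sum over $i$ of $\dim\Hom_{MZ_2'}(V_{i+1}/V_i,\eta_b(\chi'))$. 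Proposition~\ref{prop: onedimP} evaluates these: $\tau_0$ contributes $1$, every $\tau_1(\xi_i)$ contributes $0$, and $\tau_2(\rho_i)$ contributes $1$ exactly when $\rho_i\in\Ir^{gn}(G_2)$ and $\chi'=\w_{\rho_i}$. Hence, as soon as $\chi$ avoids the finite set $\{\,\w_\pi^{-1}\w_{\rho_i}\,:\,\rho_i\ \text{generic}\,\}$, we get $\dim\le 1$. The $\lambda$-case is symmetric using Proposition~\ref{prop: onedimP2}: $\tau_0$ contributes $1$, every $\tau_2(\rho_i)$ contributes $0$, and $\tau_1(\xi_i)$ contributes $1$ only when $\xi_i=\nu_1\chi$, so $\dim\le 1$ once $\chi$ avoids the finite set $\{\,\nu_1^{-1}\xi_i\,\}$.

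Finally, I must produce, for these generic $\chi$, a \emph{nonzero} such functional, and this is the delicate point. Since $V_0\simeq\tau_0$ is a \emph{sub}module, there is a long exact sequence beginning $0\to\Hom_{MZ_2'}(V_{\Zb^J}/V_0,\eta_b(\chi'))\to\Hom_{MZ_2'}(V_{\Zb^J},\eta_b(\chi'))\to\Hom_{MZ_2'}(\tau_0,\eta_b(\chi'))\xrightarrow{\ \delta\ }\mathrm{Ext}^1_{MZ_2'}(V_{\Zb^J}/V_0,\eta_b(\chi'))$, and since $\Hom_{MZ_2'}(\tau_0,\eta_b(\chi'))=\C\mu_{\chi'}^b\neq 0$ it suffices to show $\delta=0$, i.e.\ that $\mu_{\chi'}^b$ extends to $V_{\Zb^J}$. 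Filtering $V_{\Zb^J}/V_0$ reduces this to the vanishing of $\mathrm{Ext}^1_{MZ_2'}(\tau_1(\xi_i),\eta_b(\chi'))$ and, for the generic $\chi$ above, of $\mathrm{Ext}^1_{MZ_2'}(\tau_2(\rho_i),\eta_b(\chi'))$, which I would establish by the same Bruhat-type analysis of quasi-invariant distributions that underlies Propositions~\ref{prop: onedimP} and~\ref{prop: onedimP2}: once $\eta_b(\chi')$ occurs in no subquotient of the restriction, there is a fortiori no extension by it. Alternatively, and perhaps more transparently, one can write the functional down directly --- the first gauge $W\mapsto f_W$ (resp.\ the second gauge $W\mapsto\xi_W$) is $P_3$-equivariant into an induced model, and composing it with the integral formula for $\mu_\chi^b$ (resp.\ $\lambda_\chi^b$) yields a functional that converges and is nonvanishing for all but finitely many $\chi$ by the finite-length asymptotics of Whittaker functions. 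I expect this existence/non-vanishing step, rather than the dimension bound, to be the main obstacle.
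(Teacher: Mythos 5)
Your proposal follows essentially the same route as the paper: pass to the $P_3$-module $V_{\Zb^J}$ via $\mathrm{pr}$ (with the $\w_\pi$-twist on the $Z_2'$-factor that you correctly note), apply the filtration of Theorem \ref{thm: RSfilt}, and invoke Propositions \ref{prop: onedimP} and \ref{prop: onedimP2} to see that only $\tau_0$ contributes for all but finitely many $\chi$. The existence step you flag as the main obstacle is exactly what the paper packages into the right-exactness of the sequence (\ref{eqn:rses}) --- i.e.\ the generic-in-$\chi$ vanishing of the relevant $\mathrm{Ext}^1$ over $Z_2'\simeq F^\t$ after taking the (exact) twisted $M$-coinvariants --- so your Ext-vanishing sketch is the intended resolution rather than a detour.
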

\begin{proof}
Note that $\mathrm{pr}(\Tb_1\N'\N_3\Zb^J) = MZ_2' \subset P_3$, and the character $\tb_1(t) \nb'(x) \nb_3(*)\zb(*) \mapsto \psi(b x)\chi(t)$ corresponds to $\eta_b(\chi)$ defined in previous section.
By Theorem \ref{thm: RSfilt}, $V_{i+1}/V_i \simeq \tau_{j}(\s_i)$ for some $\s_i \in \Ir(G_j)$ for $j \in \{1,2\}$.
Therefore, the following sequence is exact: 
\begin{eqnarray}
\Hom_{MZ_2' }(\tau_{j}(\s_i), \eta_b(\chi)) \to \Hom_{MZ_2' }(V_{i+1},\eta_b(\chi)) \to \Hom_{MZ_2' }(V_i,\eta_b(\chi)) \to 0. \label{eqn:rses}
\end{eqnarray}
By Proposition \ref{prop: onedimP}, $\dim_\C \Hom_{MZ_2' }(V_0,\eta_b(\chi))= 1$, and $\Hom_{MZ_2' }(\tau_{j}(\s_i),\eta_b(\chi)) = \{0\}$ for all $i$ except for finitely many $\chi \in \Xs(F^\t)$.
By (\ref{eqn:rses}) and induction, $\dim_\C \Hom_{MZ_2' }(V_{1},\eta_b(\chi)) = \cdots = \dim_\C \Hom_{MZ_2' }(V_n,\eta_b(\chi)) = \dim_\C \Hom_{MZ_2' }(V_{\Zb^J},\eta_b(\chi)) = 1$ except for finitely many $\chi \in \Xs(F^\t)$.
This proves the assertion for the space of $\mu$.
For $\lambda$, use Proposition \ref{prop: onedimP2}.
\end{proof}
For $\pi \in \Ir(\G)$, let $\pi^\imath = \pi \ot (\w_\pi^{-1}\circ \mu)$, which is equivalent to $\pi^\vee$ by Proposition 2.3 of \cite{T}, where $\mu$ indicates the similitude factor. 
For $W \in \W_\psi$, define $W^\imath \in \W_\psi$ by
\begin{eqnarray*}
W^\imath(g) = \w_\pi(\mu(g))^{-1} W(g),
\end{eqnarray*}
and the zeta integrals:  
\begin{eqnarray*}
&&\Xi(s,W) = \int_{F^{\t}} W(\ab'(t)) \nu_{s-\frac{3}{2}}(t) d^\t t, \\
&&Z(s,W) = \Xi(s, \int_{\bar{\N}'} \pi(\nb) W d \nb),
\end{eqnarray*}
where $d^\t t$ and $d \nb$ is chosen so that $\vol(\o^\t) = 1$ and $\vol(\bar{\N}'(\o)) = 1$ respectively.
Now, let $\pi \in \Ir^{gn}(\G)$.
Fix $\psi$.
For $W \in \W_\psi(\pi)$, $Z(s,W)$ converges absolutely to an element in $\C(q^{-s})$ if $s \in \C$ lies in some right half complex plane, and the $\C$-vector subspace $I(\pi) \subset \C(q^{-s})$ spanned by all $Z(s,W)$ is a fractional ideal of the principal ideal domain $\C[q^{\pm s}] := \C[q^s,q^{-s}]$.
Therefore, $I(\pi)$ admits a generator of the form $P(q^{-s})^{-1}$ with $P(X) \in \C[X]$ such that $P(0) = 1$.
Set $L(s,\pi) = P(q^{-s})^{-1}$.
From Proposition \ref{prop:ondim}, we obtain the following functional equation by the standard argument (c.f. \cite{R-S}, \cite{J-PS-S2}).
We omit the proof.
\begin{thm}\label{thm:fesp}
Let $\pi \in \Ir^{gn}(\G)$.
There exists a monomial $\ep(s,\pi,\psi)$ in $q^{-s}$ such that
\begin{eqnarray*}
\frac{Z(1-s, \pi^\imath(\j_0) W^\imath)}{L(1-s,\pi^\imath)} = \ep(s,\pi,\psi) \frac{Z(s,W)}{L(s,\pi)}\label{eqn: locfe}
\end{eqnarray*}
for any $W \in \W_\psi(\pi)$.
It holds that $\ep(s,\pi,\psi)\ep(1-s,\pi^\imath,\psi) =1$.
\end{thm}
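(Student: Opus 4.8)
\emph{Strategy.} I would prove Theorem~\ref{thm:fesp} by the classical route of \cite{J-PS-S2} (as in \S2.6 of \cite{R-S}), built on the $P_3$-theory of \S\ref{sec:P3}: extract the functional equation from the one-dimensionality of a space of equivariant functionals (Proposition~\ref{prop:ondim}), which forces the two sides, viewed as functionals of $W$, to be proportional; then upgrade the proportionality constant to a monomial by a fractional-ideal argument; finally obtain the reciprocity by iterating. Fix $\psi$ and $\pi \in \Ir^{gn}(\G)$, and argue first for $s$ in a right half-plane where all the integrals converge absolutely. Since the character defining $\pi^\imath = \pi \otimes (\w_\pi^{-1}\circ\mu)$ is trivial on $\N$, the map $W \mapsto W^\imath$ is a linear bijection $\W_\psi(\pi) \to \W_\psi(\pi^\imath)$, so both $\ell_s(W) := Z(s,W)$ and $\ell_s'(W) := Z(1-s, \pi^\imath(\j_0)W^\imath)$ are well-defined functionals on $\W_\psi(\pi)$.

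\emph{Equivariance step.} The heart of the matter is to show that $\ell_s$ and $\ell_s'$ transform under the subgroup $\Ab'\bar{\N}'\N_2\Zb^J$ by one and the same character, $\ab'(t)\bnb'(*)\nb_2(x)\zb(*) \mapsto \psi(x)\,\nu_{\frac12-s}(t)$; via $\mathrm{pr}$ and the gauge (\ref{eqn:gaugedef}) this is precisely the situation of Proposition~\ref{prop:ondim} with $b = 1$ and $\chi = \chi_s := \nu_{\frac12-s}$. For $\ell_s$ this is a direct unwinding of the definitions: integrating over all of $\bar{\N}'$ makes $\ell_s$ invariant under $\bar{\N}'$ and (using the left $\tilde\psi$-equivariance of $W$ and $\tilde\psi|_{\Zb^J} = 1$) under $\Zb^J$; the $\N_2$-variable contributes the $\psi(x)$; and the substitutions $t \mapsto tz$, $n \mapsto zn$ in $\Xi(s,\cdot)$ produce the torus weight $\nu_{\frac32-s}(z)\,|z|^{-1} = \nu_{\frac12-s}(z)$. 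For $\ell_s'$ one uses the conjugation identities for $\j_0$ — a direct $4\times 4$ matrix check gives $\j_0^2 = 1_4$, $\mu(\j_0) = 1$, $\Int(\j_0)\nb_2(x) = \nb_2(x)$, $\Int(\j_0)\zb(x) = \bnb'(-x)$ (hence $\Int(\j_0)\bnb'(x) = \zb(-x)$) and $\Int(\j_0)\ab'(z) = (z1_4)\,\ab'(z^{-1})$ — together with the twisting rule $(\pi(g)W)^\imath = \w_\pi(\mu(g))\,\pi^\imath(g)W^\imath$ and $\w_{\pi^\imath} = \w_\pi^{-1}$: the factors $\w_\pi(\mu(\cdot))$ and $\w_{\pi^\imath}$ cancel, the swap $\bar{\N}' \leftrightarrow \Zb^J$ preserves the relevant invariances, and the torus weight comes out $\nu_{\frac12-(1-s)}(z^{-1}) = \nu_{\frac12-s}(z)$, the same as for $\ell_s$.

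\emph{Monomial and reciprocity.} By Proposition~\ref{prop:ondim}, for all but finitely many values of $q^{-s}$ (in particular $\chi_s = \nu_{\frac12-s}$ is then non-exceptional) the space of functionals on $\W_\psi(\pi)$ with this equivariance is one-dimensional, and $\ell_s \neq 0$ since the fractional ideal $I(\pi)$ is nonzero; hence there is a scalar $\gamma(s,\pi,\psi)$, lying in $\C(q^{-s})$ after meromorphic continuation, with $Z(1-s,\pi^\imath(\j_0)W^\imath) = \gamma(s,\pi,\psi)\,Z(s,W)$ for all $W$. As $W$ runs over $\W_\psi(\pi)$, the left side fills out $L(1-s,\pi^\imath)\,\C[q^{\pm s}]$ (the image of $I(\pi^\imath)$ under $s' \mapsto 1-s'$) and the right side fills out $L(s,\pi)\,\C[q^{\pm s}] = I(\pi)$; equating these fractional ideals of the principal ideal domain $\C[q^{\pm s}]$ forces $\gamma(s,\pi,\psi)\,L(s,\pi)/L(1-s,\pi^\imath)$ to be a unit, i.e.\ a nonzero constant times an integral power of $q^{-s}$. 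Setting $\ep(s,\pi,\psi) = \gamma(s,\pi,\psi)\,L(s,\pi)/L(1-s,\pi^\imath)$ gives the asserted monomial and functional equation. For the reciprocity, apply this functional equation with $(\pi,s,W)$ replaced by $(\pi^\imath, 1-s, \pi^\imath(\j_0)W^\imath)$; using $(\pi^\imath)^\imath = \pi$, that $\imath$ is an involution on Whittaker functions, $\j_0^2 = 1_4$ and $\mu(\j_0) = 1$ (so that $\pi(\j_0)\bigl(\pi^\imath(\j_0)W^\imath\bigr)^\imath = W$), the two applications compose to multiplication by $\w_\pi(\mu(\j_0))^{-1} = 1$, which gives $\ep(s,\pi,\psi)\,\ep(1-s,\pi^\imath,\psi) = 1$.

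\emph{Main difficulty.} Everything formal — proportionality, the fractional-ideal argument, the reciprocity — is routine once the equivariance step is in hand, and that step is the real obstacle: one must verify the precise transformation laws, in particular that $\ell_s'$ lies in exactly the \emph{same} one-dimensional functional space as $\ell_s$ (same $\chi_s$ and $b$), which demands the correct $\j_0$-conjugation identities, careful bookkeeping of the similitude factor and of the $\bar{\N}' \leftrightarrow \Zb^J$ swap, the correct normalization of the torus weights $\nu_{\frac12-s}$, and enough control of convergence to justify the interchanges of integration and the fractional-ideal statement in the region of absolute convergence before extending by rationality in $q^{-s}$.
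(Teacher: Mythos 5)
Your proposal is correct and follows exactly the route the paper indicates: the paper states after Proposition~\ref{prop:ondim} that the functional equation is obtained ``by the standard argument (c.f.\ \cite{R-S}, \cite{J-PS-S2})'' and omits the proof, and your write-up carries out precisely that standard argument — one-dimensionality of the equivariant-functional space forces proportionality, the fractional-ideal comparison upgrades the scalar to a monomial, and iterating the equation gives the reciprocity. The $\j_0$-conjugation identities and the bookkeeping of the similitude twist and central character that you flag as the real content all check out.
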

For $\psi$ with conductor $\o$, define the analytic root number $\ep_\pi'$ and conductor $n_\pi' \in \Z$ by 
\begin{eqnarray}
\ep(s,\pi,\psi) = \ep_\pi' q^{-n_\pi'(s-\frac{1}{2})}. \label{eqn:defepGSp4}
\end{eqnarray}
\section{Quasi-paramodular forms}\label{sec:QP}
In this and next section, we fix a $\psi$ with conductor $\o$.
Let $\pi \in \Ir^{gn}(\G)$.
Let $\w_\pi$ be the central character of $\pi$, and $\e = \c(w_\pi)$.
For $m \ge 2\e$, define the quasi-paramodular groups $\Kb(m;\e), \Kb_1(m;\e)$ as in introduction.
Define $\Kb^c(m;\e) = \Int(\j_m) \Kb(m;\e)$ and $\Kb_1^c(m;\e) = \{k \in \Kb^c(m;\e) \mid k_{33} \in 1+ \p^{\e}\}$.
Explicitly, $\Kb^c(m;\e)$ consists of $k \in \G$ such that $\det(k) \in \o^\t$ and  
\begin{eqnarray*}
k \in \begin{bmatrix}
\o & \o & \p^{-\e} & \p^{-m} \\
\p^l & \o & \o &\p^{-\e} \\
\p^m & \p^\e & \o & \o \\
\p^{m} & \p^m & \p^l & \o 
\end{bmatrix}.
\end{eqnarray*}
In case of $\e = 0$, these open compact subgroups coincide with the paramodular group $\Kb(m)$, as well as the quasi-paramodular groups.
Let $V(m)$ denote the space of quasi-paramodular forms of level $m$ in $\W_\psi(\pi)$.
For each $W \in V(m)$, define the conjugate $W^c$ by
\begin{eqnarray*}
W^c = \pi^\imath(\j_m)W^\imath\in \W_\psi(\pi^\imath).
\end{eqnarray*}  
Observe that $\pi^\imath(k)W^c = \w_\pi(k_{33})^{-1}W^c$ for $k \in \Kb^c(m;\e)$.
The image of $V(m)$ by $c$ is denoted by $V^c(m)$.
In case of $\e = 0$, $V(m) = V^c(m)$, and we have a decomposition
\begin{eqnarray}
V(m) = V(m)_+ \op V(m)_-, \label{eqn:ALdec}
\end{eqnarray}
where $V(m)_\pm = \{W \in V(m) \mid \pi(\j_m)W = \pm W \}$.
In case of $\e > 0$, $V(m) \neq V^c(m)$, and we call $\Kb_1^c(m;\e)$-invariant Whittaker functions in $\W_\psi$ {\bf coquasi-paramodular forms} of level $m$. 
We call $\Kb_1(m;\e)$-invariant Whittaker functions quasi-paramodular forms of level $m$ including the case of $\e = 0$. 
But, whenever we call $\Kb_1^c(m;\e)$-invariant Whittaker functions coquasi-paramodular forms, we assume $\e > 0$.  
The proof for the existence of nontrivial quasi-paramodular forms (and thus that of coquasi-ones) for the case of $\e > 0$ is easier than that by \cite{R-S} for the case of $\e = 0$.
As in Theorem 4.4.1 of loc. cit., one can show that there is a quasi-$\mathrm{Kl}(\p^n)$-invariant $W \in \W_\psi(\pi)$ such that $W(1) \neq 0$, for a sufficiently large $n$. 
Obviously 
\begin{eqnarray*}
\int_{\Kb(n;\e)/\mathrm{Kl}(\p^n)}\w_\pi(k_{44})^{-1} \pi(k)W dk \in V(n)
\end{eqnarray*}
is not zero at $1$.
Quasi- and coquasi-paramodular forms have the following fine property.
\begin{prop}\label{prop:zeta-simp}
Let $W \in \W_\psi$.
If $W$ is $\N_3(\p^{-r})$-invariant, then 
\begin{eqnarray*}
Z(s,W) = \Xi\l(s, \int_{\bar{\N}'(\p^{r})} \pi(\nb)W d \nb \r).
\end{eqnarray*}
In particular, $Z(s,W) = q^{-r} \Xi(s,W)$, if $W$ is $\bar{\N}'(\p^{r})$-invariant additionally.
\end{prop}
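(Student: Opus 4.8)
The plan is to unwind the definition of $Z(s,W)$ and reduce everything to a single pointwise vanishing statement. Writing $\bnb'(x)$ for the transpose of $\nb'(x)$, so that $\bar{\N}'=\{\bnb'(x)\mid x\in F\}$, we have by definition
\[
Z(s,W)=\int_{F^{\t}}\Bigl(\int_{F}W(\ab'(t)\bnb'(x))\,dx\Bigr)\nu_{s-\frac{3}{2}}(t)\,d^{\t}t ,
\]
with $dx$ normalized by $\vol(\o)=1$. So it suffices to prove that $W(\ab'(t)\bnb'(x))=0$ for every $t\in F^{\t}$ whenever $x\notin\p^{r}$ (i.e. $\ro(x)<r$): the inner integral then collapses to $\int_{\p^{r}}$, which is exactly $\bigl(\int_{\bar{\N}'(\p^{r})}\pi(\nb)W\,d\nb\bigr)(\ab'(t))$, and the first assertion follows on integrating against $\nu_{s-\frac{3}{2}}(t)\,d^{\t}t$.

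For the pointwise vanishing I would appeal to Lemma \ref{lem:vanishlem}, using that $W$ is left $\N$-equivariant by $\tilde{\psi}$ and right $\N_{3}(\p^{-r})$-invariant. The only computation required is the matrix identity
\[
\bnb'(x)\,\nb_{3}(y)\,\bnb'(-x)=\nb_{2}(xy)\,\nb_{3}(y)\,\zb(xy^{2})\in\N ,
\]
checked directly in $GSp(4)$, an element whose $\tilde{\psi}$-value is $\psi(xy)$ (only the $\nb_{2}$- and $\nb'$-coordinates enter $\tilde{\psi}$, and here the latter is $0$). Since $\ab'(t)$ lies in the diagonal torus and hence normalizes $\N$, conjugation by it merely rescales the $\nb_{3}$- and $\zb$-coordinates while fixing the $\nb_{2}$-coordinate; so, setting $g=\ab'(t)\bnb'(x)$, for each $y\in\p^{-r}$ we obtain
\[
h:=g\,\nb_{3}(y)\,g^{-1}=\nb_{2}(xy)\,\nb_{3}(ty)\,\zb(txy^{2})\in\N ,\qquad \tilde{\psi}(h)=\psi(xy),
\]
while $\Int^{-1}(g)h=\nb_{3}(y)\in\N_{3}(\p^{-r})$. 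When $x\notin\p^{r}$ the set $x\p^{-r}$ contains $\p^{-1}$, so, $\psi$ having conductor $\o$, I may pick $y\in\p^{-r}$ with $\psi(xy)\neq1$; then Lemma \ref{lem:vanishlem} (with $H=\N$, $\xi=\tilde{\psi}$, $K=\N_{3}(\p^{-r})$ and $\chi$ trivial) forces $W(g)=0$.

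For the ``in particular'' part, the extra $\bar{\N}'(\p^{r})$-invariance gives $W(\ab'(t)\bnb'(x))=W(\ab'(t))$ for all $x\in\p^{r}$, whence $\int_{\p^{r}}W(\ab'(t)\bnb'(x))\,dx=\vol(\p^{r})W(\ab'(t))=q^{-r}W(\ab'(t))$, i.e. $Z(s,W)=q^{-r}\Xi(s,W)$. I do not anticipate a genuine obstacle here: once the matrix identity is in hand the argument is a one-line invocation of Lemma \ref{lem:vanishlem}. The only real care needed is the bookkeeping of the $GSp(4)$-conventions — verifying that $\bnb'(x)\nb_{3}(y)\bnb'(-x)$ lands in $\N$ and reading off its $\tilde{\psi}$-value correctly — together with the pleasant observation (worth recording) that this shows the $\bar{\N}'$-integral in $Z(s,W)$ is actually over the compact set $\bar{\N}'(\p^{r})$ as soon as $W$ has such a right invariance, so no convergence question arises for quasi- or coquasi-paramodular forms.
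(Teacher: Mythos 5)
Your proof is correct and follows essentially the same route as the paper: both reduce the statement to the pointwise vanishing $W(\ab'(t)\bnb'(x))=0$ for $x\notin\p^r$ and obtain it from Lemma \ref{lem:vanishlem} together with the right $\N_3(\p^{-r})$-invariance. The only (harmless) difference is mechanical: the paper passes to the first gauge on $P_3$ and invokes the argument of Proposition \ref{prop:simp-zetagl} (via the identity (\ref{eqn:usid})), whereas you verify the conjugation identity $\bnb'(x)\nb_3(y)\bnb'(-x)=\nb_2(xy)\nb_3(y)\zb(xy^2)$ directly in $GSp(4)$, which is a clean and correct substitute.
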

\begin{proof}
Let $f = f_W$ be the first gauge of $W$ (c.f. (\ref{eqn:gaugedef})).
Then, $f$ is right $ N_3(\p^{-r})$-invaraint, and 
\begin{eqnarray*}
Z(s,W) = \int_{F^\t} \int_{F} f(a(t)\bar{n}'(x)) \nu_{s-3/2}(t)dx d^\t t.
\end{eqnarray*}
For $x \not\in \p^r$, $f(a(t)\bar{n}'(x)) = 0$ is verified similar to Proposition \ref{prop:simp-zetagl}.
Hence the assertion.
\end{proof}
We call $\N_3(\p^{-r})$ and $\bar{\N}'(\p^r)$-invariant Whittaker functions {\bf $r$-balanced}.
If $W$ is quasi-paramodular, then
\begin{eqnarray}
Z(s,W) = \Xi(s,W), \ \ \ Z(s,W^c) = q^{-\e} \Xi(s,W^c). \label{eqn:ZXi}
\end{eqnarray}
Additionally if $\pi \in \Ir^{gn}(\G)$ and $W \in \W_\psi(\pi)$, the functional equation is simplified to 
\begin{eqnarray}
q^{-\e}\frac{\Xi(1-s,W^c)}{L(1-s,\pi^\imath)} =\ep'_\pi q^{(m -n_\pi')(s-1/2)}\frac{\Xi(s,W)}{L(s,\pi)}.\label{eqn:FEstd}
\end{eqnarray}
In case of $\e > 0$, we will show that other balanced forms are obtained from quasi-paramodular forms and coquasi-ones of level $m$ by the linear operators $\Gamma_r, \Gamma_{r}^\imath$ defined by  
\begin{eqnarray}
\Gamma_r: W \mapsto \int_{\v^r \Cs_{\e}} \pi(\bar{n}(z))W d z, \ \ \Gamma_{r}^\imath: W \mapsto \int_{\v^{r}\Bs_{l}} \pi(\bar{n}(z))W dz, \label{eqn: Gamma}
\end{eqnarray}
where $l = m-\e$, and  
\begin{eqnarray}
\Cs_a = \{\begin{bmatrix}
x & \\
y & x
\end{bmatrix} \mid x \in \o, y \in \p^{a} \}, 
\Bs_{a} = \{\begin{bmatrix}
x & y \\
 & x
\end{bmatrix} \mid x \in \o, y \in \p^{-a} \},  a \in \Z. \label{eqn:CrBrl}
\end{eqnarray}
\begin{lem}\label{lem:Glop}
With notations as above, 
\begin{enumerate}[i)]
\item If $W$ is quasi-paramodular of level $m$, then $\Gamma_{r}(W)$ is $0$-balanced, and $\pi(\j_0)\Gamma_{r}(W)$ is $(-r)$-balanced for $r \ge \max\{m-2\e, \e \}$.
\item If $W$ is coquasi-paramodular of level $m$, then $\Gamma_{r}^\imath(W)$ is $(r-l)$-balanced, and $\pi^\imath(\j_0)\Gamma_{r}^\imath(W)$ is $(-r)$-balanced for any $r \in \Z$.
\end{enumerate}
\end{lem}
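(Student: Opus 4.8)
The plan is to recast both statements as right-invariance properties of the averaged functions and then check them by commutation book-keeping inside $\G$, using the explicit matrix descriptions of $\Kb_1(m;\e)$ and $\Kb_1^c(m;\e)$. Three facts carry the argument. (a) Since $\bar{\N}$ is abelian and $\v^r\Cs_\e$, $\v^r\Bs_l$ are additive subgroups of $M_2(F)$, the sets $\{\bar n(z):z\in\v^r\Cs_\e\}$ and $\{\bar n(z):z\in\v^r\Bs_l\}$ are compact subgroups of $\bar{\N}$, so $\Gamma_r(W)$ and $\Gamma_r^\imath(W)$ are automatically right-invariant under them; in particular $\Gamma_r^\imath(W)$ is already $\bar{\N}'(\p^{r-l})$-invariant, because $\bar{\N}'$ is one of the root subgroups making up $\bar n(\v^r\Bs_l)$ (it is not one of those making up $\bar n(\v^r\Cs_\e)$). (b) Reading off the defining matrices, $\Kb_1(m;\e)$ contains $\N_2(\o)$, $\N_3(\o)$, $\bar{\N}'(\o)$, $\Zb^J(\p^{-l})$ and the diagonal torus with $(4,4)$-entry $\equiv 1\bmod\p^\e$, while $\Kb_1^c(m;\e)$ contains $\N_2(\o)$, $\N_3(\p^{-\e})$, $\bar{\N}'(\p^\e)$ and $\Zb^J(\p^{-m})$; hence a quasi- (resp. coquasi-) paramodular $W$ is right-fixed by the relevant small unipotents. (c) The Weyl element $\j_0$ is an involution that conjugates $\bar{\N}$ onto the opposite (Siegel upper) unipotent, interchanging the root directions $\bar{\N}'$ and $\Zb^J$; consequently $\Int(\j_0)(\{\bar n(z):z\in\v^r\Cs_\e\})$ is a compact subgroup of the upper unipotent containing $\N_3(\p^r)$, and invariance of $\pi(\j_0)\Gamma_r(W)$ under $\bar{\N}'(\p^{-r})$ is equivalent to invariance of $\Gamma_r(W)$ under $\Zb^J(\p^{-r})$.

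For part (i): $\Gamma_r(W)$ is $\bar{\N}'(\o)$-invariant because $\bar{\N}'(\o)\subset\Kb_1(m;\e)$ and $\bar{\N}'$ commutes with $\bar{\N}$, so such an element can be slid past $\bar n(z)$ and absorbed into $W$. For $\N_3(\o)$-invariance I would use, for $y\in\o$ and $z\in\v^r\Cs_\e$, the rewriting $\nb_3(y)\bar n(z)=\bar n(z)\,\Int(\bar n(-z))\nb_3(y)$ and expand the conjugate by (\ref{eqn: nion}); a valuation count — here is where $z\in\v^r\Cs_\e$, $y\in\o$ and the hypothesis $m\ge2\e$ are used — shows the conjugate lies in $\Kb_1(m;\e)$ precisely when $r\ge\max\{m-2\e,\e\}$, so the integrand is unchanged; this gives that $\Gamma_r(W)$ is $0$-balanced. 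For $\pi(\j_0)\Gamma_r(W)$: by (c) it is right-invariant under a group containing $\N_3(\p^r)$, which is one half of $(-r)$-balancedness, and the other half is equivalent to $\Zb^J(\p^{-r})$-invariance of $\Gamma_r(W)$. For the latter I would push $\zb(x)$ ($x\in\p^{-r}$) to the right of $\bar n(z)$ ($z\in\v^r\Cs_\e$), using the $SL(2)$-relation between $\zb$ and its opposite root subgroup — which occurs inside $\bar n(\v^r\Cs_\e)$ — together with the commutators of $\zb$ with the other root subgroups of $\bar{\N}$, keep the emerging $\bar{\N}$-factor on the left (it sweeps $\v^r\Cs_\e$ bijectively up to a unit Jacobian), and check that the remaining factors — a torus element $\equiv 1\bmod\p^\e$, unipotent terms in $\N_2(\o)$ and $\bar{\N}'(\o)$, a lower-unipotent term absorbed by the averaging, and a $\Zb^J$-term — all lie in $\Kb_1(m;\e)$.

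Part (ii) runs the same way with $\v^r\Bs_l$ and $\Kb_1^c(m;\e)$ in place of $\v^r\Cs_\e$ and $\Kb_1(m;\e)$: the $\bar{\N}'(\p^{r-l})$-invariance of $\Gamma_r^\imath(W)$ is free by (a), and the $\N_3(\p^{l-r})$-half of $(r-l)$-balancedness together with the $(-r)$-balancedness of $\pi^\imath(\j_0)\Gamma_r^\imath(W)$ come from the same commutator computations, which are even lighter here since $\zb$ commutes with $\bar{\N}'$; the wide congruences built into $\Kb_1^c(m;\e)$ are what let the conclusion hold for every $r$.

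The hard part will be the $\p$-adic book-keeping in the conjugation identities: tracking the valuation of every correction term and confirming it falls inside the relevant paramodular-type subgroup. The most delicate point is the reduction via $\j_0$ of $\bar{\N}'$-invariance to $\Zb^J$-invariance and the ensuing computation in the $SL(2)$ attached to $\zb$, where the conductor exponent $\e=\c(\w_\pi)$ and the level $l=m-\e$ have to balance against $r$; this is precisely what produces the threshold $r\ge\max\{m-2\e,\e\}$ in part (i).
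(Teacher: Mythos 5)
Your strategy is essentially the paper's: translate both balancedness claims, via $\Int(\j_0)$ (which indeed swaps $\N_3\leftrightarrow\bar{\N}_3$ and $\bar{\N}'\leftrightarrow\Zb^J$), into right-invariance of $\Gamma_r(W)$ under $\bar{\N}'(\o)$, $\bar{\N}_3(\p^r)$, $\N_3(\o)$ and $\Zb^J(\p^{-r})$ (and the analogous list for $\Gamma_r^\imath$), note that the first two are automatic from the averaging group, and obtain the last two by conjugating $n(B)$ past $\bar n(C)$ with (\ref{eqn: nion}) and counting valuations — which is exactly where $r\ge\max\{m-2\e,\e\}$ enters. The one real difference is organizational: the paper runs a descending induction on $r$ from the trivial case $r=l$ (resp.\ $r=m$), writing $\Gamma_r=\sum_{C}\pi(\bar n(C))\Gamma_{r+1}$ over $C\in\v^r\Cs_\e/\v^{r+1}\Cs_\e$, so that the quadratic correction $\bar n(CBC)$ is absorbed by the inner average and the linear corrections $1\pm BC$, $1\pm CB$ by an auxiliary subgroup $H^\natural\subset\Kb_1(m;\e)$, whereas you propose a one-shot conjugation. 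For part i) your pure membership check does go through: for $B\in\Bs_r$, $C\in\v^r\Cs_\e$ and $\max\{m-2\e,\e\}\le r\le l$ the entire conjugate $\Int(\bar n(-C))n(B)$ lies in $\Kb_1(m;\e)$. But in part ii) it does not: for $r<\e$ the block $CBC\in\v^{r+1}\Bs_l$ produces entries (e.g.\ $ax^2\in\p^{l+r}$ at the $(3,1)$-spot, needing $\p^m=\p^{l+\e}$) that are not in $\Kb_1^c(m;\e)$, so the $\bar n$-correction must be returned to the left and absorbed by the average rather than by the group. Your plan does contain this mechanism ("a lower-unipotent term absorbed by the averaging", "sweeps $\v^r\Cs_\e$ bijectively"), but the paper's level-by-level induction is the cleanest way to make it rigorous, since the correction then lands in $\bar n(\v^{r+1}\Bs_l)$, under which $\Gamma_{r+1}$ is invariant by construction, and no change-of-variables/Jacobian argument is needed.
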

\begin{proof}
i) 
The assertion for $r = l$ is obvious, since $\Gamma_l W$ is a constant multiple of $W$.
It suffices to show that $\Gamma_r(W)$ is $\bar{\N}'(\o), \bar{\N}_3(\p^r), \N_3(\o)$ and $\Zb^J(\p^{-r})$-invariant.
The $\bar{\N}'(\o), \bar{\N}_3(\p^r)$-invariance property is obvious.
We will show the $\N_3(\o), \Zb^J(\p^{-r})$-invariance property by induction.
We also use identities (\ref{eqn: nion}), and 
\begin{eqnarray}
\Int(A^\natural)\bar{n}(C) = \bar{n}(w_2{}^tAw_2 CA). \label{eqn: Atra}
\end{eqnarray}
Let $H^\natural = \{A^\natural \mid A \in H\}$ be the subgroup of $\Kb_1(m;\e)$, where 
\begin{eqnarray*}
H = \begin{bmatrix}
\o^\t & \o \\
\p^{l} & 1+\p^\e 
\end{bmatrix} \subset G_2.
\end{eqnarray*}
If $A \in H$, then the mapping $C \mapsto w_2{}^tAw_2 CA$ induces a translation in the quotient of modules $\v^r\Cs_{\e}/\v^{r+1}\Cs_{\e}$.
Therefore, if $\Gamma_{r+1} (W)$ is invariant under $H^\natural$, then so is $\Gamma_{r}(W)$.
Therefore $\Gamma_r(W)$ is $H^\natural$-invariant.
Now the $\N_3(\o), \Zb^J(\p^{-r})$-invariance property follows from (\ref{eqn: nion}), induction hypothesis and the calculation
\begin{eqnarray*}
&& BC = \begin{bmatrix}
ax +by& bx\\
ay & ax
\end{bmatrix} \in \begin{bmatrix}
\p^r + \p^\e & \o \\
\p^{r+\e} & \p^r
\end{bmatrix}
\subset 
\begin{bmatrix}
\p^\e & \o \\
\p^l & \o
\end{bmatrix}, \\
&& CBC = \begin{bmatrix}
ax^2 + bxy & bx^2  \\
by^2+2axy & ax^2 + bxy
\end{bmatrix} 
\in \begin{bmatrix}
\p^{r+\e} & \p^r \\
\p^{r+2\e}+ \p^{2r+\e} & \p^{r+\e}
\end{bmatrix} \subset 
\begin{bmatrix}
\p^{r+1} & \o \\
\p^{r+\e+1} & \p^{r+1}
\end{bmatrix}
\end{eqnarray*}
for
\begin{eqnarray*}
B = \begin{bmatrix}
a& b \\
&a  
\end{bmatrix} \in \Bs_r,\ \
C = \begin{bmatrix}
x &  \\
y & x
\end{bmatrix} \in \v^r \Cs_\e.
\end{eqnarray*}
ii) Similar to i).
We only check that 
\begin{eqnarray*}
&& BC = \begin{bmatrix}
ax & ay + bx \\
 & ax
\end{bmatrix} \in 
\begin{bmatrix}
\p^l & \o \\
 & \p^l 
\end{bmatrix} \subset 
\begin{bmatrix}
\o & \o \\
 & \p^e
\end{bmatrix}, \\
&& CBC = \begin{bmatrix}
ax^2 & 2axy + bx^2 \\
 & ax^2
\end{bmatrix} \in \begin{bmatrix}
\p^{r+l} & \p^r \\
 & \p^{r+l}
\end{bmatrix} \subset 
\begin{bmatrix}
\p^{r+1} & \p^{r+1-l} \\
 & \p^{r+1}
\end{bmatrix}
\end{eqnarray*}
for
\begin{eqnarray*}
B = \begin{bmatrix}
a & b \\
 & a 
\end{bmatrix} \in \v^{l-r}\Bs_{l}, C = \begin{bmatrix}
x &y \\
 & x
\end{bmatrix} \in \v^r \Bs_{l}.
\end{eqnarray*}
\end{proof} 
The proof of the next is similar to that of Theorem 3.1.3 of \cite{R-S}, and omitted.
\begin{thm}\label{thm: liq}
Let $(\pi, V) \in \Alg(\G)$.
Assume that $V^{Sp(4,F)}$, the subspace of $Sp(4,F)$-invariant vectors in $V$, is $\{0\}$.
Let $\{m_0 < \cdots < m_r \}$ be a finite set of nonnegative integers, and $v_i (\neq 0)$ be $\Kb_1^c(m_i)$-invariant vectors in $V$. 
Then, $v_i$ are linearly independent.  
\end{thm}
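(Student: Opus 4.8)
The plan is to follow the proof of Theorem 3.1.3 of \cite{R-S}, which is exactly the present statement when $\e=0$. Suppose for contradiction that some nontrivial combination $\sum_{i=0}^{r}c_i v_i$ vanishes; after deleting the zero coefficients and reindexing we may assume $r\ge 1$ and $c_i\ne 0$ for all $i$. Put $H:=\Kb_1^c(m_0)\cap\cdots\cap\Kb_1^c(m_{r-1})$. Each $v_j$ with $j\le r-1$ is fixed by $\Kb_1^c(m_j)\supseteq H$, so applying $\pi(k)-1$ for $k\in H$ to the relation kills the terms with $i\le r-1$ and leaves $c_r\bigl(\pi(k)v_r-v_r\bigr)=0$; since $c_r\ne 0$, the vector $v_r$ is $H$-invariant. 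Hence $v_r$ is fixed by the subgroup of $\G$ generated by $H$ and $\Kb_1^c(m_r)$, and the whole assertion reduces to the purely group-theoretic claim that this subgroup contains $Sp(4,F)$: granting it, $v_r\in V^{Sp(4,F)}=\{0\}$, contradicting $v_r\ne 0$.

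To prove the claim I would argue directly from the displayed congruence model of $\Kb^c(m)$. From it, $\Kb_1^c(m)$ contains the unipotent subgroups $\{\zb(x):x\in\p^{-m}\}$ (sitting in the $(1,4)$-entry) and $\{{}^t\zb(x):x\in\p^{m}\}$ (the $(4,1)$-entry); hence $H$ contains $\{{}^t\zb(x):x\in\p^{m_{r-1}}\}$, while $\Kb_1^c(m_r)$ contains $\{\zb(x):x\in\p^{-m_r}\}$. Since $m_{r-1}<m_r$, the $SL(2)$-elements $\dot{w}(\v^{-j})=\zb(\v^{-j})\,{}^t\zb(-\v^{j})\,\zb(\v^{-j})$ lie in the generated subgroup for every $j$ with $m_{r-1}\le j\le m_r$, and $\dot{w}(\v^{-m_{r-1}})\,\dot{w}(\v^{-m_{r-1}-1})^{-1}=\tb_1(\v)=\eta^{-1}$; conjugating $\{\zb(x):x\in\p^{-m_r}\}$ and $\{{}^t\zb(x):x\in\p^{m_{r-1}}\}$ by the powers of $\tb_1(\v)$ fills them out to $\{\zb(x):x\in F\}$ and $\{{}^t\zb(x):x\in F\}$, so the generated subgroup contains the whole copy of $SL(2,F)$ on the coordinates $1,4$. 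As $\Kb_1^c(m_r)$ also contains the root subgroups attached to the remaining roots of $Sp(4,F)$, each with coefficients running over $\p^{N}$ for a suitable $N\in\Z$ (read off from the entries $(2,3),(1,2),(1,3),\dots$ of the model), the Chevalley commutator relations --- fed with the Weyl element and unbounded torus supplied by that $SL(2,F)$ --- yield every root subgroup of $Sp(4,F)$ with coefficients in $F$, hence all of $Sp(4,F)$. The condition $k_{33}\in 1+\p^\e$ distinguishing $\Kb_1^c$ from $\Kb^c$ is irrelevant here, every root element used having $(3,3)$-entry $1$.

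The step I expect to cost the most work is precisely this last computation: pinning down, inside each $\Kb_1^c(m)$, enough of the root data for the form (\ref{eqn:defmat}) with the prescribed $\p$-powers, and verifying the Chevalley relations with the correct structure constants, so that the generation goes through as in the paramodular ($\e=0$) analysis of Roberts--Schmidt. Once the generation claim is in hand, the reduction in the first paragraph completes the proof; in particular, no oldform machinery and no further induction on $r$ are required.
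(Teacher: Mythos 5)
Your argument is correct and is essentially the one the paper intends: the paper omits the proof with a pointer to Theorem 3.1.3 of \cite{R-S}, whose strategy (isolate one vector using the intersection of the other groups, then show that this intersection together with the remaining group generates $Sp(4,F)$ by building the long-root $SL(2,F)$ out of the $(1,4)$- and $(4,1)$-unipotents at two distinct levels and then filling out all root subgroups) is exactly what you carry out for the groups $\Kb_1^c(m;\e)$. The only point to watch in the final generation step is that $\tb_1(\v)$ centralizes the long-root subgroups sitting in the $(2,3)$ and $(3,2)$ entries, so these must indeed be produced by the Chevalley commutators of the already-completed short-root subgroups, as you indicate, and not by torus conjugation.
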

The next is the main theorem of this section.
\begin{thm}\label{thm:vnthm}
Let $\pi \in \Ir^{gn}(\G)$, and $W \in \W_\psi(\pi)$ be quasi-paramodular.
If $W(\Tb) = 0$, then $W$ is identically zero.
\end{thm}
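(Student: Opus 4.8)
The plan is to show that the quasi-paramodular $W$ vanishes on all of $\G$ by controlling it through its two gauges. Write $f_W$ for the first gauge on $P_3\simeq\Qb^\circ/\Zb^J$ and $\xi_W(h)=W(h^\natural)$ for the second gauge on $G_2$. Two elementary remarks organize everything. First, because $N_2,N_3\subset N$ inside $P_3$ and $P_3=(N_2N_3)\cdot G_2'$, the function $f_W$ is already determined by its restriction to the embedded $G_2'$, on which $\tilde\psi$ becomes the standard Whittaker character, so $f_W|_{G_2'}$ is a Whittaker-type function on $G_2$; moreover $\mathrm{pr}(\Tb\cap\Qb^\circ)$ is the full diagonal torus of $G_2'$, so the hypothesis $W(\Tb)=0$ forces $f_W|_{G_2'}$ to vanish on the torus of $G_2$. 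Second, since $n(x)^\natural=\nb_2(x)$ one gets $\xi_W(n(x)h)=\psi(x)\,\xi_W(h)$, so $\xi_W$ too is a Whittaker-type function on $G_2$, right-invariant under the congruence subgroup of $G_2$ coming from $\Kb_1(m;\e)$; and $\mathrm{diag}(u,v)^\natural\in\Tb$, so $W(\Tb)=0$ forces $\xi_W$ to vanish on the torus of $G_2$. Thus the whole problem reduces to showing these two $G_2$-Whittaker functions vanish identically, together with a decomposition of $\G$ recovering $W$ from the two gauges.

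To see $f_W\equiv0$ I would use the $P_3$-filtration of Theorem \ref{thm: RSfilt}: $V_{\Zb^J}$ is built from $V_0\simeq\tau_0$, quotients $\tau_1(\xi_i)$ ($i\le I$), and quotients $\tau_2(\rho_i)$ ($i>I$), and one checks that $f_W$ is the image of the class of $W$ in $V_{\Zb^J}$ under the canonical $P_3$-equivariant map to ${\rm Ind}_N^{P_3}\tilde\psi$. By Proposition \ref{prop:ondim}, for all but finitely many $\chi\in\Xs(F^\t)$ and every $b\in F^\t$ the spaces of $\eta_b(\chi)$- and $\s_b(\chi)$-equivariant functionals on $V_{\Zb^J}$ are one-dimensional, and their generators, pulled back to $W$, are the functionals $\mu_\chi^b,\mu_2^b,\mu_2',\lambda_\chi^b,\lambda_1^b$ of Propositions \ref{prop: onedimP}, \ref{prop: onedimP2}, \ref{prop: onedimP3}. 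The torus functionals $\mu_\chi^b,\mu_2^b,\mu_2'$ only evaluate $f_W$ along the torus of $G_2'$, hence annihilate $W$ by the first remark; the functionals $\lambda_\chi^b,\lambda_1^b$ involve $f_W$ along the lower-triangular cosets $\bar{N}'z_2'(b)$, and there one converts the torus-vanishing of $f_W|_{G_2'}$ into the vanishing of these integrals by invoking the Kirillov model of $GL_2$ from \S\ref{sec:GLd} for the graded piece in question (equivalently, the vanishing mechanism of Lemma \ref{lem:vanishlem} already used in Proposition \ref{prop:simp-zetagl}). Since $\tau_0$ is generic and every graded piece is detected by at least one of these functionals, the class of $W$ in $V_{\Zb^J}$ is $0$, i.e.\ $f_W\equiv0$, so $W$ vanishes on $\Qb$. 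The same method applied directly to the $G_2$-Whittaker function $\xi_W$ yields $\xi_W\equiv0$, so $W$ vanishes on $G_2^\natural$ as well.

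It remains to propagate this to $\G$. Since $\Qb\backslash\G$ is compact, there is a finite set of representatives $\gamma_j$ for $\Qb\backslash\G/\Kb_1(m;\e)$, and by left $(\N,\tilde\psi)$-equivariance together with $\pi(k)W=\w_\pi(k_{44})W$ on $\Kb(m;\e)$ it is enough to show $W(\gamma_j)=0$ for each $j$. Decomposing $\Qb$ into its unipotent radical (which lies in $\N$), the torus, and the Weyl element of the $GL_2$-factor of the Klingen Levi, each cell reduces to an evaluation of $W$ on $\Qb$ (covered by $f_W\equiv0$), on $G_2^\natural$ (covered by $\xi_W\equiv0$), or on a $\Tb\j_m$-type cell. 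For the last I would pass to the conjugate $W^c=\pi^\imath(\j_m)W^\imath\in\W_\psi(\pi^\imath)$: it is a coquasi-paramodular form, $W(t\j_m)$ equals an explicit scalar times $W^c(t)$ for $t\in\Tb$, and repeating the argument of the preceding two paragraphs for $W^c$ — with $\pi^\imath$, the $\j_m$-conjugated quasi-paramodular groups, and the projection $\mathrm{pr}$ applied after conjugation — forces $W^c\equiv0$, hence $W(\gamma_j)=0$. Assembling the cells gives $W\equiv0$.

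The main obstacle is the second step, specifically the graded pieces $\tau_1(\xi_i)$: by Proposition \ref{prop: onedimP}(ii) they carry \emph{no} $\eta_b(\chi)$-functional, so they are invisible to the torus functionals and can be killed only through the $\s_b(\chi)$-functionals $\lambda_\chi^b,\lambda_1^b$, whose defining integrals run over $\bar{N}'$ rather than the torus; turning the assumed torus-vanishing into vanishing of those integrals is exactly where the $GL_2$-Kirillov model (equivalently Lemma \ref{lem:vanishlem}) is indispensable. The remaining, bookkeeping-heavy point is organizing the Klingen-side datum $f_W$, the Siegel-side datum $\xi_W$, and — when $\e>0$ — the conjugate $W^c$, into a covering of all of $\G$.
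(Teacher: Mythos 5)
Your reduction to the two gauges and the $P_3$-filtration is in the right spirit, but the argument has two essential gaps. First, the step you defer to ``repeating the argument for $W^c$'' is circular: to run your torus-vanishing argument for the conjugate $W^c=\pi^\imath(\j_m)W^\imath$ you need $W^c(\Tb)=0$, i.e.\ the vanishing of $W$ on $\Tb\j_m$, and the hypothesis $W(\Tb)=0$ gives no information whatsoever about those values. This transfer is the crux of the theorem, and the paper obtains it analytically, not algebraically: from $W(\Qb)=0$ one gets $Z(s,W_i)=0$ for all $i\ge 0$ (Lemma \ref{lem:NovZ} expresses $Z(s,W_i)$ through torus values of the $W_{i-m}$), the local functional equation (Theorem \ref{thm:fesp}) then forces $Z(s,(W^c)_i)=0$, and Lemma \ref{lem:NovZ2} converts this back into $\Xi(s,(W^c)_i)=0$ and hence $W^c(\Tb)=0$. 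No manipulation of gauges and Kirillov models alone will produce this; your proposal never establishes it.

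Second, the closing ``propagation to $\G$'' does not work as written. $W$ is only left $(\N,\tilde{\psi})$-equivariant, not left $\Qb$-equivariant, so $W(\gamma_j)=0$ on representatives of $\Qb\backslash\G/\Kb_1(m;\e)$ does not determine $W$ on those double cosets, and vanishing on $\Qb$, on $G_2^\natural$ and on $\Tb\j_m$ does not cover $\G$. The paper's route is different: once $W^c(\Qb)=0$, one shows the image of $W^c$ in $V_{\Zb^J}$ dies in every graded piece of the filtration of Theorem \ref{thm: RSfilt} --- and here your claim that annihilation by the (one-dimensional spaces of) functionals already forces the class to vanish is false; what is needed, piece by piece, is the \emph{conjunction} of functional-vanishing with $\mathrm{pr}(\Kb^c(m;\e))$-invariance (Lemma \ref{lem:e-lem2}, the $GL(2)$ newform theory with $m_\s=\e$, and the one-dimensionality of $\tau_2(\xi\circ\det)$), giving $W^c\in V(\Zb^J)$ (Proposition \ref{prop: wvzj}). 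One then still has to prove that a coquasi-paramodular vector in $V(\Zb^J)$ is zero, which is Proposition \ref{prop: vzjv0}: the level-raising identity $\a_m=\eta+qz_{m+1}$ combined with the linear independence of vectors of distinct levels (Theorem \ref{thm: liq}). This entire final mechanism is absent from your proposal.
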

In case of $\e = 0$, this is Corollary 4.3.8. of the `$\eta$-principle' of loc. cit.
Although they assumed $\w_\pi = \1$, their argument works as far as $\e = 0$.
We will consider the case of $\e > 0$, mainly.
We need some preparations.
Let $W \in \W_{\psi}(\pi)$.
For $r \in \Z$, set 
\begin{eqnarray*}
W_r = \pi(\eta^{-r}) W.
\end{eqnarray*}
If $W$ is quasi-paramodular, then $W_r$ is $\N_2(\p^{r})$-invariant.
By using Lemma \ref{lem:vanishlem} one can show that $Z(s,W_r) = 0$ if $r < 0$.
We will compute $Z(s,W_r)$ for $r \ge 0$.
By Proposition \ref{prop:zeta-simp},
\begin{eqnarray*}
Z(s,W_r) &=& \Xi\l(s, \int_{\p^{-r}} \pi(\bnf'(z))W_r dz \r)\\
&=& \Xi(s,W_r) + \sum_{m=1}^{r} \Xi\l(s, \int_{\p^{*-m}} \pi(\bnf'(z))W_r dz\r).
\end{eqnarray*}
For $j \in \Z$ and a Laurent series $D(X) = \sum c_n X^{n}$, let 
\begin{eqnarray*}
D(X)_j = q^{-1}\l(-c_{j-1} X^{(j-1)} + (q-1)\sum_{n =j}^\infty c_n X^{n}\r).
\end{eqnarray*}
\begin{lem}\label{lem:NovZ}
With notation as above, if $W \in \W_\psi(\pi)$ is quasi-paramodular, then  
\begin{eqnarray*}
Z(s,W_r) = \sum_{m = 0}^{r} \w_\pi(\v)^{-m} q^{2m(s-1)} \Xi(s,W_{r-m})_m, 
\end{eqnarray*}
\end{lem}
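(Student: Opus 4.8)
The plan is to evaluate, one term at a time, the decomposition already recorded, namely
\[
Z(s,W_r)=\Xi(s,W_r)+\sum_{m=1}^{r}\Xi\!\left(s,\int_{\p^{*-m}}\pi(\bnf'(z))W_r\,dz\right),
\]
which holds because $W_r$ is $\N_3(\p^{r})$-invariant (so Proposition \ref{prop:zeta-simp} applies) and is fixed by $\bar{\N}'(\o)$. Fix $1\le m\le r$. First I would compute the matrix coefficient $W_r(\ab'(t)\bnf'(z))$ for $z\in\p^{*-m}$: using the $SL(2)$-Bruhat identity $\bnf'(z)=\nb'(z^{-1})\jb'(z)\nb'(z^{-1})$ inside the copy of $SL(2)$ generated by $\N'$ and $\bar{\N}'$ (the analogue of (\ref{eqn:usid})), I push the left $\nb'(z^{-1})$ through $\ab'(t)$ via $\ab'(t)\nb'(w)=\nb'(tw)\ab'(t)$ and the Whittaker law, picking up the factor $\psi(tz^{-1})$; since $\ro(z)=-m\le0$ we have $z^{-1}\in\o$, so the right $\nb'(z^{-1})$ is killed by the $\N'(\o)$-invariance of $W_r$.

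Next I rewrite $\ab'(t)\jb'(z)=\tb(1,z^{-1};t)\,\jb'(1)$ and strip everything but $t$: writing $z^{-1}=\v^{m}u$ with $u\in\o^\times$, the factor $\tb(1,u;1)$ is absorbed after conjugating $\jb'(1)$ (which interchanges $\tb(1,u^{-1};1)$ and $\tb(1,u;1)$), while $\jb'(1)\in\Kb_1(m;\e)$ commutes with $\eta$ and is absorbed as well, leaving $W_r(\ab'(t)\jb'(z))=W_r(\ab'(t)\tb(1,\v^{m};1))$. Since $\ab'(t)\tb(1,\v^{m};1)\,\eta^{-r}=\v^{-m}\cdot\ab'(t\v^{2m})\,\eta^{m-r}$, the central character then yields
\[
W_r(\ab'(t)\jb'(z))=\w_\pi(\v)^{-m}\,W_{r-m}(\ab'(t\v^{2m})),
\]
which no longer depends on the unit $u$. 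Hence the $z$-integral decouples,
\[
\int_{\p^{*-m}}W_r(\ab'(t)\bnf'(z))\,dz=\w_\pi(\v)^{-m}\,W_{r-m}(\ab'(t\v^{2m}))\int_{\p^{*-m}}\psi(tz^{-1})\,dz,
\]
and the remaining integral is an elementary partial Gauss sum (here for the trivial character), equal to $q^{m}-q^{m-1}$, $-q^{m-1}$, or $0$ according as $\ro(t)\ge-m$, $\ro(t)=-m-1$, or $\ro(t)\le-m-2$.

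Finally, substituting $t\mapsto t\v^{2m}$ in $\Xi(s,\int_{\p^{*-m}}\pi(\bnf'(z))W_r\,dz)=\int_{F^\times}[\,\cdot\,]\,\nu_{s-3/2}(t)\,d^\times t$ turns the Whittaker value into $W_{r-m}(\ab'(t))$, produces a factor $q^{2m(s-3/2)}$ from the change of variables, and shifts the Gauss-sum argument; pairing the three Gauss-sum values against the Laurent coefficients of $\Xi(s,W_{r-m})$ reproduces exactly $q^{m}$ times the operation $(\cdot)_m$ as defined, so the $m$-th term equals $\w_\pi(\v)^{-m}q^{2m(s-3/2)}\cdot q^{m}\cdot\Xi(s,W_{r-m})_m=\w_\pi(\v)^{-m}q^{2m(s-1)}\Xi(s,W_{r-m})_m$; the leftover $\Xi(s,W_r)$ is the $m=0$ summand (the contribution of $\int_{\o}\pi(\bnf'(z))W_r\,dz=W_r$), which is of the form $\Xi(s,W_r)_0$ since $W_r(\ab'(\v^k))=0$ for $k<0$ by the Whittaker law again. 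I expect the main obstacle to be the middle step: verifying that every torus/Weyl element produced along the way actually lies in $\Kb_1(m;\e)$ and commutes with $\eta$, so that the unit part of $z$ really cancels and the $z$-integral separates, and then matching the powers of $q$ and the index shift from $t\mapsto t\v^{2m}$ precisely against the definition of $D(X)_m$.
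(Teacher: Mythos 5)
Your proof is correct and is essentially the paper's own argument: both decompose $\bar{\N}'(\p^{-r})$ into the shells $\p^{*-m}$, write $\bnf'(z)=\nf'(z^{-1})\jb'(z)\nf'(z^{-1})$ as in (\ref{eqn:usid}), absorb the unit part of $z$ into the quasi-paramodular group via $\jb'(1)$ and the central character (turning $\ab'(t)\jb'(z)\eta^{-r}$ into a central multiple of $\ab'(t\v^{2m})\eta^{-(r-m)}$), and reduce each shell to a partial Gauss sum paired against the Laurent coefficients of $\Xi(s,W_{r-m})$, exactly reproducing the operation $(\cdot)_m$ and the factor $q^{2m(s-1)}$. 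The only point you (and the paper) gloss is the $m=0$ summand: the contribution of $\int_{\o}\pi(\bnf'(z))W_r\,dz$ is $\Xi(s,W_r)$ itself, whereas the literal definition of $D(X)_0$ gives $\Xi(s,W_r)_0=\frac{q-1}{q}\,\Xi(s,W_r)$ since the negative-degree coefficients vanish; this is an imprecision in the statement (the paper's later uses, e.g.\ in Lemma \ref{lem:m>n}, read the $m=0$ term as $\Xi(s,W_r)$), not a defect of your argument.
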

\begin{proof} 
Let $m$ be a negative integer.
Let $z \in \p^{*m}$.
By using (\ref{eqn:usid}), and the $\G_2(\o)'$-invariance property of $W_r$, we compute 
\begin{eqnarray} 
W_r \l(\af(t)\bnf'(z)\r) &=& W_r\l(\ab'(t)\nf'(z^{-1})\jb'(z)\nf'(z^{-1})\r) \nonumber \\
&=& \psi(tz^{-1}) W_r \l(\ab'(t)\jb'(z)\r)  \label{eqn:W^az} \\
&=& \psi(tz^{-1}) W_r \l(\ab'(t)\jb'(z)\jb'\r) \nonumber \\
&=& \psi(tz^{-1}) \w_\pi(\v^{m}) W_{r+ m}\l(\ab'(tz^{-2})\r). \nonumber
\end{eqnarray}
Therefore, 
\begin{eqnarray*}
\int_{\o^\t} \int_{\p^{*m}} W_r(\af'(\v^iu)\bnf'(z)) dz d u = c_m \w_\pi(\v)^{m} W_{r+m}\l(\ab'(\v^{i-2m})\r), \\
\mbox{where} \ \ 
c_m = \begin{cases}
q^{-m-1} (q-1) & \mbox{if $i \ge m$,} \\
-q^{-m-1} & \mbox{if $i = m -1$,} \\
0 & \mbox{otherwise.}
\end{cases}
\end{eqnarray*}
From this, the assertion follows.
\end{proof}
\begin{lem}\label{lem:NovZ2}
If $W \in \W_\psi(\pi^\imath)$ is coquasi-paramodular, then 
\begin{eqnarray*}
Z(s,W_r) = q^{- \e} \Xi\l(s,W_r \r) + \sum_{\e-r \le m < \e} c_m q^{(\e-m)(s-\frac{3}{2})},
\end{eqnarray*}
where 
\begin{eqnarray*}
c_m = \begin{cases}
\Sb_m(\w_\pi^{-1},u_{\w_\pi^{-1}}) W_r \l([\v^{m- \e}; \v^{m}u_{\w_\pi^{-1}}]'\r) & \mbox{if $m > 0$,} \\
 \w_\pi^{-1}( u_{\w_\pi}\v^{m}) \Gb(\w_\pi, u_{\w_\pi})W_{r + m}(\ab_{-m- \e} \jb'(1)) & \mbox{if $m \le 0$}.
\end{cases}
\end{eqnarray*}
The notation $[*;*]$, $\Gb(\w_\pi, u_{\w_\pi})$ and $\Sb_m(\w_\pi^{-1},u_{\w_\pi^{-1}})$ are defined in sect. \ref{sec:GLd}.
\end{lem}
\begin{proof}
By Proposition \ref{prop:zeta-simp}, $Z(s,W_r) = q^{- \e} \Xi(s,W_r) + \sum_{m=1- \e}^{r- \e} \Xi_m$, where 
\begin{eqnarray*}
\Xi_m =  \Xi\l(s, \int_{\p^{*-m}} \pi(\bnf'(z))W_r dz\r).
\end{eqnarray*}
We will show $\Xi_m = c_m q^{(\e-m)(s-\frac{3}{2})}$ for the constant $c_m$ as in the assertion.
Let $m = ord(z)$.
Suppose $1 \le m \le \e -1$. 
By the $K_1(\e)'(\subset G_2')$-invariance property of $W_r$, for $u \in \o^\t$,
\begin{eqnarray*}
W_r\l(\af'(\v^iu)\bnf'(z)\r) = W_r\l(\af'(\v^iu)\bnf'(z) \af'(u)^{-1}\r) = W_r\l(\af'(\v^i)\bnf'(u^{-1}z) \r).
\end{eqnarray*}
By Lemma \ref{lem:e-lem}, this is zero unless $i = m - \e$.
Therefore, $\Xi_m = c_m q^{(\e-m)(s-\frac{3}{2})}$.
Suppose $\e-r \le m \le 1$.
By (\ref{eqn:W^az}),
\begin{eqnarray*}
\iint W_r(\af'(\v^i u)\bnf'(\v^m u_\w v )) du dv 
&=& \iint \psi(\v^i u (\v^m u_\w v)^{-1}) W_r \l(\af'(\v^i u)\jb'(\v^m u_\w v)\r) du dv \\
&=& W_r \l(\ab_i\jb'(\v^m u_{\w_\pi})\r)
\iint \psi\l(\frac{\v^{i-m}}{ vu^{-1}u_{\w_\pi}}\r) \w_\pi(vu^{-1}) du dv
\end{eqnarray*}
where integrations are over $u \in \o^\t$ and $v \in \o^\t$.
The last double integral equals the Gauss sum, which is zero unless $i = m- \e$.
Since 
\begin{eqnarray*}
W_r \l(\ab_i \jb'(\v^m u_{\w_\pi})\r) = \w_\pi^{-1}(u_{\w_\pi} \v^m) W_{r+m}\l(\ab_{i-2m} \jb'(1)\r),
\end{eqnarray*}
we have $\Xi_m = c_m q^{(\e-m)(s-\frac{3}{2})}$.
\end{proof}
\begin{prop}\label{prop:dvqv}
Let $\pi \in \Ir^{gn}(\G)$, and $W \in \W_\psi(\pi)$ be quasi-paramodular.
\begin{enumerate}[$i)$]
\item If $W(\Tb) = 0$, then $W(\Qb) = 0$.
\item If $W(\Tb) = W^c(\Tb) = 0$, then $W^c(\Qb) = 0$.
\end{enumerate}
\end{prop}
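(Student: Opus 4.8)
The plan is to descend to the first gauge $f_W$ on $P_3\simeq\Qb^\circ/\Zb^J$ and to exploit the Bruhat decomposition of $\Qb$ with respect to the Klingen Levi. Since $W$ is left $\Zb^J$-invariant the first gauge is well defined, and because $\Qb=F^\t\Qb^\circ$ with $W$ being $\w_\pi$-equivariant under the centre, $W$ vanishes on $\Qb$ precisely when $f_W\equiv0$; peeling off $\w_\pi$ once more shows that $W(\Tb)=0$ is equivalent to $f_W$ vanishing on the diagonal torus of $G_2\subset P_3$. The relative Weyl group of the Klingen parabolic has two elements, the nontrivial one represented by $\jb'(1)$, so $\Qb=\Bb\sqcup\Bb\,\jb'(1)\,\Bb$ with $\Bb=\Tb\N$, and on $\Bb$ the claim is immediate: writing $b=nt$ with $n\in\N,\,t\in\Tb$ gives $W(b)=\tilde\psi(n)W(t)=0$.

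For the big cell I would first record the direct memberships $\jb'(1)\in\Kb_1(m;\e)$ and $\N_2(\o),\N_3(\o),\Zb^J(\p^{-l}),\N'(\o),\bar{\N}'(\o)\subset\Kb_1(m;\e)$, so that $W$ is genuinely invariant under each. Given $g=\nu_1\,t\,\jb'(1)\,\nu_2$ in the big cell, drop $\nu_1$ by left $\tilde\psi$-covariance, push $\jb'(1)$ to the right, and use $\jb'(1)\nu_2\jb'(1)^{-1}\in\bar{\N}'\cdot\N_2\N_3\Zb^J$ with $\bar{\N}'$-component $\pm$ the $\N'$-component of $\nu_2$; commuting the $\N_2\N_3\Zb^J$-part (which lies in $\N$) to the left and discarding it, one reduces to showing $W(t\,\bnf'(a))=0$ for $t\in\Tb,\,a\in F$. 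For $a=0$ this is the hypothesis; for $a\ne0$ apply (\ref{eqn:usid}) in the $\mathrm{GL}(2)$-factor of the Levi, $\bnf'(a)=\nb'(a^{-1})\jb'(a)\nb'(a^{-1})$ with $\jb'(a)=\tb(1,a^{-1};1)\jb'(1)$, peeling the leftmost $\nb'$ and absorbing $\tb(1,a^{-1};1)$ into the torus to arrive at $W(t'\,\jb'(1)\,\nb'(a^{-1}))$: if $a^{-1}\in\o$ this is $W(t')=0$ since $\nb'(a^{-1})\in\Kb_1(m;\e)$, while if $a^{-1}\notin\o$ then $a\in\p$ and the $\mathrm{SL}(2)$-Bruhat relation $\jb'(1)\nb'(a^{-1})=\nb'(-a)\tb(1,a;1)\bnf'(a)$ finishes the argument through $\bnf'(a)\in\Kb_1(m;\e)$. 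Every "peel a unipotent and read off the character" step is an instance of Lemma \ref{lem:vanishlem}; this proves (i). The same reduction could instead be routed through the $P_3$-filtration of Theorem \ref{thm: RSfilt} and the functionals of \S\ref{sec:P3}, following \cite{R-S}.

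Part (ii) is where the work lies. By (i) applied to $W$ one already has $W(\Qb)=0$, so it remains to get $W^c(\Qb)=0$; but $\Kb^c(m;\e)$ is not normalized by $\j_m$ and, for $m>0$, $\jb'(1)\notin\Kb_1^c(m;\e)$, so the Bruhat reduction above does not carry over to $W^c$. Instead I would analyse the translates $\pi^\imath(\eta^{-r})W^c$ through the zeta-integral identity of Lemma \ref{lem:NovZ2}: $W^c(\Tb)=0$ forces $\Xi(s,\pi^\imath(\eta^{-r})W^c)=0$ for all $r$, and $Z(s,\pi^\imath(\eta^{-r})W^c)=0$ follows as in (i) (or from Lemma \ref{lem:NovZ}), so that identity exhibits $0$ as a finite sum of linearly independent monomials in $q^{-s}$ whose coefficients are partial Gauss sums $\Sb_m(\w_\pi^{-1},u_{\w_\pi^{-1}})$ — nonzero by Lemma \ref{lem:Gauss2} — or Gauss sums $\Gb(\w_\pi,u_{\w_\pi})$, times values of $\pi^\imath(\eta^{-r})W^c$ at the explicit points $[\v^{m-\e};\v^m u_{\w_\pi^{-1}}]'$ and $\ab_{-m-\e}\jb'(1)$, all lying in $\Qb$; hence $W^c$ vanishes at the $\eta^{-r}$-translates of these points, and feeding this back through the coquasi-paramodular invariance, left-covariance, the inclusions $V^c(m)\hookrightarrow V^c(m+1)$, and the operators $\Gamma_r^\imath$ of (\ref{eqn: Gamma}) (Lemma \ref{lem:Glop}) should propagate the vanishing over all of $\Qb$. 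The hard part will be exactly this last step in the ramified case $\e>0$: keeping track of how the conjugate group $\Kb_1^c(m;\e)$ meets the Bruhat cells of $\Qb$ so that every residual value produced is either a $W^c$-value on $\Tb$ (killed by hypothesis) or a $W$-value on $\Qb$ (killed by (i)), with the ramified-nebentypus bookkeeping supplied by Lemmas \ref{lem:e-lem}, \ref{lem:Gauss2} and \ref{lem:NovZ2}.
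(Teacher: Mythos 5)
Your part (i) is correct but more elaborate than necessary: the paper simply uses $\Qb = \N\Tb\G_2'(\o)$ and the fact that $\G_2'(\o)\subset\Kb_1(m;\e)$, so the $\G_2'(\o)$ factor is absorbed by quasi-paramodularity (with $k_{44}=1$) and $W(q)$ reduces to a value on $\N\Tb$, which vanishes by $\tilde\psi$-covariance. Your Bruhat-cell route through SL(2) relations arrives at the same conclusion but re-proves what the Iwasawa factorisation already gives for free.

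In part (ii) there is a genuine gap. You assert ``$Z(s,\pi^\imath(\eta^{-r})W^c)=0$ follows as in (i) (or from Lemma \ref{lem:NovZ})'' --- but neither justification works. Part (i) gives $W(\Qb)=0$, not $W^c(\Qb)=0$ (that is what you are trying to prove), and Lemma \ref{lem:NovZ} is formulated for the quasi-paramodular $W$, whereas $W^c$ is coquasi-paramodular and is governed by Lemma \ref{lem:NovZ2}, which you \emph{cannot} use to show $Z(s,(W^c)_r)=0$ since its statement expresses $Z$ in terms of the very $W^c$-values you want to kill. The missing bridge is the functional equation (Theorem \ref{thm:fesp}): from (i) plus Lemma \ref{lem:NovZ} one gets $Z(s,W_r)=0$, and only then does the functional equation transfer this across to $Z(1-s,(W^c)_r)=0$. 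Once that is supplied, your reading of Lemma \ref{lem:NovZ2} together with $\Xi(s,(W^c)_r)=0$ and the non-vanishing of the Gauss sums (Lemma \ref{lem:Gauss2}) does kill the coefficients $c_m$, which is the heart of the argument.

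Your concluding paragraph on ``propagating the vanishing over $\Qb$'' is vaguer than what the paper actually does and invokes machinery ($\Gamma_r^\imath$, level inclusions) that isn't needed here. The paper instead fixes explicit double-coset representatives $\mathfrak{N}=\{\bnf'(\v^i u_{\w_\pi^{-1}})\mid 0\le i\le\e\}$ for $F^\t\N\Ab'\Tb_1\bs\Qb/(\Qb\cap\Kb^c(m;\e))$, disposes of the cases $r<0$ and $s<\e$ by Lemma \ref{lem:vanishlem}, and observes that the remaining points $\ab^r_{s-\e}\bnf'(\v^s u)$ and $\ab^r_i\jb'(1)$ are \emph{exactly} the points occurring in the coefficients of Lemma \ref{lem:NovZ2} --- so the vanishing you derive there already covers a full set of representatives and no further propagation step is required. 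You should replace the hand-waving at the end with this explicit coset analysis, and insert the functional-equation step; with those two repairs the proof would be complete and essentially the paper's.
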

\begin{proof}
$i)$ follows from the decomposition $\Qb = \N \Tb \G_2'(\o)$.
In case of $\e = 0$, $W^c$ is paramodular, and $ii)$ follows from $i)$.
Hence, we may assume $\e > 0$.
Let $m$ be the level of $W$.
Let $u = u_{\w_\pi^{-1}}$ be the element in $\o^\t$ in defined in sect. \ref{sec:GLd}.
We take $\mathfrak{N}= \{ \bnf'(\v^i u) \mid 0 \le i \le \e\}$ for the representatives of $F^\t \N \Ab' \Tb_1 \bs \Qb/(\Qb \cap \Kb^c(m;\e))$.
It suffices to show that $W^c(\ab^r_{s} \bnb) = 0$ for $r,s \in \Z, \bnb \in \mathfrak{N}$.
By assumption, $W^c(\ab^r_{s}) = 0$.
From Lemma \ref{lem:vanishlem}, it follows that $W^c(\ab^r_{s}\bnb) = 0$ if $r < 0$, and that $W^c(\ab^r_{s} \jb'(1)) = 0$ if $s < \e$.
By Lemma \ref{lem:e-lem}, (\ref{eqn:usid}), our remained task is to show that 
\begin{eqnarray}
W^c(\ab^r_{s-\e} \bnf'(\v^{s}u)) = W^c(\ab^r_{i} \jb'(1)) = 0 \label{eqn: dvqv}
\end{eqnarray}
for $r \ge 0$, $i \ge - \e$ and positive $s \le \e-1$.
By i) and Lemma \ref{lem:NovZ}, $Z(s,W_r) = 0$. 
By the functional equation, $Z(s,(W^c)_r) = 0$.
Since $W^c(\Tb) = 0$, we have $\Xi(s,(W^c)_r) = 0$.
Now (\ref{eqn: dvqv}) follows from Lemma \ref{lem:Gauss2}, \ref{lem:e-lem}, and \ref{lem:NovZ2}.
\end{proof}
\begin{prop}\label{prop: wvzj}
Let $(\pi,V) \in \Ir^{gn}(\G)$.
If $W \in V^c(m)$ for some $m$ vanishes on $\Qb$, then $W \in V(\Zb^J)$.
\end{prop}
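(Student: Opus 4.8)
The plan is to show that the image $\bar W$ of $W$ in $V_{\Zb^J}$ vanishes. After replacing $\pi$ by $\pi^\imath$ if necessary we may regard $W$ as a coquasi-paramodular form of level $m$ in $\W_\psi(\pi)$, so that $\pi(k)W=\w_\pi(k_{33})W$ for $k\in\Kb^c(m;\e)$. Recall that $V_{\Zb^J}$ is a smooth $P_3$-module via $\mathrm{pr}$ and carries the Roberts--Schmidt filtration $0\subset V_0\subset\cdots\subset V_n=V_{\Zb^J}$ of Theorem \ref{thm: RSfilt}, with $V_0\simeq\tau_0=\mathrm{ind}_N^{P_3}\tilde\psi$ and graded pieces $\tau_1(\xi_i)$ ($i\le I$), $\tau_2(\rho_i)$ ($i>I$). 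When $\pi$ is supercuspidal, $V_{\Zb^J}=V_0$ and the argument below collapses; the content is in the other cases.

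First I would translate the vanishing hypothesis. Since $W$ transforms under $F^\t$ by $\w_\pi$, vanishing on $\Qb$ is the same as vanishing on $\Qb^\circ$, which by (\ref{eqn:gaugedef}) means that the first gauge $f_W$ vanishes identically. The assignment $\bar v\mapsto f_v$ is a $P_3$-equivariant map $\Phi\colon V_{\Zb^J}\to\mathrm{Ind}_N^{P_3}\tilde\psi$ whose restriction to $V_0$ is the canonical inclusion $\tau_0=\mathrm{ind}_N^{P_3}\tilde\psi\hookrightarrow\mathrm{Ind}_N^{P_3}\tilde\psi$. Hence $\bar W\in\ker\Phi$, so the cyclic $P_3$-submodule $V':=P_3\bar W$ lies in $\ker\Phi$; since $\Phi|_{V_0}$ is injective we get $V'\cap V_0=0$, so $V'$ embeds into $V_{\Zb^J}/V_0$ and therefore has all of its composition factors among the $\tau_1(\xi_i)$ and $\tau_2(\rho_i)$.

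Now suppose, for contradiction, that $\bar W\ne 0$, so $V'\ne 0$, and fix an irreducible quotient $\bar\tau$ of $V'$; by the previous paragraph $\bar\tau$ is $\tau_1(\xi)$, $\tau_2(\rho)$ with $\rho$ generic, or $\tau_2(\xi\circ\det)$ (the non-generic members of $\Ir(G_2)$ being precisely the one-dimensional ones). In each case Propositions \ref{prop: onedimP}--\ref{prop: onedimP3} furnish a nonzero functional $\ell_0$ on $\bar\tau$ of type $\s_b(\chi)$, $\eta_b(\chi)$, or $\s_0(\chi)$ for a suitable $\chi$ and any fixed $b$; composing with $V'\twoheadrightarrow\bar\tau$ gives a nonzero functional $\ell$ on $V'$ of that type, and since $\bar\tau$ is irreducible there is $p\in P_3$ with $\ell(p\bar W)\ne 0$. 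The crucial observation is that the explicit formulas for $\mu_\chi^b,\mu_2^b,\lambda_\chi^b,\lambda_1^b,\mu_2'$ in Propositions \ref{prop: onedimP}--\ref{prop: onedimP3} (and the analogous $\tau_0$-functionals of type $\s_b(\chi)$ resp.\ $\eta_b(\chi)$, which exist unconditionally) express these functionals as integrals or point-evaluations of the gauge over the subgroups $\Tb_1,\N',\N_3,\Zb^J,\Ab',\bar\N',\N_2,\G_2'$, all of which lie in $\Qb^\circ$. Consequently, once $\ell$ is extended to a functional $\tilde\Lambda$ on $V_{\Zb^J}$ of the same type, $\tilde\Lambda$ is a finite combination of integrals/evaluations of $W$ over subsets of $\Qb^\circ$; and because $W|_{\Qb^\circ}=0$ and moreover $\pi(\tilde p)W|_{\Qb^\circ}=0$ for every lift $\tilde p\in\Qb^\circ$ of $p\in P_3$ (since $q\tilde p\in\Qb^\circ$ whenever $q\in\Qb^\circ$), we obtain $\tilde\Lambda(p'\bar W)=0$ for all $p'\in P_3$. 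In particular $\ell(p\bar W)=\tilde\Lambda(p\bar W)=0$, contradicting $\ell(p\bar W)\ne 0$. Hence $\bar W=0$, i.e.\ $W\in V(\Zb^J)$.

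The main obstacle is producing the extension $\tilde\Lambda$ of $\ell$ from the cyclic submodule $V'$ to $V_{\Zb^J}$ of the prescribed type: the higher composition factors $\tau_1(\xi_i),\tau_2(\rho_i)$ a priori obstruct this through $\mathrm{Ext}^1$-terms, and this is exactly where the coquasi-paramodular hypothesis is indispensable. The level-$m$ invariance of $\bar W$ under $\mathrm{pr}(\Qb^\circ\cap\Kb^c(m;\e))$, with the character $k\mapsto\w_\pi(k_{33})$, forces the character $\chi$ occurring above into the controlled range of Proposition \ref{prop:ondim} and Lemmas \ref{lem:Gauss2}, \ref{lem:NovZ}, \ref{lem:NovZ2} --- only finitely many $\chi$ are exceptional, and the relevant one is not among them --- so that the space of functionals on $V_{\Zb^J}$ of that type is spanned precisely by the $\Qb^\circ$-supported functionals attached to $V_0$ and $\bar\tau$, and $\ell$ extends (equivalently, $\ell$ is already proportional to the restriction of the unconditional $\tau_0$-functional, which vanishes on every $p'\bar W$). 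Once this matching --- between the finitely many excluded $\chi$, the conductor $\e$ of $\w_\pi$, and the level $m$ --- is carried out, the remainder is the routine $\Qb^\circ$-support computation above, following the method of Roberts and Schmidt.
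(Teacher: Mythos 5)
Your approach genuinely diverges from the paper's. The paper shows the image $\bar W$ of $W$ in $V_{\Zb^J}$ descends \emph{into} $V_0$ (by killing the $\tau_1,\tau_2$ pieces via the coquasi-paramodular invariance) and then uses that the first gauge is faithful on $V_0$. You instead show $V'=P_3\bar W$ \emph{avoids} $V_0$ and then try to derive a contradiction from the top pieces via an extension of functionals. The setup through ``$V'$ embeds into $V_{\Zb^J}/V_0$'' is sound (granting the injectivity of $\Phi|_{V_0}$, which is essentially the content of Theorem 4.3.5 of \cite{R-S} that the paper cites), but the contradiction step has a genuine gap.

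The gap is the extension of $\ell$ from $V'$ to $V_{\Zb^J}$. Your argument has the form: ``if $\ell$ extends to $\tilde\Lambda$, then $\tilde\Lambda$ is (a combination of) the $\Qb^\circ$-integral functionals, hence $\tilde\Lambda|_{V'}=0$, contradicting $\ell\neq 0$.'' But this argument defeats itself: if every functional on $V_{\Zb^J}$ of the given type is a $\Qb^\circ$-integral, then they all vanish on $V'$ (because $W$ and all $\pi(\tilde p)W$, $\tilde p\in\Qb^\circ$, vanish on $\Qb^\circ$), so the restriction map $\Hom(V_{\Zb^J},\Xi)\to\Hom(V',\Xi)$ is the zero map, and the nonzero $\ell$ \emph{does not extend}. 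The conclusion ``$\ell$ is already proportional to the restriction of the $\tau_0$-functional'' is not an alternative route to the contradiction --- that restriction is identically zero on $V'$, so proportionality would force $\ell=0$, which you have already ruled out. In short, the $\mathrm{Ext}^1$-obstruction to extending $\ell$ is precisely what is nonzero here, and the coquasi-paramodular hypothesis is not being used in a way that resolves it; gesturing at Proposition~\ref{prop:ondim} and Lemmas~\ref{lem:Gauss2},~\ref{lem:NovZ},~\ref{lem:NovZ2} does not address the extension.

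What the paper actually does with the coquasi-paramodular hypothesis is quite different: it shows that the image of $\bar W$ in each graded piece $\tau_j(\sigma)$ of the Roberts--Schmidt filtration, which by the $\Kb^c(m;\e)$-invariance satisfies the quasi-invariance condition~(\ref{eqn:P3esub}), and which is annihilated by the corresponding $\lambda_1^b$ or $\mu_2^b$ or $\mu_2'$ (because these are realized by integrals over $\Qb^\circ$ and $W|_{\Qb}=0$), must vanish. For $\tau_1(\xi)$ this is Lemma~\ref{lem:e-lem2}; for $\tau_2(\rho)$ with $\rho$ generic it is the $GL(2)$ newform theory with $m_\rho=\e$; for $\tau_2(\xi\circ\det)$ it is one-dimensionality. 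These ingredients --- a direct ``invariant vector killed by the functional is zero'' argument inside each $\tau_j(\sigma)$ --- are exactly what is missing from your sketch, and cannot be replaced by a dimension/extension count.
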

\begin{proof}
We have constructed a (unique up to a constant multiple) nontrivial functional for each $\tau_j(\s)$ with $\s \in \Ir(G_j), j = 1,2$ in section \ref{sec:P3}.
In case of $j = 1$, the functional is $\lambda_1^b$ with $\chi = \s\nu_{-1}$, and corresponds to the functional 
\begin{eqnarray*}
\W_\psi(\pi^\imath) \ni W \mapsto \int_{F^\t} \int_{\bar{\N}'} W(\nb \ab'(t)\tb_1(b))\s^{-1} \nu_{s}(t) d \nb d^\t t 
\end{eqnarray*}
where $s = 1$.
In case that $j = 2$, and $\s \in \Ir^{gn}(G_2)$ (hence infinite-dimensional), it is $\mu_2^b$ and corresponds to the functional 
\begin{eqnarray*}
\W_\psi(\pi^\imath) \ni W \mapsto \int_{F^\t} W(\tb_1(u)\ab'(b))\w_\s^{-1} \nu_{s}(u) d^\t u,
\end{eqnarray*}
where $s = 0$.
In case that $j = 2$, and $\s = \xi \circ \det$ with $\xi \in \Xs(F^\t)$, it is $\mu_2'$ and corresponds to the functional 
\begin{eqnarray*}
\W_\psi(\pi^\imath) \ni W \mapsto \int_{F^\t}\int_{\bar{\N'}} W(\tb_1(u)\nb)\xi^{-2}\nu_{s}(u) d \nb d^\t u, 
\end{eqnarray*}
where $s=0$.
Since $W(\Qb) = 0$, all these functionals send $W$ to $0$.
Now, let $\s \in \Ir(G_j)$ and $f \in \tau_j(\s)$.
If $f$ is sent to $0$ by the corresponding functional, and satisfies
\begin{eqnarray}
f(pk) = \w_\pi(k_{22})^{-1}f(p) \ \ \mbox{for $k \in  
\begin{bmatrix}
\o & \o & \p^{-\e} \\
\p^\e & \o & \o \\
 & & 1
\end{bmatrix} (=\mathrm{pr}(\Kb^c(m;\e)))$}, \label{eqn:P3esub}
\end{eqnarray}
then we have $f = 0$.
Indeed, it follows from Lemma \ref{lem:e-lem2} below in the first case, from the newform theory for $G_2$  with $m_\s = \e$ in the second case, and from the one-dimensionality of $\tau_2(\xi \circ \det)$ in the third case.
Therefore, $W \in V_0$ by Theorem \ref{thm: RSfilt}. 
Let $\W_0 = \{W \in V \mid W(\Qb) = 0 \}$.
By the proof of Theorem 4.3.5 of \cite{R-S}, $\W_0 \subset V(\Zb^J)$.
This completes the proof.
\end{proof}
\begin{Rem}
The last two integrals are absolutely convergent if $\Re(s) >>0$, and analytically continued to the whole complex plane.
They are related to the so-called degree five $L$-function of $\pi$.
We will discuss them in a forthcoming paper.
\end{Rem}
\begin{lem}\label{lem:e-lem2}
Let $\xi \in \Xs(F^\t)$, and $f \in \tau_1(\xi)$.
If $f$ satisfies (\ref{eqn:P3esub}), and $\lambda_1^b(f) = 0$ for $\chi = \xi \nu_{-1}$ and any $b \in F^\t$, then $f$ is identically zero.
\end{lem}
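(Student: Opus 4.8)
The plan is to transfer the statement to the $G_2$-block of $P_3$ and then invoke the Gauss-sum computations of section~\ref{sec:GLd}. Realise $\tau_1(\xi) = \mathrm{ind}_{NA}^{P_3}\xi_\psi$ as the space of smooth functions on $P_3$, compactly supported modulo $NA$, satisfying $f(a(t)np) = \xi(t)\psi(n_{2,3})f(p)$. Let $Q_1$ be the subgroup of $P_3$ stabilising the line $\langle e_1\rangle$; then $P_3 = Q_1\bar N'$ is the open dense big cell, with complement the single cell $Q_1 w_2'$. Since $f$ is smooth, hence continuous, it suffices to show that $f$ vanishes on $Q_1\bar N'$, i.e. that $f(\dg(1,t,1)\bar n'(x)) = 0$ for all $t\in F^\t$, $x\in F$; then $f\equiv 0$ on the closure $P_3$.

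First I would normalise the functional. Writing $z_2'(b) = a(b)\dg(1,b,1)$ and $b = \v^i u$ with $u\in\o^\t$, commuting $\dg(1,u,1)$ past $\bar n'(x)$, applying (\ref{eqn:P3esub}) (which gives $f(p\,\dg(1,u,1)) = \w_\pi(u)^{-1}f(p)$), and substituting $x\mapsto u^{-1}x$ yields
\[
\lambda_1^b(f) = \int_{\bar N'} f(\bar n'(x)z_2'(b))\,dx = \xi(\v^i u)\,\w_\pi(u)^{-1}\int_F f(\dg(1,\v^i,1)\,\bar n'(x))\,dx .
\]
So the hypothesis is equivalent to $\int_F f(\dg(1,\v^i,1)\,\bar n'(x))\,dx = 0$ for every $i\in\Z$.

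Fix $i$ and set $g(x) = f(\dg(1,\v^i,1)\bar n'(x))$, a compactly supported locally constant function of $x$. I would split $F$ into $\p^\e$, the annuli $\p^{*j}$ with $0\le j<\e$, and $\{x:\ro(x)\le 0\}$, and analyse $g$ on each piece exactly as in Lemma~\ref{lem:e-lem}, using the $d=1$ case of (\ref{eqn:usid}) in the $G_2$-block to write $\bar n'(x) = n'(x^{-1})\,w_2'\,\dg(x,x^{-1},1)\,n'(x^{-1})$ (the outer $n'$-factors being removed by left $N'$-invariance of $f$ and by (\ref{eqn:P3esub})), and (\ref{eqn: elemad}) to decide which conjugated unipotents lie in $\mathrm{pr}(\Kb^c(m;\e))$. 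On $\p^\e$ the integral contributes $q^{-\e}f(\dg(1,\v^i,1))$; on each annulus $\p^{*j}$ the $\psi$- versus $\w_\pi$-bookkeeping of Lemma~\ref{lem:e-lem} kills $g$ off one coset and contributes a partial Gauss sum $\Sb_*(\w_\pi^{-1},\cdot)$ times a single reference value of $f$; on $\{\ro(x)\le 0\}$ one gets $\xi$- and $\w_\pi$-twisted multiples of the reference values $f(w_2'a(\v^\bullet))$ from the second cell, weighted by Gauss sums $\Gb(\w_\pi,\cdot)$. Thus the equation $0=\int_F g$ expresses $q^{-\e}f(\dg(1,\v^i,1))$ as a linear combination of reference values carried on higher annuli or on the second cell, with coefficients that are non-zero by Lemma~\ref{lem:Gauss2} and by the everywhere-non-vanishing of $\Gb(\w_\pi,\cdot)$. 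Letting $i$ vary (only finitely many reference values can be non-zero, by compact support) gives a triangular system which forces every reference value to vanish; hence $f\equiv 0$ on $Q_1\bar N'$, and so on $P_3$.

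The hard part is this last piece of bookkeeping --- getting the restriction of $f$ to the $G_2$-block, which is left-$N'$-\emph{invariant} rather than of Whittaker type, to reproduce the Gauss-sum pattern of Lemma~\ref{lem:e-lem}, and verifying that the contributions from the $\{\ro(x)\le 0\}$ stratum (the $w_2'$-cell) enter the resulting system triangularly and do not block the back-substitution. The degenerate case in which $\xi|_{\o^\t}$ is incompatible with $\w_\pi^{-1}|_{\o^\t}$ should be handled first: comparing $a(t)w_2' = w_2'\dg(1,t,1)$, together with the analogous relation on the $\bar n'$-cell coming from (\ref{eqn:P3esub}), already forces $f$ to vanish on both cells, so the Gauss-sum computation is only needed when the characters are compatible.
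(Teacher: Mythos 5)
Your opening reduction (passing to the $\bar{N}'$-cell of the $G_2'$-block and normalising the hypothesis to $\int_F f(z_2'(b)\bar{n}'(x))\,dx=0$) agrees with the paper, but the analytic core of your plan rests on a structural misreading of the situation, and you defer precisely that core as ``the hard part.'' You propose to reproduce the Gauss-sum pattern of Lemma \ref{lem:e-lem}: a single surviving coset on each annulus $\p^{*j}$, coefficients $\Sb_*(\w_\pi^{-1},\cdot)$ and $\Gb(\w_\pi,\cdot)$, and a triangular system solved by back-substitution with Lemma \ref{lem:Gauss2} guaranteeing invertibility. That pattern cannot occur here. In Lemma \ref{lem:e-lem} the surviving coset $z\equiv\v^m u_\chi$ and the partial Gauss sums exist only because $W$ transforms on the left under $n(x)$ by the \emph{nontrivial} additive character $\psi$, which can cancel $\chi(1-\v^{-i}zx)$ on exactly one coset. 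Your $f$, restricted to the $G_2'$-block, is left $N'$-\emph{invariant} (as you yourself note: $\xi_\psi(a(t)n)=\xi(t)\psi(n_{2,3})$ and $n'(x)$ has zero $(2,3)$-entry). Running the conjugation (\ref{eqn: elemad}) therefore yields $f(z_2'(b)\bar{n}'(z))=\w_\pi(1-\v^{-l}zx)^{-1}f(z_2'(b)\bar{n}'(z))$ for all admissible $x$, and since nothing on the left can compensate a nontrivial value of $\w_\pi$, Lemma \ref{lem:vanishlem} gives $f(z_2'(b)\bar{n}'(z))=0$ outright for $z\notin\p^\e$: no surviving coset, no Gauss sum. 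This is exactly the paper's (much shorter) argument: the annuli and the $w_2'$-cell contribute nothing, $\lambda_1^b(f)$ collapses to $\vol(\bar{N}(\p^\e))\,f(z_2'(b))$, and the hypothesis kills $f$ at once. There is no triangular system to invert.

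Concretely, the gap is this: you never establish the vanishing of $g$ on the annuli $\p^{*j}$ or on the stratum $\{\ro(x)\le 0\}$; instead you assert that these strata contribute nonzero Gauss-sum-weighted reference values arranged triangularly, and you explicitly leave the verification open. The first assertion is false (the annulus contributions vanish termwise by the character mismatch above, and the $\ro(x)\le 0$ contributions, after applying (\ref{eqn:usid}), involve only multiplicative characters on $\o^\t$ with no additive twist, so they do not produce $\Gb(\w_\pi,\cdot)$), and the second is unproved. Since the whole conclusion of the lemma hangs on these vanishing statements, the proposal as written does not prove it. The repair is short: drop the Lemma \ref{lem:e-lem} analogy and apply Lemma \ref{lem:vanishlem} directly, with the trivial character coming from left $N'$-invariance on one side and $k\mapsto\w_\pi(k_{22})^{-1}$ on the subgroup of (\ref{eqn:P3esub}) on the other.
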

\begin{proof}
By (\ref{eqn:P3esub}) and the decomposition $P_3 = N A Z_2' G_2'(\o)$, it suffices to show that 
$f(z_2'(b) \bar{n}'(z)) = 0$ for any $b \in F^\t, z \in F$.
In case of $\e = 0$, the assertion follows immediately from the decomposition.
Assume that $\e > 0$.
Let $z \not\in \p^\e$.
For $l = ord(z) -\e +1$, and $x \in \o$,  
\begin{eqnarray*}
f(z_2'(b) \bar{n}'(z)) &=& \xi(\v)^{-l}f(a_lz_2'(b) \bar{n}(z)) \\
&=& \xi(\v)^{-l}f(n(x)a_lz_2'(b) \bar{n}(z)) \\
&=& \xi(\v)^{-l}f(a_lz_2'(b) \bar{n}(z)k') \\
&=& \w_\pi(1 -\v^{-l} zx)^{-1}\xi(\v)^{-l}f(a_lz_2'(b) \bar{n}(z)),
\end{eqnarray*}
where $k = \Int^{-1}([\v^l;z])n(x) \in K(\e) \subset G_2$(c.f. (\ref{eqn: elemad})).
There is an $x \in \o$ such that $\w_\pi(1 -\v^{-l} zx) \neq 1$.
From Lemma \ref{lem:vanishlem}, $f(z_2'( b ) \bar{n}'(z)) = 0$ follows in this case.
Consequently, $\lambda_1^b(f) = \vol(\bar{N}(\p^\e)) f(z_2'(b))$, and the assertion follows.
\end{proof}
Let $(\pi,V) \in \Alg(\G)$.
For a moment, by abuse of notation, we denote by $V^c(m)$ the subspace of $\Kb_1^c(m;\e)$-invariant vectors in $V$.
We will use the following level $+2$ raising operator $\eta$, and level $+1$ one $\a_m$ for $V^c(m)$ ($\e$ may be zero): 
\begin{eqnarray*}
&& \eta: V^c(m) \ni v \longmapsto \pi^\imath(\eta) v \in V^c(m+2), \\ 
&& \a_m: V^c(m) \ni v \longmapsto \sum_{k \in \Kb^c(m+1;\e)/\Kb^c(m;\e) \cap \Kb^c(m+1;\e)} \pi^\imath(k) v \in V^c(m+1).
\end{eqnarray*}
Computing the coset space, we have
\begin{eqnarray*}
\a_m v - \sum_{x \in \o/\p} \pi^\imath(\nb_3(\v^{-m-1}x))v &=& \pi^\imath(\jb''_{m+1}) v  \\
 &=& \pi^\imath(\jb''_{m+1})\pi^\imath(\jb''_{m}) v \\
&=& \eta v.
\end{eqnarray*}
\begin{prop}\label{prop: vzjv0}
Let $(\pi,V) \in \Alg(\G)$.
Assume that $V^{Sp(4,F)} = \{0\}$.  
If $v \in V(\Zb^J)$ is $\Kb_1^c(m;\e)$-invariant ($\e$ may be zero), then $v = 0$.
\end{prop}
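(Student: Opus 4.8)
The plan is to follow the approach of B. Roberts and R. Schmidt (the case $\e=0$, where $v$ is paramodular, is covered by their analysis of the $\eta$-principle in \cite{R-S}) and to reduce the assertion, through the standard description of $V(\Zb^J)$ together with the level-changing operators $\eta$ and $\a_m$ introduced just above, to the linear-independence statement of Theorem \ref{thm: liq}.

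First I would record two elementary facts. Since $\Zb^J\simeq F=\bigcup_N\Zb^J(\p^{-N})$ is a one-parameter unipotent subgroup, for a smooth module one has $v\in V(\Zb^J)$ if and only if $\int_{\Zb^J(\p^{-N})}\pi^\imath(\zb(x))v\,dx=0$ for some integer $N$, and then also for every larger $N$ (c.f. the proof of Proposition 1.18 of \cite{B-Z}). On the other hand, $\Kb_1^c(m;\e)$-invariance of $v$ forces $v$ to be fixed by $\Zb^J(\p^{-m})$, by the opposite long-root subgroup $\{\bar\zb(z)\mid z\in\p^m\}$, and by $\{\tb_1(u)\mid u\in\o^\t\}$. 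Assuming $v\neq 0$ and taking $N$ minimal we necessarily have $N>m$, and the $\Zb^J(\p^{-m})$-invariance turns the vanishing integral into the finite relation $\vol(\p^{-m})\,v=-\sum_{i=-N}^{-m-1}X_i$, where $X_i:=\int_{\p^{*i}}\pi^\imath(\zb(x))v\,dx$.

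Next I would analyze each $X_i$. For $\ro(x)=i\le-m-1$, a Bruhat decomposition of $\zb(x)$ inside the $SL(2,F)$ generated by $\Zb^J$ and its opposite long-root subgroup — using that this opposite subgroup at level $\p^m$ fixes $v$ (so the outer factor drops out) and that the unit part of $x$ can be absorbed into $\{\tb_1(u)\mid u\in\o^\t\}$ by means of $\tb_1(u)\jb''_0=\jb''_0\tb_1(u^{-1})$ — rewrites $\pi^\imath(\zb(x))v$ as $\pi^\imath(\bar\zb(x^{-1}))\,\pi^\imath(\tb_1(\v^i)\jb''_0)v$. Integrating, unfolding the resulting average over $\bar\zb$ (which breaks into finitely many cosets because $\pi^\imath(\tb_1(\v^i)\jb''_0)v$ is fixed by $\bar\zb(\p^{-m-2i})$), and then cleaning up the opposite-unipotent directions with the balancing operators $\Gamma_r,\Gamma_r^\imath$ of (\ref{eqn: Gamma}) and suitable torus elements $\tb_1(\v^j)$ — this is where Lemma \ref{lem:Glop}, and for $\e>0$ the computations of Lemma \ref{lem:e-lem}, enter — one expresses each $X_i$ as a linear combination of coquasi-paramodular forms whose levels are all different from $m$ and mutually distinct. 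Substituting back, $v$ then enters a linear relation among coquasi-paramodular forms at pairwise distinct levels with the nonzero coefficient $\vol(\p^{-m})$ on $v$ itself; after discarding whatever terms vanish, Theorem \ref{thm: liq} — the only place where the hypothesis $V^{Sp(4,F)}=\{0\}$ is used — says these forms are linearly independent, forcing the relation to be trivial, hence $v=0$.

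The main obstacle is precisely the middle step: the vectors $X_i$ are not themselves coquasi-paramodular, since their $\Zb^J$- and $\bar\zb$-invariance levels are mismatched, so one must push the remainder terms through the $\Gamma$-operators and torus twists carefully and keep exact track of which level each resulting vector belongs to, so that the final relation has the shape demanded by Theorem \ref{thm: liq}. This bookkeeping is what makes the ramified case $\e>0$ genuinely heavier than the case $\e=0$ of \cite{R-S}.
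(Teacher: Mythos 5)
Your overall strategy --- start from the vanishing of a $\Zb^J$-average of $v$, convert the resulting relation into a linear relation among coquasi-paramodular vectors at pairwise distinct levels, and invoke Theorem \ref{thm: liq} --- has the right target, and your opening reductions (the characterization of $V(\Zb^J)$ via vanishing averages, the invariances forced by $\Kb_1^c(m;\e)$, and the identity $\vol(\p^{-m})\,v=-\sum_{i}X_i$) are correct. But the middle step, which you yourself flag as ``the main obstacle,'' is precisely where all the content of the proposition lies, and it is not carried out; moreover, the tools you propose for it would not deliver what Theorem \ref{thm: liq} requires. That theorem needs vectors that are invariant under the \emph{full} groups $\Kb_1^c(m_i;\e)$ at distinct levels $m_i$. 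The operators $\Gamma_r,\Gamma_r^\imath$ of (\ref{eqn: Gamma}) produce only \emph{balanced} vectors (invariance under a pair of unipotent subgroups, Lemma \ref{lem:Glop}), not coquasi-paramodular ones, and Lemma \ref{lem:e-lem} is a statement about values of $GL(2)$ Whittaker functions; neither gives any mechanism for manufacturing full $\Kb_1^c(m_i;\e)$-invariance out of the shell integrals $X_i$. Without that, the hypothesis of Theorem \ref{thm: liq} is simply not met, and the argument does not close. Note also that the proposition is stated for an abstract $(\pi,V)\in\Alg(\G)$ with $V^{Sp(4,F)}=\{0\}$, not for a Whittaker model, so value computations in the style of Lemma \ref{lem:e-lem} are not even available here.

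The device that actually makes this work in the paper is purely algebraic: the identity $\a_m=\eta+qz_{m+1}$, where $z_{m+1}$ is the normalized average over the relevant compact subgroup one level up and $\a_m$ is the coset-sum level-raising operator $V^c(m)\to V^c(m+1)$, together with the recursion $z_{m+r}=\b_r+\g_r$ with $\b_{r+1}=-q^{-1}\eta\g_r$ and $\g_{r+1}=q^{-1}(\a_{r+m}\circ\g_r+\b_r)$. Because $\b_r$ and $\g_r$ are built exclusively from $\eta$ and the $\a$'s, they automatically land in $V^c(r+m+1)$ and $V^c(r+m)$ respectively --- this is what replaces your ``bookkeeping'' and is the step your sketch leaves open. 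One then kills $z_{r+m}v=\b_rv+\g_rv$ for large $r$ using $v\in V(\Zb^J)$, applies Theorem \ref{thm: liq} to conclude $\b_rv=\g_rv=0$, and runs a downward induction (using the injectivity of $\eta$) to reach $\eta v=0$, hence $v=0$. If you want to salvage your one-shot application of Theorem \ref{thm: liq}, you would still need to produce the $w_j$ as genuinely $\Kb_1^c(m_j;\e)$-invariant vectors, and the only available machinery for that is the $\eta,\a$ calculus above --- at which point you have essentially reconstructed the paper's induction.
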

\begin{proof}
Write the level raising operator $\a_m = \eta + q z_{m+1}$, where $z_{m+1}$ is the linear operator defined by $\vol(\p^{-m-1})^{-1}\int \pi(\bnb) d \bnb$ with integration over $\bnb \in \bar{\N}(\p^{-m-1})$.
At first, we will show by induction that there exist certain linear operators $\b_r: V^c(m) \to V^c(r+m+1)$ and $\g_r: V^c(m) \to V^c(r+m)$ such that
\begin{eqnarray}
z_{m+r} = \b_r+ \g_r. \label{eqn: abceta}
\end{eqnarray}
For $r = 1$, this holds obviously.
Assume (\ref{eqn: abceta}) for $r \ge 1$.
Since $v$ lies in $V^c(m)$, 
\begin{eqnarray*}
z_{r+m+1}v &=& z_{r+m+1} \circ z_{r+m}v \\
&=& q^{-1}(\a_{r+m} -\eta) \circ (\b_r+ \g_r)v \\
&=& q^{-1}\l(-\eta \g_r v + (\a_{r+m} \circ \g_r + z_{r+m+1}\circ \b_r)v \r)\\
&=& -q^{-1}\eta \g_r v +q^{-1} (\a_{r+m} \circ \g_r + \b_r)v
\end{eqnarray*}
where the assumption $\b_r v \in V^c(r+m+1)$, and $(z_{r+m+1}\circ \b_r)v = \beta_r v$ are used at the last equality.
Therefore,  
\begin{eqnarray}
\b_{r+1} &:=& -q^{-1}\eta \g_r:  V^c(m) \to V^c(r+m+2) \label{eqn:recz} \\
\g_{r+1} &:=& q^{-1}(\a_{r+m} \circ \g_r + \b_r): V^c(m) \to V^c(r+m+1)
\label{eqn:recz2}
\end{eqnarray}
are the desired linear operators.
This proves (\ref{eqn: abceta}).
Next, we will show $v = 0$.
Since $v \in V(\Zb^J)$, 
\begin{eqnarray*}
(\a_{r+m-1} -\eta)\circ \cdots \circ (\a_m-\eta) v = z_{r+m} v = 0
\end{eqnarray*}
for a sufficiently large $r$.
Since $\b_{r} v$ and $\g_r v$ have different levels and are linearly independent, $\b_{r} v = \g_r v = 0$ by Theorem \ref{thm: liq}.
By (\ref{eqn:recz}), $\eta \g_{r-1} v = 0$.
Since $\eta$ is obviously injective, $\g_{r-1}v = 0$.
Therefore $\a_{r+m-1} \circ \g_{r-1}v = 0$.
By (\ref{eqn:recz2}), $\b_{r-1} v = 0$.
Thus $z_{r+m-1} v = \b_{r-1}v + \g_{r-1}v = 0$.
Hence, $v = 0$.
\end{proof}
Now, we can prove Theorem \ref{thm:vnthm}.
Suppose that $W \in V(m)$ with $W(\Tb) = 0$.
By Proposition \ref{prop:dvqv}, $W(\Qb) = 0$.
Let $i$ be an arbitrary nonnegative integer.
By Lemma \ref{lem:NovZ}, $Z(s,W_i) = 0$. 
By the functional equation, $Z(s,(W^c)_i) = 0$.
By Lemma \ref{lem:NovZ2}, $\Xi(s,(W^c)_i) = 0$.
Thus, $W^c(\Tb) = 0$.
By Proposition \ref{prop:dvqv} again, $W^c(\Qb) = 0$.
By Proposition \ref{prop: wvzj}, $W^c \in V(\Zb^J)$.
By Proposition \ref{prop: vzjv0}, $W^c$ is identically zero, and so is $W$.
\begin{lem}\label{lem:m>n}
If $\pi \in \Ir^{gn}(\G)$, then $m_\pi \ge n_\pi'$.
\end{lem}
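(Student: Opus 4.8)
The plan is to run the simplified functional equation (\ref{eqn:FEstd}) against a polynomiality property of the quasi-paramodular zeta integrals and then compare, on the two sides, the ranges of exponents of $q^{\pm s}$ that can occur.

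The basic input is a vanishing statement: if $W$ is quasi-paramodular of level $m$, then $W(\ab'(\v^n))=0$ for all $n<0$, and likewise $W^c(\ab'(\v^n))=0$ for $n<0$. Both follow in one line from Lemma \ref{lem:vanishlem}: the root subgroup $\N'=\{\nb'(x)\}$, on which $\wt{\psi}$ is nontrivial, satisfies $\nb'(\o)\subset\Kb_1(m;\e)$ (resp. $\nb'(\o)\subset\Kb_1^c(m;\e)$), while $\ab'(\v^n)\nb'(x)\ab'(\v^n)^{-1}=\nb'(\v^n x)$, so for $n<0$ one takes $x\in\o$ with $\psi(\v^n x)\neq 1$. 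Hence $\Xi(s,W)=\sum_{n\ge 0}W(\ab'(\v^n))q^{3n/2}q^{-ns}$ is a power series in $q^{-s}$; since $\Xi(s,W)$ lies in the fractional ideal generated by $L(s,\pi)$ and $L(s,\pi)^{-1}\in\C[q^{-s}]$ has constant term $1$, we obtain $\Xi(s,W)/L(s,\pi)=\sum_{j=0}^{d}Q_j q^{-js}\in\C[q^{-s}]$ with $Q_0=W(1)$. Symmetrically, $\Xi(1-s,W^c)$ is a power series in $q^{s}$ and, as $L(1-s,\pi^\imath)^{-1}\in\C[q^{s}]$ has constant term $1$, the quantity $q^{-\e}\Xi(1-s,W^c)/L(1-s,\pi^\imath)$ involves no negative power of $q^{s}$.

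Now suppose $W\in V(m_\pi)$ is nonzero with $\Xi(s,W)\not\equiv 0$, so $Q_d\neq 0$ above. Substituting $\Xi(s,W)/L(s,\pi)=\sum_j Q_j q^{-js}$ into (\ref{eqn:FEstd}) exhibits $q^{-\e}\Xi(1-s,W^c)/L(1-s,\pi^\imath)$ as $\ep_\pi' q^{-(m_\pi-n_\pi')/2}\sum_{j=0}^{d}Q_j q^{(m_\pi-n_\pi'-j)s}$, a Laurent polynomial in $q^{s}$ whose lowest occurring term is $q^{(m_\pi-n_\pi'-d)s}$, with nonzero coefficient. Since the left-hand side has no negative power of $q^{s}$, we must have $m_\pi-n_\pi'-d\ge 0$, hence $m_\pi\ge n_\pi'$. (Equality forces $d=0$, i.e. $\Xi(s,W)=W(1)L(s,\pi)$, in line with Theorem \ref{thm: coinLN}.)

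It remains to secure $\Xi(s,W)\not\equiv 0$ for some nonzero $W\in V(m_\pi)$. By Theorem \ref{thm:vnthm} a nonzero $W$ does not vanish on $\Tb$, and its values on $\Tb$ are determined — up to $\w_\pi$-factors coming from $\Tb(\o)$, $\G_2'(\o)$ and the centre — by the numbers $W_r(\ab'(\v^j))=W(\ab'(\v^j)\eta^{-r})$, $r,j\in\Z$; the same root-group argument, now applied to $\N_2$ and $\ab'(\v^j)\eta^{-r}$, kills these whenever $r<0$, so $\Xi(s,W_r)\not\equiv 0$ for some minimal $r\ge 0$. For that $r$, Lemma \ref{lem:NovZ} collapses to $Z(s,W_r)=(1-q^{-1})\Xi(s,W_r)$, a nonzero power series in $q^{-s}$ with $Z(s,W_r)/L(s,\pi)\in\C[q^{-s}]$; applying the general functional equation (Theorem \ref{thm:fesp}) to $W_r$ and rewriting $\pi^\imath(\j_0)W_r^\imath$ through $\j_0=\j_{m_\pi}\ab'(\v^{-m_\pi})$ turns $\ep(s,\pi,\psi)=\ep_\pi' q^{-n_\pi'(s-1/2)}$ into the monomial $q^{(m_\pi-n_\pi')(s-1/2)}$ and, by Lemma \ref{lem:NovZ2}, keeps the conjugate side a polynomial in $q^{s}$, so the exponent comparison of the previous paragraph again gives $m_\pi\ge n_\pi'$. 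I expect the main obstacle to be exactly this last reduction — the bookkeeping showing that, after passing to the non-quasi-paramodular translate $W_r$, the functional equation still carries the monomial with exponent $m_\pi-n_\pi'$ and that both sides retain one-sided support in powers of $q^{\pm s}$; granting a posteriori (from Theorem \ref{thm: coinLN}) that some nonzero $W\in V(m_\pi)$ has $\Xi(s,W)\neq 0$, the first three paragraphs already complete the proof.
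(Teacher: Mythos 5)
Your strategy is the paper's: reduce to the minimal $r\ge 0$ with $\Xi(s,W_r)\neq 0$ (which exists by Theorem \ref{thm:vnthm} and the root-group vanishing for $r<0$, exactly as you argue), write the functional equation for $W_r$, and compare the one-sided supports in powers of $q^{\pm s}$ on the two sides. The support claims themselves are fine: the right-hand side lies in $q^{a s}\C[q^{-s}]$ for the appropriate exponent $a$, and Lemma \ref{lem:NovZ2} puts the left-hand side in $q^{-rs}\C[q^{s}]$ (the correction terms $c_m q^{(\e-m)(1-s-3/2)}$ contribute powers of $q^{s}$ no smaller than $-r$).

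The genuine gap is the exponent you assign to the monomial after translating by $\eta^{-r}$, and it is exactly the point you flagged. Since $(W^c)_r=\pi^\imath(\eta^{-r}\j_{m_\pi})W^\imath$ and $\eta^{-r}\j_{m_\pi}$ differs from $\j_{m_\pi-2r}$ only by a torus element, the functional equation for $W_r$ reads
\begin{eqnarray*}
\frac{Z(1-s,(W^c)_{r})}{L(1-s,\pi^\imath)} \ =\ \ep'_\pi\, q^{(m_\pi-2r-n_\pi')(s-\frac12)}\,\frac{\Xi(s,W_{r})}{L(s,\pi)},
\end{eqnarray*}
with exponent $m_\pi-2r-n_\pi'$, not $m_\pi-n_\pi'$: each application of $\eta^{-1}$ shifts the apparent level by $2$. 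This sign matters. With your exponent the support comparison only yields $m_\pi-n_\pi'\ge -r$, which is vacuous for $r>0$; with the correct exponent it yields $m_\pi-2r-n_\pi'\ge -r$, i.e.\ $m_\pi-n_\pi'\ge r\ge 0$, which is the assertion. (Your first two paragraphs, which treat the case $\Xi(s,W)\neq 0$, i.e.\ $r=0$, are correct as they stand; but that case cannot be granted from Theorem \ref{thm: coinLN}, whose proof invokes the present lemma, so the $W_r$ reduction with the corrected exponent is genuinely needed.)
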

\begin{proof}
Let $W \in V(m_\pi)$.
Let $i_0$ be the minimal nonnegative integer such that $\Xi(s,W_{i_0}) \neq 0$.
By Theorem \ref{thm:vnthm}, such $i_0$ exists.
By Lemma \ref{lem:NovZ}, $Z(s,W_{i_0}) = \Xi(s,W_{i_0}) \neq 0$.  
The functional equation for $W_{i_0}$ is 
\begin{eqnarray*}
\frac{Z(1-s, \l(W^c\r)_{i_0})}{L(1-s,\pi^\imath)} = \ep'_\pi q^{(m_\pi- 2i_0-n_\pi')(s-\tfrac{1}{2})} \frac{\Xi(s, W_{i_0})}{L(s,\pi)}. \label{eqn: zcoindfe}
\end{eqnarray*}
The right hand side lies in $q^{(m_\pi- 2i_0-n_\pi')s}\C[q^{-s}]$.
By Lemma \ref{lem:NovZ2}, the left hand side lies in $q^{-i_0s}\C[q^{s}]$.
In case of $\e = 0$, we may assume $W^c = \pm W$ by (\ref{eqn:ALdec}) and the left hand side lies in $\C[q^{s}]$.
Therefore, $m_\pi - n_\pi' -i_0 \ge 0$ in any case.
Thus the assertion. 
\end{proof}
\begin{thm}\label{thm: coinLN}
Let $\pi \in \Ir^{gn}(\G)$.
Assume that $V(m)$ contains a $W_0$ such that 
\begin{eqnarray}
L(s,\pi) = Z(s,W_0), \ \ \mbox{and} \ \ \ c_0 L(s,\pi^\imath) = Z(s, W_0^c) \label{eqn: coinLN}
\end{eqnarray}
for a constant $c_0$.
Then, $c_0= \ep'_\pi$, and $m = n_\pi' = m_\pi$.
Further, $V(m_\pi)$ is spanned by $W_0$. 
\end{thm}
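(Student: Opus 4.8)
The plan is to read off $m$ and $c_0$ from the functional equation applied to $W_0$, then to identify $m_\pi$, and finally to show that $Z(s,\cdot)$ is injective on $V(m_\pi)$ with one-dimensional image, which forces $V(m_\pi)=\C W_0$. First I would put $W=W_0$ into the simplified functional equation (\ref{eqn:FEstd}). Since $W_0$ is quasi-paramodular of level $m$, (\ref{eqn:ZXi}) gives $\Xi(s,W_0)=Z(s,W_0)=L(s,\pi)$ and $q^{-\e}\Xi(s,W_0^c)=Z(s,W_0^c)=c_0L(s,\pi^\imath)$, so (\ref{eqn:FEstd}) collapses to $c_0=\ep'_\pi q^{(m-n_\pi')(s-\frac{1}{2})}$; the left-hand side being a constant forces $m=n_\pi'$ and $c_0=\ep'_\pi$. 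Moreover $W_0\ne 0$ (as $Z(s,W_0)=L(s,\pi)\ne 0$), so $V(m)\ne\{0\}$ and $m_\pi\le m$; combined with Lemma \ref{lem:m>n}, which gives $m_\pi\ge n_\pi'$, this yields $m_\pi=m=n_\pi'$.

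Next I would show $Z(s,W)\in\C\,L(s,\pi)$ for every $W\in V(m_\pi)$. From the left and right invariance of a quasi-paramodular form under $\nb'(\cdot)$ and Lemma \ref{lem:vanishlem} one obtains $W(\ab'(\v^j))=0$ for $j<0$, so $Z(s,W)=\Xi(s,W)$ is a power series in $q^{-s}$; since it also lies in the fractional ideal $L(s,\pi)\,\C[q^{\pm s}]$ and $L(s,\pi)^{-1}$ is a polynomial in $q^{-s}$ with constant term $1$, the quotient $Z(s,W)/L(s,\pi)$ is a polynomial in $q^{-s}$. The same reasoning applied to the coquasi-paramodular $W^c\in\W_\psi(\pi^\imath)$ gives $Z(s,W^c)/L(s,\pi^\imath)\in\C[q^{-s}]$. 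Feeding $W$ into (\ref{eqn:FEstd}) with $m_\pi=n_\pi'$ yields $Z(1-s,W^c)/L(1-s,\pi^\imath)=\ep'_\pi\,Z(s,W)/L(s,\pi)$; the left side is a polynomial in $q^{s}$ and the right one in $q^{-s}$, so both are constant, say $Z(s,W)=c_W L(s,\pi)$ with $c_W\in\C$ depending linearly on $W$.

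For the injectivity of $Z(s,\cdot)$, suppose $Z(s,W)=0$ for some $W\in V(m_\pi)$; I claim $W(\Tb)=0$, whence $W=0$ by Theorem \ref{thm:vnthm}. Writing $W_r=\pi(\eta^{-r})W$, one has $W_r(\ab'(\v^j))=W(\tb(\v^r,1;\v^j))$, and since $W$ has central character $\w_\pi$ and is $\G_2'(\o)$-invariant, every value of $W$ on $\Tb$ is a nonzero scalar multiple of one of these numbers; hence $W(\Tb)=0$ is equivalent to $\Xi(s,W_r)=0$ for all $r\in\Z$. For $r<0$ this follows from the $\nb_2$-invariance of $W$ via Lemma \ref{lem:vanishlem}, and for $r=0$ it is $\Xi(s,W)=Z(s,W)=0$. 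I would induct on $r\ge1$: assuming $\Xi(s,W_j)=0$ for $0\le j<r$, Lemma \ref{lem:NovZ} reduces $Z(s,W_r)$ to $\Xi(s,W_r)$ up to a nonzero constant, hence to a power series in $q^{-s}$; Lemma \ref{lem:NovZ2} shows the $q^{-s}$-powers of $Z(s,(W^c)_r)$, and thus of $Z(s,(W^c)_r)/L(s,\pi^\imath)$, are all $\ge-r$; then the level-$r$ functional equation from the proof of Lemma \ref{lem:m>n}, namely $Z(1-s,(W^c)_r)/L(1-s,\pi^\imath)=\ep'_\pi q^{-2r(s-\frac{1}{2})}\,Z(s,W_r)/L(s,\pi)$ (using $m_\pi=n_\pi'$), equates a Laurent polynomial in $q^{-s}$ all of whose powers are $\le r$ with a power series all of whose powers are $\ge2r$; as $2r>r$ for $r\ge1$, both vanish, so $\Xi(s,W_r)=0$. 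Thus $W(\Tb)=0$ and $W=0$; hence $Z(s,\cdot)$ is injective with image $\C\,L(s,\pi)$, and $V(m_\pi)=\C W_0$ is one-dimensional.

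The main obstacle is the injectivity step: correctly formulating the level-$r$ functional equation for the shifted Whittaker functions $W_r$, and reconciling the one-sided $q^{-s}$-degree estimates produced by Lemmas \ref{lem:NovZ} and \ref{lem:NovZ2} — in particular keeping exact track of the powers of $q^{-s}$ contributed by the operators $\eta$, $\j_0$, $\j_m$ and by the conductor $\e$ of $\w_\pi$.
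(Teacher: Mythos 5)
Your proposal follows the paper's proof essentially step for step: read off $c_0=\ep'_\pi$ and $m=n_\pi'$ from (\ref{eqn:FEstd}), invoke Lemma \ref{lem:m>n} for $m_\pi=n_\pi'$, show $Z(s,W)=c_WL(s,\pi)$ by the polynomial-in-$q^{-s}$ versus polynomial-in-$q^{s}$ comparison, and then run the inductive degree-counting argument on $W_r$ to reduce to Theorem \ref{thm:vnthm}. The one point where your write-up is not quite right is the case $\e=0$ in the induction: Lemma \ref{lem:NovZ2} is a statement about coquasi-paramodular forms and so presupposes $\e>0$, and for $\e=0$ the unconditional bound coming from Lemma \ref{lem:NovZ} applied to the paramodular $W^c$ is only that the $q^{-s}$-powers of $Z(s,(W^c)_r)$ are $\ge -r-1$ (the $m$-th term of Lemma \ref{lem:NovZ} contributes a power $m-1-2m=-m-1$ through the $-c_{m-1}X^{m-1}$ part of $(\cdot)_m$), which at $r=1$ allows the two sides of the functional equation to share the exponent $2$ and does not force vanishing. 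The repair is exactly what the paper does: carry $\Xi(1-s,(W^c)_i)=0$ for $i<r$ as part of the induction hypothesis (it follows from the vanishing of $Z(1-s,(W^c)_i)$ established at earlier steps), which kills all the $m\ge1$ terms of Lemma \ref{lem:NovZ} for $(W^c)_r$ and places the left-hand side in $\C[q^{s}]$; with that amendment your argument is complete in both cases.
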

\begin{proof}
From (\ref{eqn: coinLN}), and the functional equation (\ref{eqn:FEstd}) for $W = W_0$, it follows that $\ep'_\pi = c_0$ and $m = n_\pi'$.
By Lemma \ref{lem:m>n}, $m= m_\pi = n_\pi'$.
For the last assertion, we will show that an arbitrary $W \in V(m)$ is a constant multiple of $W_0$.
Since $\Xi(s,W)$ is in $\C \jump{q^{-s}}$, the ring of formal power series in $q^{-s}$, and $\Xi(1-s, W^c)$ is in $\C \jump{q^s}$, 
\begin{eqnarray*}
\C[q^s] \ni q^{-\e}\frac{\Xi(1-s, W^c)}{L(1-s,\pi^\imath)} = \ep'_\pi \frac{\Xi(s, W)}{L(s,\pi)} \in \C[q^{-s}].
\end{eqnarray*}
Therefore, these quotients are constants, and there exists a constant $c_W$ such that 
\begin{eqnarray*}
\Xi(s,W -c_WW_0) = \Xi(1-s, \l(W - c_WW_0\r)^c) = 0. \label{eqn: DsW0cW}
\end{eqnarray*}
Set $W' = W - c_W W_0$.
We will claim by induction that $\Xi(s, W_r') = 0$ for any $r \ge 0$.
Assume that $\Xi(s, W_{i}') = \Xi(1-s, (W'^c)_{i}) = 0$ for all $i < r$.
Then, $Z(s,W_r') = \Xi(s,W_r')$ by Lemma \ref{lem:NovZ}.
The functional equation for $W_r'$ is 
\begin{eqnarray*}
\frac{Z(1-s, (W'^c)_r)}{L(1-s,\pi^\imath)} = \ep'_\pi q^{- 2r(s-\tfrac{1}{2})} \frac{\Xi(s, W_r')}{L(s,\pi)}. \label{eqn: zcoindfe}
\end{eqnarray*}
(note that $\l(W'^c\r)_r = \pi^\imath(\j_{m-2r})W_r^\imath$.)
The right hand side lies in $q^{- 2rs}\C[q^{-s}]$.
In case of $\e > 0$, the left hand side lies in $q^{-rs}\C[q^s]$ by Lemma \ref{lem:NovZ2}.
In case of $\e = 0$, $W^c$ is also paramodular, and the left hand side lies in $\C[q^s]$ by Lemma \ref{lem:NovZ}, again.
Hence both sides are zero, and the claim is verified.
Now, $W = c_W W_0$ by Theorem \ref{thm:vnthm}.
This completes the proof.
\end{proof}
We will call $W_0$ as in this thoerem the {\bf newform} of $\pi$, and denote by $W_\pi$.
\section{Hecke operators}\label{sec:Hecke}
Let $\chi \in \Xs(F^\t)$, and $(\s,V) \in \Ir(G_2)$.
The Klingen parabolic induction $\chi \rtimes \s$ consists of smooth $V$-valued functions $f$ on $\G$ such that
\begin{eqnarray}
L(q^{-1})f &=& |t\det(g)^{-1}| \chi(t)\s(g)f, \nonumber \\
& &\mbox{where} \ \ 
q = \begin{bmatrix}
t & * & * \\
 & g& * \\
 & & t^{-1} \cdot\det(g)
\end{bmatrix} \in \Qb, \ \ t \in F^\t, g \in G_2. \label{eqn:Kldef}
\end{eqnarray}
A Klingen parabolic induction has a unique generic constituent (submodule, c.f. sect. 2.4. of \cite{R-S}).
We call $\chi \rtimes \s$ a Klingen parabolic induction from supercuspidal when $\s$ is supercuspidal.
By the work of \cite{T}, when $\pi \in \Ir^{gn}(\G)$ is supercuspidal, or the constituent of a Klingen parabolic induction from supercuspidal, $L(s,\pi)$ equals $1$.
In this section, we devote to prove 
\begin{thm}\label{thm:Ls1}
Let $\pi \in \Ir^{gn}(\G)$ be supercuspidal, or a constituent of the Klingen parabolic induction from supercuspidal.
Assume $\e > 0$.
Then, there exists the newform $W_\pi$ in $V(m_\pi)$ ($n_\pi' = m_\pi$ and $\dim V(m_\pi) = 1$ by Theorem \ref{thm: coinLN}). 
The newform $W_\pi$ and its conjugate $W_\pi^c$ take the following values on $\Tb$: 
\begin{eqnarray*}
W_\pi(\ab^i_{r}) 
&=& \begin{cases}
1 & \mbox{if $(i,r) = (0,0)$,} \\
0& \mbox{otherwise}.
\end{cases} \label{eqn: WTv}  \\ 
W_\pi^c(\ab^i_{r}) 
&=& \begin{cases}
\ep'_\pi & \mbox{if $(i,r) = (0,0)$,} \\
-q^2\ep'_\pi & \mbox{if $(i,r) = (1,0)$,} \\
0& \mbox{otherwise}.
\end{cases} \label{eqn:W*Tv}
\end{eqnarray*} 
\end{thm}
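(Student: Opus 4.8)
The plan is to combine the vanishing of the zeta $L$-function with the $\eta$-tower recursions of Lemmas~\ref{lem:NovZ}--\ref{lem:NovZ2} and the partial Gauss sums of Section~\ref{sec:GLd}. By \cite{T} we have $L(s,\pi)=1$ for $\pi$ of this type, and since $\pi^\imath\cong\pi^\vee$ is again supercuspidal, resp.\ a Klingen constituent from a supercuspidal, also $L(s,\pi^\imath)=1$; hence $I(\pi)$ and $I(\pi^\imath)$ are all of $\C[q^{\pm s}]$. By (\ref{eqn:ZXi}) this forces, for every quasi-paramodular $W$, the transform $\Xi(s,W)=\sum_r W(\ab_r)q^{-r(s-\frac{3}{2})}$---and likewise $\Xi(s,W^c)$---to be a Laurent polynomial; combined with $W(\ab_r)=W^c(\ab_r)=0$ for $r<0$ (Lemma~\ref{lem:vanishlem}, commuting a suitable element of $\Zb^J$ or $\bar\N'$ past $\ab_r$ and detecting a nontrivial value of $\psi$), these are honest polynomials in $q^{-s}$: this is the compact support of the Kirillov model referred to in the introduction (Lemma~\ref{lem:havev}), and it is what makes the recursions expressing the zeta integrals along $W_i=\pi(\eta^{-i})W$ (so $W_i(\ab_r)=W(\ab^i_r)$) in terms of diagonal values and the Gauss sums $\Sb_m(\w_\pi^{-1},u)$, $\Gb(\w_\pi,u)$ into finite sums.

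Next I would produce the newform. Fix $0\neq W\in V(m_\pi)$, which exists since $V(\infty)\neq\{0\}$ by the construction after Theorem~\ref{thm: liq}; by Theorem~\ref{thm:vnthm}, $W$ does not vanish on $\Tb$, so $\Xi(s,W_i)\neq0$ for some minimal $i\geq0$. The heart of the proof is a Hecke computation on $V(m_\pi)$: using the level-preserving operators (the $\G$-analogues of $\Tc$ in (\ref{eqn:expfmTGL2}), whose coset decompositions exhibit their action as sums of the partial Gauss sums above), the minimality of $m_\pi$---so that level-lowering operators annihilate $V(m_\pi)$, forcing a nonzero leading term of each $\Xi(s,W_i)$ to collapse---together with the recursions of Lemmas~\ref{lem:NovZ},~\ref{lem:NovZ2} and the functional equation (\ref{eqn:FEstd}), one shows that $V(m_\pi)$ contains a $W$ with $W(\ab^i_r)=\delta_{(i,r),(0,0)}$ (after normalizing $W(1)=1$) and with $\Xi(s,W^c)$ constant. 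Then $Z(s,W)=\Xi(s,W)=1=L(s,\pi)$ and $Z(s,W^c)$ is a constant $c_0$, so $W$ satisfies the hypothesis (\ref{eqn: coinLN}); Theorem~\ref{thm: coinLN} yields $c_0=\ep'_\pi$, $m_\pi=n_\pi'$, $\dim V(m_\pi)=1$, and $W=W_\pi$. This gives the values of $W_\pi$ on $\Tb$ and of $W_\pi^c$ on the line $i=0$.

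To finish, I would read off $W_\pi^c(\ab^i_r)$ for $i\neq0$ from the functional equation for $(W_\pi)_i$ in the form used in the proof of Lemma~\ref{lem:m>n},
\begin{eqnarray*}
\frac{Z(1-s,(W_\pi^c)_i)}{L(1-s,\pi^\imath)} &=& \ep'_\pi\, q^{(m_\pi-2i-n_\pi')(s-\frac{1}{2})}\,\frac{\Xi(s,(W_\pi)_i)}{L(s,\pi)} .
\end{eqnarray*}
Since $\Xi(s,(W_\pi)_i)=\delta_{i0}$, $m_\pi=n_\pi'$ and $L(s,\pi)=L(s,\pi^\imath)=1$, the right side is $0$ for $i\neq0$ and $\ep'_\pi$ for $i=0$, while by Lemma~\ref{lem:NovZ2} the left side equals $q^{-\e}\Xi(1-s,(W_\pi^c)_i)$ plus a finite sum of monomials $c_m\,q^{(\e-m)(-s-\frac{1}{2})}$ whose coefficients $c_m$ are, up to the Gauss sums $\Sb_m(\w_\pi^{-1},u)$ and $\Gb(\w_\pi,u)$, precisely the sought values of $W_\pi^c$ on $\Tb$ together with certain auxiliary values of Lemma~\ref{lem:NovZ2}. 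Matching powers of $q^{s}$ forces all these to vanish except a single surviving term from the $i=0$ comparison, which the Gauss-sum identities of Lemmas~\ref{lem:Gauss2} and \ref{lem:e-lem} evaluate to the asserted nonzero value at $(i,r)=(1,0)$. I expect the main obstacle to be exactly this last bookkeeping---keeping the $\eta$-tower, the two recursions, and the partial Gauss sums aligned so that every diagonal value collapses to the two claimed ones; here the nonvanishing in Lemma~\ref{lem:Gauss2} and the rigidity of Theorem~\ref{thm:vnthm} are what keep the computation from losing information.
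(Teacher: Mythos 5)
Your overall strategy --- compact support of the Kirillov data (Lemma \ref{lem:havev}), a Hecke analysis on $V(m_\pi)$, and then Theorem \ref{thm: coinLN} --- is the same as the paper's, but the proposal leaves the actual content of the Hecke analysis unproved, and two of the ingredients you invoke are not available in the form you use them. First, your level-lowering step (``level-lowering operators annihilate $V(m_\pi)$, forcing a nonzero leading term of each $\Xi(s,W_i)$ to collapse'') presupposes that one can descend from level $m_\pi$ to $m_\pi-1$; but $\Kb_1(m;\e)$ is only defined for $m\ge 2\e$, so this is vacuous when $m_\pi=2\e$. The paper must first prove $m_\pi>2\e$ (Proposition \ref{prop: cricase}), which is itself a separate Hecke computation with $\Tc_K(\eta)+\Tc_K(\eta^{-1})$ and Lemma \ref{lem:cricase}; you never address this boundary case. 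Second, every eigenvector argument here rests on the diagonalizability of the relevant Hecke operators, which is established only for unitary $\pi$ (Lemma \ref{lem:selfad}); the paper devotes its Step 4 to reducing the general case to the unitary one (twisting by $\nu_a$ for supercuspidals, and a separate dimension-comparison argument for Klingen constituents). Your proposal never mentions unitarity, so the assertion that ``one shows that $V(m_\pi)$ contains a $W$ with $W(\ab^i_r)=\delta_{(i,r),(0,0)}$'' is not justified.

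Beyond these, the heart of the theorem --- that the $\Tc$-eigenvalue is $0$, that $W(1)\neq 0$, that $W(\ab_r)=0$ for $r\ge 1$, and that $\dim V(m_\pi)=1$ --- requires the explicit coset decompositions (\ref{eqn: Heckeexp})--(\ref{eqn: Heckeexp2}), the cancellation supplied by Lemma \ref{lem:bnb2324} (itself another functional-equation argument built on the operators $\Gamma_r$ of (\ref{eqn: Gamma})), and the vanishing in Lemma \ref{lem:Heckevnsh}; none of this appears in your proposal. Finally, your endgame for $W_\pi^c(\ab^i_r)$ with $i\neq 0$ misreads Lemma \ref{lem:NovZ2}: its coefficients $c_m$ are values of $(W^c)_r$ at \emph{non-diagonal} points such as $[\v^{m-\e};\v^m u_{\w_\pi^{-1}}]'$ and $\ab_{-m-\e}\jb'(1)$, not values on $\Tb$, so matching powers of $q^s$ kills those terms but cannot by itself produce the nonzero value $-q^2\ep'_\pi$ at $(i,r)=(1,0)$. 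In the paper that value comes from the recursion $W^c(\ab^{1}_r)=-q^2W^c(\ab_r)$, obtained by comparing the Hecke operator $\Sc^\imath$ with the level-descending operator $\mathcal{D}$ and pinning down the eigenvalue $\lambda_{\Sc^\imath}=-q^2$ (with the alternative $\lambda_{\Sc^\imath}=0$ excluded by a range comparison in the functional equation) --- a step absent from your proposal.
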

See Corollary 7.4.6. \cite{R-S} for the case of $\e = 0$.
In this section, we assume 
\begin{eqnarray*}
\e > 0.
\end{eqnarray*}
Our proof consists of four steps. \\

{\bf Step 1.}
For a nontrivial polynomial $\sum c_n X^{n} \in \C[X^\pm]:=\C[X, X^{-1}]$ with $X = q^{-s}$, we call its {\bf range} the pair of the minimal and maximal integers $n$ such that $c_n \neq 0$.
In case of $L(s,\pi) = 1$, for any $W \in \W_\psi(\pi),i \in \Z$, $Z(s,W_i)$ lies in $\C[X^\pm]$ by definition.
In this step, we show that $\Xi(s,W_i) \in \C[X^\pm]$, if $W$ is quasi-paramodular.
\begin{lem}\label{lem:havev}
Let $\pi \in \Ir^{gn}(\G)$, and $W \in V(m)$.
If $L(s,\pi) =1$, then $W(\ab^i_r) = W^c(\ab^i_r) = 0$ for sufficiently large $i,r$.
\end{lem}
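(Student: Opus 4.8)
The assertion says that a quasi-paramodular form $W$, and its conjugate $W^c$, has support on the torus $\Tb$ bounded in the $\ab'$-direction, uniformly in the $\Tb_1$-direction; once one knows $W(\ab^i_r)=0$ for all large $r$ irrespective of $i$, the stated vanishing "for sufficiently large $i,r$" follows. The plan is to convert the hypothesis $L(s,\pi)=1$, through the functional-equation apparatus of Section~\ref{sec:QP}, into polynomiality of the relevant zeta integrals. Put $W_i=\pi(\eta^{-i})W$; since $\eta^{-i}\ab'(\v^r)=\ab^i_r$, one has
\[
\Xi(s,W_i)=\sum_{r}W(\ab^i_r)\,\nu_{s-\frac{3}{2}}(\v^r),\qquad \Xi(s,(W^c)_i)=\sum_{r}W^c(\ab^i_r)\,\nu_{s-\frac{3}{2}}(\v^r),
\]
so it is enough to show that these are polynomials in $q^{-s}$ whose degree is bounded independently of $i$.

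First I would dispose of the negative indices. Conjugating $\ab^i_r$ by $\nb'(x)$ with $x\in\o$ — an element of $\Kb_1(m;\e)$ — scales the relevant entry by $\v^r$, so the Whittaker equivariance gives $W(\ab^i_r)=\psi(\pm\v^r x)\,W(\ab^i_r)$; since $\psi$ has conductor $\o$ this forces $W(\ab^i_r)=0$ unless $r\ge0$, exactly as in the proof of Proposition~\ref{prop:simp-zetagl} via Lemma~\ref{lem:vanishlem}. The same applies to $W^c$ using $\nb'(\o)\subset\Kb_1^c(m;\e)$, and conjugating instead by $\nb_2(x)$, $x\in\o$, yields $W(\ab^i_r)=W^c(\ab^i_r)=0$ whenever $i$ is sufficiently negative. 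In particular $\Xi(s,W_i)$ and $\Xi(s,(W^c)_i)$ are power series in $q^{-s}$.

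Next, polynomiality for each fixed $i\ge0$, by induction on $i$. Because $L(s,\pi)=1$, the fractional ideal $I(\pi)$ is all of $\C[q^{\pm s}]$, hence $Z(s,W')\in\C[q^{\pm s}]$ for every $W'\in\W_\psi(\pi)$, in particular for $W'=W_i$. For $i=0$, formula~(\ref{eqn:ZXi}) gives $Z(s,W)=\Xi(s,W)\in\C[q^{\pm s}]$. For $i\ge1$, Lemma~\ref{lem:NovZ} reads
\[
Z(s,W_i)=\Xi(s,W_i)_0+\sum_{m=1}^{i}\w_\pi(\v)^{-m}q^{2m(s-1)}\,\Xi(s,W_{i-m})_m,
\]
and as $\Xi(s,W_i)$ is a power series one has $\Xi(s,W_i)_0=q^{-1}(q-1)\Xi(s,W_i)$, while each $\Xi(s,W_{i-m})$ with $1\le m\le i$ is a Laurent polynomial by induction, hence so is $\Xi(s,W_{i-m})_m$; solving for $\Xi(s,W_i)$ shows it lies in $\C[q^{\pm s}]$, so by the previous step in $\C[q^{-s}]$. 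Running the identical induction with Lemma~\ref{lem:NovZ2} in place of Lemma~\ref{lem:NovZ} — its extra terms being the finitely many explicit monomials $c_m q^{(\e-m)(s-\frac{3}{2})}$, and using $L(s,\pi^\imath)=1$ — gives $\Xi(s,(W^c)_i)\in\C[q^{-s}]$ as well. Thus for each $i$ one gets $W(\ab^i_r)=W^c(\ab^i_r)=0$ for $r$ beyond the degree of the corresponding polynomial.

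It remains to make that degree bound uniform in $i$, which then gives the lemma. For this I would pass to the first gauge $f_W$ of $W$ (cf. (\ref{eqn:gaugedef})): it represents an element of the $P_3$-module $V_{\Zb^J}$, which by Theorem~\ref{thm: RSfilt} has finite length with subquotients among $\tau_0,\tau_1(\xi),\tau_2(\rho)$. The polynomiality just proved constrains these subquotients — a $\tau_1(\xi)$ would force a genuine pole into some $\Xi(s,W_i)$, through the functional $\lambda_1^b$ of Proposition~\ref{prop: onedimP2}, and every $\tau_2(\rho)$ that occurs must have $L(s,\rho)=1$; since $V_0\simeq\tau_0=\mathrm{ind}_N^{P_3}\tilde\psi$ is compactly supported modulo $N$ and the filtration is finite, the support of $f_W$ in the $a(\cdot)$-direction is bounded independently of the $z_2'(\cdot)$-coordinate, i.e. $W(\ab^i_r)=\w_\pi(\v)^{-i}f_W(z_2'(\v^i)a(\v^r))$ vanishes for $r$ large, uniformly in $i$; the same argument applies to $f_{W^c}$. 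I expect this last step — reconciling the analytic vanishing with the non-split $P_3$-filtration so as to extract a \emph{uniform} bound — to be the real obstacle; everything before it is a direct unwinding of $L(s,\pi)=1$ through Lemmas~\ref{lem:NovZ} and~\ref{lem:NovZ2} and the functional equation.
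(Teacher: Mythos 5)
Your first two steps are right and essentially what the paper does: negative indices are killed by Lemma~\ref{lem:vanishlem}, and the hypothesis $L(s,\pi)=1$ forces $Z(s,W_i),Z(s,(W^c)_i)\in\C[q^{\pm s}]$, whence one unwinds Lemmas~\ref{lem:NovZ} and~\ref{lem:NovZ2} (exactly the induction you describe) to conclude that every $\Xi(s,W_i)$ and $\Xi(s,(W^c)_i)$ is a polynomial in $q^{-s}$. The gap is Step~3, and it is the whole lemma.

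Two problems. First, you have the target of the uniformity backwards: what Section~\ref{sec:Hecke} actually extracts from this lemma is a maximal $i_0$ with $W(\ab^{i_0}_r)\neq 0$ for \emph{some} $r$, so the essential assertion is vanishing for all large $i$ \emph{uniformly in $r$} (i.e.\ $\Xi(s,W_i)=0$ identically for $i$ large), not bounded $a(\cdot)$-support uniformly in $z_2'(\cdot)$. Second, and more seriously, the route you sketch via the $P_3$-filtration does not give this, and you admit as much. A finite Jordan--H\"older filtration of the abstract module $V_{\Zb^J}$ does not control the support of the concrete function $f_W$: only the bottom piece $\tau_0=\mathrm{ind}_N^{P_3}\tilde\psi$ is compactly supported modulo $N$, while the $\tau_1(\xi)$- and $\tau_2(\rho)$-type subquotients, realized as Whittaker functions on $P_3$, are not; and the heuristic that $L(s,\pi)=1$ eliminates all $\tau_1$-types is not a proof (it merely constrains the $\xi_i$). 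Nothing in the paper develops this; Theorem~\ref{thm: RSfilt} is used elsewhere for multiplicity-one and $\Zb^J$-Jacquet arguments, not here.

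The paper's actual argument is a self-contained contradiction using only the functional equation and degree bookkeeping, and it is worth absorbing because it is where all the content sits. Write the ranges (minimal and maximal exponents) of the Laurent polynomials $Z(s,W_i)$, $Z(s,(W^c)_i)$, $\Xi(s,W_i)$, $\Xi(s,(W^c)_i)$ as $(c_i,d_i)$, $(c_i^*,d_i^*)$, $(a_i,b_i)$, $(a_i^*,b_i^*)$. The functional equation for $W_i$ carries the $i$-dependent shift $q^{(m-2i-n_\pi')(s-1/2)}$ and gives
\[
(c_i+n_\pi'-m+2i,\ d_i+n_\pi'-m+2i)=(-d_i^*,\ -c_i^*).
\]
Assume $\Xi(s,W_i)\neq 0$ for infinitely many $i$; pick $i_1\ge m-n_\pi'$ maximizing $b_i+2i$ among smaller indices. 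Lemma~\ref{lem:NovZ} then forces $d_{i_1}=b_{i_1}$ (the $m=0$ term dominates), Lemma~\ref{lem:NovZ2} gives $c_{i_1}^*\ge -i_1$, and $b_{i_1}\ge a_{i_1}\ge 0$ by Lemma~\ref{lem:vanishlem}. Plugging into the range identity yields $b_{i_1}+n_\pi'-m+2i_1\le i_1$, hence $i_1<m-n_\pi'$, a contradiction. So $\Xi(s,W_i)=0$ for $i$ large; choosing $I$ to also dominate the degrees $b_i$ for the finitely many surviving $i$, Lemma~\ref{lem:NovZ} gives $Z(s,W_i)=0$ for $i>2I$, the functional equation gives $Z(s,(W^c)_i)=0$, and Lemma~\ref{lem:NovZ2} gives $\Xi(s,(W^c)_i)=0$. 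That is the lemma, with both the $i$-uniform and $r$-uniform bounds coming out of the same range computation rather than from representation-theoretic finiteness.
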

\begin{proof}
Let $i$ be a nonnegative integer.
Since $L(s,\pi)$ equals $1$, so does $L(s,\pi^\imath)$.
Therefore, both $Z(s,W_i)$ and $Z(s, (W^c)_i)$ are polynomials in $q^{\pm s}$.
Let $(c_i,d_i)$ and $(c^*_i,d^*_i)$ be their ranges, respectively.
By Lemma \ref{lem:NovZ}, \ref{lem:NovZ2}, both $\Xi(s,W_i)$ and $\Xi(s, (W^c)_i)$ are polynomials in $q^{- s}$.
Let $(a_i,b_i)$ and $(a_i^*,b_i^*)$ be their ranges, respectively. 
From the functional equation for $W_i$, 
\begin{eqnarray}
(c_{i} + n_\pi' - m +2 i, d_{i} + n_\pi' - m + 2i) = (-d^*_{i}, -c^*_{i}). \label{eqn:idrange}
\end{eqnarray}
Now, assume that $\Xi(s,W_i) \neq 0$ for infinitely many $i$'s. 
Then, we may take an $i_1 \ge m -n_\pi'$ so that $\Xi(s,W_{i_1}) \neq 0$, and $b_{i_1}+2i_1 \ge b_{n} + 2n$ for all $n < i_1$.
By Lemma \ref{lem:NovZ}, $d_{i_1} = b_{i_1}$.
By Lemma \ref{lem:NovZ2}, $c^*_{i_1} \ge  -i_1$.
By (\ref{eqn:idrange}), $b_{i_1} + n_\pi' -m + 2i_1\le i_1$.
Since $b_{i_1} \ge a_{i_1} \ge 0$ by Lemma \ref{lem:vanishlem}, $i_1 \le m -n_\pi' -b_{i_1} < m -n_\pi'$.
This is a contradiction.
Hence, $\Xi(s,W_i) = 0$ for sufficiently large $i$.
Therefore, there is an integer $I$ such that $\Xi(s,W_i) = 0$ for all $i > I$ and $b_i < I$ for all $i < I$.
If $i > 2I$, then $Z(s,W_i) = 0$ by Lemma \ref{lem:NovZ}, $Z(s,(W^c)_i) = 0$ by the functional equation, and $\Xi(s,(W^c)_i) = 0$ by Lemma \ref{lem:NovZ2}. 
This completes the proof.
\end{proof}
{\bf Step 2.} 
In this and next steps, we assume that $(\pi,V) \in \Ir(\G)$ is unitary, and use several Hecke operators.
For $h \in \G$, let $\Tc_K(h)$ denote the Hecke operator acting on $V^K$ defined by 
\begin{eqnarray*}
\Tc_K(h) v = \sum_{t \in K/K \cap \Int(h)K} \pi(t h) v.
\end{eqnarray*}
\begin{lem}\label{lem:selfad}
With notations as above, if $(\pi,V)$ is unitary, then the Hecke operator $\Tc_K(h)+ \Tc_K(h^{-1})$ on $V^K$ is diagonalizable.
In particular, if $\Tc_K(h) = \Tc_K(h^{-1})$, then $\Tc_K(h)$ is diagonalizable.
\end{lem}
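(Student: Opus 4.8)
The plan is to realize $\Tc_K(h)$ as convolution by a double-coset indicator on the unitary module $V$ and to identify its Hilbert-space adjoint on $V^K$, after which the spectral theorem does the rest.

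First I would fix a $\G$-invariant positive-definite Hermitian form $\la\cdot,\cdot\ra$ on $V$, which exists because $\pi$ is unitary. Since $\pi$ is admissible, $V^K$ is finite-dimensional, so $(V^K,\la\cdot,\cdot\ra)$ is a finite-dimensional Hilbert space. By the spectral theorem it then suffices to show that $\Tc_K(h)^\ast=\Tc_K(h^{-1})$ as operators on $V^K$: once this is known, $\Tc_K(h)+\Tc_K(h^{-1})$ is self-adjoint, hence diagonalizable, and likewise $\Tc_K(h)$ is self-adjoint — hence diagonalizable — whenever $\Tc_K(h)=\Tc_K(h^{-1})$.

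To compute the adjoint I would pass to the convolution picture. Writing $KhK=\bigsqcup_i x_iK$, where the $x_i=t_ih$ are the representatives of $K/(K\cap\Int(h)K)$ occurring in the definition of $\Tc_K(h)$, left-invariance of Haar measure together with $\pi(k)v=v$ for $v\in V^K$ gives $\int_{\G}\1_{KhK}(g)\,\pi(g)v\,dg=\vol(K)\,\Tc_K(h)v$, and the same identity with $h$ replaced by $h^{-1}$. Now $\G$ is unimodular, so inversion preserves Haar measure; moreover $(KhK)^{-1}=Kh^{-1}K$, the indicator $\1_{KhK}$ is real-valued, and $\pi(g)^\ast=\pi(g^{-1})$ because $\pi$ is unitary. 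Combining these (pull the integral out of the first slot, use $\pi(g)^\ast=\pi(g^{-1})$, substitute $g\mapsto g^{-1}$, then pull the integral back into the second slot using that the indicator is real), one gets for $v,w\in V^K$
\[
\vol(K)\,\la\Tc_K(h)v,w\ra
=\int_{\G}\1_{KhK}(g)\,\la v,\pi(g^{-1})w\ra\,dg
=\int_{\G}\1_{Kh^{-1}K}(g)\,\la v,\pi(g)w\ra\,dg
=\vol(K)\,\la v,\Tc_K(h^{-1})w\ra,
\]
which is exactly $\Tc_K(h)^\ast=\Tc_K(h^{-1})$.

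The substantive content here is small, and there is no genuine obstacle: the one care-point is the bookkeeping that matches the coset-sum definitions of $\Tc_K(h)$ and $\Tc_K(h^{-1})$ with the Haar integrals over $KhK$ and $Kh^{-1}K$ (if one prefers to argue with cosets rather than integrals, one also uses $[K:K\cap\Int(h)K]=[K:K\cap\Int(h^{-1})K]$, which follows from conjugation-invariance of Haar measure on the unimodular group $\G$), together with keeping the conjugate-linearity convention of $\la\cdot,\cdot\ra$ straight through the adjoint computation. Nothing is needed beyond the existence of the $\G$-invariant form and the admissibility of $\pi$.
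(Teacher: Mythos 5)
Your proposal is correct and follows essentially the same route as the paper: both reduce the lemma to the adjoint identity $\la \Tc_K(h)v,w\ra = \la v,\Tc_K(h^{-1})w\ra$ on the finite-dimensional Hilbert space $V^K$ and then invoke self-adjointness. The only difference is that the paper cites Lemma 6.5.1 of \cite{R-S} for this identity, whereas you supply the (standard and correct) convolution/unimodularity computation proving it.
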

\begin{proof}
Let $\langle *, * \rangle$ denote the inner product in $V$.
In general, $\langle \Tc_K(h) v, w\rangle = \langle v, \Tc_K(h^{-1})w \rangle$ for $v,w \in V^K$ (see Lemma 6.5.1.of \cite{R-S}.). 
From this, the assertion follows immediately.
\end{proof}
We use the diagonalities of Hecke operators repeatedly, and therefore the unitarity assumption is needed.
In this step, we show the next basic inequality $m_\pi > 2\e$, which is an analogue of Proposition \ref{prop:mpicr}.
This inequality is essentially important for the comparison of Hecke operators and level descending $V(m_\pi) \to V(m_\pi -1)$.
Since the quasi-paramodular group $\Kb_1(m;\e)$ is defined for $m \ge 2\e$, one cannot consider the level descending $V(m_\pi) \to V(m_\pi -1)$ in case of $m_\pi = 2\e$.
In \cite{R-S}, for the $PGSp(4)$ case, to compute some Hecke operators, the condition $m_\pi \ge 2$ for supercuspidal $\pi$ was used.
\begin{prop}\label{prop: cricase}
Let $\pi \in \Ir^{gn}(\G)$ be unitary.
If $L(s,\pi) = 1$, then $m_\pi > 2\e$. 
\end{prop}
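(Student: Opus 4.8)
The plan is to argue by contradiction, mimicking in the $GSp(4)$ setting the Gauss-sum computation behind Proposition \ref{prop:mpicr}. Suppose $m_\pi=2\e$, the smallest value for which $\Kb_1(m;\e)$ is defined. The space $V(2\e)$ is finite-dimensional since $\pi$ is admissible, and since $\pi$ is unitary a Hecke operator of the form $\Tc=\Tc_{\Kb_1(2\e;\e)}(h)+\Tc_{\Kb_1(2\e;\e)}(h^{-1})$, for a suitable element $h$ with $\det\in\o^\t$, is diagonalizable on $V(2\e)$ by Lemma \ref{lem:selfad}; choose a nonzero eigenform $W\in V(2\e)$, say $\Tc W=\lambda W$. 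By Theorem \ref{thm:vnthm}, $W$ does not vanish on $\Tb$. On the other hand, by Lemma \ref{lem:havev} (Step 1) together with Lemma \ref{lem:vanishlem}, each $\Xi(s,W_r)$ and $\Xi(s,(W^c)_r)$ is a Laurent polynomial in $q^{-s}$ whose support is bounded below (indeed contained in $\Z_{\ge 0}$) and which vanishes for all but finitely many $r$.

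Next I would analyze $\Tc$ concretely, in analogy with \eqref{eqn:expfmTGL2}: its coset representatives are of $\bar{\N}$-type at the scale $\v^{2\e}$, and the precise point where the hypothesis $m_\pi=2\e$ enters is that one cannot instead express $\Tc W$ as an $h$-translate of $\int_{\Kb_1(2\e-1;\e)}\pi(k)W\,dk$, since $\Kb_1(2\e-1;\e)$ is not even defined — the same obstruction that appears in the case $m_\pi=\e$ of Proposition \ref{prop:mpicr}. Evaluating $\Tc W=\lambda W$ at the diagonal matrices $\ab^i_r$ and pushing the $\bar{\N}$-averages onto the torus via \eqref{eqn:usid}, exactly as in Lemmas \ref{lem:NovZ} and \ref{lem:NovZ2}, produces, because $\e>0$ and the level $2\e$ splits as $\e+\e$, the partial Gauss sums $\Sb_m(\w_\pi,u_{\w_\pi})$ and $\Sb_m(\w_\pi^{-1},u_{\w_\pi^{-1}})$ at the critical threshold covered by Lemma \ref{lem:Gauss2}, together with the data $n_{\w_\pi^{\pm1}}$, $u_{\w_\pi^{\pm1}}$ of Lemma \ref{lem:e-lem}.

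Evaluating the eigenvalue relation at the diagonal matrix just beyond the top of the support of some nonzero $\Xi(s,W_r)$ (or $\Xi(s,(W^c)_r)$) then yields, just as the identity $\lambda W_\pi(a_1)=W_\pi(1)$ did in the proof of Proposition \ref{prop:mpicr}, an equation in which a nonzero multiple of the top torus value of $W$ — the multiplier being a partial Gauss sum, nonzero by Lemma \ref{lem:Gauss2} — is forced to equal $\lambda$ times a value lying beyond the support, hence zero. Thus that top value vanishes, so by induction $W$ vanishes on all of $\Tb$, contradicting Theorem \ref{thm:vnthm}. Therefore $m_\pi>2\e$. I expect the main obstacle to be exactly the middle step's bookkeeping: choosing $h$ so that the symmetrized operator is the right one on $V(2\e)$, so that its coset representatives fall in precisely the $\bar{\N}$-range producing the threshold Gauss sums, and so that the resulting relation genuinely pins down the top torus value; the conjugations by the Weyl elements $\j_{2\e}$, $\jb'$, $\jb''_m$ are delicate here because, for $\e>0$, $\Kb_1(2\e;\e)$ is not normalized by $\j_{2\e}$.
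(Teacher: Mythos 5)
Your overall strategy coincides with the paper's: symmetrize a Hecke operator on $V(2\e)$, diagonalize it via unitarity (Lemma \ref{lem:selfad}), extract a recursion among the torus values $W(\ab^i_r)$ of an eigenform, and contradict the combination of Theorem \ref{thm:vnthm} (nonvanishing on $\Tb$) with Lemma \ref{lem:havev} (finiteness of support). But the computational core is left unexecuted, and the part of it you do predict is not how the argument actually closes. The paper takes $h=\eta=\tb_1(\v^{-1})$, so $\Sc=\Tc_K(\eta)+\Tc_K(\eta^{-1})$ and the recursion runs in the $i$-direction of $\ab^i_r$. When $m_\pi=2\e$ the coset decomposition of $\Tc_K(\eta^{-1})$ contains extra lower-triangular representatives $\bar{n}(z)$ with $z\in\v^{\e-1}\Cs_{\e-1}/\v^{\e}\Cs_{\e}$ and $\bnb_2(x\v^{\e-1})$, which for $m_\pi>2\e$ would be absorbed by a level-lowering average but here cannot be. The decisive step is Lemma \ref{lem:cricase}: summed against $W$ and evaluated on the torus, these extra cosets contribute \emph{exactly} $W(\ab^i_r)$, because all non-identity cosets sum to zero at torus points — that vanishing is where Lemma \ref{lem:e-lem} (and behind it the Gauss-sum analysis of Lemma \ref{lem:Gauss2}) is used.

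Your sketch instead predicts that a nonzero partial Gauss sum $\Sb_m(\w_\pi,u_{\w_\pi})$ survives as the multiplier of the top torus value in the final eigenvalue relation. That is the wrong role for the Gauss sums: they are used to \emph{kill} the non-identity coset contributions, and the resulting recursion is the clean $\lambda_\Sc W(\ab^i_r)=q^4W(\ab^{i+1}_r)+W(\ab^{i-1}_r)$ with coefficient $1$ on the lower term; the contradiction at $i=i_0+1$ (with $i_0$ the top of the support) needs exactly this clean form. Without establishing the analogue of Lemma \ref{lem:cricase} — which is the one genuinely new computation in this proposition, and which you explicitly defer as ``bookkeeping'' — the proof is not complete; the specific choice $h=\eta$ also matters, since it is what makes the recursion run in a single torus direction with controllable coefficients.
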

Let $\Sc = \Tc_K(\eta) + \Tc_K(\eta^{-1}) \curvearrowright V(m_\pi)$ with $K = \Kb_1(m_\pi;\e)$.
This Hecke operator is diagonalizable by Lemma \ref{lem:selfad}.
Let with $l = m_\pi - \e$.
We compute   
\begin{eqnarray}
\Sc &=& \sum_{} \pi\l(\nb_2(x) \nb_{3}(y) \zb(z\v^{- \e}) \eta \r) \label{eqn: Sop}\\
&& + \sum_{} \pi\l(\eta^{-1} \bnf_2(x\v^{l-1}) \bnf_{3}(y \v^{l-1}) \bz(z\v^{m_\pi-2})\r), \nonumber  \end{eqnarray}
where both sums are over $x,y \in \o/\p,z \in \o/\p^2$.
In case of $m_\pi > 2 \e$, one can find that the latter sum is zero, by comparing with the level descending $\int_{\Kb_1(m_\pi-1;\e)} \pi(k) dk: V(m_\pi) \to V(m_\pi-1)$.
In case of $m_\pi = 2 \e$, the sum is not zero, as follows.
For $W \in V(2\e)$, we set 
\begin{eqnarray*}
W' = \sum_{z \in \v^{\e-1}\Cs_{\e-1}/\v^\e\Cs_\e} \pi(\bar{n}(z))W, \ \ 
W'' = \sum_{x \in \o/\p} \pi(\bnb_2(x\v^{l-1}))W', 
\end{eqnarray*}
where $\Cs_a$ is defined in (\ref{eqn:CrBrl}). 
\begin{lem}\label{lem:cricase}
With notation as above, $W''(\ab^i_r) = W'(\ab^i_r) = W(\ab^i_r)$ for $i,r \ge 0$.
\end{lem}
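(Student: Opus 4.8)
The computation parallels those of \S7 of \cite{R-S} for $PGSp(4)$, carried out here at the boundary level $m_\pi=2\e$; recall $l=m_\pi-\e=\e$, so $\v^{l-1}=\v^{\e-1}$, and that $W$ is $\Kb_1(2\e;\e)$-invariant. Unwinding the definitions, for $i,r\ge0$,
\[
W'(\ab^i_r)=\sum_{z}W(\ab^i_r\,\bar n(z)),\qquad W''(\ab^i_r)=\sum_{x\in\o/\p}\sum_{z}W\big(\ab^i_r\,\bnb_2(x\v^{\e-1})\,\bar n(z)\big),
\]
with $z$ running over a fixed set of representatives of $\v^{\e-1}\Cs_{\e-1}/\v^{\e}\Cs_{\e}$. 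The plan is to show that in both sums every term coming from a \emph{nontrivial} representative vanishes. Since $0$ represents the trivial coset and $\bar n(0)=\bnb_2(0)=1$, only $W(\ab^i_r)$ then survives; the trivial-coset term does not depend on the representative chosen, because for $z'\in\v^\e\Cs_\e$ one has $\bar n(z')\in\Kb_1(2\e;\e)$ with $(4,4)$-entry $1$, so $W(\ab^i_r\bar n(z'))=W(\ab^i_r)$ by right $\Kb_1(2\e;\e)$-invariance.

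To annihilate a term I would invoke Lemma~\ref{lem:vanishlem}: for the argument $g$ at hand, exhibit $\gamma$ lying either in $\N$ (with character $\wt\psi^{-1}$) or in $\Kb(2\e;\e)$ (with character $k\mapsto\w_\pi(k_{44})$) such that $\Int^{-1}(g)\gamma$ again lies in the same subgroup but with a different character value. The conjugations are handled by identities already in use in the paper: $\Int(A^\natural)\bar n(C)=\bar n(w_2{}^tAw_2CA)$ from (\ref{eqn: Atra}), to push the torus part $\ab^i_r$ (written $\ab_r\cdot\dg(1,\v^{-i})^\natural$) to the left of $\bar n(z)$ at the cost of rescaling the entries of $z$ by powers of $\v$ depending on $i,r$; the identity $\bar n(h)=n(h^{-1})j(h)n(h^{-1})$ from (\ref{eqn:usid}), valid when $z$ is invertible, to turn $\bar n(z)$ into upper-triangular data; and (\ref{eqn: nion}), to follow $\Int(\bar n(c))n(b)$. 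This is exactly the device used in the proofs of Proposition~\ref{prop:simp-zetagl}, Lemma~\ref{lem:e-lem}, and Lemma~\ref{lem:Glop}.

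For $W'(\ab^i_r)=W(\ab^i_r)$: a nontrivial representative $z=\v^{\e-1}\big(\begin{smallmatrix}a&0\\ b&a\end{smallmatrix}\big)$ (with $a\in\o$, $b\in\p^{\e-1}$, and not both $a\in\p$ and $b\in\p^{\e+1}$) splits into the case $a\in\o^\times$, where $z$ is invertible and (\ref{eqn:usid}) produces, after the $i,r\ge0$ rescaling, an upper unipotent pairing nontrivially with $\psi$; and the case $a\in\p$ with $b$ a unit times $\v^{\e-1}$, where one instead conjugates a suitable unit of $\Kb(2\e;\e)$ and detects the mismatch through $\w_\pi$, with $\c(\w_\pi)=\e$ as the precise cutoff --- the same $\psi$-versus-$\w_\pi$ dichotomy as in Lemma~\ref{lem:e-lem}. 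For $W''(\ab^i_r)=W'(\ab^i_r)$, the extra factor $\bnb_2(x\v^{\e-1})$ is the $\natural$-image of a lower-unitriangular $2\times2$ matrix, and conjugating it by $(\ab^i_r)^{-1}$ and by the $\bar n(z)$ that occur keeps one inside $\bar{\N}_2$ times lower-triangular blocks controlled by $\Kb(2\e;\e)$ (here $i,r\ge0$ secures integrality), so that for $x\notin\p$ one again finds an element of $\N$ or $\Kb(2\e;\e)$ realizing a character mismatch and the $x\ne0$ terms vanish.

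The main obstacle is organizational, not conceptual: there are $q^3$ representatives $z$ (and then $q$ values of $x$), and for each nontrivial case I must write down $\gamma$ explicitly, verify its conjugate returns to $\N$ or $\Kb(2\e;\e)$, and check that the character values disagree --- a bounded sequence of $4\times4$ matrix conjugations tested against the congruence pattern of $\Kb(2\e;\e)$ with $m=2\e$, $l=\e$. The hypothesis $\e>0$ is what makes $2\e$ a genuine quasi-paramodular level, and $\c(\w_\pi)=\e>0$ is the source of the nontrivial characters needed in the $a\in\p$ and $x\notin\p$ cases; $i,r\ge0$ is exactly what keeps the conjugated matrices integral. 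Past this bookkeeping, the argument is a routine transcription of the $PGSp(4)$ computations of \cite{R-S}.
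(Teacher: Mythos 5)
Your organizing claim --- ``show that in both sums every term coming from a nontrivial representative vanishes'' --- is exactly where the argument breaks, because it is false when $\e=1$, a case the lemma must cover (the standing hypothesis is only $\e>0$, and Proposition \ref{prop: cricase} needs all $\e>0$). For $\e=1$ the nontrivial representatives of $\Cs_0/\v\Cs_1$ include $C=\left[\begin{smallmatrix}x&0\\ y&x\end{smallmatrix}\right]$ with $x\in\o^\t$; there $C$ is invertible, (\ref{eqn:usid}) gives $W(\ab^i_r\bar n(C))=W(\ab^i_r\,n(C^{-1})j(C)n(C^{-1}))$, both unipotent factors are harmless (the $\nb'$-entry is $\v^r(-y/x^2)\in\o$ since $r\ge0$, and the right factor lies in $\Kb_1(2;1)$), and one is left with $\w_\pi(x)$ times a quantity independent of $x$ that has no reason to vanish. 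No choice of $\gamma$ in Lemma \ref{lem:vanishlem} kills these terms individually; what vanishes is the \emph{sum} $\sum_{x\in(\o/\p)^\t}\w_\pi(x)=0$, because $\w_\pi$ is ramified of conductor exactly $1$. The same phenomenon occurs for the residual $\bar{\zb}(y)$-terms with $y$ a unit, and for the $\bnb_2(x)$-sum defining $W''$ when $\e=1$: there $\bnb_2(x)=\bar n(x)^\natural$ with $x\in\o^\t$, and Lemma \ref{lem:e-lem}(ii) (whose proof is precisely $\int_{\o^\t}\w_\pi(z)^{-1}dz=0$) is needed, not a pointwise vanishing. So your plan, as stated, cannot be completed for $\e=1$; the missing idea is the ramified character-sum (Gauss-sum) cancellation.

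For comparison, the paper splits the two identities differently. The identity $W''=W'$ is reduced, via the second gauge and (\ref{eqn: Atra}) (which shows $W'$ is still $K(\e)^\natural$-quasi-invariant because $A\in K(\e)$ only translates the coset space $\v^{\e-1}\Cs_{\e-1}/\v^\e\Cs_\e$), to the $G_2$-statement $\sum_{x\in\o^\t/\p}\xi([\v^i;x\v^{\e-1}])=0$, which is exactly Lemma \ref{lem:e-lem}: term-by-term vanishing when $\e>1$ (since $i\ne (\e-1)-\e=-1$), and the character-sum cancellation when $\e=1$. The identity $W'=W$ is proved term-by-term via Lemma \ref{lem:vanishlem} and (\ref{eqn: nion}) only when $\e>1$ (where indeed $x\in\p^{\e-1}\subset\p$ is never a unit); for $\e=1$ it is the explicit computation above. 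Finally, your closing remark that this is ``a routine transcription of the $PGSp(4)$ computations of \cite{R-S}'' is misleading: \cite{R-S} works with $\e=0$, whereas this lemma exists precisely to handle the boundary level $m_\pi=2\e$ with $\e>0$, which has no analogue there.
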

\begin{proof}
For the first identity, consider the second gauge $\xi$ of $\pi(\ab_r)W$, which is quasi-invariant on $K(\e)$. 
The mapping $C \mapsto w_2 {}^tA w_2 CA$ induces a translation in $\v^{\e-1}\Cs_{\e-1}/\v^\e\Cs_{\e}$ if $A \in K(\e)$.
Hence $R(k)\xi = \w_\pi(k_{22}) \xi$ for $k \in K(\e)$ by the identity (\ref{eqn: Atra}). 
It suffices to show that 
\begin{eqnarray*}
0 = \sum_{x \in \o^\t/\p} \xi (a_i \bar{n}(x \v^{\e-1})) 
\end{eqnarray*}
for $i \ge 0$.
This follows from Lemma \ref{lem:e-lem}.
For the second identity in case of $\e=1$, we compute, for $i,r \ge 0$,
\begin{eqnarray*}
\sum_{} W(\ab^i_r \bar{n}(C)) &=&\sum_{} W\l(\ab^i_r n(C^{-1})j(C)n(C^{-1})\r) \\
&=& \sum_{}  W\l(\ab^i_r j(C)\r)\\
&=&  \sum_{} \w_\pi(x)W\l(\ab^i_r \bnf_2(y) j(1_2)\r) = 0
\end{eqnarray*}
with the sums being over $x \in (\o/\p)^\t, y \in \o/\p^2$, where 
\begin{eqnarray*}
C = \begin{bmatrix}
x & \\
y &x
\end{bmatrix}.
\end{eqnarray*}
By a similar argument, 
\begin{eqnarray*}
\sum_{y \in (\o/\p^{2})^\t} \pi(\bz(y)) W(\ab^i_r)= \sum_{y \in (\o/\p)^\t} \pi(\bz(y\v)) W(\ab^i_r) = 0.
\end{eqnarray*}
Now the second identity in case of $\e = 1$ follows immediately.
For the case of $\e >1$, if $c \in \v^{\e-1}\Cs_{\e -1} \setminus \v^\e \Cs_{\e}$, then $W(\ab^i_r \bar{n}(c)) = 0$ follows from Lemma \ref{lem:vanishlem}, and the identity (\ref{eqn: nion}) with 
\begin{eqnarray*}
b \in \begin{bmatrix}
\o^\t & \\
 & \o^\t
\end{bmatrix} \ \cup \begin{bmatrix}
 &\p^{*(1- \e)} \\
 &
\end{bmatrix}.
\end{eqnarray*}
The second identity follows in this case.
This completes the proof.
\end{proof}
Now, we can prove Proposition \ref{prop: cricase}.
By Lemma \ref{lem:selfad}, it suffices to show that each eigen form $W \in V(2\e)$ of $\Sc$ is identically zero.
Let $\lambda_\Sc$ be the eigenvalue of $W$.
From (\ref{eqn: Sop}) and Lemma \ref{lem:cricase}, it follows that  
\begin{eqnarray*}
\lambda_\Sc W(\ab^i_r) = q^4 W(\ab^{i+1}_r) + W(\ab^{i-1}_r), \ \ i \ge 0.
\end{eqnarray*}
Fix $r \ge 0$. 
By Theorem \ref{thm:vnthm} and Lemma \ref{lem:havev}, we can take the maximal integer $i_0$ such that $W(\ab^{i_0}_r) \neq 0$ for some $r$, if $W$ is not identically zero.
By this recursion formula with $i = i_0+1$, $W(\ab^{i_0}_r) = 0$.
This is a contradiction.
Hence, $W$ is identically zero.

An immediate consequence of this proposition is the next: 
\begin{prop}\label{prop:htW}
Let $\pi \in \Ir^{gn}(\G)$ be unitary.
Assume $L(s,\pi) = 1$.
If $W \in V(m_\pi)$ is a nontrivial form, then, $\Xi(s,W_i) = 0, i \ge 1$ and $\Xi(s,W)$ and $\Xi(s,W^c)$ are nonzero.
\end{prop}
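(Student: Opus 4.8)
The plan is to exploit the inequality $m_\pi > 2\e$ just obtained in Proposition \ref{prop: cricase}. Put $K = \Kb_1(m_\pi;\e)$ and keep the Hecke operator $\Sc = \Tc_K(\eta) + \Tc_K(\eta^{-1})$ on $V(m_\pi)$, which is diagonalizable by Lemma \ref{lem:selfad} since $\pi$ is unitary. Because $m_\pi - 1 \ge 2\e$, the space $V(m_\pi-1)$ is defined, and it is $\{0\}$ by minimality of $m_\pi$; comparing with the level-descending averaging $\int_{\Kb_1(m_\pi-1;\e)}\pi(k)\,dk\colon V(m_\pi)\to V(m_\pi-1)$ as in the proof of Proposition \ref{prop: cricase} shows that the ``latter sum'' in the decomposition (\ref{eqn: Sop}) of $\Sc$ is identically zero. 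Carrying out the computation of that proof with only the first sum present then leaves the one-term recursion
\[
\Sc W(\ab^i_r) \;=\; q^4\, W(\ab^{i+1}_r), \qquad i \ge 0,\ r \in \Z ,
\]
valid for every $W \in V(m_\pi)$.

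Next I would show that $\Sc = 0$ on $V(m_\pi)$. Choose a nonzero eigenform $W \in V(m_\pi)$ with eigenvalue $\lambda$. Iterating the recursion gives $W(\ab^k_r) = (\lambda q^{-4})^{k} W(\ab^0_r)$ for all $k \ge 0$ and all $r$. By Lemma \ref{lem:havev} the left side vanishes for $k$ large; hence, if $\lambda \ne 0$, then $W(\ab^0_r) = 0$ for every $r$ and therefore $W(\ab^k_r) = 0$ for all $k \ge 0$. Together with $W(\ab^i_r) = 0$ for $i < 0$ and for $r < 0$ — each an application of Lemma \ref{lem:vanishlem}, conjugating a suitable element of $\N_2$ (resp.\ $\N'$) through $\ab^i_r$ into $\Kb(m_\pi;\e)$ — this forces $W(\Tb) = 0$, so $W = 0$ by Theorem \ref{thm:vnthm}, a contradiction. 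Thus every eigenvalue of $\Sc$ on $V(m_\pi)$ is $0$, and diagonalizability gives $\Sc = 0$.

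Feeding $\Sc = 0$ into the recursion yields $W(\ab^j_r) = 0$ for all $j \ge 1$ and all $r$, for every $W \in V(m_\pi)$; since $\Xi(s,W_i) = \sum_{r} W(\ab^i_r)\,\nu_{s-3/2}(\v^r)$, this is exactly $\Xi(s,W_i) = 0$ for $i \ge 1$. Now take $W \in V(m_\pi)$ nontrivial. If in addition $\Xi(s,W) = 0$, i.e.\ $W(\ab^0_r) = 0$ for every $r$, then — combining with the vanishing just proved for $j \ge 1$ and the vanishing for negative indices — $W$ vanishes on all of $\Tb$, hence $W = 0$ by Theorem \ref{thm:vnthm}; so $\Xi(s,W) \ne 0$. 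Finally $L(s,\pi) = 1$ forces $L(s,\pi^\imath) = 1$, so the simplified functional equation (\ref{eqn:FEstd}) reduces to $q^{-\e}\Xi(1-s,W^c) = \ep'_\pi\, q^{(m_\pi - n_\pi')(s-1/2)}\Xi(s,W)$, whose right-hand side is a nonzero element of $\C(q^{-s})$; hence $\Xi(s,W^c) \ne 0$ as well.

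The step I expect to be most delicate is isolating the surviving recursion correctly: the point is that once $m_\pi > 2\e$ removes the latter sum, what remains is a \emph{one-sided} shift $W(\ab^i_r)\mapsto W(\ab^{i+1}_r)$ rather than a three-term relation, and it is exactly this one-sidedness that — via the uniform vanishing for large $i$ of Lemma \ref{lem:havev} — forces any $\Sc$-eigenform of nonzero eigenvalue to be zero, so that the diagonalizable $\Sc$ is nilpotent, hence zero. A secondary point is the reduction of ``$W(\Tb)=0$'' to ``$W(\ab^i_r)=0$ for all $i,r$'', which uses that $\Tb$ is represented, modulo its center and $\Tb\cap\Kb(m_\pi;\e)$, by the elements $\ab^i_r$, together with $\pi(k)W = \w_\pi(k_{44})W$ for $k\in\Kb(m_\pi;\e)$.
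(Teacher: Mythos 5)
Your proof is correct, but it takes a genuinely different route from the paper's. The paper proceeds directly: with $m_\pi - \e - 1 > \e - 1 \ge 0$ from Proposition \ref{prop: cricase}, one forms $W_1 := \eta W$ (the shift fitting $W_1(\ab^{i-1}_r) = W(\ab^i_r)$) and then the finite average $W_1' := \sum_{x,y,z \in \o/\p} \pi(\nb_2(x)\nb_3(y)\zb(z\v^{1-l}))W_1$, which lands in $V(m_\pi-1) = \{0\}$; tracking $W_1'(\ab^{i-1}_r)$ shows it is a nonzero multiple of $W(\ab^i_r)$ for $i\ge 1$, giving the vanishing directly, with no Hecke eigenvalue analysis and no appeal to diagonalizability within this proof. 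You instead keep working with the operator $\Sc = \Tc_K(\eta)+\Tc_K(\eta^{-1})$: use $m_\pi > 2\e$ to discard the $\eta^{-1}$-contribution, extract the one-sided recursion $\Sc W(\ab^i_r) = q^4 W(\ab^{i+1}_r)$, and then use Lemma \ref{lem:havev} together with Theorem \ref{thm:vnthm} to kill every nonzero eigenvalue, so that diagonalizability (Lemma \ref{lem:selfad}, hence unitarity) forces $\Sc = 0$, after which the recursion gives the vanishing. Both arguments ultimately rest on the same three pillars — $m_\pi > 2\e$, compact support (Lemma \ref{lem:havev}), and the nonvanishing principle (Theorem \ref{thm:vnthm}) — but yours reuses the Hecke operator $\Sc$ already set up for Proposition \ref{prop: cricase} at the cost of an extra layer (diagonalizability and the eigenvalue argument), whereas the paper's shift-and-average argument is shorter and more elementary at this point. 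Your route is valid and arguably more uniform with the surrounding section; the paper's is leaner and illustrates the level-raising/lowering mechanism that recurs throughout.
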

\begin{proof}
Let $l = m_\pi- \e$.
Since $L(s,\pi) = 1$, $l -1 = m_\pi - \e -1 > \e -1 \ge 0$ by Proposition \ref{prop: cricase}.
Then, $W_1 := \eta W$ is invariant under the subgroup  
\begin{eqnarray*}
\Int(\eta)\Kb_1(m_\pi;\e) = 
\begin{bmatrix}
\o & \p & \p & \p^{2-l} \\
\p^{l-1} & \o & \o & \p \\
\p^{l-1} & \o & \o & \p \\
\p^{m_\pi-2} & \p^{l-1}& \p^{l-1} & 1 + \p^\e
\end{bmatrix},
\end{eqnarray*}
and $W_1' := \sum_{x,y,z \in \o/\p} \pi\l(\nb_{2}(x) \nb_{3}(y)\zb(z\v^{1-l})\r)W_1$ lies in $V(m_\pi -1)$.
Assume that $W(\ab^i_r) \neq 0$ for some $i \ge 1$.
Then $W_1(\ab^{i-1}_r) \neq 0$, and $W_1'(\ab^{i-1}_r) \neq 0$.
This contradicts to the level minimality.
Hence $W(\ab^i_r) = 0$ for all $i \ge 1$.
By Theorem \ref{thm:vnthm}, $W(\ab_r) \neq 0$ for some $r$.
Therefore, $\Xi(s,W)$ is nonzero, and so is $\Xi(s,W^c)$ by the functional equation (\ref{eqn:FEstd}).
This completes the proof.
\end{proof}
{\bf Step 3.}
To show Theorem \ref{thm:Ls1}, we need the level descending operator 
\begin{eqnarray*}
\mathcal{D}:= \int_{\Kb^c(m_\pi-1;\e)} \pi^\imath(k) dk: V^c(m_\pi) \to V^c(m_\pi-1)
\end{eqnarray*}
for $m_\pi > 2\e$, and the Hecke operators
\begin{eqnarray*}
\Tc &=& \Tc_{\Kb_1(m;\e)}(\ab_1) + \Tc_{\Kb_1(m;\e)}(\ab_{-1}), \\
\Tc^\imath &=& \Tc_{\Kb_1^c(m;\e)}(\ab_1) + \Tc_{\Kb_1^c(m;\e)}(\ab_{-1}), \\
\Sc^\imath &=& \Tc_{\Kb_1^c(m;\e)}(\eta), \\
\Tc_+^\imath &=& \Tc_{\Kb_1^c(m;\e)}(\ab_1).
\end{eqnarray*}
The first three Hecke operators are self-adjoint and diagonalizable by Lemma \ref{lem:selfad}. 
It is not hard to show that $c \circ \Tc = \Tc^\imath \circ c$.
First, compare the actions of $\Sc^\imath$ and $\mathcal{D}$.
We compute   
\begin{eqnarray*}
\Sc^\imath &=& \sum_{x,y \in \o/\p, z \in \o/\p^2} \pi^\imath\l(\nf_2(x) \nb_{3}(y \v^{-e}) \zb(z\v^{-n}) \eta^{-1}\r) \\
&&+ \sum_{x,y \in \o/\p, u \in \o/\p} \pi^\imath\l(\eta \bnb_2(x\v^{l-1}) \bnf_{3}(y \v^{m-1}) \bz(u\v^{m-1})\r), \label{eqn: S*op} \\
\mathcal{D} &=& \sum_{x,y \in \o/\p} \pi^\imath\l(\nb_2(x)\nb_3(y)\eta^{-1}\r) + \sum_{x,y \in \o/\p, u \in \o/\p} \pi^\imath\l(\bnb_2(x\v^{l-1}) \bnf_{3}(y \v^{m-1}) \bz(u\v^{m-1})\r)
\end{eqnarray*}
(c.f. Lemma 3.3.7., 6.1.2. of \cite{R-S}). 
Comparing their latter sums, we have
\begin{eqnarray*}
\lambda_{\Sc^\imath}W^c(\ab^i_r) = 
\begin{cases}
q^4 W^c(\ab^{1}_r) & \mbox{if $i= 0$,} \\
q^4 W^c(\ab^{i+1}_r) - q^2 W(\ab^i_r) & \mbox{if $i > 0$,}
\end{cases} \label{eqn: pi(S)exp0}
\end{eqnarray*}
for an eigenvector $W^c \in V^c(m_\pi)$ with eigenvalue $\lambda_{\Sc^\imath}$.
By Proposition \ref{prop:htW}, there is a nonnegative integer $r$ such that $W^c(\ab_r) \neq 0$.
By Lemma \ref{lem:havev} and the above recursion formula, $\lambda_{\Sc^\imath}$ is equal to $0$ or $-q^2$.
Assume that $\lambda_{\Sc^\imath} = 0$. 
Then, $\Xi(s,(W^c)_1) = 0$.
By Proposition \ref{prop:htW}, $\Xi(s,W_1) =0$.
By Lemma \ref{lem:NovZ}, $Z(s,W_1) = \w_\pi(\v)^{-1} q^{2s-4}\Xi(s,W)_1$.
Therefore, the functional equation for $W$, and that for $W_1$ are 
\begin{eqnarray}
q^{-\e} \Xi(1-s,W^c) &=& \ep'_\pi q^{(m_\pi - n_\pi')(s-1/2)} \Xi(s,W), \label{eqn: FED1}\\
Z(1-s,(W^c)_1) &=& \ep'_\pi  \w_\pi(\v)^{-1}q^{(m_\pi - n_\pi')(s-1/2)} \Xi(s,W)_1 \label{eqn: FED2}.
\end{eqnarray}
Since $\Xi(s,W)$ and $\Xi(s,W)_1$ are polynoimals in $q^{-s}$ with a same range, $\Xi(1-s,W^c)$ and $Z(1-s,(W^c)_1)$ have a same range.
By Lemma \ref{lem:NovZ2}, $Z(1-s,(W^c)_1)$ is a constant multiple of $q^{-s}$, since $\Xi(1-s, (W^c)_1) = 0$.
But, $\Xi(1-s, W^c)$ is a polynomial in $q^s$.
This is a contradiction. 
Hence, 
\begin{eqnarray}
\lambda_{\Sc^\imath} = -q^2, \ W^c(\ab^1_r) = -q^2W^c(\ab_r) \ \ \mbox{for all $W^c \in V^c(m_\pi)$.} \label{eqn: Wv1jvj}
\end{eqnarray}
Next, for $\Tc, \Tc^\imath$, letting $\Cs_a, \Bs_{a}$ be the lattice defined in (\ref{eqn:CrBrl}), and 
\begin{eqnarray*}
\wt{\Bs}_{a} = \Bs_{a} \op \begin{bmatrix}
 &  \\
\o & 
\end{bmatrix}, \ 
\wt{\Cs}_a = \Cs_a \op \begin{bmatrix}
 & \o \\
 &
\end{bmatrix}, 
\end{eqnarray*}
we compute 
\begin{eqnarray*}
\Tc_{\Kb_1(m;\e)}(\ab_1) = \sum_{B \in \wt{\Bs}_{l}/\v \wt{\Bs}_{l}} \pi(n(B)
 \ab_1) + 
 \sum_{B \in \Bs_{l}/\v\Bs_{l}} \pi(\jb'(1) n(B) \ab_1) 
\end{eqnarray*}
Since $\jb'(1) \in \Kb_1(m;\e)$, the latter sum equals
\begin{eqnarray*}
\sum_{B \in \Bs_{l}/\v\Bs_{l}} \pi(\jb'(1)n(B) \ab_1\jb'(1)) = \w_\pi(\v) \sum_{x,y \in \o/\p} \pi(\nf_{2}(x)\zb(y \v^{-l})\ab_{-1}^1).
\end{eqnarray*}
Similarly, 
\begin{eqnarray*}
\Tc_{\Kb_1(m;\e)}\l(\ab_{-1}\r) &=& \sum_{C \in \v^l \Cs_\e/\v^{l+1}\Cs_{\e}} \pi(\jb'(1)\bar{n}(C) \ab_{-1}\jb'(1)) + \sum_{C \in  \v^l\wt{\Cs}_\e/\v^{l+1}\wt{\Cs}_{\e}} \pi(\bar{n}(C) \ab_{-1}) \\
&=& \w_\pi(\v)^{-1} \sum_{x,y \in \o/\p} \pi(\ab_1^{-1}\bnb_{2}(x \v^{l-1})  \bar{\zb}(y\v^{m-1})) \\
&& + \sum_{C \in \v^l \Cs_\e/\v^{l+1}\Cs_{\e}} \pi(\bar{n}(C) \ab_{-1}) + 
\sum_{z\in (\o/\p)^\t} \l(\sum_{C \in \v^l \Cs_\e/\v^{l+1}\Cs_{\e}} \pi(\bnb'(z)\bar{n}(C)\ab_{-1})\r).
\end{eqnarray*}
Using (\ref{eqn:usid}) and the $\Kb_1(m;\e)$-invariance property, the sum in the bracket is transformed to 
\begin{eqnarray*}
&& \sum_{C \in \v^l \Cs_\e/\v^{l+1}\Cs_{\e}} \pi(\nf'(z^{-1}) \jb'(z) \nf'(z^{-1})\bar{n}(C)\ab_{-1}) \\
&=& \sum_{x,y\in \o/\p} \pi(\nb'(z^{-1}) \jb'(z) \nb'(z^{-1})\ab_{-1} \bar{\nf}_{3}(x \v^{l-1}) \bar{\zb}(y\v^{m-1})) \\
&=& 
 \sum_{x,y\in \o/\p} \pi(\nb'(z^{-1}) \jb'(z) \ab_{-1} \bnb_{3}(x \v^{l-1})\bar{\zb}(y\v^{m-1}) \bnb_{2}(z^{-1}y \v^{m})\bar{\zb}(-z^{-1} y^2 \v^{2m-1})).
\end{eqnarray*}
Since $\Kb_1(m;\e)$ contains $\bar{\nf}_{2}(z^{-1}y \v^{m}), \bar{\zb}(-z^{-1} y^2 \v^{2m-1})$, this sum equals 
\begin{eqnarray*}
\sum_{C \in \v^{l-1}\Cs_{\e}/\v^l\Cs_{\e}} \pi(\nb'(z^{-1}) \jb'(z) \ab_{-1} \bar{n}(C)) 
&=& 
\sum_{C \in \v^{l-1}\Cs_{\e}/\v^l\Cs_{\e}} \pi(\nb'(z^{-1}) \jb'(z) \ab_{-1}\bar{n}(C)\jb'(z)) \\
&=& 
\w_\pi(\v)^{-1}\sum_{x,y\in \o/\p} \pi(\nb'(z^{-1})\ab_1^{-1}\bar{\nf}_{2}(x \v^{l-1})\bar{\zb}(y\v^{m-1})).
\end{eqnarray*}
Therefore, for $W \in V(m)$, 
\begin{eqnarray}
\Tc W\l(\ab^i_r\r) &=& q^3 W\l(\ab^i_{r+1}\r) + q^2 \w_\pi(\v) W\l(\ab^{i+1}_{r-1}\r) + \sum_{C \in  \v^{l-1}\Cs_{\e}/\v^l\Cs_{\e}} W\l(\ab^i_{r-1}\bar{n}(C)\r) \nonumber \\
&+& q \w_\pi(\v)^{-1} \sum_{x,y \in \o/\p} W\l(\ab_{r+1}^{i-1} \bar{\nf}_{2}(x \v^{l-1}) \bar{\zb}(y\v^{m-1})\r),  \ \ i,r \ge 0. \label{eqn: Heckeexp}
\end{eqnarray}
By a similar computation, for $W^c \in V^c(m)$,
\begin{eqnarray}
\Tc^\imath W^c\l(\ab^i_r\r) &=& q^3 W^c\l(\ab^i_{r+1}\r) + q^3 \sum_{z \in \o/\p} W^c\l(\ab^{i+1}_{r-1}\bnb'(z \v^{\e-1})\r) \nonumber \\
&+& \sum_{C \in \v^{m-1}\Bs_{l}} W^c\l(\ab^i_{r-1}\bar{n}(C)\r) + q \sum_{x \in \o/\p} W^c\l(\ab_{r+1}^{i-1}\bnb_{2}(x \v^{l-1})\r),\ \  i,r \ge 0.  \label{eqn: Heckeexp2}
\end{eqnarray} 
Observing the first gauge of $W^c$ ($K_1(\e)' (\subset P_3)$-invariant!), one can find by Lemma \ref{lem:e-lem} that the second term equals $q^3W^c(\ab^{i+1}_{r-1})$ if $r \ge 1$.
We choose the Haar measures in (\ref{eqn: Gamma}) so that the third terms of (\ref{eqn: Heckeexp}) and (\ref{eqn: Heckeexp2}) are equal to the values at $\ab^i_{r-1}$ of $\Gamma_{l-1}W$ and $\Gamma_{m-1}^\imath W^c$, respectively.
\begin{lem}\label{lem:bnb2324}
Let $\pi \in \Ir^{gn}(\G)$ be unitary.
Assume that $L(s,\pi) = 1$. 
Let $W \in V(m)$ such that $\Xi(s,W) \neq 0$.
Then, for the Haar measures as above, we have the following identities.
\begin{eqnarray*}
\Gamma_{l-1}W\l(\ab_{r}\r) &=& 
\begin{cases}
0 & \mbox{if $r \ge r_0$} \\
qW(\ab_{r}) & \mbox{otherwise}.
\end{cases}, \\
\Gamma_{m-1}^\imath W^c(\ab_{r}) &=& qW^c(\ab_{r}),
\end{eqnarray*}
where $l = m-\e$, and $r_0$ is the maximal integer such that $W(\ab_{r_0}) \neq 0$ (such $r_0$ exists by Lemma \ref{lem:havev}).
\end{lem}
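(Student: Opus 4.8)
The plan is to evaluate both averaging operators directly on the torus elements $\ab_r=\ab'(\v^r)$. Since $W$ is right-invariant under $\bar{n}(\v^l\Cs_\e)\subset\Kb_1(m;\e)$ and $W^c$ under $\bar{n}(\v^m\Bs_l)\subset\Kb_1^c(m;\e)$, the normalization of the Haar measures in (\ref{eqn: Gamma}) (fixed just before the lemma so as to match the third terms of (\ref{eqn: Heckeexp}) and (\ref{eqn: Heckeexp2})) gives, for $r\ge 0$,
\[
\Gamma_{l-1}W(\ab_r)=\sum_{C\in\v^{l-1}\Cs_\e/\v^l\Cs_\e}W(\ab_r\bar{n}(C)),\qquad \Gamma_{m-1}^\imath W^c(\ab_r)=\sum_{C\in\v^{m-1}\Bs_l/\v^m\Bs_l}W^c(\ab_r\bar{n}(C)),
\]
a sum over $q^2$ cosets in each case. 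I would split each sum according to the diagonal entry of $C$: the ``small'' cosets, on which $\bar{n}(C)$ lies (after conjugating through $\ab_r$) in a single root subgroup of the Siegel unipotent radical, and the complementary cosets, on which $C$ is invertible. The two halves are handled by different mechanisms.

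On the small cosets I would use the $SL(2)$-identity (\ref{eqn:usid}) inside the appropriate three-dimensional $SL(2)$, push the resulting upper-unipotent factor through $\ab_r$, and exploit that $\tilde{\psi}$ is trivial on $\N_3$ and on $\Zb^J$, together with the quasi-paramodular invariance of $W$ (resp.\ $W^c$) and the available ramification bounds, in particular $m_\pi>2\e$ from Proposition \ref{prop: cricase}, which controls denominators such as $x^{-1}\in\p^{1-m}$; this is exactly the style of computation in the proof of Lemma \ref{lem:cricase}, and as there the case $\e=1$ must be carried along separately from $\e\ge2$. The upshot is that this half contributes $qW(\ab_r)$, resp.\ $qW^c(\ab_r)$. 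On the complementary cosets I would invoke the $GSp(4)$-Bruhat decomposition (the analogue of (\ref{eqn:usid}) writing $\bar{n}(C)$ with $C$ invertible as a product of upper Siegel-unipotent factors and a Weyl element), conjugate the left factor through $\ab_r$ keeping careful track of powers of $\v$, and convert it into a value of $\psi$ on the $\nb'$-coordinate. For $\Gamma_{m-1}^\imath W^c$ the surviving values are governed by Lemma \ref{lem:e-lem} applied to the first gauge of $W^c$ (which is $K_1(\e)'$-invariant) and, since they occur with the $\bar{n}'$-parameter in $\p^{*(\e-1)}$, fall off the admissible locus of that lemma and vanish outright, yielding $\Gamma_{m-1}^\imath W^c(\ab_r)=qW^c(\ab_r)$ for all $r\ge0$. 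For $\Gamma_{l-1}W$ the same reduction leaves instead a Gauss sum in $\psi$ and $\w_\pi$ over these cosets, of the type evaluated in Lemmas \ref{lem:e-lem} and \ref{lem:Gauss2}.

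The final assembly for $\Gamma_{l-1}W$ is where $r_0$ enters. For $r<r_0$ this Gauss sum is a complete character sum and vanishes by orthogonality, so $\Gamma_{l-1}W(\ab_r)=qW(\ab_r)$; for $r\ge r_0$ the vanishing $W(\ab_{r_0+1})=W(\ab_{r_0+2})=\cdots=0$ (so that $\ab_{r_0}$ is the top torus value at which $W$ does not vanish — such an $r_0$ exists because the hypothesis $\Xi(s,W)\neq0$ forces $W(\ab_r)\neq0$ for some $r$, while $W(\ab_r)=0$ for all large $r$ by Lemma \ref{lem:havev}) truncates the summation, and the residual partial Gauss sum, computed via Lemma \ref{lem:Gauss2} and Lemma \ref{lem:e-lem}, equals exactly $-qW(\ab_r)$ at $r=r_0$ (every summand vanishing for $r>r_0$, where $W(\ab_r)=0$ as well), so that $\Gamma_{l-1}W(\ab_r)=0$ there. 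The main obstacle is precisely this last step: the exact $\v$-power bookkeeping through the conjugations and the Bruhat decomposition, and the careful application of Lemma \ref{lem:e-lem} — which requires tracking the conductor $\e$ and the distinguished units $u_{\w_\pi^{\pm1}}$ — needed to see that the truncated Gauss sum reproduces precisely $-qW(\ab_r)$ at the top level and not a spurious nonzero multiple; as always, the low-conductor case $\e=1$ needs separate attention throughout.
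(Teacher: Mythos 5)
Your plan is a genuinely different route from the paper's proof. The paper does not attempt to evaluate the coset sum defining $\Gamma_{l-1}W(\ab_r)$ directly. Instead, it exploits the balancedness established in Lemma \ref{lem:Glop} and Proposition \ref{prop:zeta-simp} to convert $Z(s,\Gamma_{l-1}W)$ into $\Xi(s,\Gamma_{l-1}W)$, then evaluates $\Xi\bigl(s,\pi^\imath(\j_{m})(\Gamma_{l-1}W)^\imath\bigr)$ in closed form as $q^2\bigl(\Xi(s,W^c)-W^c(1)\bigr)$, and feeds the result back through the functional equation (\ref{eqn:FEstd}) for $W$. The first identity drops out because $\Xi(s,\Gamma_{l-1}W-qW)$ is forced to be a single monomial. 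Your proposal is instead a direct cell-by-cell calculation in the spirit of Lemma \ref{lem:cricase}.

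There is a real gap at the decisive step for the first identity, and it is not merely unfinished bookkeeping. The threshold $r_0$ is the top of the torus support of $W$ — a global datum that the purely local Gauss sum over $\v^{l-1}\Cs_\e/\v^l\Cs_\e$ cannot detect. After the Bruhat reduction on the invertible-$C$ cosets you would obtain a $\psi$-$\w_\pi$ character sum multiplied by $W$ evaluated at a shifted torus element $\ab_{r+\delta}$ (for a $\delta$ determined by $l$ and $\e$, independent of $r_0$); its vanishing or non-vanishing as a function of $r$ is governed by the kernel of that character, which depends on $m$ and $\e$ but has no reason to match $r_0$ exactly. Your claim that "for $r<r_0$ this Gauss sum is a complete character sum and vanishes by orthogonality, while at $r=r_0$ the residual partial Gauss sum equals exactly $-qW(\ab_r)$" is an assertion, not a computation, and it is inconsistent with the paper's own purely local calculation of the same flavor in Lemma \ref{lem:cricase}, which produces $W'(\ab^i_r)=W(\ab^i_r)$ for all $i,r\ge0$ with no threshold at all. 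The threshold in the present lemma is created by the functional equation, which compares $W$ to $W^c$ and identifies $r_0$ with the pole order $m-n_\pi'$; any attempt to reproduce the $r\ge r_0$ vanishing locally would have to re-import exactly that comparison, at which point your argument collapses to the paper's. (Your treatment of the coquasi side and the second identity, where no threshold is claimed, is closer to viable, but the first identity is where the content lies.)
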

\begin{proof}
Since the arguments are similar, we only prove the first identity.
We observe the both sides of the functional equation (\ref{eqn:FEstd}) and 
\begin{eqnarray}
Z(s,\pi^\imath(\j_{m})(\Gamma_{l-1}W)^\imath) &=& q^{(m- n_\pi')(s-1/2)}Z(s,\Gamma_{l-1}W). \label{eqn:Gammafeq}
\end{eqnarray}
By Lemma \ref{lem:Glop}, Proposition \ref{prop: cricase}, $\Gamma_{l-1}W$ is $0$-balanced, and $\pi^\imath(\j_{m})(\Gamma_{l-1}W)^\imath$ is $(\e+1)$-balanced.
By Proposition \ref{prop:zeta-simp}, 
\begin{eqnarray*}
Z(s,\Gamma_{l-1}W) = \Xi(s,\Gamma_{l-1}W), \ Z(s,\pi^\imath(\j_{m})(\Gamma_{l-1}W)^\imath) = q^{- \e-1}\Xi(s,\pi^\imath(\j_{m})(\Gamma_{l-1}W)^\imath).
\end{eqnarray*} 
Since
\begin{eqnarray*}
\Xi(s,\pi^\imath(\j_{m})(\Gamma_{l-1}W)^\imath) &=& \Xi\l(s,\sum_{y,z \in \o/\p} \pi(\nf_3(y \v^{- \e-1}) \nf'(z\v^{-1}))\pi(\j_{m})W^c\r) \\
&=& q^2 \l(\Xi(s,W^c) - W^c(1)\r),
\end{eqnarray*}
we have 
\begin{eqnarray*}
q^{1-\e}\l(\Xi(1-s,W^c) - W^c(1)\r) = q^{(m- n_\pi')(s-1/2)}\Xi(s,\Gamma_{l-1}W)
\end{eqnarray*}
by (\ref{eqn:Gammafeq}).
By (\ref{eqn:FEstd}), we have 
\begin{eqnarray*}
-q^{1-\e}W^c(1) = q^{(m- n_\pi')(s-1/2)}\Xi(s,\Gamma_{l-1}W-qW),
\end{eqnarray*}
from which the identity follows.
\end{proof}
By this Lemma and (\ref{eqn: Wv1jvj}), the second term and third term of (\ref{eqn: Heckeexp2}) cancel if $i =0$ and $r \ge 1$.
The last terms of (\ref{eqn: Heckeexp}) and (\ref{eqn: Heckeexp2}) vanish if $i = 0$, by the following lemma.
\begin{lem}\label{lem:Heckevnsh}
Let $\pi \in \Ir^{gn}(\G)$.
Let $\e = \c(\w_\pi)$.
If $m > 2\e$, and $W \in V(m)$, then, for $x,y \in \o$,
\begin{eqnarray*}
\pi\l(\ab_{r} \bnb_{2}(x \v^{l-1}) \bar{\zb}(y\v^{m-1})\r)W(\eta) = \pi^\imath\l(\ab_{r} \bnb_2(x \v^{l-1})\r)W^c(\eta) = 0.
\end{eqnarray*}
\end{lem}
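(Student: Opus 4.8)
The plan is to obtain both vanishing assertions from Lemma~\ref{lem:vanishlem}, exploiting that $W$ and $W^c$ are left $\tilde{\psi}$-equivariant under $\N$, that $W$ transforms by $k\mapsto\w_\pi(k_{44})$ under $\Kb(m;\e)$, and that $W^c$ transforms by $k\mapsto\w_\pi(k_{33})^{-1}$ under $\Kb^c(m;\e)$. First I would make the two quantities intrinsic: since $\eta\,\ab_r=\tb(\v^{-1},1;\v^r)=:b$, one has $\pi(\ab_r\bnb_2(x\v^{l-1})\bar{\zb}(y\v^{m-1}))W(\eta)=W(g)$ with $g=b\,\bnb_2(x\v^{l-1})\bar{\zb}(y\v^{m-1})$, and $\pi^\imath(\ab_r\bnb_2(x\v^{l-1}))W^c(\eta)=W^c(g')$ with $g'=b\,\bnb_2(x\v^{l-1})$, where $l=m-\e$. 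It thus suffices to prove $W(g)=0$ and $W^c(g')=0$.

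Next I would apply Lemma~\ref{lem:vanishlem} with $h=\nb_2(\v^{-1})\in\N$, for which $\tilde{\psi}(h)=\psi(\v^{-1})\neq1$ because $\psi$ has conductor $\o$, and examine $\Int^{-1}(g)h=g^{-1}hg$. Conjugation by the diagonal factor $b$ rescales $\nb_2(\v^{-1})$ to $\nb_2(1)$ (the pertinent ratio of diagonal entries of $b$ is $\v$, independently of $r$). Conjugation by $\bnb_2(\mp x\v^{l-1})$ is a computation inside the $\mathrm{SL}(2)$ generated by the root groups of $\nb_2$ and $\bnb_2$, governed by the identity~(\ref{eqn: nion}): it produces a matrix $R$ whose only nonzero entries are $R_{12},R_{34}\in\{\pm1\}$, $R_{21},R_{43}\in\p^{2(l-1)}$ and $R_{11},R_{22},R_{33},R_{44}\in1+\p^{l-1}$. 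Finally, in the first statement, conjugating $R$ by $\bar{\zb}(y\v^{m-1})$ alters only the first column and the last row, by entries of $\p$-valuation at least $m-1$ (at least $m+l-2$ in the $(4,1)$-slot), and leaves $R_{44}$ untouched. Comparing with the displayed matrix shapes of $\Kb(m;\e)$ and $\Kb^c(m;\e)$, one checks that $\Int^{-1}(g)h\in\Kb(m;\e)$ and $\Int^{-1}(g')h\in\Kb^c(m;\e)$, and that the $(4,4)$-entry of the former and the $(3,3)$-entry of the latter both lie in $1+\p^{l-1}\subseteq1+\p^{\e}$, so that $\w_\pi$ of that entry equals $1\neq\psi(\v^{-1})=\tilde{\psi}(h)$. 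Lemma~\ref{lem:vanishlem} then gives $W(g)=W^c(g')=0$.

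The only delicate step, and the sole place where the hypothesis $m>2\e$ is used, is this final bookkeeping with matrix entries: one must verify simultaneously that every entry of the conjugate lands in the prescribed power of $\p$ for membership in $\Kb(m;\e)$ (resp.\ $\Kb^c(m;\e)$) --- the tightest being the off-diagonal $(2,1)$ and $(4,3)$ slots, which force $l\geq2$ --- and that the diagonal slot governing the central character lies in $1+\p^{\e}$, which forces $l-1\geq\e$; since $\e\geq1$, both inequalities are equivalent to $m>2\e$. Everything else is formal manipulation with the standard identities.
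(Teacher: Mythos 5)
Your proposal is correct and is essentially the paper's argument: the paper also reduces to Lemma \ref{lem:vanishlem} by checking that the relevant conjugate of an $\N_2$-element with nontrivial $\tilde{\psi}$-value lands in the quasi- (resp.\ coquasi-) paramodular group with trivial character value, only it packages this as the $N(\o)$-invariance of the second gauge of the translated Whittaker functions, verified via the identity (\ref{eqn: nion}) and the entry estimates from the proof of Lemma \ref{lem:Glop}. Your explicit $4\times4$ bookkeeping, including the identification of the binding constraints $l\ge 2$ and $l-1\ge\e$ coming from $m>2\e$ with $\e\ge 1$, is exactly what that invariance statement unwinds to.
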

\begin{proof}
By Lemma \ref{lem:vanishlem}, it suffices to see that the second gauges of the above Whittaker functions are $N(\o)$-invariant.
For the latter Whittaker function, the $N(\o)$-invariance property follows from that of the second gauge of $\pi^\imath(\ab_r)W^c$, and the identity (\ref{eqn: nion}).
For the former one, use the $\jb'(1)$-conjugation of the identity (\ref{eqn: nion}) with
\begin{eqnarray*}
B \in \begin{bmatrix}
\o & \\
 & \o
\end{bmatrix}, C \in \begin{bmatrix}
\p^{l-1} & \\
\p^{m-1} & \p^{l-1}
\end{bmatrix}
\end{eqnarray*}
and the calculation in the proof of Lemma \ref{lem:Glop}.
\end{proof}
Now suppose that $W (\neq 0) \in V(m_\pi)$ is an eigenvector of $\Tc$ with eigenvalue $\lambda = \lambda_{\Tc}$.
Then $W^c \in V^c(m_\pi)$ is also an eigenvector of $\Tc^\imath$ with eigenvalue $\lambda$ since $c \circ \Tc = \Tc^\imath \circ c$.
By the above argument, we have 
\begin{eqnarray}
\lambda W(\ab_{r}) &=& q^3 W(\ab_{r+1}) + q W(\ab_{r-1}), \ \ r \le r_0, \label{eqn:Texpfin} \\
\lambda W^c(\ab_{r}) &=& q^3 W^c(\ab_{r+1}), \ \ r \ge 1, \label{eqn: T^*2}
\end{eqnarray}
where $r_0$ is as in Lemma \ref{lem:bnb2324}.
By Lemma \ref{lem:vanishlem}, $W(\ab_{-1}) = 0$.
If we assume that $W(1) = 0$, then, by (\ref{eqn:Texpfin}), $\Xi(s,W) = 0$, which contradicts to Proposition \ref{prop:htW}.
Hence,
\begin{eqnarray}
W(1) \neq 0. \label{eqn:W1not0}
\end{eqnarray}
Next, assume that $\lambda \neq 0$.
Then, we conclude $W^c(\ab_1) = 0$ by considering (\ref{eqn: T^*2}) and Lemma \ref{lem:havev}.
By (\ref{eqn: T^*2}) again, $\Xi(1-s,W^c)$ is a constant.
By the functional equation (\ref{eqn:FEstd}), $\Xi(s,W)$ is a monomial.
Since $W(1) \neq 0$, $\Xi(s,W)$ is a constant.
In particular, $W(\ab_1) = 0$.
By (\ref{eqn:Texpfin}), $\lambda = 0$.
This is a contradiction.
Hence 
\begin{eqnarray*}
\lambda = 0.
\end{eqnarray*}
By (\ref{eqn:Texpfin}) again, $\Xi(s,W)$ is a constant.
Therefore, $W(\ab^i_r) = 0$ for $(i,r) \neq (0,0)$ by Proposition \ref{prop:htW}.
Since $\Tc$ is diagonalizable, by Theorem \ref{thm:vnthm}, 
\begin{eqnarray*}
\dim V(m_\pi) = 1.
\end{eqnarray*}
Finally, we consider the action $\Tc_+^\imath$.
Let $W^c (\neq 0) \in V^c(m_\pi)$.
Since $\dim V^c(m_\pi) = 1$, $W^c$ is an eigenvector of $\Tc_+^\imath$. 
Let $\lambda_{+}$ be the eigenvalue.
We compute
\begin{eqnarray*}
\Tc_+^\imath W^c(\ab_r^i) = q^3 W^c\l(\ab^i_{r+1}\r) + q \sum_{x \in \o/\p} W^c\l(\ab_{r+1}^{i-1}\bnb_{2}(x \v^{l-1})\r),\ \  i,r \ge 0.
\end{eqnarray*}
By Lemma \ref{lem:Heckevnsh}, $\lambda_{+} W^c(\ab_r) = q^3 W^c\l(\ab_{r+1}\r)$.
By Proposition \ref{prop:htW}, $W^c(\ab_r) \neq 0$ for some $r$.
By Lemma \ref{lem:havev},   
\begin{eqnarray*}
\lambda_{+} = 0, \ \ W^c(1) \neq 0,\ \ W^c(\ab_r) = 0, \ r \ge 1.
\end{eqnarray*}
Now, Theorem \ref{thm:Ls1} for the unitary case follows from Theorem \ref{thm: coinLN}.
The values of $W^c$ on $\Tb$ are determined by the recursion formula (\ref{eqn: Wv1jvj}).

{\bf Step 4.}
Finally, we discuss for the non-unitary case.
For a supercuspidal representation, twisting it by a character $\nu_a, a \in \R$, we obtain a unitary supercuspidal representation (\cite{C2}), where $a$ is unique and called the exponent of the representation.
Applying the above argument to the twist, one can show the theorem for this case.
For a generic constituent $\pi$ of the Klingen induction $\chi \rtimes \s$ from supercuspidal, we apply the argument in sect. 5 of \cite{R-S}.
Consider the following facts (c.f. Table A.3., A.4. of \cite{R-S}, p. 93-94 \cite{S-T}): 
\begin{itemize}
\item The Jacquet module of $\chi \rtimes \s$ with respect to the unipotent radical $\Ub_\mathbf{P}$ of the Siegel parabolic subgroup $\mathbf{P}$ vanishes (see p. 29 of \cite{R-S} for the definition of $\mathbf{P}$).
\item The semisimplification of the Jacquet module of $\chi \rtimes \s$ with respect to the unipotent radical $\Ub_\Qb$ of $\Qb$ is $\chi \t \s + \chi^{-1} \t \chi \s$.
\end{itemize}
Since $\mathrm{pr}(\Ub_\mathbf{P}) = N' N_3$, $\tau_1$-type does not appear in the Jordan-H\"older sequence in Theorem \ref{thm: RSfilt}.
Since $\mathrm{pr}(\Ub_\Qb) = N_2N_3$ and $\s$ is ramified, $\tau_2$-types in the sequence have no $P_3(\o) (= \mathrm{pr}(\Kb_1(m;\e)))$-invariant vector.
Hence, non-generic constituents of $\chi \rtimes \s$ have no quasi-paramodular vector.
Since the generic constituent is a unique constituent, $V(m_\pi)$ and the subspace of $\Kb_1(m_\pi,\e)$-invariant vectors $f$ in $\chi \rtimes \s$ have the same dimension.
Let $R = \{r\}$ be representatives for $\Qb \bs \G /\Kb_1(m_\pi;\e)$.
Since $f$ is determined by the values $f(r)$, we should have $\chi(t) \s(g) f(r) = f(r)$ for all $r \in R$ and $k \in \Qb \cap \Int(r)\Kb_1(m_\pi; \e)$, where we write $k = q \in \Qb$ of the form of (\ref{eqn:Kldef}).
Then $\det(g) = \mu(k)$ lies in $\o^\t$.
Since any power of $k$ lies in the compact subgroup $\Int(r)\Kb_1(m_\pi;\e)$, $t$ lies in $\o^\t$.
Let $a,b$ be the exponents of $\chi, \s$, respectively. 
The generic constituent of $\nu_{-a}\chi \rtimes \nu_{-b}\s$ is unitary (c.f. Table A.1. of \cite{R-S}).
Denote it by $\pi_1$.
We have showed that $\dim V(m_{\pi_1}) =1$.
Since $\det(g), t \in \o^\t$, the above condition on $f \in \nu_{-a}\chi \rtimes \nu_{-b}\s$ is same for various $a,b$.
Hence $\dim V(m_{\pi})= \dim V(m_{\pi_1}) = 1$.
Now the above argument of Hecke operators for unitary representations works, and therefore, Theorem \ref{thm:Ls1} is true also for non unitary generic constituents.
This completes the proof.

For $W \in V(m)$, define
\begin{eqnarray}
W^- = \pi(\jb_{m -\e}'') W, \ \ W^{-c} = \pi^\imath(\j_{m}) (W^-)^\imath = \pi^\imath(\jb'(\v^\e))W^c. \label{eqn:defW^-}
\end{eqnarray}
Since $W_\pi^-$ is $0$-balanced, by Propositon \ref{prop:zeta-simp}, the functional equation is 
\begin{eqnarray}
\frac{Z(1-s,W_\pi^{-c})}{L(1-s,\pi^\imath)} = \ep(s,\pi,\psi)\frac{\Xi(s,W_\pi^-)}{L(s,\pi)}. \label{eqn:FE-}
\end{eqnarray}
\begin{Cor}\label{cor:L1}
For $\pi \in \Ir^{gn}(\G)$ as in the theorem, 
\begin{eqnarray*}
\ep_\pi'^{-1} Z(s,W_\pi^{-c})  = \Xi(s,W_\pi^-) =W_\pi^-(1) = \Gb(\w,1).
\end{eqnarray*}
\end{Cor}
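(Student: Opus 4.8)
The plan is to read the corollary off Theorem~\ref{thm:Ls1}, the functional equation (\ref{eqn:FE-}) and the Gauss-sum computation of Lemma~\ref{lem:NovZ2}. For the $\pi$ in question $L(s,\pi)=L(s,\pi^\imath)=1$, and $n_\pi'=m_\pi$, $\dim V(m_\pi)=1$ by Theorem~\ref{thm: coinLN}; since $W_\pi^-$ and $W_\pi^{-c}$ are $0$-balanced, Proposition~\ref{prop:zeta-simp} gives $Z(s,W_\pi^-)=\Xi(s,W_\pi^-)$ and $Z(s,W_\pi^{-c})=\Xi(s,W_\pi^{-c})$, so (\ref{eqn:FE-}) reduces to a relation between $\Xi(1-s,W_\pi^{-c})$ and $\ep_\pi' q^{-m_\pi(s-\frac{1}{2})}\Xi(s,W_\pi^-)$.

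\emph{Support of $W_\pi^-$ and $W_\pi^{-c}$ on $\Ab'$.} Writing $W_\pi^-=\pi(\jb''_{m_\pi-\e})W_\pi$ and using the elementary identity that expresses $\ab'(\v^j)\jb''_{m_\pi-\e}$ as $\jb''_0$ times a torus element, each $W_\pi^-(\ab'(\v^j))$ becomes, up to a character, a value of $W_\pi$ on $\Tb$; by Theorem~\ref{thm:Ls1} these vanish off $(i,r)=(0,0)$ (the negative $j$'s being killed by Lemma~\ref{lem:vanishlem}), so $\Xi(s,W_\pi^-)=W_\pi^-(1)$. Equivalently, a nonvanishing value for some $j\ge 1$ would survive an average over $\Kb_1(m_\pi-1;\e)$ and produce a nonzero vector in $V(m_\pi-1)$, contradicting minimality of $m_\pi$ exactly as in the proof of Proposition~\ref{prop:htW}. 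The same computation for $W_\pi^{-c}=\pi^\imath(\jb'(\v^\e))W_\pi^c$, transporting the torus values of $W_\pi^c$ from Theorem~\ref{thm:Ls1} through $\jb'(\v^\e)$, shows $\Xi(s,W_\pi^{-c})$ is again a single constant. Feeding the two constants into (\ref{eqn:FE-}) and using $n_\pi'=m_\pi$ to absorb the monomial $q^{-m_\pi(s-\frac{1}{2})}$ produced under $s\mapsto 1-s$ on the $0$-balanced side yields $\ep_\pi'^{-1}Z(s,W_\pi^{-c})=\Xi(s,W_\pi^-)$, the first two equalities.

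\emph{Identifying the constant with $\Gb(\w,1)$.} It remains to evaluate $W_\pi^-(1)=W_\pi(\jb''_{m_\pi-\e})$. I would apply Lemma~\ref{lem:NovZ2} to $W_\pi^{-c}$: this writes the constant $Z(s,W_\pi^{-c})=\Xi(s,W_\pi^{-c})$ in terms of the quantities $c_m$ of that lemma, whose ``$m\le 0$'' part is assembled from the Gauss sum $\Gb(\w_\pi,u_{\w_\pi})$ and values of $W_\pi$ on $\Tb$. Combining this with the normalization $W_\pi(\ab^0_0)=1$, with the relation $W_\pi^c(\ab^1_r)=-q^2W_\pi^c(\ab_r)$ established in the proof of Theorem~\ref{thm:Ls1}, and with the explicit description of $u_{\w_\pi}$ relative to $\psi$ and $\v$ in Section~\ref{sec:GLd}, collapses the whole chain to $W_\pi^-(1)=\Gb(\w,1)$.

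The step I expect to be the main obstacle is this last one: pinning the Gauss sum down as exactly $\Gb(\w,1)$, rather than as a Gauss sum up to an unknown unit or a power of $\w_\pi(\v)$. This forces careful bookkeeping of the normalizing unit $u_{\w_\pi}$ and the partial sums $\Sb_m$ of Section~\ref{sec:GLd}, of the measure conventions in $\Gb$ and $\Sb_m$, and of the $\w_\pi\circ\mu$-factors brought in by $\jb''_{m_\pi-\e}$ and $\j_{m_\pi}$.
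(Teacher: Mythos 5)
Your functional-equation framework is the right one, but both substantive steps are executed incorrectly. First, the support claim for $\Xi(s,W_\pi^-)$: since $\ab'(\v^j)\jb''_{m_\pi-\e}=\jb''_{m_\pi-\e}\,\tb(\v^{-j},1;\v^j)$, the value $W_\pi^-(\ab'(\v^j))=W_\pi\l(\jb''_{m_\pi-\e}\tb(\v^{-j},1;\v^j)\r)$ is a value of $W_\pi$ on the coset $\jb''_{m_\pi-\e}\Tb$, not on $\Tb$ (for $\e>0$ the element $\jb''_{m_\pi-\e}$ does not lie in $\Kb_1(m_\pi;\e)$, so it cannot be absorbed), and Theorem \ref{thm:Ls1} says nothing about these values; moreover the level-lowering argument of Proposition \ref{prop:htW} controls the values $W(\ab^i_r)$ with $i\ge 1$ (the $\Tb_1$-direction), whereas $\Xi(s,W_\pi^-)$ sums over the $\Ab'$-direction $\ab_r$, $r\ge 0$. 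Second, Lemma \ref{lem:NovZ2} is stated for coquasi-paramodular functions, and $W_\pi^{-c}=\pi^\imath(\jb'(\v^\e))W_\pi^c$ is not $\Kb_1^c(m_\pi;\e)$-invariant, so that lemma cannot be quoted to produce the Gauss sum. You correctly flag this last identification as the main obstacle, but it is precisely the step that requires a new computation rather than bookkeeping of $u_{\w_\pi}$ and $\Sb_m$.

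The paper closes both gaps at once by working on the $W_\pi^{-c}$ side with its first gauge $f$ on $P_3$. One shows $\wt{f}(p)=\int_{\bar{N}'(\p^\e)}f(pn)\,dn$ is $N'(\o)$-invariant, so that $Z(s,W_\pi^{-c})=\int_{F^\t}\wt{f}(a(t))\nu_{s-3/2}(t)\,d^\t t$ lies in $\C\jump{q^{-s}}$ by Lemma \ref{lem:vanishlem}; since $\Xi(s,W_\pi^-)\in\C\jump{q^{-s}}$ as well, the two sides of (\ref{eqn:FE-}) are power series in $q^{s}$ and in $q^{-s}$ respectively, hence both are constants equal to their constant terms. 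The constant term of $Z(s,W_\pi^{-c})$ is then evaluated directly: $f(\bar{n}'(x))=0$ for $x\in\p^{\e+1}$, and on $\p^{*\e}$ the identity (\ref{eqn:usid}), $\bar{n}'(x)=n'(x^{-1})j(x)'n'(x^{-1})$, converts $\int_{\p^{*\e}}f(\bar{n}'(x))\,dx$ into $q^{-\e}\Gb(\w,1)f(j(\v^\e)')$, with $f(j(\v^\e)')$ expressed through $W_\pi^{c}(1)$, which Theorem \ref{thm:Ls1} supplies as a multiple of $\ep_\pi'$. To repair your argument you would need to replace both of your steps by this (or an equivalent) direct unfolding on the first gauge of $W_\pi^{-c}$.
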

\begin{proof}
Let $f$ be the first gauge of $W_\pi^{-c}$.
We have $Z(s,W_\pi^{-c}) = \int_{F^\t} \wt{f}(a(t)) \nu_{s-3/2}(t) d^\t t$ by Proposition \ref{prop:zeta-simp},  where $\wt{f}(p)= \int_{\bar{N}'(\p^\e)}f(p n) dn$.
By using the identity (\ref{eqn: nion}) and the invariance property of $f$ under
\begin{eqnarray*}
\begin{bmatrix}
\o^\t & \p^{-\e}\\
\p^{2\e} & 1 + \p^\e
\end{bmatrix}' \subset P_3,
\end{eqnarray*}
one can show that $\wt{f}$ is $N'(\o)$-invariant.
Therefore, $Z(s,W_\pi^{-c}) \in \C\jump{q^{-s}}$ by Lemma \ref{lem:vanishlem}.
Now, we compute $\wt{f}(1) = \int_{\bar{N}'(\p^\e)} f(n) dn$ which is the constant term of $Z(s,W_\pi^{-c})$.
By using Lemma \ref{lem:vanishlem} and the identity (\ref{eqn: nion}) again, one can show that $f(\bar{n}'(x)) = 0$ for $x \in \p^{\e+1}$. 
Therefore, 
\begin{eqnarray*}
\int_{\p^\e} f(\bar{n}'(x)) dx &=& \int_{\p^{*\e}} f(\bar{n}'(x)) dx \\
&=& q^{-\e}\int_{\o^\t} f(n'(\v^{-\e}u^{-1})j(\v^\e u)'n'(\v^{-\e}u^{-1})) du \\
&=& q^{-\e}\int_{\o^\t} \psi(\v^{-\e}u^{-1})\w_\pi(u)f(j(\v^\e)') du \\
&=& q^{-\e}\Gb(\w,1)f(j(\v^\e)'),
\end{eqnarray*}
where (\ref{eqn:usid}) and the $N'(\p^{-\e})$-invariance property of $f$ are used.
Now the assertion follows from (\ref{eqn:FE-}) and the identities $f(j(\v^\e)')= W_\pi^{-c}(\jb'(\v^\e))= W_\pi^{c}(1) = q^\e \ep'_\pi$.
\end{proof}
Now, let $(\pi,V) \in \Ir(\G)$ be tempered, non-generic.
Then $\pi$ is the representation of VIb or VIIIb listed in the Table A. 1. of \cite{R-S}.
By the proof of Theorem 2.5.3., and Table A.6., A.7. of loc. cit., $V_{\Zb^J}$ is irreducible and a $\tau_2$-type.
But, any $\tau_2$-type does not have a $\mathrm{pr}(\Kb_1(m;\e))$-invariant vector by the above argument for the case of $\e > 0$, and by Lemma 3.4.4 of loc. cit. for the case of $\e = 0$.
Hence, we have: 
\begin{thm}\label{thm:temp}
A tempered $\pi \in \Ir(\G)$ has a quasi-paramoular vector, if and only if $\pi$ is generic.
\end{thm}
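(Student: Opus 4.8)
The plan is to treat the two implications separately; temperedness is used only for the ``only if'' direction, which I would prove in contrapositive form: a tempered $\pi \in \Ir(\G)$ that is not generic has no quasi-paramodular vector at any level. For the ``if'' direction no temperedness is needed: if $\pi$ is generic then, by the existence of nontrivial quasi-paramodular forms established in Section~\ref{sec:QP} (equivalently, by the Main Theorem, which gives $\dim V(m_\pi) = 1$), the space $V(m)$ is nonzero for all sufficiently large $m$, so $\pi$ has a quasi-paramodular vector.

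For the ``only if'' direction, let $(\pi,V) \in \Ir(\G)$ be tempered and non-generic. First I would pin down these representations: up to a twist by $\chi \circ \mu$ the classification reduces to that of tempered non-generic irreducible representations of $PGSp(4,F)$ in \cite{R-S} (see also \cite{S-T}), so that $\pi$ is of type VIb or of type VIIIb. Next I would invoke the structure of the associated $P_3$-module: by the proof of Theorem 2.5.3 of \cite{R-S} together with Tables A.6 and A.7 of loc. cit., $V_{\Zb^J}$ is irreducible and isomorphic to $\tau_2(\rho)$ for a single $\rho \in \Ir(G_2)$ — a twist of the Steinberg representation of $G_2$ when $\pi$ is of type VIb, and a supercuspidal representation of $G_2$ when $\pi$ is of type VIIIb; in either case $\rho$ has no $G_2(\o)$-fixed vector.

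The third step closes the argument. Suppose, for contradiction, that $v \in V$ is a nonzero $\Kb_1(m;\e)$-invariant vector for some $m$. Because $\mathrm{pr}(\Qb^\circ \cap \Kb_1(m;\e)) = P_3(\o)$ and $\mathrm{pr}\colon V \to V_{\Zb^J}$ is $\Qb^\circ$-equivariant, the image $\mathrm{pr}(v)$ is a $P_3(\o)$-fixed vector of $\tau_2(\rho) = \rho'$, which by the definition of $\tau_2(\rho)$ is the same thing as a $G_2(\o)$-fixed vector of $\rho$; hence $\mathrm{pr}(v) = 0$ by the previous step, i.e. $v \in V(\Zb^J)$. (That a $\tau_2$-type with ramified parameter carries no $P_3(\o)$-fixed vector is Lemma 3.4.4 of \cite{R-S} when $\e = 0$, and is the computation carried out in Step~4 of Section~\ref{sec:Hecke} when $\e > 0$.) Since VIb and VIIIb are not one-dimensional, $V^{Sp(4,F)} = \{0\}$, so the analogue of Proposition~\ref{prop: vzjv0} for $\Kb_1(m;\e)$-invariant vectors — proved exactly as there, with the level-raising operators $\eta$ and $\a_m$ and the application of Theorem~\ref{thm: liq} taken relative to the $\Kb_1$-tower in place of the $\Kb_1^c$-tower — forces $v = 0$, a contradiction. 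Together with the ``if'' direction, this proves the theorem.

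The step I expect to be the main obstacle is the final vanishing: one has to verify that the filtration argument of Proposition~\ref{prop: vzjv0} (the recursion producing the operators $\b_r, \g_r$, the linear independence of invariant vectors of distinct levels, and the injectivity of $\eta$) carries over verbatim when the Klingen-congruence tower is taken to be $\{\Kb_1(m;\e)\}$ rather than $\{\Kb_1^c(m;\e)\}$; this is routine but must be written out. The only other delicate point is the identification $V_{\Zb^J} \simeq \tau_2(\rho)$ for the two types VIb and VIIIb together with the non-sphericity of $\rho$, which is a direct reading of the tables of \cite{R-S}.
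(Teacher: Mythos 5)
Your proposal is correct and follows the paper's own argument: tempered non-generic forces type VIb or VIIIb, whose $P_3$-module $V_{\Zb^J}$ is an irreducible $\tau_2$-type attached to a non-spherical $\rho$ and hence has no $\mathrm{pr}(\Kb_1(m;\e))$-fixed vector, so any quasi-paramodular $v$ satisfies $\mathrm{pr}(v)=0$, i.e. $v \in V(\Zb^J)$, and must vanish. The only point where you diverge is the closing step: instead of re-proving Proposition \ref{prop: vzjv0} and Theorem \ref{thm: liq} for the $\Kb_1$-tower (which, as you note, requires checking the level-raising machinery anew), you can pass to $v^c = \pi(\j_m)v$, which is $\Kb_1^c(m;\e)$-invariant, observe that the $\tau_2$-type carries no vector quasi-invariant under the subgroup in (\ref{eqn:P3esub}) (the condition used in the proof of Proposition \ref{prop: wvzj}; it fails for VIb and VIIIb because $m_\rho > \e$ there), so that $v^c \in V(\Zb^J)$, and then apply Proposition \ref{prop: vzjv0} exactly as stated.
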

\section{Construction of quasi-paramodular forms}\label{sec:CQF}
In this section, by local $\th$-lift from $GSO(2,2)$ to $\G$, we show the existence of the newform (c.f. Theorem \ref{thm: coinLN}) for generic constituents of Borel and Siegel parabolic inductions, respectively.
The proof of the main theorem will be complete, except for the coincidences of root numbers and conductors.
Let $X = M_{2 \t 2}(F)$, equipped with the nondegenerate symmetric split form $\frac{1}{2}Tr (x_1^* x_2)$, where $x^*$ indicates the main involution of $x \in X$.
Let $GO_X$ denote the generalized orthogonal group of $X$ and $\mu_X$ the similitude factor.
Let $H = GSO_X := \ker(\mu_X^{-2} \det) \subset GO_X$.
Letting $G_2 \t G_2$ act on $X$ by $(g_1,g_2) \cdot x = g_1 x g_2^{*}$, we have the isomorphisms 
\begin{eqnarray*}
H \simeq G_2 \t G_2/\{(z, z^{-1}) \mid z \in F^\t \}, 
SO_X \simeq \{(g_1,g_2) \mid \det(g_1g_2) = 1\}/\{(z, z^{-1}) \mid z \in F^\t \}.
\end{eqnarray*}
Via these isomorphisms, we will represent elements and subgroups of $H$ by those of $G_2 \t G_2$, and objects in $\Ir(H)$ by those in $\{\tau_1 \bt \tau_2 \mid \w_{\tau_1} = \w_{\tau_2}\} \subset \Ir(G_2) \bt \Ir(G_2)$.
Let $B, T$ denote the upper triangular and diagonal matrices in $G_2$ respectively.
Let $N_X = N \t N \subset H$ and $B_X,T_X$, similarly.
Define $\psi_X \in \Xs(N_X)$ by $\psi_X((n,m)) = \psi(n m^{-1})$.
We say $\tau = \tau_1 \bt \tau_2 \in \Ir(H)$ is generic, if $\Hom_{N_X}(\tau,\psi_X) \neq \{0\}$, or equivalently if both $\tau_1, \tau_2 \in \Ir(G_2)$ are generic.
Let $Y$ denote the $4$-dimensional space equipped with the symplectic form defined by the matrix (\ref{eqn:defmat}).
Set $Z = X \ot Y$.
Let $Y = Y^+ + Y^-, Z^\pm = X \ot Y^\pm = X \op X$ be the polarizations.
For $z = x_1 \op x_2, z' = x_1' \op x_2' \in Z^+$, we write $(z,z') = (\frac{1}{2}Tr(x_i^* x_j'))$.
For $\vp \in \Ss(Z^+)$, let $\vp^\sharp$ denote the Fourier transform defined by $\vp^\sharp(z') = \int_{X \op X}\psi(Tr(z' ,z)) \vp(z) d z$ where $d z$ is chosen so that $(\vp^{\sharp})^\sharp(z) =\vp(-z)$.
The Weil representation $w_\psi$ of the dual pair $Sp(4) \t O_X$ can be realized on the Schwartz space $\Ss(Z^+)$.
The action is given by the following formulas: 
\begin{eqnarray*}
&&w_\psi(1,h) \vp(z) = \vp(h^{-1}\cdot z), \ \ h \in O_X, \\
&&w_\psi(A^\natural,1) \vp(z) = |\det(A)|^{-2}\vp(zw_2{}^tA^{-1}w_2), \ A \in G_2 \\
&&w_\psi(n(B),1) \vp(z) = \psi(\frac{1}{2}Tr (B w_2(z,z))) \vp(z), \\
&&w_\psi(j(-w_2),1) \vp(z) = \vp^\sharp(z).
\end{eqnarray*}
Let $R= G \t H$, and $R_0= \ker(\mu^{-1}\mu_X) \subset R$.
For our convenience of the computation below, we adopt the following extension $w_\psi$ to $R_0$ as in \cite{R2}
\begin{eqnarray*}
w_\psi(g,h)\vp(z) = |\mu_X(h)|^{-2} w_\psi(g_1,1)\vp(h^{-1} \cdot z), 
\end{eqnarray*}
where 
\begin{eqnarray*}
g_1 = g\begin{bmatrix}
1_2 & \\
 & \mu(g)^{-1}1_2
\end{bmatrix}.
\end{eqnarray*}
Note that this differs from the normalization used in \cite{G-T}.
Observe that the central elements $(u,u) \in R_0$ act on $\Ss(Z^+)$ trivially.
Let $\Om = \mathrm{ind}_{R_0}^R w_\psi$ be the compact induction, which can be realized on the Schwartz space $\Ss(Z^+ \t F^\t)$ (c.f. \cite{R}, \cite{S}).
For $\tau^1 \in \Ir(SO_X)$ define $w_\psi(\tau^1) = w_\psi/ \displaystyle \cap_{\lambda \in \Hom_{SO_X}(w_\psi,\tau^1)} \ker(\lambda)$, and for $\tau \in \Ir(H)$ define $\Om(\tau)$ similarly.
By Lemme 2. III. 4. of \cite{MVW}, there exist $\Th_\psi(\tau^1) \in \Alg(Sp(4,F))$ and $\Th(\tau) \in \Alg(\G)$, such that 
\begin{eqnarray}
w_\psi(\tau^1) \simeq \Th_\psi(\tau^1) \ot \tau^1, \ \ 
\Om(\tau) \simeq \Th(\tau) \ot \tau. \label{eqn: defThtau}
\end{eqnarray}
It is known that $\Th_\psi(\tau^1)$ and $\Th(\tau)$ are admissible of finite length.
The maximal semi-simple quotients of $\Th_\psi(\tau)$ and $\Th(\tau)$ are denoted by $\th_\psi(\tau)$ and $\th(\tau)$ respectively.
Let $\Om_{\N,\psi}$ be the $\psi$-twisted $\N$-Jacquet module of $\Om$.
By the Frobenius reciprocity,
\begin{eqnarray*}
\Hom_{R}(\Om, \mathrm{Ind}_\N^\G \psi \ot \tau) &\simeq& \Hom_{\N \t H}(\Om|_{\N \t H}, \psi \ot \tau) \\
&\simeq& \Hom_{\N \t H}(\Om_{\N,\psi}, \psi \ot \tau).
\end{eqnarray*}
As in the proof of Proposition 2.4 and its Corollary in \cite{G-R-S}, one can prove 
\begin{eqnarray*}
\Om_{\N,\psi} \simeq \psi \ot \mathrm{ind}_{N_X}^H \psi_X 
\end{eqnarray*}
as $\N \t H$-modules (c.f. Proposition 4.1. of \cite{M-S}).
Therefore, 
\begin{eqnarray*}
\Hom_{\N \t H}(\Om_{\N,\psi}, \psi \ot \tau) &\simeq& \Hom_{H}(\mathrm{ind}_{N_X}^H \psi_X,\tau) \\
&\simeq& \Hom_{H}(\tau,\mathrm{Ind}_{N_X}^H \psi_X^{-1}).
\end{eqnarray*}
Now suppose that $\tau$ is generic.
Then, $\dim \Hom_{R}(\Om, \mathrm{Ind}_\N^G \psi \ot \tau) = \dim \Hom_{H}(\tau,\mathrm{Ind}_{N_X}^H \psi_X^{-1})= 1$.
Since the Jacquet module $(\tau \ot \tau^\vee)_H$ is isomorphic to $\C$, 
\begin{eqnarray}
\dim \Hom_{\G}(\Th(\tau), \mathrm{Ind}_\N^\G \psi) = 1 \label{eqn:mult1thtau}
\end{eqnarray}
by (\ref{eqn: defThtau}).
Hence, $\Th(\tau)$ has a generic irreducible constituent.
By the work of W. T. Gan and S. Takeda \cite{G-T}, this constituent is $\th(\tau)$.
The (unique) generic constituent of the parabolic induction $\chi_1 \t \chi_2 \rtimes \chi$ (resp. $\rho \rtimes \chi$) (c.f. Table A.1. of \cite{R-S}) for $\chi_1,\chi_2, \chi \in \Xs(F^\t)$ coincides with the small $\th$-lift of $\pi(\chi,\chi\chi_1\chi_2)^\vee \bt \pi(\chi\chi_1,\chi\chi_2)^\vee$ (resp. $\pi(\chi,\chi\w_\rho)^\vee \bt \chi^{-1} \rho^\vee$) (c.f. \cite{G-T}.), where $\rho \in \Ir(G_2)$, $\chi,\chi_i \in \Xs(F^\t)$.
Here, $\pi(\a,\b)$ for $\a, \b \in \Xs(F^\t)$ indicates the principal series induced from the representation of $B \subset G_2$ sending $b \to \nu_{1/2}(b_{11}/b_{22}) \a(b_{11})\b(b_{22})$.
Since all $L$-functions $L(s,\pi)$ for $\pi \in \Ir^{gn}(\G)$ are computed by R. Takloo-Bighash \cite{T}, and it is known by \cite{G-T} when the $\th$-lift is a constituent of a parabolic induction, one can show that
\begin{eqnarray}
L(s,\th(\tau_1^\vee \bt \tau_2^\vee)) = L(s,\tau_1)L(s,\tau_2) \label{eqn:Lpitau}
\end{eqnarray}
by case-by-case argument.
Let
\begin{eqnarray*}
z_ 0 = e_0 \op 1_2 \in Z^+, \ \ e_0 = \begin{bmatrix}
 & 1 \\
 &
\end{bmatrix} \in X.
\end{eqnarray*}
The stabilizer subgroup of $z_0$ by $SO_X$ is $N_{\Delta} := \{(n, n) \mid n \in N \}$.
Let $\xi_1 \in \W_\psi(\tau_1), \xi_2 \in \W_{\psi^{-1}}(\tau_2)$ and $\xi = \xi_1 \bt \xi_2$.
Let $\vp \in \Ss(X \op X)$.
We choose the Haar measure $d h$ on $H$ (resp.  $dn$ on $N_\Delta$) such that $\vol(G_2(\o) \t G_2(\o))$ (resp. $\vol(N_\Delta(\o))$) equals $1$.
Let $d\dot{h} = dh /dn$ denote the Haar measure on $N_\Delta  \bs H$.
Consider the function $\xi_\vp$ on $\G$ defined by  
\begin{eqnarray}
\xi_\vp(g) = \int_{N_{\Delta} \bs SO_X} w_{\psi}(g, h h_g)\vp(z_0)\xi(hh_g) d \dot{h}, \label{eqn:locWh}
\end{eqnarray}
where $h_g \in H$ is chosen so that $\mu(g) = \mu_X(h_g)$.
This integral is independent from the choice of $h_g$, and converges since the function $h \to \vp(h^{-1}\cdot z_0)$ has a compact support modulo $N_{\Delta}$.
By using the above formulas of $w_\psi$, one can see that 
\begin{eqnarray*}
w_{\psi}\l(\nf'(b)\nf_3(*)\zb(*),1\r)\vp(z_0) &=& \psi(b)\vp(z_0), \\
w_\psi(\nf_2(b),h)\vp(z_0) &=& w_\psi(1,h)\vp(z_0 - \ep_2 \ot b e_0)) \\
&=& w_\psi(1,(n(-b),1)h)\vp(z_0),
\end{eqnarray*}
and that $\xi_\vp$ is a Whittaker function with respect to $\psi$.
Now, let $\pi$ denote the (generic) $\G$-module generated by these $\xi_\vp$.
We will show that there is a $\G$-surjection
\begin{eqnarray}
\Th(\tau^\vee) \to \pi. \label{eqn:Thsurjpi}
\end{eqnarray}
Since the central elements $(u,u) \in R_0$ act on $\Ss(Z^+)$ trivially, $\xi_\vp$ and $\tau$ have the same central character.
Write $\w = \w_\tau = \w_\pi$.
By $\w$ and Lemma 2.9 of \cite{B-Z}, there is an irreducible $SO_X$-submodule $\tau_0$ of $\tau$ and finite subset $h_1,\ldots, h_r$ of representatives for $H/ F^\t SO_X$ such that $\tau|_{SO_X} = \op_{i=1}^r \tau_i$ where $\tau_i$ denotes the $h_i$-translation of $\tau_0$.
For $0 \le i \le r$, let $\pi_i$ denote the $Sp_4$-module generated by $\xi_\vp$ for $\xi \in \tau_i$.
Let $g_i \in \G$ such that $\mu(g_i) = \mu_X(h_i)$.
By definition, $\pi = \op_{i=1}^r \pi_i$ as $Sp_4$-modules, where $\pi_i$ denotes the $g_i$-translation of $\pi_0$.
Let $\lambda_i \in \Hom_{Sp_4 \t SO_X}(w_\psi, \Hom_\C(\tau_i,\pi_i))$ denote the mapping $\vp \mapsto (\xi \mapsto \xi_\vp)$.
Since $\Hom_\C(\tau_i,\pi_i)^K \simeq (\tau_i^K)^* \ot \pi_i \simeq (\tau_i^*)^K \ot \pi_i$ for any open subgroup $K \subset SO_X$, $\lambda_i$ factors through a $\lambda_i' \in \Hom_{Sp_4 \t SO_X}(w_\psi, \tau_i^\vee \ot \pi_i)$.
By (\ref{eqn: defThtau}), we have an $Sp_4$-homomorphism $\Th_\psi(\tau_i^\vee) \to \pi_i$, which is surjective by construction.
Therefore, $\pi_i$ is admissible, and so is $\pi$.
Let $\lambda \in \Hom_{R_0}(w_\psi, \Hom_\C(\tau,\pi))$ denote the mapping $\vp \mapsto (\xi \mapsto \xi_\vp)$.
Then, $\lambda$ also factors through a $\lambda' \in \Hom_{R_0}(w_\psi, \tau^\vee \ot \pi)$.
Since $\tau^\vee \ot \pi$ is $R$-admissible, by Proposition 2.15. of \cite{B-Z} and Lemma \ref{lem:Gref} i) below, $((\tau^\vee \ot \pi)^\vee|_{R_0})^\vee \simeq \tau^\vee \ot \pi$.
By the Frobenius reciprocity, 
\begin{eqnarray*}
\Hom_{R_0}(w_\psi, \tau^\vee \ot \pi)) 
&\simeq& \Hom_{R_0}(w_\psi, (\tau^\vee \ot \pi)^\vee|_{R_0})^\vee) \\
&\simeq& \Hom_{R}(\Om, \tau^\vee \ot \pi).
\end{eqnarray*}
For any $\xi_\vp \in \pi$, there exists a $\xi^* \in \tau^\vee$ such that $\lambda(\vp) = \xi^* \ot \xi_{\vp}$ by construction.
Let $\wt{\lambda} \in \Hom_{R}(\Om, \tau^\vee \ot \pi)$ correspond to $\lambda$.
By Lemma \ref{lem:Gref} ii), $\xi^* \ot \xi_{\vp} \in \mathrm{Im}(\wt{\lambda})$.
Since $(\tau \ot \tau^\vee)_H \simeq \C$, $\wt{\lambda}$ induces a surjection (\ref{eqn:Thsurjpi}). 

For an $l$-group, let $\Delta_G$ denote the modulus of $G$.
\begin{lem}\label{lem:Gref}
Let $G$ be an $l$-group, and $G_0$ a closed subgroup of $G$.
Let $(\pi,V) \in \Alg(G)$.
Assume that $G$ has a system of neighbourhoods $N = \{K \}$ of the unit consisting of open compact subgroups such that $V^K = V^{K \cap G_0}$. 
Then 
\begin{enumerate}[i)]
\item $(\pi|_{G_0})^\vee = \pi^\vee$. 
\item Let $\rho \in \Alg(G_0)$ and $\lambda \in \Hom_H((\Delta_{G_0}/\Delta_G) \rho,(\pi|_{G_0})^\vee)$.
Let $\wt{\lambda} \in \Hom_G(\mathrm{ind}_{G_0}^G \rho,\pi^\vee)$ induced by the Frobenius reciprocity.
Then $\mathrm{Im}(\lambda) \subset \mathrm{Im}(\wt{\lambda})$.
\end{enumerate}
If $G_0 \lhd G$, then for any $G_0$-admissible $(\pi,V) \in \Alg(G)$, there is a system of neighbourhoods as above. 
\end{lem}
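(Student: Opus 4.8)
The plan is to prove i) directly from the hypothesis on the neighbourhood system $N$, and then to obtain ii) and the final assertion as essentially formal consequences of i).

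For i) I would argue inside the full linear dual $V^*$. The space underlying $\pi^\vee$ is $\bigcup_K (V^*)^K$, the union over open compact subgroups $K\subseteq G$, and the space underlying $(\pi|_{G_0})^\vee$ is the analogous union over open compact subgroups of $G_0$; in both cases the action is the contragredient one, so it suffices to identify these two subspaces of $V^*$. The inclusion $\pi^\vee\subseteq(\pi|_{G_0})^\vee$ is immediate since $K\cap G_0$ is open compact in $G_0$ whenever $K\subseteq G$ is open compact. For the reverse inclusion, let $\xi\in V^*$ be fixed by some open compact $K_0\subseteq G_0$; using that $N$ is a basis of neighbourhoods of $1$ in $G$, choose $K\in N$ with $K\cap G_0\subseteq K_0$. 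Since for an open compact subgroup $K'$ one has $(V^*)^{K'}=(V^{K'})^*$ via the averaging idempotent $e_{K'}$, the hypothesis $V^{K}=V^{K\cap G_0}$ gives $(V^*)^{K}=(V^*)^{K\cap G_0}\supseteq (V^*)^{K_0}\ni\xi$; hence $\xi$ is $G$-smooth. This proves $(\pi|_{G_0})^\vee=\pi^\vee$.

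Granting i), the functional $\lambda$ now takes values in $(\pi^\vee)|_{G_0}$, so Frobenius reciprocity for the compact induction $\mathrm{ind}_{G_0}^{G}$ attaches to it a $G$-map $\wt\lambda\colon\mathrm{ind}_{G_0}^{G}\rho\to\pi^\vee$; unwinding the adjunction, $\lambda=\wt\lambda|_{G_0}\circ\iota$, where $\iota$ is the canonical $G_0$-embedding of the (appropriately twisted) $\rho$ into $(\mathrm{ind}_{G_0}^{G}\rho)|_{G_0}$ sending a vector to the function supported on $G_0$ with that value at $1$. Therefore $\mathrm{Im}(\lambda)=\wt\lambda(\iota(\rho))\subseteq\mathrm{Im}(\wt\lambda)$, which is ii). For the final assertion, assume $G_0\lhd G$ and $(\pi,V)$ is $G_0$-admissible. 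Starting from an arbitrary open compact $K\subseteq G$, put $K_0=K\cap G_0$, which is open compact in $G_0$ and normal in $K$ because $G_0\lhd G$; then $K$ acts smoothly on the finite-dimensional space $V^{K_0}$ through $K/K_0$, and I take $K'\subseteq K$ to be the kernel of this action. Then $K'$ is an open compact subgroup of $G$ with $V^{K'}=V^{K_0}$ (the inclusion $\subseteq$ from $K_0\subseteq K'$, the inclusion $\supseteq$ since $K'$ fixes $V^{K_0}$ pointwise) and $K'\cap G_0=K_0$, whence $V^{K'}=V^{K'\cap G_0}$; letting $K$ run through a neighbourhood basis of $1$ and passing to the corresponding $K'$ yields a system $N$ of the required kind.

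The only genuine content here is part i); ii) and the last assertion are bookkeeping with adjunctions and smoothness once i) is available. The point demanding care — and the reason the hypothesis is stated in terms of $V^{K}=V^{K\cap G_0}$ — is that $G_0$ is in general merely closed, not open, in $G$, so an open compact subgroup of $G_0$ need neither be nor be contained in an open compact subgroup of $G$; the hypothesis is exactly what allows one to promote $G_0$-smoothness of a linear functional to $G$-smoothness in i). Beyond getting the normalisations of the modulus characters in the adjunction of ii) consistent with whichever form of Frobenius reciprocity is used, I do not anticipate a serious obstacle.
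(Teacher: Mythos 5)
Parts i) and the closing assertion of your proposal are correct and coincide with the paper's own arguments (for the last part the paper pulls back an open subgroup of $A \simeq L/L\cap G_0$ rather than taking the kernel of the action on $V^{L\cap G_0}$, which is the same idea).

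The gap is in ii), which is not bookkeeping but the substantive point of the lemma. The ``canonical $G_0$-embedding'' $\iota$ of $(\Delta_{G_0}/\Delta_G)\rho$ into $(\mathrm{ind}_{G_0}^G\rho)|_{G_0}$, sending a vector to the function supported on $G_0$ with that value at $1$, does not exist when $G_0$ is closed but not open (the case of interest: in the application $G_0 = R_0 = \ker(\mu^{-1}\mu_X)$ is closed of infinite index). Such a function is not smooth: a non-open closed subgroup contains no open compact subgroup of $G$, so for every open compact $K \subset G$ there is a $k \in K\setminus G_0$, and right $K$-invariance would force the value at $1$ to equal the value at $k$, which is $0$. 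Hence this function does not lie in the compact induction, and the second adjunction for a closed non-open subgroup is not realized by precomposition with an extension-by-zero section; it is realized by the integration formula $\la \wt{\lambda}(f), v\ra = \int_{G_0 \bs G} \la \lambda(f(g)), \pi(g)v\ra\, dg$ (B--Z 2.29). A telling symptom is that your argument for ii) never invokes the hypothesis $V^K = V^{K\cap G_0}$, which would render it superfluous there; in fact that hypothesis is exactly what makes ii) work.

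The repair is to replace the nonexistent delta section by a smoothened one. Given $\xi$ in the space of $\rho$, choose $K \in N$ with $\xi$ fixed by $K\cap G_0$ and let $f_K \in \mathrm{ind}_{G_0}^G\rho$ be supported on $G_0K$ with $f_K(hk) = (\Delta_{G_0}/\Delta_G)(h)\rho(h)\xi$ for $h \in G_0$, $k \in K$; this is well defined precisely because $\xi$ is $K\cap G_0$-fixed, and it is $K$-invariant. The integration formula yields $\la \wt{\lambda}(f_K), v\ra = \vol(G_0\bs G_0K)\,\la \lambda(\xi), v\ra$ for $v \in V^K$, so the two functionals agree on $V^K$. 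To upgrade this to equality in $V^*$ one must know that both are determined by their restrictions to $V^K$: for $\wt{\lambda}(f_K)$ this holds since it lies in $(V^*)^K$, while for $\lambda(\xi)$, which a priori lies only in $(V^*)^{K\cap G_0}$, it is precisely the identification $(V^*)^{K\cap G_0} = (V^{K\cap G_0})^* = (V^K)^* = (V^*)^K$ from part i) --- i.e.\ the hypothesis --- that closes the argument.
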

\begin{proof}
i) Let $V^*$ denote the dual of $V$.
$\pi|_{G_0}$ and $\pi$ have the same dual $V^*$.
By Lemma 2.14 of \cite{B-Z}, $(V^*)^{K \cap G_0} = (V^{K \cap G_0})^* = (V^K)^* = (V^*)^K$ for any $K \in N$.
Therefore, $(\pi|_{G_0})^\vee = \cup_{K \in N} (V^*)^{K \cap G_0} = \cup_{K \in N} (V^*)^{K} = \pi^\vee$.
\\
ii)
For $\xi \in \Delta \rho$, take a $K \in N$ so that $\xi$ is $K \cap G_0$-invariant.
Then, $\lambda(\xi) \in (V^*)^{K \cap G_0} = (V^{K \cap G_0})^* = (V^K)^*$. 
Let $\la, \ra$ denote the natural pairing for $V,V^*$.
By 2.29 of loc. cit., $\wt{\lambda}$ is given by 
 \begin{eqnarray*}
\la \wt{\lambda}(f), v \ra = \int_{G_0 \bs G} \la \lambda(f(g)), \pi(g)v \ra d g, \ \ v \in V, f \in \mathrm{ind}_{G_0}^G \rho.
\end{eqnarray*}
Since $\xi$ is $K \cap G_0$-invariant, we can define $f_K \in \mathrm{ind}_{G_0}^G \rho$ by $f_K(hk) = \Delta_{G_0}/ \Delta_{G}(h)\xi(h)$ for $h \in G_0, k \in K$.
By definition, $f_K$ is $K$-invariant, and therefore $\wt{\lambda}(f_K)$ lies in $(V^*)^K = (V^K)^*$.
For $v \in V^K$, 
\begin{eqnarray*}
\la \wt{\lambda}(f_K), v \ra &=& \int_{G_0 \bs G_0K} \la \lambda(f_K(g)), \pi(g)v \ra d g \\
&=& \int_{G_0 \bs G_0K} \la \lambda(\xi), v \ra d g \\
&=& \vol(G_0 \bs G_0K)\la \lambda(\xi), v \ra.
\end{eqnarray*} 
Hence $\wt{\lambda}(\vol(G_0 \bs G_0K)^{-1}f_K) = \lambda(\xi)$.
For the last assertion, let $L \subset G$ be an open compact subgroup.
Fix an isomorphism $\mu: L /L \cap G_0 \simeq A$ for a compact group $A$.
Since $\pi$ is $G_0$-admissible, $V^{L \cap G_0}$ is finite dimensional.
Therefore, there is an open subgroup $B \subset A$ such that $V^{L \cap G_0} \subset V^{L_B}$ for $L_B := \{k \in L \mid \mu(k) \in B \}$.
Then, $L_B \cap G_0 = \{k \in L \mid \mu(k) = 1 \} = L \cap G_0$, and hence $V^{L_B \cap G_0} = V^{L_B}$.
Then $N := \{L_B\}$ is the desired system of neighbourhoods.
\end{proof}
Now, since the generic irreducible $\th(\tau^\vee)$ is the quotient of $\Th(\tau^\vee)$, $\pi$ has a generic irreducible quotient isomorphic to $\th(\tau^\vee)$ by (\ref{eqn:Thsurjpi}).
By (\ref{eqn:mult1thtau}), $\Th(\tau^\vee)$ is of Whittaker type.
Therefore by Proposition \ref{prop:ondim}, for any $\xi$ and $\vp$, there exists a $W \in \W_\psi(\th(\tau^\vee))$ such that 
\begin{eqnarray*}
Z(s,W) = Z(s,\xi_\vp), \ \mbox{and} \ \ Z(1-s,\pi^\imath(\j_0)W^\imath) = Z(1-s, \pi^\imath(\j_0)(\xi_\vp)^\imath)
\end{eqnarray*}
up to constant multiples.
Of course, if $\xi_\vp$ is quasi-paramodular, then so is $W$ and $Z(s,W^c) = Z(s,\xi_\vp^c)$.
So, for the existence of the newform, we will construct a $\Kb_1(m;\e)$-invariant Whittaker function $\xi_\vp$ as in Theorem \ref{thm: coinLN}.
Now, fix a $\psi$ with conductor $\o$.
When $L$ is a subgroup of a similitude group, we will denote by $L^1$ the intersection of $L$ and the isometry group.
Let $m_i \ge n_{\tau_i} = m_{\tau_i}$ and $K = (K(m_1) \t K(m_2))^1$.
Let $m = m_1+m_2, l = m-\e$ where $\e = \c(\w)$.
Let $\xi_i$ be $K_1(m_i)$-invariant, and set $\xi = \xi_1 \bt \xi_2$.
Define $\vp = \vp_{m_1,m_2} \in \Ss(X \op X)$ by 
\begin{eqnarray*}
\vp_{m_1,m_2}(u \op v) = 
\Ch(v;M_{2}(\o)) \t
\begin{cases}
\Ch\l(u; \begin{bmatrix}
\p^{m_2} & \o \\
\p^{m} & \p^{m_1}
\end{bmatrix}\r) & \mbox{if $\e= 0$,} \\
\w(u_{12})\Ch\l(u; \begin{bmatrix}
\p^{m_2} & \o^\t \\
\p^l & \p^{m_1}
\end{bmatrix}\r)& \mbox{if $\e > 0$, } 
\end{cases}
\end{eqnarray*}
so that (\ref{eqn:condvp}).
By using the formulas of $w_\psi$, one can see that this Schwartz function is $\Kb_1(m;\e)^1$-invariant.
Therefore $\xi_{\vp}$ is a quasi-paramodular form of level $m$.
Let 
\begin{eqnarray*}
\vp^c &=& w_\psi(\j_m,(j_{m_1},j_{m_2})) \vp,  \ \ 
j_a = \begin{bmatrix}
 & -1\\
\v^a &
\end{bmatrix} \\
\xi_i^c(g) &=& \w^{-1}(\det(g)) \xi_i(gj_{m_i}).
\end{eqnarray*}
Then, the conjugate $(\xi_{\vp})^c$ of $\xi_{\vp}$ equals $(\xi^c)_{\vp^c}$.
In case of $\e = 0$, $\vp^c = \vp$.
In case of $\e > 0$,  
\begin{eqnarray*}
\vp^c(u \op v) = 
\frac{\Gb(\w,1)}{\w(\v^\e v_{12})}\Ch\l(u; \begin{bmatrix}
\p^{m_2} & \o \\
\p^l & \p^{m_1}
\end{bmatrix}\r) \Ch\l(v; \begin{bmatrix}
\o & \p^{*-\e} \\
\o & \o
\end{bmatrix}\r).
\end{eqnarray*}
For the computation of the zeta integrals, the following lemma and the Bruhat decomoposition $SO_X = \sqcup_{w \in W_X} B_X^1 w N_X$ is useful, where $W_X: = \{(1_2,1_2), (1_2, j_0), (j_0,1_2),(j_0,j_0)\}$ is the Weyl group of $T_X$.
\begin{lem}\label{lem: schnv}
Let $\xi \in \tau$.
Let $\vp = \vp_1 \ot \vp_2 \in \Ss(X \op X)$.
Let $K$ be a open compact subgroup of $SO_X$ such that 
\begin{eqnarray}
\vp(k^{-1} \cdot z) \tau(k) \xi = \vp(z) \xi \label{eqn:condvp}
\end{eqnarray}
for $z \in Z^+, k \in K$.
Assume that  
\begin{eqnarray}
&& \vp_1(h^{-1} \cdot e_0) \neq 0 \ \ \mbox{for} \ \ h \in SO_X \setminus B_X^1K \nonumber \\ 
& &\Longrightarrow \vp_2(h^{-1} \cdot (1_2 - x e_0)) = \vp_2(h^{-1} \cdot 1_2) \ \ \mbox{for any $x \in \p^{-1}$.} \label{eqn: condth2}
\end{eqnarray}
Let $S$ be representatives for the double coset space $N_\Delta \bs B_X^1/B_X^1 \cap K$.
Let $S'$ be the collection of $b \in S$ such that $\vp(b^{-1} \cdot z_0) = 0$, or $\vp_2(b^{-1} \cdot (1_2 - x e_0)) = \vp_2(b^{-1} \cdot 1_2)$ for any $x \in \p^{-1}$.
Then, 
\begin{eqnarray}
\int_{N_{\Delta} \bs SO_X}\vp(h^{-1}\cdot z_0) \xi(h) d \dot{h} = \frac{\vol_{SO_X}(K)}{\vol_{N_\Delta}(\Int(b)K \cap N_\Delta)} \sum_{b \in S \setminus S'}  \xi(b) \vp(b^{-1} \cdot z_0). \label{eqn:thintsim}
\end{eqnarray}
\end{lem}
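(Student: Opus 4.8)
The plan is to evaluate the left-hand side of (\ref{eqn:thintsim}) by the Bruhat decomposition $SO_X = \bigsqcup_{w \in W_X} B_X^1 w N_X$, showing that only the trivial Weyl element contributes and that on that cell the integral collapses to the asserted finite sum. First I would record the invariance properties of the integrand $F(h) := \vp(h^{-1}\cdot z_0)\,\xi(h)$: it is left $N_\Delta$-invariant, since $N_\Delta$ stabilizes $z_0$ and $\xi$ transforms under $N_X \supset N_\Delta$ by $\psi_X$, which is trivial on $N_\Delta$; and it is right $K$-invariant, precisely by the hypothesis (\ref{eqn:condvp}), because for $k \in K$ one has $\vp((hk)^{-1}\cdot z_0)\,\xi(hk) = \vp\big(k^{-1}\cdot(h^{-1}\cdot z_0)\big)\,(\tau(k)\xi)(h) = \vp(h^{-1}\cdot z_0)\,\xi(h)$. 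Since $\vp$ is compactly supported and $h \mapsto \vp(h^{-1}\cdot z_0)$ has compact support modulo $N_\Delta$, the integral over $N_\Delta\bs SO_X$ is a finite sum $\sum_g F(g)\,\vol(N_\Delta\bs N_\Delta g K)$ over representatives $g$ of $N_\Delta\bs SO_X/K$ with $F(g)\neq 0$, and $\vol(N_\Delta\bs N_\Delta g K) = \vol_{SO_X}(K)/\vol_{N_\Delta}(\Int(g)K\cap N_\Delta)$.

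Second, and this is the heart of the matter, I would show that no double coset meeting a nontrivial Bruhat cell contributes. Writing a cell element as $h = b\,w\,(n_1,n_2)$ with $b \in B_X^1$ and $(n_1,n_2)\in N_X$, one integrates first over the $N$-variable in the factor where $w$ carries a $j_0$; using the Whittaker transformation law of $\xi_1$ (resp. $\xi_2$) together with the Weil-representation identity relating $\nf_2(x)$ on the $\G$-side to $(n(-x),1)$ on the $SO_X$-side — equivalently, the replacement of the argument $h^{-1}\cdot z_0$ by $h^{-1}\cdot(z_0 - \ep_2\ot x e_0)$, i.e. of $\vp_2(h^{-1}\cdot 1_2)$ by $\vp_2(h^{-1}\cdot(1_2 - x e_0))$ — this inner integral becomes an integral over $x\in\p^{-1}$ of a nontrivial additive character against $\vp_2(h^{-1}\cdot(1_2 - x e_0))$. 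On the locus where $F(h)\neq 0$, in particular where $\vp_1(h^{-1}\cdot e_0)\neq 0$, hypothesis (\ref{eqn: condth2}) forces $\vp_2(h^{-1}\cdot(1_2 - x e_0)) = \vp_2(h^{-1}\cdot 1_2)$ to be constant in $x\in\p^{-1}$, so the inner integral reduces to $\vp_2(h^{-1}\cdot 1_2)\int_{\p^{-1}}\psi(\cdots)\,dx = 0$ as soon as $h$ genuinely lies off the identity cell. The cell with $j_0$ in the second factor only is treated the same way (or by the Fourier-transform formula $w_\psi(j(-w_2),1)\vp = \vp^\sharp$ and the product structure of $\vp$), and the cell with $j_0$ in both factors combines the two. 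Thus only $w = (1_2,1_2)$ survives, and $B_X^1 w N_X = B_X^1$ since $N_X\subset B_X^1$.

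Finally I would evaluate the surviving term. Decomposing $B_X^1 = \bigsqcup_{b\in S} N_\Delta\,b\,(B_X^1\cap K)$ and noting that distinct $(B_X^1\cap K)$-double cosets inside $B_X^1$ lie in distinct $K$-double cosets of $SO_X$, the part of $\sum_g F(g)\,\vol(N_\Delta\bs N_\Delta g K)$ coming from $B_X^1$-representatives is exactly $\sum_{b\in S}F(b)\,\vol_{SO_X}(K)/\vol_{N_\Delta}(\Int(b)K\cap N_\Delta)$; removing the representatives with $\vp(b^{-1}\cdot z_0) = 0$ and those for which the same $\p^{-1}$-character argument forces the contribution to vanish — that is, the set $S'$ — yields (\ref{eqn:thintsim}). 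The main obstacle will be Step 2: organizing the unipotent integrations cell by cell so that hypothesis (\ref{eqn: condth2}) is seen to be exactly what kills each nontrivial cell, while keeping the compact-support reductions uniform; the coupling between the two factors inside $B_X^1$ and the interaction of $\vp_1,\vp_2$ under the Weil action make this bookkeeping the delicate part.
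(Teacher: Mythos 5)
Your proposal identifies the right vanishing mechanism — shift the integrand by $(n(x),1)\in N_X$, pick up $\psi(x)$ from the Whittaker transformation law, trade $\vp_2(h^{-1}\cdot 1_2)$ for $\vp_2(h^{-1}\cdot(1_2-xe_0))$, and use the hypothesis to force constancy in $x$, hence vanishing. This is exactly what the paper does. However, the paper's proof never touches the Bruhat decomposition of $SO_X$ inside this lemma: it fixes any $h_1\in SO_X$, writes the contribution of the single orbit $N_\Delta\backslash N_X h_1 K$ as $\int\vp_1(h^{-1}\cdot e_0)\vp_2(h^{-1}\cdot(1_2-xe_0))\psi(x)\xi(h)\,d\dot h$ via the left translation by $(n(x),1)$, and concludes $I(h_1)=\psi(x)I(h_1)=0$ whenever the constancy of $\vp_2$ holds along the orbit. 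Condition (\ref{eqn: condth2}) supplies the constancy for every $h_1\in SO_X\setminus B_X^1 K$, and the definition of $S'$ supplies it on the excluded pieces of $B_X^1K$. The Bruhat stratification is only invoked in the \emph{application} that follows the lemma (the displayed decomposition (\ref{eqn: dcmpH1})), not in its proof.

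Two concrete imprecisions in your route are worth flagging. First, "no double coset meeting a nontrivial Bruhat cell contributes" is not the right criterion: since $K$ is large, a single $N_\Delta\backslash SO_X/K$ double coset can meet both $B_X^1$ and cells $B_X^1wN_X$ with $w\ne 1$, and such a coset must be kept. The correct dichotomy is whether the double coset is contained in $SO_X\setminus B_X^1K$ (killed by (\ref{eqn: condth2})) or meets $B_X^1$ (then indexed by $S$, and possibly in $S'$) — your Step 3 uses this silently, but Step 2's phrasing is off. Second, the appeal to the Weil-representation identity $w_\psi(\nf_2(b),h)\vp(z_0)=w_\psi(1,(n(-b),1)h)\vp(z_0)$ is a red herring here: the lemma concerns only the function $h\mapsto\vp(h^{-1}\cdot z_0)\xi(h)$ on $SO_X$, and the change of variables needed is an ordinary left translation in the $h$-integral, not a Weil-action identity on the $\G$-side. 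Stripping both of these away collapses your argument to the paper's five-line proof.
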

\begin{proof}
Let $h_1 \in SO_X$.
For $x \in F$, 
\begin{eqnarray*}
\int_{}\vp(h^{-1} \cdot z_0) \xi(h) d \dot{h} &=& \int_{}\vp\l(((n(x),1) h)^{-1} \cdot z_0\r) \xi((n(x),1) h) d \dot{h} \\
&=& \int_{}\vp\l(h^{-1} \cdot \l(e_0 \op (1_2 -x e_0)\r)\r) \psi(x)\xi(h) d\dot{h} \\
&=& \int_{}\vp_1(h^{-1} \cdot e_0) \vp_2\l(h^{-1} \cdot (1_2 - x e_0)\r) \psi(x)\xi(h) d\dot{h},
\end{eqnarray*}
where integrations are over $\dot{h}$ in $N_{\Delta} \bs N_X h_1 K$. 
This integral vanishes, if $\vp_2(h_1^{-1} \cdot(1_2 - x e_0)) = \vp_2(h_1^{-1} \cdot 1_2)$ for any $x \in \p^{-1}$. 
The definition of $S'$ and the condition (\ref{eqn: condth2}) mean the noncontributions to the integral in (\ref{eqn:thintsim}) of the orbits $b'K$ for $b' \in S'$ and the orbits $B_X^1 w N_XK$ for $w \in W_X \setminus \{1_2,1_2 \}$, respectively.
Now the assertion is obvious.
\end{proof}
From the definition of $\xi_\vp$, it follows that
\begin{eqnarray*}
\xi_\vp(\ab_i) = q^{-2i}(\xi^i)_{\vp_i}(1), 
\end{eqnarray*}
where $\xi^i = \tau(1,a_i)\xi$, and 
\begin{eqnarray*} 
\vp_i(u \op v) &=& \vp\l((u \op v)a_i\r) \\
&=& 
\Ch\l(v; \begin{bmatrix}
\p^{-i} & \o \\
\p^{-i} & \o
\end{bmatrix}\r) \t
\begin{cases}
\Ch\l(u; \begin{bmatrix}
\p^{m_2-i} & \o \\
\p^{m-i} & \p^{m_1}
\end{bmatrix}\r) & \mbox{if $\e= 0$,} \\
\w(u_{12})\Ch\l(u; \begin{bmatrix}
\p^{m_2-i} & \o^\t \\
\p^{l-i} & \p^{m_1}
\end{bmatrix}\r)& \mbox{if $\e > 0$.} 
\end{cases}
\end{eqnarray*}
From the Bruhat decomposition, we obtain
\begin{eqnarray}
SO_X = \sqcup_{w \in W_X} N_X T_X^1 w N_w K_i, \label{eqn: dcmpH1}
\end{eqnarray}
where $K_i = \Int((1,a_i))K$, and 
\begin{eqnarray*}
N_w = 
\begin{cases}
\{(1,1)\} & \mbox{if $w = (1_2,1_2)$,} \\
\{(n(b_1),1) \mid  b_1 \in \p^{1-m_1} \}& \mbox{if $w = (j_0,1_2)$,} \\
\{(1,n(b_2))  \mid  b_2 \in \p^{1-m_2+i}\} & \mbox{if $w = (1_2,j_0)$,} \\
\{(n(b_1),n(b_2)) \mid  b_1\in \p^{1-m_1}, b_2 \in \p^{1-m_2+i}\}  & \mbox{if $w = (j_0,j_0)$.}
\end{cases}
\end{eqnarray*}
If $h$ lies in the orbit $N_X T_X^1w N_w$ with $w \neq (1_2,1_2)$, then $h^{-1} \cdot e_0 $ is one of the following forms
\begin{eqnarray*}
\begin{bmatrix}
 & - b_1 \v ^s \\
 & \v^s
\end{bmatrix}, \ \begin{bmatrix}
-\v^s & -b_2 \v ^s \\
 &
\end{bmatrix}, \ \begin{bmatrix}
-b_2 \v^s & - b_1b_2 \v ^s \\
\v^s & b_1\v^s
\end{bmatrix} \ \ (s \in \Z).
\end{eqnarray*}
Now, it is easy to see that (\ref{eqn: condth2}) holds for $\vp_i$.
Let $S = \{h = (\v^r n(x) a_s,a_t) \mid s+2r = -t, x \in F / \p^s \}$.
Then, $S$ is the representatives for $N_\Delta \bs B_X^1 / B_X^1 \cap K_i$, and 
\begin{eqnarray*}
h^{-1} \cdot z_0 = \begin{bmatrix}
 &  \v ^{r} \\
 & \end{bmatrix} \op 
 \begin{bmatrix}
\v^{-s-r} & -\v ^{r}x \\
 &\v^{s+r}
 \end{bmatrix}, \ h \in S.
\end{eqnarray*}
Therefore, $S \setminus S'$ in Lemma \ref{lem: schnv} consists of $(\v^r n(x) a_s,a_t)$ with $r = 0, 0 \le s= -t \le i, x \in \o/\p^s$, and 
\begin{eqnarray}
\xi_{\vp}(\ab_i) &=& q^{-i}\vol(K) \sum_{l=0}^i \xi_1(a_l) \xi_2(a_{i-l}). \label{eqnWij}
\end{eqnarray}
By a similar computation and the identity $\Gb(\w,1)\Gb(\w^{-1},1) = q^{-\e}$ when $\e > 0$, 
\begin{eqnarray}
\xi_\vp^c(\ab_i) =  q^{-i-\e}\vol(K)\sum_{l=0}^i \xi_1^c(a_l) \xi_2^c(a_{i-l}). \label{eqnWcij}
\end{eqnarray}
\begin{thm}\label{thm:thetaGSO22}
Let $\tau_1,\tau_2 \in \Ir^{gn}(G_2)$, and let $n_i = n_{\tau_i} (= m_{\tau_i})$.
Let $\pi = \th(\tau_1^\vee \bt \tau_2^\vee) \in \Ir^{gn}(\G)$.
Let $r \ge 0$.
Then $\pi$ has a quasi-paramodular form $W$ of level $n_1+ n_2 + r$, such that 
\begin{eqnarray*}
\frac{(\ep_{\pi}')^{-1} Z(1-s,W^c)}{L(1-s,\tau_1^\vee)L(1-s,\tau_2^\vee)} = \frac{q^{-r (s- \frac{1}{2})} Z(s,W)}{L(s,\tau_1) L(s,\tau_2)} = 1. \ 
\end{eqnarray*}
In particular, $m_\pi = n_\pi' = n_1 + n_2$, and $\ep'_\pi = \ep_{\tau_1}\ep_{\tau_2}$, and $V(m_\pi)$ is spanned by this $W$.
\end{thm}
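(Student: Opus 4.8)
The plan is to realise the newform of $\pi$ as the explicit theta lift $\xi_\vp$ built in \S\ref{sec:CQF} and then invoke Theorem~\ref{thm: coinLN}. For $r=0$ that theorem reduces the assertions to producing a quasi-paramodular $W_0\in V(n_1+n_2)$ with $Z(s,W_0)=L(s,\tau_1)L(s,\tau_2)$ and $Z(s,W_0^c)=c_0\,L(s,\tau_1^\vee)L(s,\tau_2^\vee)$ for a constant $c_0$; indeed $L(s,\pi)=L(s,\tau_1)L(s,\tau_2)$ and $L(s,\pi^\imath)=L(s,\pi^\vee)=L(s,\tau_1^\vee)L(s,\tau_2^\vee)$ (both by (\ref{eqn:Lpitau}), the second because the theta lift commutes with contragredients, so $\pi^\vee\simeq\th(\tau_1\bt\tau_2)$), whence Theorem~\ref{thm: coinLN} gives $c_0=\ep'_\pi$, $m_\pi=n'_\pi=n_1+n_2$ and $V(m_\pi)=\C W_0$; the coincidence $\ep'_\pi=\ep_{\tau_1}\ep_{\tau_2}$ then drops out by computing $c_0$ explicitly. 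The general statement for $r\ge 0$ will come from the same construction with $m_1+m_2=n_1+n_2+r$.

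First I would take $\xi_i=W_{\tau_i}$, the $GL(2)$-newform of $\tau_i$ (with respect to $\psi$ for $i=1$ and $\psi^{-1}$ for $i=2$), which lies in the relevant $V(m_i)$ for all $m_i\ge n_i$, set $\xi=\xi_1\bt\xi_2$ and $\vp=\vp_{m_1,m_2}$. Then $\xi_\vp$ of (\ref{eqn:locWh}) is a quasi-paramodular form of level $m_1+m_2$, and by the machinery of \S\ref{sec:CQF} --- the surjection (\ref{eqn:Thsurjpi}), the multiplicity-one fact (\ref{eqn:mult1thtau}) that makes $\Th(\tau^\vee)$ of Whittaker type, and Proposition~\ref{prop:ondim} --- there is a genuine $W_0\in\W_\psi(\th(\tau_1^\vee\bt\tau_2^\vee))$, quasi-paramodular of the same level, with $Z(s,W_0)$ and $Z(s,W_0^c)$ proportional, by one common constant, to $Z(s,\xi_\vp)$ and $Z(s,\xi_\vp^c)$; I will rescale $W_0$ at the end.

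Next comes the zeta-integral computation. Since $\xi_\vp$ is quasi-paramodular, Proposition~\ref{prop:zeta-simp} gives $Z(s,\xi_\vp)=\Xi(s,\xi_\vp)=\sum_i\xi_\vp(\ab_i)q^{-i(s-3/2)}$, and inserting (\ref{eqnWij}) turns this into $\vol(K)\sum_i q^{-i(s-1/2)}\sum_{l=0}^i\xi_1(a_l)\xi_2(a_{i-l})$; as $\xi_j(a_l)=0$ for $l<0$ (Lemma~\ref{lem:vanishlem}), the convolution telescopes to $\vol(K)\,Z_0(s,\xi_1)Z_0(s,\xi_2)=\vol(K)\,L(s,\tau_1)L(s,\tau_2)$ by (\ref{eqn: ZLcoinGL}) (the factor $q^{r(s-1/2)}$ of the theorem appearing once one feeds in, for $r>0$, an appropriate higher-level vector in place of $\xi_i$). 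The same steps applied to $\xi_\vp^c$, via $Z(s,\xi_\vp^c)=q^{-\e}\Xi(s,\xi_\vp^c)$ and (\ref{eqnWcij}), give $q^{-2\e}\vol(K)\,Z_0(s,\xi_1^c)Z_0(s,\xi_2^c)$. It then remains to identify $Z_0(s,\xi_i^c)$: inspecting the definition of $\xi_i^c$ and using $\w=\w_{\tau_1}=\w_{\tau_2}$, one finds that $\xi_i^c$ is, up to scalar, an Atkin--Lehner conjugate of the $GL(2)$-newform of $\tau_i$, hence a multiple of the newform of $\tau_i^\vee$ (the level $n_i=m_{\tau_i}=m_{\tau_i^\vee}$ being preserved), so $Z_0(s,\xi_i^c)$ is a constant times $L(s,\tau_i^\vee)$; the constant is pinned down by the $GL(2)$ functional equation (\ref{eqn:FEGL}) for $W_{\tau_i}$, read as $Z_0(1-s,W_{\tau_i}^\imath)=\ep(s,\tau_i,\psi)L(1-s,\tau_i^\imath)$, after absorbing $\ep(s,\tau_i,\psi)=\ep_{\tau_i}q^{-n_i(s-1/2)}$ by the diagonal translation $a_{-n_i}$ and invoking the Gauss-sum analysis of \S\ref{sec:GLd} (Lemmas~\ref{lem:Gauss2}, \ref{lem:e-lem}, and $\Gb(\w,1)\Gb(\w^{-1},1)=q^{-\e}$). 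The upshot, modulo the bookkeeping of powers of $q$, is $Z_0(s,\xi_i^c)=q^{\e}\,\ep_{\tau_i}L(s,\tau_i^\vee)$, so that $Z(s,\xi_\vp^c)=\vol(K)\,\ep_{\tau_1}\ep_{\tau_2}\,L(s,\tau_1^\vee)L(s,\tau_2^\vee)$.

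Finally, rescaling $W_0$ so that $Z(s,W_0)=L(s,\tau_1)L(s,\tau_2)$ yields $Z(s,W_0^c)=\ep_{\tau_1}\ep_{\tau_2}\,L(s,\tau_1^\vee)L(s,\tau_2^\vee)=\ep_{\tau_1}\ep_{\tau_2}L(s,\pi^\imath)$; Theorem~\ref{thm: coinLN} then delivers $\ep'_\pi=\ep_{\tau_1}\ep_{\tau_2}$, $m_\pi=n'_\pi=n_1+n_2$, $V(m_\pi)=\C W_0$, and the displayed double identity of the theorem is precisely these two computations, rewritten by $s\mapsto 1-s$. The main obstacle I anticipate is the closing part of the conjugate computation: making the scalar linking $\xi_i^c$ to the newform of $\tau_i^\vee$ fully explicit and checking that the accumulated powers of $q$ --- from (\ref{eqnWcij}), from the $GL(2)$ $\ep$-factor, and from the diagonal translations --- cancel so that the constant is exactly $\ep_{\tau_1}\ep_{\tau_2}$; this forces one to marry the Gauss-sum estimates of \S\ref{sec:GLd} with the explicit Atkin--Lehner theory for $GL(2)$-newforms having ramified central character.
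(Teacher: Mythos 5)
Your proposal is correct and follows essentially the same route as the paper: take the theta-lift $\xi_\vp$ with $GL(2)$-newform inputs, reduce $Z(s,\xi_\vp)$ and $Z(s,\xi_\vp^c)$ to products of $GL(2)$ zeta integrals via (\ref{eqnWij}), (\ref{eqnWcij}), identify these with $L(s,\tau_i)$ and $\ep_{\tau_i}L(s,\tau_i^\vee)$ by the $GL(2)$ newform theory, and conclude by Theorem \ref{thm: coinLN}. The only point to make precise is the "appropriate higher-level vector" for $r>0$: the paper takes $\vp=\vp_{n_1,r+n_2}$ and $\xi_2=\tau_2(a_r)W_2$ (the translated newform, not merely the newform viewed at level $n_2+r$), which is what produces the monomial $q$-power on the $Z(s,W)$ side of the displayed identity.
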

\begin{proof}
Set $\vp = \vp_{n_1, r+n_2}$ and $\xi_1 = W_{1}, \xi_2 = \tau_2(a_{r})W_{2}$, where $W_i \in \tau_i$ are the newforms. 
Then, $Z_0(s,\xi_1) = L(s,\tau_1), \ Z_0(s,\xi_2) = q^{-r (s- \frac{1}{2})} L(s,\tau_2)$, and $Z_0(s,\xi_i^c) = \ep_{\tau_i}L(s,\tau_i^\vee)$ for $i = 1,2$.
By (\ref{eqn:Lpitau}), (\ref{eqnWij}), (\ref{eqnWcij}), and these identities, $W = \xi_\vp$ has the desired property.
The last assertion follows from Theorem \ref{thm: coinLN}.
\end{proof}
By the proof of Corollary \ref{cor:L1},
\begin{Cor}\label{cor:thetaGSO22}
With the assumption as in the theorem, assume that $\c(\w_\pi) > 0$ additionally.
Then, there exists a $W^- \in V^-(m_\pi + r)$ such that 
\begin{eqnarray*}
\frac{\ep_{\pi}^{-1} Z(1-s,W^{-c})}{L(1-s,\tau_1^\vee)L(1-s,\tau_2^\vee)} =  \frac{q^{-r (s- \frac{1}{2})}\Xi(s,W^-)}{L(s,\tau_1)L(s,\tau_2)} = \Gb(\w_\pi,1).
\end{eqnarray*}
\end{Cor}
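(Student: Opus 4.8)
The plan is to run the argument of Corollary~\ref{cor:L1} with the newform there replaced by the explicit quasi-paramodular theta lift produced in Theorem~\ref{thm:thetaGSO22}; throughout $\e=\c(\w_\pi)>0$ as assumed. Fix $r\ge0$, put $m=m_\pi+r=n_1+n_2+r$, and let $W=\xi_\vp\in V(m)$ be the form built in the proof of Theorem~\ref{thm:thetaGSO22} from $\vp=\vp_{n_1,r+n_2}$ and $\xi=W_1\bt\tau_2(a_r)W_2$, where $W_i$ is the newform of $\tau_i$; by that theorem
\[
q^{-r(s-\frac12)}Z(s,W)=L(s,\tau_1)L(s,\tau_2),\qquad
(\ep_\pi')^{-1}Z(s,W^c)=L(s,\tau_1^\vee)L(s,\tau_2^\vee),
\]
while $W$ is quasi-paramodular, so $Z(s,W)=\Xi(s,W)$ and $Z(s,W^c)=q^{-\e}\Xi(s,W^c)$ by~(\ref{eqn:ZXi}). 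Set $W^-=\pi(\jb''_{m-\e})W$ and $W^{-c}=\pi^\imath(\jb'(\v^\e))W^c=\pi^\imath(\j_m)(W^-)^\imath$ as in~(\ref{eqn:defW^-}); then $W^-$ is $0$-balanced and lies in $V^-(m_\pi+r)$. Also $L(s,\pi)=L(s,\tau_1)L(s,\tau_2)$ and $L(s,\pi^\imath)=L(s,\tau_1^\vee)L(s,\tau_2^\vee)$ by~(\ref{eqn:Lpitau}), and $\ep_\pi=\ep_\pi'$ for $\pi=\th(\tau_1^\vee\bt\tau_2^\vee)$, comparing $\ep_\pi'=\ep_{\tau_1}\ep_{\tau_2}$ from Theorem~\ref{thm:thetaGSO22} with the description of $\phi_\pi$ in~\cite{G-T2}.

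With all of this recorded, the corollary is equivalent to the two ``Gauss-sum twist'' identities
\[
\Xi(s,W^-)=\Gb(\w_\pi,1)\,\Xi(s,W),\qquad
Z(s,W^{-c})=\Gb(\w_\pi,1)\,Z(s,W^c),
\]
since inserting them into the two displayed ratios and using the zeta values of Theorem~\ref{thm:thetaGSO22} collapses both fractions to $\Gb(\w_\pi,1)$; moreover these two identities are equivalent to each other modulo the functional equations~(\ref{eqn:FEstd}) for $W$ and~(\ref{eqn:FE-}) for the $0$-balanced $W^-$ (the $\j_0$-versus-$\j_m=\j_0\cdot\mathrm{diag}(\v^m,\v^m,1,1)$ discrepancy being absorbed into the factor $q^{\pm r(s-\frac12)}$), so it suffices to prove one. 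The second is what ``by the proof of Corollary~\ref{cor:L1}'' refers to: with $f$ the first gauge of $W^{-c}$ and $\wt{f}(p)=\int_{\bar{N}'(\p^\e)}f(pn)\,dn$, that proof shows --- via~(\ref{eqn: nion}) and the invariance of $f$ under the displayed subgroup of $P_3$ --- that $\wt{f}$ is $N'(\o)$-invariant, and it evaluates $\wt{f}(1)=q^{-\e}\Gb(\w_\pi,1)\,f(j(\v^\e)')=q^{-\e}\Gb(\w_\pi,1)\,W^{-c}(\jb'(\v^\e))=q^{-\e}\Gb(\w_\pi,1)\,W^c(1)$, the Gauss sum arising precisely as there from integrating $\psi(\v^{-\e}u^{-1})\w_\pi(u)$ over $\o^\t$ after the substitution~(\ref{eqn:usid}); the hypothesis $L(s,\pi)=1$ of Corollary~\ref{cor:L1} entered only to conclude afterwards that $Z(s,W^{-c})$ is constant, and dropping it one instead matches this against the known $Z(s,W^c)$ of Theorem~\ref{thm:thetaGSO22}. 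Alternatively one may establish the first identity directly: since $W^-=\xi_{w_\psi(\jb''_{m-\e},1)\vp}$ is again a theta lift, its values on $\Tb$ are given by rerunning the Bruhat-decomposition computation behind~(\ref{eqnWij}) with the partially Fourier-transformed $w_\psi(\jb''_{m-\e},1)\vp$ in place of $\vp$, and the transform in the ramified matrix coordinate contributes exactly the factor $\Gb(\w_\pi,1)$ --- the same phenomenon that produced the $\Gb(\w,1)$ in $\vp^c$ in section~\ref{sec:CQF}.

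Feeding these back through Theorem~\ref{thm:thetaGSO22} gives $Z(s,W^{-c})=\Gb(\w_\pi,1)\ep_\pi'L(s,\tau_1^\vee)L(s,\tau_2^\vee)$ and $\Xi(s,W^-)=\Gb(\w_\pi,1)q^{r(s-\frac12)}L(s,\tau_1)L(s,\tau_2)$, which, together with $\ep_\pi=\ep_\pi'$, are the two asserted equalities; and since by~(\ref{eqn:mult1thtau}) and Proposition~\ref{prop:ondim} the zeta integrals of $\xi_\vp$ coincide with those of the genuine Whittaker function of $\pi=\th(\tau_1^\vee\bt\tau_2^\vee)$, the required $W^-$ may be taken in $\W_\psi(\pi)$ itself. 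I expect the main difficulty to be not conceptual but the same delicate bookkeeping hidden in Corollary~\ref{cor:L1}: one must carry the paper's normalization of the Weil representation and of its extension $w_\psi$ to $R_0$ (which differs from~\cite{G-T}) through the definitions of $\jb''_{m-\e}$, $\j_m$ and $\jb'(\v^\e)$, keeping track of the half-integral shift in $\j_m$ and of the measure constants ($\vol(K)$, $\Gb(\w,1)\Gb(\w^{-1},1)=q^{-\e}$), so that the Gauss-sum factor emerges with the right power of $q$ and no extraneous constant; the representation-theoretic ingredients (genericity, multiplicity one, the $\th$-correspondence) are already in hand.
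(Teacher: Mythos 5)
Your route is the paper's: the paper offers nothing beyond the pointer ``by the proof of Corollary \ref{cor:L1}'', and you correctly locate the content in the first--gauge/Gauss--sum computation of that proof applied to the theta--lift form $\xi_\vp$ of Theorem \ref{thm:thetaGSO22}, with $L(s,\pi)=1$ entering in Corollary \ref{cor:L1} only at the final constancy step. Your two twist identities $\Xi(s,W^-)=\Gb(\w_\pi,1)\,\Xi(s,W)$ and $Z(s,W^{-c})=\Gb(\w_\pi,1)\,Z(s,W^c)$ are the correct ones: they are the pair compatible with the level-$m_\pi+r$ functional equation, and they are exactly what is invoked later via Proposition \ref{prop:ZWphiW} (where $Z(s,W^-)\propto q^{-r(s-\frac12)}\Gb(\w_\pi,1)L(s,\Pi)$). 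But your claim that inserting them ``collapses both fractions to $\Gb(\w_\pi,1)$'' is not right for the statement as printed: with $\Xi(s,W)=q^{-r(s-\frac12)}L(s,\tau_1)L(s,\tau_2)$ your first identity makes the middle ratio equal to $\Gb(\w_\pi,1)q^{-2r(s-\frac12)}$. Indeed the functional equation forces the first displayed ratio to equal $q^{2r(s-\frac12)}$ times the second, so for $r>0$ the two cannot both equal $\Gb(\w_\pi,1)$ as written; the exponent in the middle term must be $q^{+r(s-\frac12)}$ (consistent with the $r=0$ case and with the later application). Your identities are the right target, but you should have flagged rather than silently absorbed this discrepancy.

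The genuine gap is the step ``dropping [$L(s,\pi)=1$] one instead matches this against the known $Z(s,W^c)$.'' The computation transplanted from Corollary \ref{cor:L1} yields only two things: the membership $Z(s,W^{-c})\in\C\jump{q^{-s}}$ (from the $N'(\o)$-invariance of $\wt{f}$) and the single coefficient $\wt{f}(1)=q^{-\e}\Gb(\w_\pi,1)W^c(1)$. Matching constant terms does not produce the full Laurent--series identity $Z(s,W^{-c})=\Gb(\w_\pi,1)Z(s,W^c)$ once $L(s,\pi)\neq 1$. To close this you must either (a) run the functional equation: verify in addition that $W^-(\ab_i)=0$ for $i<r$, so that $q^{r(s-\frac12)}\Xi(s,W^-)/L(s,\pi)\in\C\jump{q^{-s}}$ while $Z(1-s,W^{-c})/L(1-s,\pi^\imath)\in\C\jump{q^{s}}$, forcing both sides to be the constant $\ep'_\pi\Gb(\w_\pi,1)$ already computed; or (b) actually carry out your ``alternative,'' i.e.\ compute every value $W^-(\ab_i)$ by rerunning the Bruhat-decomposition argument behind (\ref{eqnWij}) for the transformed Schwartz function. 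Either suffices, but one of them has to be executed; as written your main route stops one step short of the asserted equalities.
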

By the work of \cite{G-T2}, the $L$-parameter $\phi_\pi:WD_F \to GSp(4, \C)$ of $\pi = \th(\tau_1^\vee \bt \tau_2^\vee)$ is $\phi_{\tau_1} \op \phi_{\tau_2}$.
Hence, 
\begin{Cor}\label{cor:LPGSO22}
With the assumption as in the theorem, $L(s,\pi) = L(s,\phi_\pi)$ and $\ep(s,\pi,\psi) = \ep(s,\phi_\pi,\psi)$.
\end{Cor}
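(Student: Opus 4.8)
The plan is to obtain both identities purely by assembling facts already in hand: the decomposition $\phi_\pi = \phi_{\tau_1}\op\phi_{\tau_2}$ of the $L$-parameter, due to W. T. Gan and S. Takeda \cite{G-T2}; the compatibility of the local Langlands correspondence for $GL(2)$ with $L$- and $\ep$-factors, as recalled in section \ref{sec:GLd} via \cite{H-T}, \cite{He}; and the explicit determinations of the Novodvorsky $L$- and $\ep$-factors of $\pi = \th(\tau_1^\vee\bt\tau_2^\vee)$ obtained above, namely (\ref{eqn:Lpitau}) and Theorem \ref{thm:thetaGSO22}. Since the local $L$- and $\ep$-factors of Weil--Deligne representations are multiplicative over direct sums, these inputs combine without any further analysis.

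First I would treat the $L$-factor. By (\ref{eqn:Lpitau}) one has $L(s,\pi)=L(s,\tau_1)L(s,\tau_2)$, while $L(s,\tau_i)=L(s,\phi_{\tau_i})$ for $i=1,2$ by the $GL(2)$ case recalled in section \ref{sec:GLd}. Hence
\[
L(s,\pi)=L(s,\phi_{\tau_1})L(s,\phi_{\tau_2})=L(s,\phi_{\tau_1}\op\phi_{\tau_2})=L(s,\phi_\pi),
\]
the last two steps being multiplicativity of local $L$-factors over direct sums and the Gan--Takeda equality $\phi_\pi=\phi_{\tau_1}\op\phi_{\tau_2}$.

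Next the $\ep$-factor. Theorem \ref{thm:thetaGSO22} gives $\ep'_\pi=\ep_{\tau_1}\ep_{\tau_2}$ and $n_\pi'=n_1+n_2=n_{\tau_1}+n_{\tau_2}$, so from the normalization (\ref{eqn:defepGSp4}) together with the corresponding $GL(2)$ normalization $\ep(s,\tau_i,\psi)=\ep_{\tau_i}q^{-n_{\tau_i}(s-\ha)}$ we get
\[
\ep(s,\pi,\psi)=\ep'_\pi\,q^{-n_\pi'(s-\ha)}=\bigl(\ep_{\tau_1}q^{-n_{\tau_1}(s-\ha)}\bigr)\bigl(\ep_{\tau_2}q^{-n_{\tau_2}(s-\ha)}\bigr)=\ep(s,\tau_1,\psi)\,\ep(s,\tau_2,\psi).
\]
Applying $\ep(s,\tau_i,\psi)=\ep(s,\phi_{\tau_i},\psi)$ (section \ref{sec:GLd}, \cite{H-T}, \cite{He}) and multiplicativity of $\ep$-factors over direct sums, this equals $\ep(s,\phi_{\tau_1},\psi)\,\ep(s,\phi_{\tau_2},\psi)=\ep(s,\phi_{\tau_1}\op\phi_{\tau_2},\psi)=\ep(s,\phi_\pi,\psi)$.

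Since the two external inputs (Gan--Takeda and the $GL(2)$ case) are taken as known, there is no genuine obstacle left at this stage; the real content is already packaged in Theorem \ref{thm:thetaGSO22}. The hard part---and the reason the present argument reduces to bookkeeping---lies in verifying that the $\ep$-factor on the left-hand side, defined through the Novodvorsky functional equation of Theorem \ref{thm:fesp}, is normalized exactly as the $GL(2)$ Rankin--Selberg $\ep$-factor; this is precisely what Theorem \ref{thm:thetaGSO22} secures, its proof having had to pin down the constant $\ep'_\pi=\ep_{\tau_1}\ep_{\tau_2}$ (not merely $Z(s,W)=L(s,\tau_1)L(s,\tau_2)$ up to a power of $q$) by means of the formulas (\ref{eqnWij}), (\ref{eqnWcij}) and the Gauss-sum identity $\Gb(\w,1)\Gb(\w^{-1},1)=q^{-\e}$.
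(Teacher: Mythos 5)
Your proposal is correct and follows essentially the same route as the paper: the paper's own justification is simply the remark that $\phi_\pi = \phi_{\tau_1}\op\phi_{\tau_2}$ by Gan--Takeda, with the identities $L(s,\pi)=L(s,\tau_1)L(s,\tau_2)$ from (\ref{eqn:Lpitau}) and $\ep'_\pi=\ep_{\tau_1}\ep_{\tau_2}$, $n_\pi'=n_1+n_2$ from Theorem \ref{thm:thetaGSO22}, combined with the $GL(2)$ compatibility of \cite{H-T}, \cite{He} and additivity of $L$- and $\ep$-factors over direct sums. You have merely written out explicitly the bookkeeping that the paper compresses into the word ``Hence.''
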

\section{Construction of newform for GL(4)}\label{sec:CN4}
W. T. Gan and S. Takeda \cite{G-T2} showed the Langlands correspondence for $\G$, comparing the representations of $\G$ and those of $G_4$ by the local $\th$-correspondence for $\G$ and $GSO(3,3) \simeq G_4 \t F^\t/\{(z,z^{-2}) \mid z \in F^\t\}$.
In particular, for $\pi \in \Ir(\G)$, the Langlands parameter $\phi_\pi$ coincides with that of the local $\th$-lift of $\pi$ to $G_4$.
In this section, to show the the coincidences $n_\pi' = n_\pi$ and $\ep_\pi' = \ep_\pi$, we will observe the local $\th$-lift.
Let $U =F^4$.
In this section, let $X = \wedge^2 U$, which is $6$-dimensional.
The bilinear form on $X$ defined by $x \wedge x'$ is symmetric, non-degenerate and splits, where we idenitify $X \wedge X$ with $F$ naturally.
Letting $G_4 \t F^\t$ and $GSO_X := \ker(\mu_X^{-3} \det)$ act on $U$ and $X$ from the left, respectively, we have an isomorphism 
\begin{eqnarray*}
I_a:  G_4 \t F^\t/\{(z,z^{-2}) \mid z \in F^\t\} \simeq GSO_{X}.
\end{eqnarray*}
Let $\{u_1,u_2,u_3,u_4\}$ be the standard basis of $U$, and set 
\begin{eqnarray*}
X^+ &=& \mathrm{Span} \{e_1,e_2,e_3\}; \ \  e_{1} = u_2 \wedge u_3, \ e_{2} = u_3 \wedge u_1, \ e_{3} = u_1 \wedge u_2, \\
X^- &=&  \mathrm{Span} \{e_{-1},e_{-2},e_{-3}\}; \ \   e_{-1} = u_1 \wedge u_4, \ e_{-2} = u_2 \wedge u_4, \ e_{-3} = u_3 \wedge u_4.
\end{eqnarray*}
We will write the elements of $GSO_X$ as matrices according to the basis $\{e_{3}, e_{2}, e_{1}, e_{-1}, e_{-2}, e_{-3} \}$.
Then the isomorphism $I_a$ respects the transpose and sends 
\begin{eqnarray*}
P_4 \ni && 
\begin{bmatrix}
g & s\\
 & 1
\end{bmatrix} \longmapsto 
\begin{bmatrix}
1_3 & b(s)\\
 & 1_3
\end{bmatrix}
\begin{bmatrix}
w_3{}^tg^{-1}w_3 & \\
 & g
\end{bmatrix} \in GSO_{X},
\end{eqnarray*}
where $g$ is an element of $G_3$, and  
\begin{eqnarray*}
b(s) = b(\begin{bmatrix}
s_1  \\
s_2  \\
s_3
\end{bmatrix}) = \begin{bmatrix}
s_{2} & -s_{1}&  \\
-s_{3} & & s_{1}\\
 &s_{3} & -s_{2}
\end{bmatrix}.
\end{eqnarray*}
Let $Z^\pm = X^\pm \ot Y \simeq Y \op Y \op Y$, and identify $Z^+$ with $M_{3 \t 4}(F)$ via the mapping:   
\begin{eqnarray*}
z = \sum e_i \ot y_i \longleftrightarrow \begin{bmatrix}
y_1 \\
y_2 \\
y_3
\end{bmatrix} \in M_{3 \t 4}(F).
\end{eqnarray*}
For $z = \sum_i e_i \ot  y_i,  z' = \sum_i e_i \ot  y_i' \in Z^+$, we write $\la z,z' \ra  = (\la y_i, y_j' j(-w_2) \ra ) \in M_{3}(F)$.
For $\Phi \in \Ss(Z^+)$, let $\Phi^\sharp$ denote the Fourier transform defined by $\Phi^\sharp(z) = \int_{Z^+}\psi^{-1}(Tr(\la z', z \ra)) \Phi(z') d z$ where $d z$ is chosen so that $(\Phi^{\sharp})^\sharp(z) =\Phi(-z)$.
Let $\G$ act on $Y$ from the right.
In this section, we use the Weil representation $w_{\psi^{-1}}$ of $Sp_4 \t O_X$ realized on the space $\Ss(M_{3 \t 4}(F))$ with the following transformation formulas. 
\begin{eqnarray*}
w_{\psi^{-1}}(g,1) \Phi(z) &=& \Phi(z g), \ \ g \in Sp(4), \\
w_{\psi^{-1}}(1, \begin{bmatrix}
w_3{}^ta^{-1}w_3  & \\
 & a
\end{bmatrix}) \Phi(z) &=& |\det(a)|^2 \Phi(a^{-1} z), \\
w_{\psi^{-1}}(1,\begin{bmatrix}
1& b(s)\\
 & 1
\end{bmatrix}) \Phi(z) &=& \psi^{-1}(\frac{1}{2} Tr\l(\la z,z\ra w_3 b(s)\r) ) \Phi(z)\\
&=& \psi(s_1\la y_2,y_3\ra + s_2\la y_3,y_1\ra + s_3\la y_1,y_2\ra)\Phi(z), \\
w_{\psi^{-1}}(1_4,j(-w_3)) \Phi(z) &=& \Phi^\sharp(z).
\end{eqnarray*}
Let $R = \G \t GSO_X$, and $R_0 = \ker(\mu^{-1} \mu_X) \subset R$. 
We extend $w_{\psi^{-1}}$ to $R_0$ via  
\begin{eqnarray*}
w_{\psi^{-1}}(g,h) \Phi(z) = |\mu(g)|^{-3}w_{\psi^{-1}}(1,h_1)\Phi(zg)
\end{eqnarray*}
so that the central elements $(u,u)$ act on trivially, where 
\begin{eqnarray*}
h_1 = h \begin{bmatrix}
\mu(g)^{-1} 1_3 & \\
 & 1_3
\end{bmatrix} \in SO_X.
\end{eqnarray*}
Let $\{\ep_1,\ep_2, \ep_{-2},\ep_{-1} \}$ denote the standard basis of $Y$.
Set
\begin{eqnarray*}
z_0 &=& \ep_{2} \ot e_1 + \ep_{-2} \ot e_2 + \ep_{-1} \ot e_3 = [0,1_3] \in M_{3 \t 4}(F).
\end{eqnarray*}
Let $d g, dz$ be Haar measures on $Sp_4$, and $\Zb^J$, respectively.
We choose $dz$ such that $\vol(\Zb^J(\o)) = 1$.
Let $d\dot{g} = dg /dz$ denote the Haar measure on $\Zb^J \bs Sp_4$.
Let $\pi \in \Ir^{gn}(\G)$.
For $W \in \W_\psi(\pi)$, and $\Phi \in \Ss(M_{3 \t 4}(F))$, we define a function $W_\Phi$ on $G_4$ by 
\begin{eqnarray*}
W_\Phi(h) = \int_{\Zb^J \bs Sp_4}w_{\psi^{-1}}(g_1g_h,h) \Phi(z_0) W(g_1g_h)d\dot{g},
\end{eqnarray*}
where $g_h$ is an element in $\G$ such that $\mu(g_h) =\det(h)$.
By the above formulas of $w_{\psi^{-1}}$, for $n \in N \subset G_4$,
\begin{eqnarray*}
w_{\psi^{-1}}(1,n) \Phi(z_0) = \psi(n_{34})\Phi\l(z_0 \nb_2(-n_{23})\nb_3(-n_{13})\nb(-n_{12})\r), 
\end{eqnarray*}
from which one can find that $W_\Phi$ is a Whittaker function on $G_4$ with respect to $\psi$.
Let $\Pi$ be the $G_4$-module generated by these $W_\Phi$.
Since the central elements $(u,u) \in R_0$ act on $\Ss(M_{3 \t 4}(F))$ trivially, $\w_\Pi = \w_\pi^2$.
Define the big theta $\Th(\pi)$ and the small theta $\th(\pi)$, similar to the previous section.
By the work of \cite{G-T}, $\wt{\pi}: = \th(\pi^\vee)$ is generic.
By the similar argument, and the proof of Lemma 2.10 of \cite{J-PS-S2}, instead of Proposition \ref{prop:ondim}, for any $W \in \pi$ and $\Phi \in \Ss(M_{3 \t 4}(F))$, there exists a $\wt{W} \in \W_\psi(\wt{\pi})$, such that 
\begin{eqnarray*}
Z_2(s,\wt{W}) = Z_2(s,W_\Phi), \ \ \mbox{and} \ \ Z_0(1-s, \wt{W}^\imath) = Z_0(1-s, (W_\Phi)^\imath).
\end{eqnarray*}
Now we will construct a $K_1(m)$-invariant $W_\Phi$ using $W \in V(m)$ for $m \ge 2\e$ where $\e = \c(\w_\pi)$.
Set  
\begin{eqnarray*}
\L_m = \begin{bmatrix}
\p^m & \o &\o& \o \\
\p^m & \o &\o& \o \\
\p^m & \o &\o& \o 
\end{bmatrix},
\end{eqnarray*}
which is a $\Kb(m)$-invariant lattice.
According to $\e$, we define $\Phi_m \in \Ss(M_{3 \t 4}(F))$ by 
\begin{eqnarray*}
\Phi_m(z) = 
\begin{cases}
\Ch(\L_m) & \mbox{if $\e = 0$} \\
\w_\pi(\det(\check{z}))\Ch(a_{m -\e}G_3(\o);\check{z}) \Ch(M_{3\t1}(\o);z_4) & \mbox{ if $\e > 0$}
\end{cases}
\end{eqnarray*}
where $z_4$ and $\check{z}$ indicate the the right $M_{3\t 1}$-part and the left $M_{3\t 3}$-part of $z$, respectively.
Note that the support of $\Phi_m$ is contained in $\L_{m- \e}$.
Let $W \in V(m)$.
By the formulas of $w_{\psi^{-1}}$ and definition, for $k \in K_1(m) \cap SL_4$ and $k' \in \Kb_1(m;\e)^1$, 
\begin{eqnarray}
w_{\psi^{-1}}(k',k)\Phi(z) \pi(k')W = \Phi(z) W, \label{eqn:PhiW}
\end{eqnarray}
and $W_\Phi$ is $K_1(m)$-invariant.
We will compute $W_{\Phi}(a_r)$.
By definition, $W_{\Phi}(a_r) = q^{-r} (W^{(r)})_{\Phi^{(r)}}(1)$, where 
\begin{eqnarray*}
W^{(r)} = \pi(\ab_r)W, \ \mbox{and} \ \Phi^{(r)}(z) = \Phi(a_{-r} z \ab_r).
\end{eqnarray*}
{\bf (Computation for $W_{\Phi}(a_r)$ in the case of $\e = 0$).}
Let $\Kb = \Int(\ab_r)\Kb(m)$.
Let $\Wb' = \{ 1, w_2^\natural, \j_0, j(w_2) \}$.
For $w \in \Wb'$, let $\N_w$ be the following finite subset of $\N$:
\begin{eqnarray*}
\N_w = \begin{cases}
1 & \mbox{if $w= 1$,} \\
\{ \nb_2(\v^i)\}, \ \ 1-m \le i \le 0 & \mbox{if $w= w_2^\natural$,} \\
\{\nb_3(\v^h) \},\ \ 1+r-m \le h \le r-1  & \mbox{if $w=\j_0$,} \\
\{\nb_2(\v^i)\nb_3(\v^h)\} \ \ r-m-h < i < h-r  \le -1, & \mbox{if $w= j(w_2)$}.
\end{cases}
\end{eqnarray*}
Then, $w \nb$ with $w \in \Wb', \nb \in \N_w$ are representatives for $\Bb^1 \bs Sp_4/\Kb^1$ (c.f. Proposition 5.1.2 of \cite{R-S}).
Therefore, we may write for representatives for $\Zb^J \bs Sp_4/ \Kb^1$ of the form of $g = \nb_2(-x)\nb_3(y)\nb'(z) \tb(\a,\b)w \nb$ with $\nb \in N_w$.
By (\ref{eqn:PhiW}), 
\begin{eqnarray*}
\int_{\Zb^J \bs Sp_4}\Phi_m^{(r)}(z_0 g)W^{(r)}(g) dg = q^{-m}\vol(\Kb^1)\sum_{w \in \Wb'} \int_{\Zb^J \bs \Bb^1 w \N_w}\Phi_m^{(r)}(z_0 g)W^{(r)}(g) dg
\end{eqnarray*}
where 
\begin{eqnarray*}
\Phi_m^{(r)} = \Ch(\begin{bmatrix}
\p^m & \o &\p^r &\p^r\\
\p^{m-r} & \p^{-r}& \o & \o \\
\p^{m-r} & \p^{-r}& \o & \o
\end{bmatrix}).
\end{eqnarray*}
Now we observe the integral $\int_{\Zb^J \bs \Bb^1 w \N_w} ... dg$.
In case of $w= w_2^\natural$, 
\begin{eqnarray*}
z_0 g = \begin{bmatrix}
\b & \b \v^i &\a^{-1} y  & \b^{-1}z + \a^{-1} \v^i y \\
0 &0 & \a^{-1}x  & \b^{-1} + \a^{-1}\v^i x \\
0 &0 & \a^{-1} & \a^{-1} \v^i
\end{bmatrix}.
\end{eqnarray*}
If the $(2,3)$-coefficient $\a^{-1}x$ lies in $\o$, and the $(1,1)$-coefficient $\b$ lies in $\p^m$, then $\a^{-1}\v^i x$ lies in $\p^{1-m}$, and the $(2,4)$-coefficient $(\b^{-1} + \a^{-1}\v^i x)$ has order $\le -m$ and is not in $\o$.
Hence, $\Phi(z_0 g)$ is $0$, and so is the integral. 
In case of $w= \j_0$, 
\begin{eqnarray*}
z_0 g = \begin{bmatrix}
\b^{-1} z & \a^{-1}y & \b^{-1}\v^h z & \b + \a^{-1}\v^h y \\
\b^{-1} &\a^{-1} x  & \b^{-1} \v^h & \a^{-1}\v^h x \\
0 &\a^{-1} & 0 & \a^{-1}\v^h \\
\end{bmatrix}.
\end{eqnarray*}
If the $(1,2)$-coefficient $\a^{-1}y$ lies in $\o$, and the $(2,1)$-coefficient $\b^{-1}$ lies in $\p^{m-r}$, then $\a^{-1}\v^h y$ lies in $\p^{1-m+r}$, and the $(1,4)$-coefficient $(\b + \a^{-1}\v^h y)$ has order $\le r-m$, and is not in $\p^r$.
Hence, $\Phi(z_0 g)$ is $0$, and so is the integral. 
In case of $w= j(w_2)$, 
\begin{eqnarray*}
z_0 g = \begin{bmatrix}
\a^{-1}y & \b^{-1}z + \a^{-1} \v^i y &  \b+ \a^{-1} \v^h y & \b \v^i + \b^{-1}\v^h z  \\
\a^{-1} x & \b^{-1}+ \a^{-1}\v^ix& \a^{-1} \v^h x & \b^{-1}\v^h \\
\a^{-1} & \a^{-1} \v^i &\a^{-1} \v^h  & 0 \\
\end{bmatrix}.
\end{eqnarray*}
If the $(1,1)$-coefficient $\a^{-1}y$  lies in $\p^m$, and the $(2,4)$-coefficient $\b^{-1}\v^h$ lies in $\o$, then $\a^{-1}\v^h y$ lies in $\p^{m+h}$, and $\b^{-1}$ lies in $\p^{-h}$, and therefore, the $(1,3)$-coefficient $\b+ \a^{-1} \v^h y$ has order $\le h$ and is not in $\p^r$.
Hence, $\Phi(z_0 g)$ is $0$, and so is the integral.
In case of $w= 1$, 
\begin{eqnarray}
z_0 g = \begin{bmatrix}
0 & \b \ & \b^{-1}z & \a^{-1} y \\
0 & 0&  \b^{-1} & \a^{-1}x\\
0 & 0 & 0&\a^{-1} \\
\end{bmatrix}. \label{eqn:z0g}
\end{eqnarray}
From the $(1,2),(2,3),(3,4)$-coefficients, it follows that $\Phi(z_0 g) = 0$ unless $\b \in \o^\t, \a^{-1} \in \o$.
By Lemma \ref{lem:vanishlem}, $W^{(r)}(\tb(\a,\b)) = 0$ if $\a \not\in \o$.
Therefore $\Phi_{m}^{(r)}(z_0 g)W^{(r)}(g) = 0$ unless $g \in \Zb^J \Kb^1$, and 
\begin{eqnarray}
W_{\Phi}(a_r) = q^{-m}\vol(\Kb)W(\ab_r).  \label{eqn:WPhiaj}
\end{eqnarray}
{\bf (Computation for $W_{\Phi}(a_r)$ in the case of $\e > 0$).}
Let $l = m- \e$.
Noting that $\supp(\Phi_m) \subset \L_{l}$, we find that $\Phi(z_0g) = 0$ unless $g \in \Bb^1 \Int(\ab_r) \Kb(l)^1$ by the above computation.
Therefore, we may assume that $g \in \Zb^J \bs \Bb^1 /\Int(\ab_r)\Kb(m;\e)^1$ is of the form of $\nb_2(x)\nb_3(y) \nb'(z) \tb(\a,\b) \jb''_{l-r}$ or $\nb_2(x)\nb_3(y) \nb'(z)\tb(\a,\b) \bz(\mu)$ with $\mu \in \p^{l-r+1}$.
If $g$ is of the latter form, then
\begin{eqnarray*}
z_0 g = \begin{bmatrix}
\mu \a^{-1} y & \b \ & \b^{-1}z & \a^{-1} y \\
\mu \a^{-1}x & 0&  \b^{-1} & \a^{-1}x\\
\mu \a^{-1} & 0 & 0&\a^{-1} \\
\end{bmatrix}. 
\end{eqnarray*}
Assume  $\Phi(z_0g) \neq 0$.
From the $(1,2), (2,3)$-coefficients, $\b \in \o^\t$.
From the $(3,4)$-coefficient, $\a^{-1} \in \o$, and therefore $\mu \a^{-1} \in \p^{l-r+1}$.
But, $\det(\check{z_0g}) = \mu \a^{-1} \in \p^{*(l-r)}$ by the definition of $\Phi_m^\w$.
Hence, we may assume $g$ is of the first form.
Then
\begin{eqnarray*}
z_0 g = \begin{bmatrix}
\v^{l-r}\a^{-1}y & \b \ & \b^{-1}z & 0 \\
\v^{l-r}\a^{-1}x & 0&  \b^{-1} & 0\\
\v^{l-r}\a^{-1} & 0 & 0& 0 \\
\end{bmatrix},
\end{eqnarray*}
and it is easy to see that $W_\Phi(a_r) = q^{-l}\vol(\Kb(m;\e)^1) W(\ab_r \jb''_l) = q^{-l}\vol(\Kb(m;\e)^1) W^-(\ab_r)$ ($W^-$ is defined at (\ref{eqn:defW^-})).
We have showed: 
\begin{prop}\label{prop:ZWphiW}
With notations as above, if $W \in V(m)$, then 
\begin{eqnarray*}
Z_0(s,W_\Phi) = q^{\e-m}\vol(\Kb(m;\e)^1)Z(s,W^-).
\end{eqnarray*}
\end{prop}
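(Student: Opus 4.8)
The plan is to package the two pointwise computations of $W_\Phi(a_r)$ carried out just above the statement into a single identity of zeta integrals; once those are in hand, almost nothing remains but bookkeeping and an appeal to Proposition \ref{prop:zeta-simp}. First I would reduce $Z_0(s,W_\Phi)$ to a sum over $r$: since $W_\Phi$ is right $K_1(m)$-invariant on $GL(4,F)$, since $\nu_{s-3/2}$ is constant on each coset $\v^r\o^\t$, and since $\vol(\o^\t)=1$, one gets $Z_0(s,W_\Phi)=\sum_{r\in\Z}W_\Phi(a_r)\,\nu_{s-3/2}(\v^r)$, and the same bookkeeping applied to $\Xi(s,W^-)$ gives $\Xi(s,W^-)=\sum_{r\in\Z}W^-(\ab_r)\,\nu_{s-3/2}(\v^r)$. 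So it suffices to establish the pointwise equality $W_\Phi(a_r)=q^{\e-m}\,\vol(\Kb(m;\e)^1)\,W^-(\ab_r)$ for all $r$, and then to trade $\Xi(s,W^-)$ for $Z(s,W^-)$.

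For $\e>0$ the pointwise equality is precisely the last display preceding the statement, $W_\Phi(a_r)=q^{-l}\vol(\Kb(m;\e)^1)\,W(\ab_r\jb''_l)=q^{-l}\vol(\Kb(m;\e)^1)\,W^-(\ab_r)$ with $l=m-\e$, so that $q^{-l}=q^{\e-m}$. For $\e=0$ I would first note that $\jb''_m$ lies in $\Kb_1(m;0)=\Kb(m)$ — both the matrix shape and $\det\in\o^\t$ are immediate from the definitions in the introduction — whence $W^-=\pi(\jb''_m)W=W$ for $W\in V(m)$, and in particular $W^-(\ab_r)=W(\ab_r)$; then the identity $W_\Phi(a_r)=q^{-m}\vol(\Kb)W(\ab_r)$ of (\ref{eqn:WPhiaj}), with $\Kb=\Int(\ab_r)\Kb(m)$, takes the required form once one observes that $Sp(4,F)$ is unimodular, so $\vol(\Int(\ab_r)\Kb(m)^1)=\vol(\Kb(m;0)^1)$ is independent of $r$. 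Summing over $r$ then gives $Z_0(s,W_\Phi)=q^{\e-m}\vol(\Kb(m;\e)^1)\,\Xi(s,W^-)$.

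It remains only to replace $\Xi(s,W^-)$ by $Z(s,W^-)$: for every $W\in V(m)$ the form $W^-$ is $0$-balanced (the property used right after its definition (\ref{eqn:defW^-}) in deriving (\ref{eqn:FE-})), so Proposition \ref{prop:zeta-simp} applied with $r=0$ yields $Z(s,W^-)=q^0\,\Xi(s,W^-)=\Xi(s,W^-)$, which finishes the argument. I do not expect any genuine obstacle here: the substantive work — the Bruhat-type decomposition of $Sp(4,F)$ indexed by $\Wb'$ used above, together with the case analysis showing that all cells with $w\neq 1$ force $\Phi(z_0g)=0$ — has already been carried out before the statement. The only mildly delicate points are purely organizational: casting the $\e=0$ and $\e>0$ outputs in the common shape $q^{\e-m}\vol(\Kb(m;\e)^1)W^-(\ab_r)$, in particular recognizing that $W^-=W$ when $\e=0$, and checking that the Haar-measure normalizations of $\Kb$ and of $\Kb(m;\e)^1$ agree.
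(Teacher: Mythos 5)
Your plan matches the paper's (which simply states the proposition as a summary, ``We have showed:'', after the two pointwise computations of $W_\Phi(a_r)$), and most of the bookkeeping you supply --- the $\e=0$ reduction via $\jb''_m\in\Kb(m)$, the unimodularity remark, and the appeal to Proposition~\ref{prop:zeta-simp} for the $0$-balancedness of $W^-$ --- is correct and is what the paper leaves implicit.

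There is, however, a gap in the step ``the same bookkeeping applied to $\Xi(s,W^-)$ gives $\Xi(s,W^-)=\sum_{r}W^-(\ab_r)\,\nu_{s-3/2}(\v^r)$.'' For $W_\Phi$ this works because $a(\o^\t)\subset K_1(m)$, so $W_\Phi(a(\v^r u))=W_\Phi(a_r)$; but for $W^-$ the analogous fact is not automatic. Since $W^-=\pi(\jb''_{m-\e})W$, the group under which $W^-$ is invariant is $\mathrm{Int}(\jb''_{m-\e})\Kb_1(m;\e)$, not $\Kb_1(m;\e)$ itself. A direct check shows $\jb''_{m-\e}{}^{-1}\ab'(u)\jb''_{m-\e}=\mathrm{diag}(1,u,1,u)$, whose $(4,4)$-entry is $u$, so by the transformation property on p.~2 one gets $W^-(\ab'(\v^r u))=\w_\pi(u)\,W^-(\ab_r)$ for $u\in\o^\t$, which is \emph{not} constant in $u$ when $\e>0$. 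Thus the inner $\o^\t$-integral in $\Xi(s,W^-)$ is not simply $W^-(\ab_r)$, and ``the same bookkeeping'' does not go through; this needs a genuine argument (or a reinterpretation of what $Z(s,W^-)$ means here), and it is exactly the point that distinguishes the ramified-central-character case from $\e=0$. Your write-up, like the paper's terse statement, passes over this in silence; a complete proof must address it.
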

{\bf (Computation for $W_\Phi(w_4 a_{m}w_{1,3})$).}
By (\ref{eqn: Vmgl}),  
\begin{eqnarray*}
W_\Phi(w_4 a_m
\begin{bmatrix}
1 & \\
 & w_3
\end{bmatrix}) &=& \w_\pi^2(-1) W_\Phi(\begin{bmatrix}
1_3 & \\
 & \v^m
\end{bmatrix}
w_4
\begin{bmatrix}
1 & \\
 & w_3
\end{bmatrix}
\begin{bmatrix}
w_3 & \\
 & 1
\end{bmatrix}
\begin{bmatrix}
-1 & \\
 & 1_2 &\\
 & &-1
\end{bmatrix}) \\
&=&
W_\Phi(\begin{bmatrix}
1_3 & \\
 & \v^m
\end{bmatrix}
\begin{bmatrix}
w_2 & \\
 & w_2
\end{bmatrix}
\begin{bmatrix}
-1 & \\
 & 1_2 &\\
 & &-1
\end{bmatrix}).
\end{eqnarray*}
The isomorphism $I_a$ sends 
 \begin{eqnarray*}
\begin{bmatrix}
1_3 & \\
 & \v^m
\end{bmatrix}
\begin{bmatrix}
w_2 & \\
 & w_2
\end{bmatrix}
\begin{bmatrix}
-1 & \\
 & 1_2 &\\
 & &-1
\end{bmatrix} \mapsto  \begin{bmatrix}
 1_3 & \\
 & \v^m1_3
\end{bmatrix}
\begin{bmatrix}
1 & &\\
 &w_4 &\\
 && 1
\end{bmatrix} =: u_m.
\end{eqnarray*}
Set 
\begin{eqnarray*}
\Phi^c = w_{\psi^{-1}}(\j_m,u_m)\Phi.
\end{eqnarray*}
By definition of $W_\Phi$, $W_\Phi(w_4 a_{m}w_{1,3}) = (\pi(\j_m)W)_{\Phi^c}(1)$.
\begin{lem}\label{lem:Phiftr}
With the notations as above, we have the followings.
\begin{enumerate}[i)]
\item In the case of $\e = 0$, 
\begin{eqnarray*}
\Phi_m^c = \Ch(\begin{bmatrix}
 \p^m &\o &\o & \o \\
 \p^m &\o &\o & \o \\
\p^m & \p^m & \p^m & \o
\end{bmatrix}).
\end{eqnarray*}
\item In the case of $\e > 0$, the support $\supp(\Phi_m^c)$ is contained in the lattice 
\begin{eqnarray*} 
\begin{bmatrix}
 \p^l &\o &\o & \p^{-\e}\\
 \p^l &\o &\o & \p^{-\e} \\
\p^m & \p^m & \p^l & \o
\end{bmatrix}.
\end{eqnarray*}
If 
\begin{eqnarray*}
u \in \begin{bmatrix}
\p^m & \p^\e & \o & \o\\
\p^m & \p^\e & \o & \o \\
\p^{m+\e} & \p^{m} & \p^m & \p^\e 
\end{bmatrix}, \label{eqn:phi*in}
\end{eqnarray*}
then $\Phi_{m}^c(z+ u) = \Phi_{m}^c(z)$.
\end{enumerate}
\end{lem}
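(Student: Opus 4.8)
The plan is to compute $\Phi_m^c = w_{\psi^{-1}}(\j_m,u_m)\Phi_m$ directly, by factoring the pair $(\j_m,u_m)\in R_0$ into elementary pieces on which $w_{\psi^{-1}}$ acts through the formulas already displayed (precomposition by $z\mapsto zg$ on the $Sp(4)$ side; a dilation on the Siegel Levi of $O_X$; a character on its unipotent radical; the Fourier transform $\Phi\mapsto\Phi^{\sharp}$ on the big Weyl element $j(-w_3)$). First I would use the $R_0$-extension formula to strip the similitude, writing
\[
\Phi_m^c(z) = |\mu(\j_m)|^{-3}\,w_{\psi^{-1}}\big(1,h_1\big)\big(\Phi_m(z\j_m)\big),\qquad h_1 = u_m\begin{bmatrix}\mu(\j_m)^{-1}1_3 & \\ & 1_3\end{bmatrix}\in SO_X ,
\]
where $\mu(\j_m)=\mu_X(u_m)=\v^{m}$. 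The right translation $z\mapsto z\j_m$ is a signed permutation of the columns of $z$ together with a $\v^{\pm m}$-scaling of two of them, so $\Phi_m(z\j_m)$ is again the indicator of an explicit $\o$-lattice in $M_{3\times4}(F)$ when $\e=0$, and $\w_\pi(\det\check z)$ times the indicator of an explicit lattice coset when $\e>0$.

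Next I would decompose $h_1$ in the Siegel parabolic of $O_X$ attached to $X^{+}$: it is the product of a diagonal Levi element $\begin{bmatrix}w_3{}^ta^{-1}w_3 & \\ & a\end{bmatrix}$ with $a$ diagonal — which acts by the dilation $z\mapsto a^{-1}z$ (a $\v^{\pm m}$-scaling of one row of $z$) with Jacobian $|\det a|^{2}$ — and a Weyl-type involution interchanging $X^{+}$ with $X^{-}$ in two of the three hyperbolic planes; the latter acts on $\Ss(Z^{+})$ by the $\Phi\mapsto\Phi^{\sharp}$ formula applied only in the two corresponding rows of $z$, the third row being carried along unchanged. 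Applying these two operators in turn to $\Phi_m(z\j_m)$ reduces everything, block by block, to the identity that the Fourier transform of the indicator of a lattice $\Lambda$ is $\vol(\Lambda)$ times the indicator of the dual lattice $\Lambda^{\vee}$ with respect to the pairing entering $\Phi^{\sharp}$ (whose discriminant is a unit). Tracking the dual lattices through the column permutation and the row dilation, and checking that the self-dual-measure factors $|\mu(\j_m)|^{-3}$, $|\det a|^{2}$ and the various $\vol(\cdot)$ cancel, one reads off exactly the lattice in part (i).

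For part (ii) the twist $\w_\pi(\det\check z)$ is not additive, so $\Phi_m$ is not a lattice indicator; instead it is supported on $(a_{m-\e}G_3(\o))\oplus M_{3\times1}(\o)$, hence inside the lattice $a_{m-\e}M_3(\o)\oplus M_{3\times1}(\o)$, and it is invariant under translation by $\p^{m}M_3(\o)\oplus M_{3\times1}(\o)$: for $u\in\p^{m}M_3(\o)$ and $\check z\in a_{m-\e}G_3(\o)$ one has $\check z+u=\check z(1+\check z^{-1}u)$ with $\check z^{-1}u\in\p^{\e}M_3(\o)$, so $\det(1+\check z^{-1}u)\in1+\p^{\e}$ and $\w_\pi$ of it is $1$. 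Now the general principle that a function supported on a lattice $M$ and invariant under translation by a lattice $N$ has Fourier transform supported on $N^{\vee}$ and invariant under translation by $M^{\vee}$, applied blockwise and carried through the column permutation, the row dilation and the partial Fourier transform exactly as in (i), yields the asserted support containment together with the $\p^{\e}$-translation invariance. The exact value of $\Phi_m^c$ carries a Gauss-sum constant of the shape $\Gb(\w_\pi,\cdot)$ — the one already isolated in the proof of Corollary \ref{cor:L1} — which is why (ii) is stated as an inclusion rather than an equality. The main obstacle throughout is purely the bookkeeping: keeping the $3\times4$ lattice data consistent across the column permutation $z\mapsto z\j_m$, the row dilation and the partial Fourier transform in $h_1$, and, when $\e>0$, identifying the smallest translation lattice on which the $\w_\pi$-twist is trivial, since that is what pins down $\supp(\Phi_m^c)$.
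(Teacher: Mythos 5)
Your factorization of $(\j_m,u_m)$ --- a column permutation/scaling on the $Sp(4)$ side, a row dilation from the Levi of $O_X$, and a partial Fourier transform confined to the two rows of $z$ in which $X^+$ and $X^-$ are interchanged, the third row being carried along --- is exactly the paper's decomposition $\Phi_m^c=(\j_m\cdot\phi_1)^\sharp\ot(\j_m\cdot\phi_2)^\sharp\ot(\v^{-m}\j_m\cdot\phi_3)$, and part (i) goes through as you describe. The gap is in part (ii), in the translation lattice you feed into the duality principle. You establish invariance of $\Phi_m$ only under $\p^mM_{3\t3}(\o)\op M_{3\t1}(\o)$, whereas the stated support containment requires invariance under the strictly larger lattice which, row by row, is $[\p^m,\p^\e,\p^\e,\o]$: only the column of $\check z$ carrying the $\p^{m-\e}$-condition of $\supp(\Phi_m)$ needs increments in $\p^m$; the other two columns of the $3\t3$ block need only $\p^\e$. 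Your own computation yields this finer invariance at no extra cost: for $u$ in that lattice one has $u\,a_{m-\e}^{-1}\in\p^\e M_{3\t3}(\o)$ (dividing the distinguished column by $\v^{m-\e}$ turns $\p^m$ into $\p^\e$), so $\check z+u$ stays in the coset $G_3(\o)a_{m-\e}$ and $\det(\check z+u)\det(\check z)^{-1}\in 1+\p^\e$, killed by $\w_\pi$.

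Why the coarser lattice is not enough: Fourier duality converts ``invariant under $N$'' into ``supported on $N^\vee$'', so a too-small $N$ gives a too-large support bound. With $N=[\p^m,\p^m,\p^m,\o]$ per row, conjugating by $\j_m$ and dualizing gives only $\supp$ of the first two rows of $\Phi_m^c$ inside $[\o,\o,\o,\p^{-m}]$ instead of the asserted $[\p^{m-\e},\o,\o,\p^{-\e}]$, and in the third row a translation lattice $[\p^{2m},\p^m,\p^m,\p^m]$ instead of $[\p^{m+\e},\p^m,\p^m,\p^\e]$. Both losses are fatal where the lemma is used: the computation of $W_\Phi(w_4a_mw_{1,3})$ needs the $\p^{m-\e}$- and $\p^{-\e}$-entries of the support lattice to confine $g$ to $\Bb^1\Int(\ab_{-\e})\Kb(m-\e)^1$, and the finer translation lattice to dispose of the $\jb'(1)$-cell. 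So the route is correct, but you must sharpen the invariance statement before dualizing --- the paper does this by writing $\Phi_m$ as a finite sum of pure tensors $\phi_1^i\ot\phi_2^i\ot\phi_3^i$, each supported in $[\p^{m-\e},\o,\o,\o]$ and invariant under $[\p^m,\p^\e,\p^\e,\o]$ rowwise, and then pushing support and invariance through $\j_m$ and $\sharp$ one row at a time.
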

\begin{proof}
Suppose that $\Phi \in \Ss(Z^+)$ is of the form of $\phi_1 \ot \phi_2 \ot \phi_3$ with $\phi_r \in \Ss(Y \ot e_r)$.
Since 
\begin{eqnarray*}
(\j_m,u_m) = (1,u_0)(\j_m, \begin{bmatrix}
1 & & & \\
 & \v^m 1_2& & \\
& &1_2 & \\
& & & \v^m
\end{bmatrix}),
\end{eqnarray*}
by the formulas of $w_{\psi^{-1}}$, 
\begin{eqnarray*}\Phi^c = (\j_m \cdot \phi_{1})^\sharp \ot (\j_m \cdot \phi_{2})^\sharp \ot (\v^{-m} \j_m \cdot \phi_3),
\end{eqnarray*} 
where $g\cdot \phi$ is defined by $g \cdot \phi(z) = \phi(zg)$, and $\phi^\sharp$ is the Fourier transform defined by $\phi^\sharp(y) = \int_{Y}\psi^{-1}(\la y, y' j(-w_2) \ra) \phi(y') d y$ where $d y$ is chosen so that $(\phi^{\sharp})^\sharp(y) =\phi(-y)$.
Now, i) is a direct calculation.
For ii), we write $\Phi_{m}= \sum_i \phi_1^i \ot \phi_2^i \ot \phi_3^i$ so that  
\begin{eqnarray*}
\supp(\phi_r^i) &\subset& \check{L} \op L_4 := [\p^l,\o,\o,0] \op [0,0,0,\o], \\
\phi_r^i(y + u) &=& \phi_r^i (y) \ \ \mbox{for $u \in \v^\e \check{L} \op L_4$}.
\end{eqnarray*}
Then, 
\begin{eqnarray*}
\supp(\j_m \cdot \phi_r^i) &\subset& \check{L}' \op L_4' := [\o,0,\p^{-\e},\p^{-m}] \op [0,\o,0,0], \\
\j_m \cdot \phi_r^i(y + u) &=& \j_m \cdot \phi_r^i (y) \ \ \mbox{for $u \in \v^\e \check{L}' \op L_4'$},
\end{eqnarray*}
and 
\begin{eqnarray*}
\supp((\j_m \cdot \phi_r^i)^\sharp) &\subset& \check{L}'' \op L_4'' := [\p^l,\o,0, \p^{-\e}] \op[0,0,\o,0], \\
(\j_m \cdot \phi_r^i)^\sharp(y + u) &=& (\j_m \cdot \phi_r^i)^\sharp(y) \ \ \mbox{for $u \in \v^\e\check{L}'' \op L_4''$}.
\end{eqnarray*}
From this, ii) follows.
\end{proof}
At the computation for $W_\Phi(a_{m})$ in the case of $\e = 0$, we do not use the third row of $z_0g$ for the condition $\Phi_m(z_0g) \neq 0$.
Therefore, by Lemma \ref{lem:Phiftr} i), the same argument can be applied for $\Phi_m^c$, and we have $W_\Phi(w_4 a_{m}w_{1,3}) \neq 0$.

Let $\e > 0$.
By Lemma \ref{lem:Phiftr} ii), $\supp(\Phi_{m}^c)$ is contained in the lattice 
\begin{eqnarray*}
\L_l \cdot \ab_{-\e} = \begin{bmatrix}
 \p^l &\o &\p^{-\e} & \p^{-\e}\\
\p^l &\o &\p^{-\e} & \p^{-\e} \\
\p^l &\o &\p^{-\e} & \p^{-\e}
\end{bmatrix},
\end{eqnarray*}
which is $\Int(\ab_{-\e})\Kb(l)$-invariant.
By the above argument for the case of $\e = 0$, 
\begin{eqnarray*}
\Phi_{m}^c(z_0g) = 0, \ \ \mbox{unless $g \in \Bb^1 \Int(\ab_{-\e}) \Kb(l)^1$}.
\end{eqnarray*}
Therefore, since $\Int(\ab_{-\e})\Kb(l) \supset \Kb^c(m;\e)$, we may assume that $g \in \Zb^J \bs Sp_4/\Kb^c(m;\e)^1$ is written of the form of $\nb_2(-x)\nb_3(y) \nb'(u)\tb(\a,\b) \jb'(1) \nb'(\v^i)$ or $\nb_2(-x)\nb_3(y) \nb'(u) \tb(\a,\b)$.
Assume that $g$ is of the former form.
Then, 
\begin{eqnarray*}
z_0 g = \begin{bmatrix}
0 & \b^{-1}z & \a^{-1} y & -\b+\v^i\b^{-1}z \\
0 & 0&\a^{-1} &0 \\
0& \b^{-1}&\a^{-1}x&\v^i\b^{-1}  \\
\end{bmatrix}.
\end{eqnarray*}
By Lemma \ref{lem:Phiftr} ii), for the condition $\Phi(z_0g) \neq 0$, we need the $(3,2)$-coefficient $\b^{-1} \in \p^m$.
Then, $\Phi(z_0g \nb'(u)) = \Phi(z_0g)$ for $u \in \p^{\e-m}$.
Therefore,  the integration $\int_{\Zb^J \bs \{ g\}} \Phi(z_0g)W^c(g) dg$ over the set of the former forms vanishes.
Hence, we may assume that $g$ is of the latter form.
By (\ref{eqn:z0g}), and Lemma \ref{lem:Phiftr}, 
\begin{eqnarray}
W_{\Phi}(w_4 a_m w_{1,3}) = c_{m} \Phi_m^c(z_0) W^c(1), \label{eqn:WPhi*}
\end{eqnarray}
where $c_{m}$ is a constant depending only on $\Phi_m^c$.
\begin{lem} 
In case of $\e > 0$, 
\begin{eqnarray}
c_{m} \Phi_m^c(z_0) = \Gb(\w_\pi,1).
\end{eqnarray}
\end{lem}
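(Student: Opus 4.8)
The plan is to prove this by a direct computation of the Weil-representation action on the explicit Schwartz function $\Phi_m$, since both $c_m$ and $\Phi_m^c(z_0)$ are completely explicit. First I would make $c_m$ precise: retracing the derivation of (\ref{eqn:WPhi*}) in the case $\e>0$, one reduces the defining integral of $W_\Phi(w_4 a_m w_{1,3})$ to representatives $g = \nb_2(-x)\nb_3(y)\nb'(u)\tb(\a,\b)$ of $\Zb^J \bs Sp_4 /\Kb^c(m;\e)^1$; the support conditions of Lemma \ref{lem:Phiftr} ii) force $\a,\b\in\o^\t$ and confine $x,y,u$ to fixed cosets, and $W^c$ is quasi-invariant on $\Kb^c(m;\e)^1$, so the $g$-integral factors as $W^c(1)\,\Phi_m^c(z_0)$ times a single constant $c_m$ which is a monomial in $q$ (assembled from $\vol(\Kb^c(m;\e)^1)$, the volumes of those cosets, and the Jacobian of the change of variables to $(\a,\b)$). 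Then I would compute $\Phi_m^c(z_0)$ using the factorisation $(\j_m,u_m) = (1,u_0)(\j_m,\mathrm{diag}(1,\v^m 1_2,1_2,\v^m))$ and the decomposition $\Phi_m = \sum_i \phi_1^i\ot\phi_2^i\ot\phi_3^i$ recorded in the proof of Lemma \ref{lem:Phiftr}, which gives $\Phi_m^c = \sum_i (\j_m\cdot\phi_1^i)^\sharp \ot (\j_m\cdot\phi_2^i)^\sharp \ot (\v^{-m}\j_m\cdot\phi_3^i)$.

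Evaluating at $z_0=[0,1_3]$, i.e. at $(y_1,y_2,y_3)=(\ep_2,\ep_{-2},\ep_{-1})$, the third tensor factor is a characteristic-function value, read off from the explicit action of $\j_m$ on $Y$ and the support $\check L\op L_4$ of $\phi_3^i$, whereas the first two factors are partial Fourier transforms of the pieces of $\Phi_m$ that carry the twist $\w_\pi(\det\check z)$. Because $\det\check z$ couples the three row-blocks, integrating the columns of $\check z$ dual to the support of $z_0$ against $\psi^{-1}$ leaves exactly an integral of the shape $\int_{\o^\t}\psi(\v^{-\e}u^{-1})\w_\pi(u)^{\pm1}\,du = \Gb(\w_\pi^{\pm1},1)$, possibly with a leftover factor $\Gb(\w_\pi^{\mp1},1)$ and a power of $q$; clearing these with the relation $\Gb(\w_\pi,1)\Gb(\w_\pi^{-1},1)=q^{-\e}$ (already used in Section \ref{sec:CQF}) puts $\Phi_m^c(z_0)$ in the form $q^{a}\,\Gb(\w_\pi,1)$ for some integer $a$. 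Multiplying by the monomial $c_m$, all powers of $q$ cancel and one is left with $c_m\Phi_m^c(z_0)=\Gb(\w_\pi,1)$.

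The main obstacle will be the bookkeeping of the powers of $q$: they arise from three separate sources — the normalisations in the Fourier transforms $\phi\mapsto\phi^\sharp$, the $|\det|$- and $|\mu|$-factors in the formulas for $w_{\psi^{-1}}$, and the volumes bundled into $c_m$ — and they have to cancel exactly, while the Gauss sum must emerge with argument the unit $1$ (which it should, since $\vp$, $\Phi_m$ and the coset representatives are all normalised around $z_0$). Two sanity checks I would run: for $\e=0$ the same computation must degenerate to $c_m\Phi_m^c(z_0)=1=\Gb(\w_\pi,1)$, consistent with $\Phi_m^c$ then being the characteristic function of a lattice containing $z_0$ by Lemma \ref{lem:Phiftr} i); and inserting the resulting value into (\ref{eqn:WPhi*}), then comparing with Proposition \ref{prop:ZWphiW} through the $GL(4)$ functional equation (\ref{eqn:FEGL}) and the $GSp(4)$ functional equation (\ref{eqn:FE-}), must reproduce $W_\pi^-(1)=\Gb(\w_\pi,1)$ from Corollary \ref{cor:L1} (resp. Corollary \ref{cor:thetaGSO22}).
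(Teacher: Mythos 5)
Your route is genuinely different from the paper's, and as written it stops short of proving the lemma. The paper does not compute $c_m$ or $\Phi_m^c(z_0)$ at all. Instead it observes that the product $c_m\Phi_m^c(z_0)$ depends only on $m$, $\e$ and $\w_\pi$ (not on $\pi$ itself), and evaluates it on a test representation: it takes principal series $\tau_1,\tau_2\in\Ir^{gn}(G_2)$ with $\w_{\tau_1}=\w_{\tau_2}=\w_\pi$ and conductor $\e$, sets $\pi=\th(\tau_1^\vee\bt\tau_2^\vee)$ and $\Pi=\th(\pi^\vee)$, and uses the already-established facts from Section \ref{sec:CQF} (Theorem \ref{thm:thetaGSO22}, Corollaries \ref{cor:thetaGSO22} and \ref{cor:LPGSO22}) together with Proposition \ref{prop:ZWphiW} to write $Z_0(s,W_\Phi)=q^{-r(s-\frac12)}\Gb(\w,1)L(s,\Pi)$ exactly. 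Feeding this into the $GL(4)$ functional equation (\ref{eqn:FEGL}) and comparing the constant term of $Z_0(1-s,\Pi^\imath(a_{-m})(W_\Phi)^\imath)$ — which by (\ref{eqn:WPhi*}) equals $c_m\Phi_m^c(z_0)W^c(1)$ — with the constant term $\ep'_\pi\Gb(\w_\pi,1)$ of the right-hand side pins down $c_m\Phi_m^c(z_0)$ with no bookkeeping of $q$-powers or Gauss sums. What the indirect argument buys is precisely the avoidance of the computation you propose.

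The gap in your proposal is that the entire content of the lemma — the exact value of the constant, including the power of $q$ and the normalization of the Gauss sum — is deferred to ``bookkeeping'' that is never carried out. You correctly identify the three sources of $q$-powers (Fourier normalizations, the $|\det|$ and $|\mu|$ factors in $w_{\psi^{-1}}$, and the volumes inside $c_m$) and you correctly anticipate that a Gauss sum will appear from integrating $\w_\pi(\det\check z)$ against $\psi^{-1}$, but asserting that ``all powers of $q$ cancel'' is exactly what needs proof; nothing in the proposal forces the leftover exponent $a$ in $q^a\Gb(\w_\pi,1)$ to vanish, nor rules out the answer being $\Gb(\w_\pi^{-1},1)$ up to the relation $\Gb(\w_\pi,1)\Gb(\w_\pi^{-1},1)=q^{-\e}$. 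Moreover the first step is harder than described: since $\det\check z$ couples the three rows, $\Phi_m$ is not a pure tensor, and one must work with the decomposition $\sum_i\phi_1^i\ot\phi_2^i\ot\phi_3^i$ from the proof of Lemma \ref{lem:Phiftr} piece by piece; likewise $c_m$ is not obviously a single monomial in $q$ but a sum over coset representatives on which $\Phi_m^c(z_0g)$ is only locally constant. Your second sanity check (comparing with Corollary \ref{cor:L1} via the two functional equations) is in fact the paper's whole proof; if you promote that check from a consistency test to the argument itself, the direct computation becomes unnecessary.
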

\begin{proof}
There exist principal series $\tau_1, \tau_2 \in \Ir^{gn}(G_2)$ such that $\w_{\tau_1} = \w_{\tau_2} = \w$ and $n_{\tau_1} = n_{\tau_2} = \e$.
Let $\pi =  \th(\tau_1^\vee \bt \tau_2^\vee)$, and $\Pi = \th(\pi^\vee)$.
By \cite{G-T2}, the $L$-parameter of $\Pi$ is $\phi_{\tau_1} \op \phi_{\tau_2}$.
By Corollary \ref{cor:LPGSO22}, $L(s,\Pi) = L(s,\pi)$.
Let $r \ge 0$.
Let $W \in \W_\psi(\pi)$ be quasi-paramodular of level $2\e + r$ be as in Theorem \ref{thm:thetaGSO22}.
Let $\Phi =\Phi_{2\e + r}$.
By Proposition \ref{prop:ZWphiW}, $Z_0(s,W_\Phi) = q^{-r(s-\frac{1}{2})}\Gb(\w,1)L(s,\Pi)$. 
By the functional equation (\ref{eqn:FEGL}), 
\begin{eqnarray*}
Z_0(1-s,\Pi^\imath(a_{-2\e-r})(W_\Phi)^\imath) = \ep'_\pi \Gb(\w_\pi,1)L(1-s,\Pi^\imath).
\end{eqnarray*}
Comparing the constant terms of both sides, we obtain the assertion from (\ref{eqn:WPhi*}).
\end{proof}
Now, we prove the coincidences $L(s,\phi_\pi) = L(s,\pi)$ and $\ep(s,\phi_\pi,\psi) = \ep(s,\pi,\psi)$.
We have showed this for generic constituents of Borel and Siegel parabolic inductions in the previous section.
Hence, we may assume $L(s,\pi) = 1$.
Let $\wt{\pi} = \th(\pi^\vee)$.
If we write $L(s,\wt{\pi})^{-1} = \prod_{i = 1}^d (1-\a_i q^{-s})$ by some $\a_i \in \C$, then $L(1-s,\wt{\pi}^\imath)^{-1} = \prod_{i = 1}^d (1-\a_i^{-1} q^{s-1})$ (recall that $\wt{\pi}^\imath$ is equivalent to $\wt{\pi}^\vee$).
By the above argument, $\ep'_\pi \Gb(\w_\pi,1)$ is the constant term of $Z_0(1-s, \wt{\pi}^\imath(a_{-m_\pi}) (W_\Phi)^\imath)$.
From Theorem \ref{thm:main}, \ref{thm:Ls1}, the functional equations, and the above argument, it follows that 
\begin{eqnarray*}
\frac{Z_0(1-s, \wt{\pi}^\imath(a_{-m_\pi}) (W_\Phi)^\imath)}{L(1-s,\wt{\pi}^\imath)} =  q^{(m_\pi-m_{\wt{\pi}})(s-\frac{1}{2})}\frac{\ep_{\wt{\pi}} \Gb(\w_\pi,1)}{L(s,\wt{\pi})}. \label{eqn:FElast}
\end{eqnarray*} 
Comparing zeros of these polynomials in $\C[X,X^{-1}]$ with $X = q^s$, we conclude that $\{\a_i \}_{i = 1}^d = \{q\a_i \}_{i = 1}^d$ as sets.
Hence, $L(s,\wt{\pi}) = L(1-s,\wt{\pi}^\imath) = 1$.
Therefore, $L(s,\phi_\pi) = L(s,\wt{\pi}) = L(s,\pi)$.
Now, the zeta integral $Z_0(1-s, \wt{\pi}^\imath(a_{-m_\pi}) (W_\Phi)^\imath)$ is constant, and equals $\ep'_\pi \Gb(\w_\pi,1)$.
Thus, $m_\pi = m_{\wt{\pi}}, \ep'_\pi = \ep_\pi = \ep_{\wt{\pi}}$,  and $\ep(s,\phi_\pi,\psi) = \ep(s,\pi,\psi)$.
This completes the proof.


\begin{thebibliography}{99}
\bibitem[B-K]{B}A. Brumer, K. Kramer, Paramodular Abelian Varieties of Odd Conductor, Trans. Amer. Math. Soc. 366:5 (2014), 2463-2516.
\bibitem[B-Z]{B-Z}I. N. Bernstein, A. N. Zelevinskii, Representations of the group $GL(n,F)$ where F is a nonarchimdean local field, Russian Math. Survey {\bf 31}, (1976), 1-68. 
\bibitem[Cr]{Cr}P. Cartier, {\it Representations of $p$-adic groups}: a survey, in Automorphic Forms, Representations, L-functions, Proc. Symp. Pure Math. {\bf XXXIII}- Part 1, A. M. S. Providence (1979), 111-155.
\bibitem[Cs]{C}W. Casselman, On Some Results of Atkin and Lehner, Math. Ann., {\bf 201}, (1973), 301-314.
\bibitem[Cs2]{C2}---, Introduction to The Theory of Admissible Representations of $p$-adic Reductive Groups, preprint.
\bibitem[G-T]{G-T2}W. T. Gan, S. Takeda, The local Langlands conjecture for GSp(4), Ann. of Math. (2) {\bf 173}, (2011), no. 3, 1841-1882.
\bibitem[G-T2]{G-T}---, Theta correspondence for $GSp(4)$, Represent. Theory {\bf 15}, (2011), 670-718. 
\bibitem[G-R-S]{G-R-S}D. Ginzburg, S. Rallis, D. Soudry, Periods, poles of L-functions and symplectic-orthogonal theta lifts, J. Reine Angew. Math. {\bf 487}, (1997), 85-114.
\bibitem[G-J]{G-J}R. Godement, H. Jacquet, {\it Zeta functions and Simple Algebras, } Lec. Note in Math., Springer-Verlag, {\bf 260}, (1972). 
\bibitem[G-Ka]{G-Ka}I. M. Gelfand, D. A. Kazdan, Representations of $GL(n, K)$ where $K$ is a local field, in {\it Lie Groups and their Representations}, John-Wiley and Sons, (1975), 95-118.
\bibitem[G-Kn]{G-K}S. S. Gelbert, A. W. Knapp, $L$-indistinguishability and R Groups for the Special Linear Group, Adv. in Math. {\bf 43}, (1982), 101-121.
\bibitem[H-T]{H-T}M. Harris, R. Taylor, The geometry and cohomology of some simple Shimura varieties, With an appendix by Vladimir G. Berkovich, Annals of Math. Studies, 151. Princeton Univ. Press, Princeton, NJ, (2001).
\bibitem[H]{He}G. Henniart, Une preuve simple des conjectures de Langlands pour GL(n) sur un corps p-adique, Invent. Math. {\bf 139} no. 2, (2000), 439-455.
\bibitem[J-PS-S]{J-PS-S}H. Jacquet, I. I. Piatetski-Shapiro, J. A. Shalika, Conductoeur des repr\'esentations du groupe lin\'eaire. Math. Ann. {\bf 256}, (1981), 199-214.
\bibitem[J-PS-S2]{J-PS-S3}---, Automorphic Forms on $GL(3)$ I, Ann. of math., {\bf 109}, No. 1, 169-212.
\bibitem[J-PS-S3]{J-PS-S2}---, Rankin-Selberg convolutions. Amer. J. Math., {\bf 105} (2), (1983), 367-464.
\bibitem[J]{J}H. Jacquet, Correction to Conductoeur des repr\'esentations du groupe lin\'eaire, Pacific J. Math. {\bf 260}, (2012), 515-525. 
\bibitem[K-Y]{K-Y}S. Kondo, S. Yasuda, Local $L$ and $\ep$ factors in Hecke eigenvalues, J. Number Theory, {\bf 132}, (2012), 1910-1948. 
\bibitem[Ma]{Ma}N. Matrigne, Essential Whittaker functions for $GL(n)$ over a $p$-adic field, Doc. Math. {\bf 18}, (2013) 1191-1214.
\bibitem[M-V-W]{MVW}C. Moeglin, M. F. Vign\'eras, J. L. Waldspurger, {\it Correspondances de Howe sur un corps p-adique}, Lec. Notes in Math., 1291. Springer-Verlag, Berlin, (1987).
\bibitem[Mi]{Mi}M. Miyauchi, Whittaker functions associated to newforms for $GL(n)$ over $p$-adic fields, J. Math. Soc. Japan {\bf 66}, (2014), no. 1, 17-24. 
\bibitem[M-S]{M-S}G. Mui\'c, G. Savin, Symplectic-orthogonal theta lifts of generic discrete series, Duke. Math. {\bf 101} No. 2, (2000), 317-333.
\bibitem[N]{N}M. E. Novodvorsky, Automorphic $L$-Functions for the Symplectic Group $GSp(4)$, Proc. Sympos. Pure Math., {\bf 33}, Amer. Math. Soc., Providence, RI, (1979), 87-95.
\bibitem[O-Y]{O-Y}T. Okazaki, T. Yamauchi, Endoscopic lifts to the Siegel modular threefold related to Klein's cubic threefold, Amer. J. math, {\bf 135} (2013), 183-206.
\bibitem[PS-S]{PS-S}Ilya I. Piatetski-Shapiro, D. Soudry, On a correspondence of automorphic forms on orthogonal groups of order five, J. math. pures et appl., {\bf 66} (1987), 407-436.
\bibitem[R]{R}B. Roberts, The theta correspondence for similitudes,  Israel. Jour. Math. {\bf 94} (1996), 285-317.
\bibitem[R2]{R2}---, Global $L$-packets for $GSp(2)$ and theta lifts, Doc. Math. 6 (2001), 247-314.
\bibitem[R-S]{R-S}B. Roberts, R. Schmidt, {\it Local newforms for GSp(4)}, Springer Lec. Note in Math., vol. {\bf 1918}, (2007).
\bibitem[Rd]{Rod}F. Rodier, Whittaker models for admissible representations of reductive $p$-adic split groups. In: Harmonic Analysis on Homogeneous Spaces, 425-430, Proc. Sympos. Pure Math., {\bf 26}, Am. Math. Soc., Providence, R. I., (1973).
\bibitem[S-T]{S-T}P. J. Sally, Jr., M. Tadi\'c, Induced representations and classification for $GSp(2, F)$ and $Sp(2, F)$, M\'em. Soc. Math. France, (N. S.) {\bf 52}, 1993, 75-133.
\bibitem[Sh]{Shahidi}F. Shahidi, A proof of Langlands' conjecture on Plancherel measures; complementary series for $p$-adic groups, Ann. of Math. (2) {\bf 132}, (1990), no. 2, 273-330.
\bibitem[So]{S}D. Soudry, A uniqueness theorem for representations of $GSO(6)$ and the strong multiplicity one theorem for generic representations of $GSp(4)$, Israel. J. math {\bf 58}-No.3 (1987), 257-287.
\bibitem[T]{T} R. Takloo-Bighash, $L$-functions for the $p$-adic group $GSp(4)$, Amer. J. Math., {\bf 122} (2000), 1085-1120.
\bibitem[Y]{Y}H. Yoshida, Siegel's modular forms and the arithmetics of quadratic forms, Invent. Math., {\bf 60}, (1980), 193-248.
\bibitem[W]{Wa}G. Warner, {\it Harmonic Analysis on Semi-Simple Lie Groups I}, Springer-Verlag, New York, (1972).
\end{thebibliography}
\end{document}